\newdimen\rh@wd
\newdimen\rh@hta
\newdimen\rh@htb
\newbox\rh@box
\def\rh@measure#1{\setbox\rh@box=\hbox{$#1$}\rh@wd=\wd\rh@box \rh@hta=\ht\rh@box}
\def\widecheck#1{\rh@measure{#1}%
  \setbox\rh@box=\hbox{$\widehat{\vrule height \rh@hta width\z@ \kern\rh@wd}$}%
  \rh@htb=\ht\rh@box \advance\rh@htb\rh@hta \advance\rh@htb\p@
  \ooalign{$\vrule height \ht\rh@box width\z@ #1$\cr
           \raise\rh@htb\hbox{\scalebox{1}[-1]{\box\rh@box}}\cr}}
\theoremstyle{plain}
\newtheorem{Theo}{Theorem}[subsection]
\theoremstyle{plain}
\newtheorem{Lem}[Theo]{Lemma}
\theoremstyle{plain}
\newtheorem{Prop}[Theo]{Proposition}
\theoremstyle{plain}
\newtheorem{Def}[Theo]{Definition}
\theoremstyle{plain}
\newtheorem{Cor}[Theo]{Corollary}
\theoremstyle{remark}
\newtheorem{Rque}[Theo]{Remark}
\DeclareMathOperator{\Div}{div}
\DeclareMathOperator{\Pic}{Pic}
\DeclareMathOperator{\ord}{ord}
\DeclareMathOperator{\End}{End}
\DeclareMathOperator{\CH}{CH}
\DeclareMathOperator{\CHw}{\widecheck{CH}}
\DeclareMathOperator{\Kw}{\widecheck{K}}
\DeclareMathOperator{\Gw}{\widecheck{G}}
\DeclareMathOperator{\bra}{\langle}
\DeclareMathOperator{\ket}{\rangle}
\DeclareMathOperator{\ch}{ch}
\DeclareMathOperator{\cht}{\wt{ch}}
\DeclareMathOperator{\Td}{Td}
\DeclareMathOperator{\c1}{\widehat{c_1}}
\DeclareMathOperator{\codim}{codim}
\DeclareMathOperator{\Proj}{Proj}
\DeclareMathOperator{\Spec}{Spec}
\DeclareMathOperator{\cone}{cone}
\DeclareMathOperator{\DIM}{DIM}
\DeclareMathOperator{\SECT}{SECT}
\DeclareMathOperator{\FGL}{FGL}
\DeclareMathOperator{\bc}{bc}
\DeclareMathOperator{\Id}{id}
\DeclareMathOperator{\im}{im}
\DeclareMathOperator{\Hom}{Hom}
\DeclareMathOperator{\Trs}{tr_s}
\DeclareMathOperator{\cob}{\widecheck{\Omega}}
\DeclareMathOperator{\V}{V^b}
\DeclareMathOperator{\Vh}{\ovl{V}^b}
\newcommand{\na}{_{n.a}}
\newcommand{\Kh}{\widehat{K}}
\newcommand{\Gh}{\widehat{G}}
\newcommand{\ds}[1]{\xrightarrow{#1}}
\newcommand{\cobna}{\Omega_{n.a}}
\newcommand{\mcl}{\mathcal}
\newcommand{\mbb}{\mathbb}
\newcommand{\mfr}{\mathfrak}
\newcommand{\ovl}{\overline}
\newcommand{\wh}{\widehat}
\newcommand{\wt}{\widetilde}
\newcommand{\fii}{\varphi}
\newcommand{\KA}{\mcl{KA}}
\newcommand{\M}{\ovl{M}}
\newcommand{\E}{\ovl{E}}
\renewcommand{\H}{\wh{H}}
\newcommand{\N}{\ovl{N}}
\newcommand{\X}{\ovl{X}}
\newcommand{\Z}{\ovl{Z}}
\newcommand{\Y}{\ovl{Y}}
\renewcommand{\t}{\mathbf{t}}
\renewcommand{\S}{\ovl{S}}
\newcommand{\g}{\mfr{g}}
\newcommand{\h}{\mfr h}
\renewcommand{\O}{\mcl{O}}
\renewcommand{\P}{\mbb{P}}
\newcommand{\Dt}{\wt D^{\bullet,\bullet}_{\mbb R}}
\newcommand{\At}{\wt A^{\bullet,\bullet}_{\mbb R}}
\newcommand{\Att}{\wt A^{\bullet,\bullet}_{[\t]}}
\newcommand{\DL}{\wt D^{\bullet,\bullet}_{\Lh}}
\newcommand{\Dtp}[1]{\wt D^{#1,#1}_{\mbb R}}
\newcommand{\DLp}[1]{\wt D_{\Lh, #1}}
\newcommand{\Dtt}{\wt D^{\bullet,\bullet}_{[\t]}}
\renewcommand{\d}{\partial}
\newcommand{\db}{\ovl{\partial}}
\newcommand{\Lh}{\wh{\mbb L}}
\renewcommand{\L}{\ovl{L}}
\renewcommand{\div}{div}
\title{Weak Arithmetic Cobordism.}
\author{Aur\'{e}lien Rodriguez}
\begin{document}
\maketitle
\abstract{In the early 2000's, Levine and Morel built an algebraic cobordism theory, extending to the case of arbitrary algebraic varieties over any field the construction and properties of the complex cobordism ring studied by Milnor and Quillen. We will show how we can refine their construction to build a weak arithmetic cobordism group in the context of Arakelov geometry. The general strategy is to define the notion of homological theory of arithmetic type, encapsulating the common properties of arithmetic $K$-theory and arithmetic Chow groups, and to build a universal such theory.
}
\newpage

\subsection*{Introduction}

The general purpose of Arakelov geometry is to extend classical algebraic geometry over a field, and more precisely intersection theory, to objects over $\Spec \mbb Z$, with the idea to use these new tools to extract arithmetic information. As is well-known, a good Arakelov theory for arithmetic surfaces is sufficient to prove the Mordell conjecture. Satisfying analogs of Chow groups, and $K$-theory groups have been defined and studied by Gillet-Soul\'{e}, Burgos, Faltings and others.\par
In the geometric context Chow theory and $K$-theory are the prototype of what Levine and Morel call homology theories for algebraic varieties, and these two authors have incorporated an essential new theory to the picture, namely \emph{Algebraic Cobordism}. It was a natural question to ask what the analog of that theory in the Arakelov world would be and this paper is a first step towards an answer to that question.\par
The general approach that we will follow is what we may call the functorial approach to cobordism. Namely we want to define a good notion of homology theory (in the arithmetic case), such that arithmetic Chow theory and arithmetic $K$-theory would be examples of such theories, and then we want to construct a universal such theory.\par
Let's see how we can achieve this in more details.\\

In the first section, we introduce some specializations of certain notions defined by Burgos, Freixas and Litcanu, mostly coming from \cite{BFL}. The notion of metrized sheaf, and secondary forms associated to it was already existent in the literature for instance in \cite{GSinv}, although we give a slightly different version of it, using the language of \cite{BFL}, notably the notion of meager complex and of quasi-isometry. The reader familiar with \cite{BFL} won't find anything new, although some of our proofs are different. This language will make it easy for us to introduce the different notions of weak arithmetic theories.\par
The second section contains the review of what we call \emph{weak arithmetic theories}. In order to mimic the functorial construction of the cobordism group of Levine and Morel, we need to have good functorial properties for our arithmetic objects. Therefore we introduce a weak version of arithmetic Chow groups $\CHw(\X)$ for an arithmetic variety $\X$, that is an algebraic variety together with a K\"ahler metric on its tangent bundle invariant by complex conjugation. Those groups were introduced by Zha, Burgos and Moriwaki independently, we prove that these groups are the prototype of what we call an \emph{oriented Borel Moore functor of arithmetic type}.\par
We then review the theory of Bott-Chern singular currents, and of the Analytic torsion forms both essentially do to Bismut and his collaborators on the one hand, and to Burgos, Freixas and Litacnu on the second hand, we make heavy use of the language defined in \cite{BFL} which makes the analogy between those two objects clear. We then introduce the notion of weak arithmetic $\Kw$-theory and prove that it is also an oriented Borel Moore functor of arithmetic type.\par
The parallel between Chow and $K$-groups show the particular place occupied by the Todd form. It appears that both those theories have a Todd form, but the Todd form in the case of arithmetic Chow theory is just 1, therefore it disappears from the classical presentation of the theory and the usual Todd form appears to be a specificity of $\Kh$-theory.\par 
In the third section we proceed to construct a universal Borel Moore functor of arithmetic type. For this, we will need a universal Todd form, for various reasons we define a universal inverse Todd form which we denote $\g$, we also introduce secondary forms associated to it.\par
This $\g$ class will enable us to construct a \emph{universal Bott-Chern singular current} for the immersion of a smooth divisor. The crucial observation is that in the case of Chow theory we have the following relation\footnote{under a technical meager condition} relating the first Chern class and the direct image via the immersion of a divisor  $$i_*(1_{\Z})=\c1(L)(1_{\X})+a(\log\|s\|^2)$$ whereas in $\Kw$ theory this relations become $$i_*(1_{\Z})=\c1(L)(1_{\X})+a(\log\|s\|^2 \Td(\L)^{-1})$$
It is therefore natural to replace the $\Td^{-1}$ form by the most general form $\g(\L)$.\par
The formal group law giving the action of $\c1(\L\otimes \M)$ in function of $\c1(\L)$ and $\c1(\M)$ imposes relations between the coefficient of $\g$ and those of $F_\mbb L$ the universal group law on the Lazard ring. We show that this enables to relate $\g$ to the universal logarithmic class defined by Hirzebruch. In other words, the formal group law imposes what the Todd form should be and \emph{vice versa}. This sheds lights on various constructions of classical Arakelov theory and especially explains why it is possible to define covariant arithmetic Chow groups on the category of algebraic varieties but that it is only possible to define covariant arithmetic $\Kw$ groups on arithmetic varieties, the difference being explained by the triviality of the Todd form in Chow theory but not in $\Kw$-theory.\par
We then proceed to a technical remark about projective Borel-Moore functor, essentially destined to prove that there are no surprises in passing from the quasi-projective to the projective case for Borel-Moore functors of geometric type.\par
We can now prove the fundamental exact sequence
$$\DL(X)\ds a \cob(X) \ds \zeta \Omega(X)\to 0$$
which fulfills the goal of building arithmetic cobordism as an extension of the geometric cobordism by the space of currents and which lies at the heart of our theory.\par
This exact sequence should be thought as the analog of the localization exact sequence in classical algebraic cobordism, as to extend usual geometric arguments, involving this localization, to Arakelov geometry, one usually uses this exact sequence for arithmetic Chow or $K$-groups. In an upcoming paper we will show and we can extend this exact sequence on the left, by relating it to "higher cobordism" groups, $\text{MGL}^{2n-1,n}$ from motivic homotopy theory.\par
We then prove that an analog of the star product lies in the groups $\cob$ that gives back the star-product of Gillet-Soul\'{e} when mapped to arithmetic Chow theory, and we prove a universal anomaly formula that, here again, explains the differences between arithmetic Chow and $K$-theory (the anomaly term being 0 in Chow theory).\par
This enables us to compute the structure of $\cob(k)$. It can be given the structure of a commutative ring because, over a point, strong object and weak objects should coincide. In doing so we prove some kind of universal Hirzebruch-Riemann-Roch formula for the $\g$ class, which is a reflexion of the canoncial isomorphisms $$MU_{2\bullet}\simeq \mbb L_{\bullet}\simeq \Omega_\bullet(k)$$
this formula is the key fact that ensures that the groups $\cob(X)$ have a natural $\cob(k)$-module structure. The explicit description that we then give of $\cob(k)$ seems to fit perfectly in the general framework of Arakelov theory.\par
Finally we prove the existence of different arrows form $\cob$ to $\CHw$ and $\Kw$ and make explicit the notion of Borel-Moore functor of arithmetic type, rouding up or prospect of integrating in Arakelov geometry the general situation of cobordism with respect to other oriented (co)homology theories.

\subsection*{Acknowledgments}
I'd like to thank my thesis advisor Vincent Maillot, for his innumerable comments, suggestions and helpful insights. I'm also greatly thankful to C.Soul\'{e} for the very fruitful discussions that we had together, and M.Levine who helped me greatly in clarifying the presentation of this paper.
\subsection*{Notations and conventions}
Throughout all the paper $k$ will be a number field. If $X$ is a complex manifold we set $A^{p,p}_{\mbb R}(X)$ to be the set of smooth real forms over $X$ of $(p,p)$ type satisfying $F_\infty^*(w)=(-1)^pw$, the notation $D^{p,p}_{\mbb R}(X)$ will represent the space of real currents of $(p,p)$-type, satisfying $F_\infty^*(\eta)=(-1)^p\eta$, and $\wt{A}^{p,p}_{\mbb R}(X)$ will be $A^{p,p}_{\mbb R}(X)/(\im \d+\im \db)$, in the same way $D^{p,p}_{\mbb R}(X)/(\im \d+\im \db)$ is to be denoted $\Dtp p(X)$.\par
When $X$ is a complex quasi-projective variety, we will use the same notations to denote the corresponding objects over $X(\mbb C)$ seen as a complex manifold consisting of the disjoint union of the complex points of $X_{\sigma(\mbb C)}=X\times_{\sigma, k}\Spec \mbb C$ where $\sigma$ runs through the embeddings of $k$ in the complex numbers.\par
The suggestion to differentiate weak objects and strong objects by capping them with a "check" for the former and a "hat" for the latter had been made to me by C.Soul\'{e}.
\tableofcontents
\newpage
\section{Some metric notions}
In this section we recall some results obtained by Burg\'os, Freixas and Litcanu in a series of papers, \cite{BL, BFL, BFL1, BFL2}, which will enable us, among other things, to have a good notion of hermitian sheaf i.e a coherent sheaf on a proper, smooth, complex variety, equipped with a hermitian structure generalizing the case of locally free sheaves.

\subsection{Resolutions}
We review here a specialization of the general theory of metrized structures on derived categories (see \cite{BFL2}); as we only need to metrize sheaves, and not complexes of sheaves we present the general theory in this restrictive context.\par
In this section $X$ will denote a projective complex manifold (the complex points of a smooth projective variety over $\mbb C$). On such a variety, any coherent sheaf admits a finite resolution by vector bundles \cite{SGA6}.
\begin{Def} Let $\mcl F$ be a coherent sheaf over $X$, a metric structure (or sometimes just metric) on $\mcl F$, will consist in the datum of a (finite) resolution of $\mcl F$ by algebraic vector bundles, endowed with hermitian metrics.
\end{Def}
We will need some facts about complexes of vector bundles on a smooth manifold, that we recall here. Let us state some conventions, following \cite{BFL2}, we'll denote $\V(X)$ (resp. $\Vh(X)$) the category of bounded complexes of vector bundles (resp. hermitian vector bundles) over $X$; such a complex will be written homologically $$0\to E_n\ds{d_{n}}...\to E_1\ds{d_1}E_0\ds{d_0}0$$  
If we have a complex $E_\bullet$ resolving a coherent sheaf $\mcl F$, we will label the resolution $$\cdots\to E_1\to E_0 \to \mcl F$$
with $\mcl F$ in degree $-1$. We may also interpret such a resolution as a quasi-isomorphism between the complex $E_\bullet$ and the complex consisting of the single sheaf $\mcl F$ placed in degree 0. 
\begin{Prop}\label{toit}
Let $E_\bullet$, $F_\bullet$ and $G_\bullet$ be three complexes of vector bundles over $X$ such that we have a diagram of quasi-isomorphisms
$$\xymatrix{
     F_\bullet \ar[rd]_f & & E_\bullet \ar[ld]^g\\
     &G_\bullet& }$$

Then there exists a complex of vector bundles $H_\bullet$ such that we have a diagram, commuting up to homotopy $$\xymatrix{
   &H_\bullet \ar[rd] \ar[ld] &\\
     F_\bullet \ar[rd] & & E_\bullet \ar[ld]\\
     &G_\bullet& }$$
     where the top arrows are quasi-isomorphisms.
\end{Prop}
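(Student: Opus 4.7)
The plan is to construct $H_\bullet$ explicitly as the homotopy fibre product of $F_\bullet$ and $E_\bullet$ over $G_\bullet$, a standard construction in the homological algebra of chain complexes. Setting
\[
H_n := F_n \oplus G_{n+1} \oplus E_n
\]
for every integer $n$, I would equip it with the differential
\[
d_H(x,\gamma,y) := \bigl(d_F x,\; f(x) - g(y) - d_G \gamma,\; d_E y\bigr).
\]
A one-line computation using only that $f$ and $g$ are chain maps gives $d_H^2 = 0$; and because each summand is a vector bundle and the three starting complexes are bounded, we obtain $H_\bullet \in \V(X)$.

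I would next take the obvious projections $\pi_F : H_\bullet \to F_\bullet$ and $\pi_E : H_\bullet \to E_\bullet$, which are manifestly chain maps. To see that both are quasi-isomorphisms, I would exploit the short exact sequence of complexes
\[
0 \to K_\bullet \to H_\bullet \xrightarrow{\pi_F} F_\bullet \to 0,
\]
in which $K_n = G_{n+1} \oplus E_n$ (with the induced differential) is identified, up to a sign twist, with a shift of the mapping cone of $g$. Since $g$ is a quasi-isomorphism this cone is acyclic, and the associated long exact sequence in cohomology forces $\pi_F$ to be a quasi-isomorphism. The symmetric argument, using the cone of $f$, handles $\pi_E$.

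It remains to verify that the outer diagram commutes up to homotopy. For this I would define $h : H_n \to G_{n+1}$ by $h(x,\gamma,y) := \gamma$; a direct check yields
\[
d_G \circ h + h \circ d_H = f \circ \pi_F - g \circ \pi_E,
\]
so $h$ is exactly the desired chain homotopy between the two compositions $H_\bullet \rightrightarrows G_\bullet$. I see no genuine obstacle here: the argument is entirely formal, all intermediate objects automatically remain bounded complexes of vector bundles, and no appeal to the existence of vector bundle resolutions invoked just before the statement is needed. The only real care is the sign bookkeeping in $d_H$ and in the identification of $K_\bullet$ with the shifted mapping cone.
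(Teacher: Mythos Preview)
Your proof is correct and is essentially the same construction as the paper's: the paper sets $H=\cone(E\to\cone(f))[-1]$, which unwinds precisely to your homotopy fibre product $H_n=F_n\oplus G_{n+1}\oplus E_n$ (the paper's $G_{n-1}$ appears to be a typo). You have simply spelled out the differential, the projections, the acyclicity of the kernels via the cones of $f$ and $g$, and the explicit homotopy $h$, whereas the paper leaves these verifications implicit.
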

\begin{proof}
We have a map from $E_\bullet$ to $\cone(f)$ given by sending $x$ to $(0,g(x))$, we set $H=\cone(E,\cone(f))[-1]$, we have $H_n=E_n\oplus F_n\oplus G_{n-1}$ and we get diagram $$\xymatrix{
   &H_\bullet \ar[rd] \ar[ld] &\\
     F_\bullet \ar[rd] & & E_\bullet \ar[ld]\\
     &G_\bullet& }$$ as well as a homotopy $h:H_n\to G_{n-1}$ that makes the diagram commutes up to homotopy.
\end{proof}

We have the following lemma.
\begin{Lem}
Let $E_\bullet \to \mcl F$ be a locally free resolution of a coherent sheaf and $\mcl G\to \mcl F$ a morphism of sheaves, we can find a locally free resolution $H_\bullet$ of $\mcl G$ such that we have a commutative diagram $$\xymatrix{
    H_\bullet \ar[r]\ar[d] &\mcl G\ar[d]\\
      E_\bullet \ar[r] & \mcl F}$$
      Moreover if $\mcl G\to \mcl F$ is onto, then we can choose the $H_i\to E_i$ to be onto too, in this case we say that the top resolution dominates the bottom one.
\end{Lem}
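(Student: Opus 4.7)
The plan is to construct $H_\bullet$ inductively, term by term, using fibered products to simultaneously enforce the resolution property and the compatibility with $E_\bullet$. The essential input is that on a projective complex manifold every coherent sheaf is a quotient of a vector bundle, and that $E_\bullet \to \mcl F$ being a resolution makes each $E_{n+1} \to \ker(E_n \to E_{n-1})$ surjective.

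First I would handle degree $0$. Since $E_0 \to \mcl F$ is surjective, the fibered product $P_0 := E_0 \times_{\mcl F} \mcl G$ surjects onto $\mcl G$. Choose a vector bundle $H_0$ with a surjection $H_0 \twoheadrightarrow P_0$. The two projections of $P_0$ give on the one hand the augmentation $H_0 \to \mcl G$ and on the other hand the first component $H_0 \to E_0$ of the desired morphism of complexes, and by construction the square over $\mcl G \to \mcl F$ commutes.

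For the inductive step, suppose that $H_0,\ldots,H_n$ and compatible maps to $E_0,\ldots,E_n$ have been built, with the truncated complex $H_n \to \cdots \to H_0 \to \mcl G$ exact. Set (with the conventions $H_{-1}=\mcl G$, $E_{-1}=\mcl F$)
$$Z_n^H := \ker(H_n \to H_{n-1}), \qquad Z_n^E := \ker(E_n \to E_{n-1}),$$
and let $P_{n+1} := E_{n+1} \times_{Z_n^E} Z_n^H$. The surjectivity of $E_{n+1} \to Z_n^E$ forces the projection $P_{n+1} \to Z_n^H$ to be surjective. Pick a vector bundle $H_{n+1} \twoheadrightarrow P_{n+1}$; composing with the two projections yields the differential $H_{n+1} \to Z_n^H \hookrightarrow H_n$ (automatically surjecting onto $Z_n^H$, so extending the exactness one step further) and the lift $H_{n+1} \to E_{n+1}$, commuting with the previously constructed data by construction of the pullback.

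For the "moreover" part, I would carry along the additional inductive hypothesis that $Z_n^H \to Z_n^E$ is surjective, whose base case $n=-1$ is precisely the assumption that $\mcl G \to \mcl F$ is surjective. Under that hypothesis, the projection $P_{n+1} \to E_{n+1}$ is surjective (its image equals $d_E^{-1}(\mathrm{im}(Z_n^H \to Z_n^E)) = E_{n+1}$), so $H_{n+1} \to E_{n+1}$ is surjective. A short diagram chase in the pullback square then shows that any cycle in $Z_{n+1}^E$ lifts through the surjection $H_{n+1} \twoheadrightarrow P_{n+1}$ to an element of $Z_{n+1}^H$ mapping to it, closing the induction.

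The only genuine subtlety is the finiteness of $H_\bullet$. Since $X$ is smooth, coherent sheaves have finite projective dimension, so after sufficiently many steps the kernel $Z_n^H$ is itself locally free and the construction can be terminated by taking $H_{n+1} = Z_n^H$ (extending $E_\bullet$ by zeros past its length if necessary). This is the point where smoothness is essential; everything else is pure diagram chasing with fibered products.
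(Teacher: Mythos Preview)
Your proof is correct and follows the same fibered-product strategy as the paper: both construct $H_0$ by surjecting onto $E_0 \times_{\mcl F} \mcl G$ (the paper writes this as $\ker(\mcl G \oplus E_0 \xrightarrow{\pi - d_0} \mcl F)$) and proceed inductively via $E_{n+1} \times_{Z_n^E} Z_n^H$. The one real difference is in the ``moreover'' clause: the paper enlarges $H_n$ to $H_n' = H_n \oplus F$ at each step (with $F$ surjecting onto $\ker d_n$) to force $\ker(H_n' \to H_{n-1}) \twoheadrightarrow \ker d_n$ irrespective of any hypothesis on $\mcl G \to \mcl F$, whereas you more economically carry the surjectivity $Z_n^H \twoheadrightarrow Z_n^E$ as an inductive hypothesis, available precisely when $\mcl G \to \mcl F$ is onto. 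Your handling of termination (stopping once $Z_n^H$ is locally free, past the length of $E_\bullet$) and the paper's (running until $E_\bullet$ is exhausted, then resolving the leftover kernel separately) are equivalent, both resting on finite projective dimension over the smooth variety $X$.
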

\begin{proof}
We build the resolution $H_\bullet$ by induction. Set $\pi$ to be the morphism from $\mcl G$ to $\mcl F$, and let $K$ be the kernel of the map $\mcl G\oplus E_0  \xrightarrow{\pi-d_0} \mcl F$, as $K$ is a coherent sheaf, it is a quotient of some locally free sheaf, say $H_0$. We thus get a commutative diagram $$\xymatrix{H_0 \ar[r]\ar[d] & \mcl G\ar[r]\ar[d]&0\\E_0 \ar[r] & \mcl F\ar[r]&0}$$
On the other hand if $\pi$ is surjective, then by construction $\pi_0 :H_0\to E_0$ is too.\par
Now, assume that we have built the bundles $H_i$, and the morphisms $\pi_i$ for $i=0...n$.\par 
We have a surjection from $E_{n+1}$ to the kernel of $d_n$, however the kernel of $H_n\to H_{n-1}$ may not map surjectively onto $\ker(d_n)$, still we may replace $H_n$ by $H_n'=H_n\oplus F$ where $F$ is a locally free sheaf equipped with a surjection $\epsilon$ onto $\ker(d_n)$, we extend the map $H_{n}\to H_{n-1}$ to a map $H'_{n}\to H_{n-1}$, by sending $F$ to $0$.\par
Of course $H'_n$ is still mapped surjectively onto the kernel of $H_{n-1}\to H_{n-2}$, and the kernel of this map is $\ker(H_n\to H_{n-1})\oplus F$ which maps surjectively onto $\ker(d_n)$,  hence by re-iterating the procedure described in the previous paragraph we build a commutative diagram
$$\xymatrix{H_{n+1} \ar[r]\ar[d] & \mcl \ker(H'_n\to H_{n-1})\ar[r]\ar[d]&0\\E_{n+1} \ar[r] & \mcl \ker(d_{n-1})\ar[r]&0}$$
which enable us to construct $H_{n+1}$, and the morphism $\pi_{n+1}$. The arrow from $H_{n+1}$ to $E_{n+1}$ being surjective as the one from $\ker(H'_n)$ to $\ker(d_n)$ is, by construction\par
After a finite number of steps, we're left with the following commutative diagram
$$\xymatrix{0\ar[d]\ar[r]&\ker(H_{p+1}\to H_p)\ar[r]\ar[d]&H_{\bullet} \ar[r]\ar[d] & \mcl G\ar[r]\ar[d]&0\\0\ar[r]&0\ar[r]&E_\bullet \ar[r] & \mcl F\ar[r]&0}$$
As $\ker(H_{p+1}\to H_p)$ is a coherent sheaf, it will be enough to replace it by a resolution by locally free sheaves to prove the proposition.
\end{proof}

\subsection{Acyclic calculus of Burgos, Freixas and Litcanu}
Let $X$ be a complex algebraic variety (the $\mbb C$-valued points of an algebraic variety to be precise), we review here the theory of acyclic calculus developed in \cite{BFL2}.

Recall that in the general formalism of derived categories, a basic observation is that a resolution of a coherent sheaf should be defined up to a "roof", we mimic this situation in the metric case, the analog of quasi-isomorphisms will be called quasi-isometries (which is not exactly the terminology employed in \cite{BFL2}).\par
In order to define them we first define the notion of meager complex which is introduced in \cite{BFL}, this is the analog (and a refinement) of the notion of acyclic complex is the context of hermitian sheaves.\par
\begin{Def}
The class of \emph{meager complexes}, denoted $\mcl M(X)$ is the smallest class of complexes of hermitian bundles over $X$ satisfying, 
\begin{enumerate}
	\item Every ortho-split complex is meager.
	\item Every complex isometric to $\ovl F_\bullet\oplus \ovl F_\bullet[1]$, for $F_\bullet$ an acyclic complex endowed with any metric, i.e the cone of the zero map from an acyclic complex to itself is meager.
	\item The cone of the identity of a complex $\ovl F_\bullet$ is meager.
	\item For every morphism of complex $f:\ovl{E}_\bullet\to \ovl{F}_\bullet$, if any two of the following complexes $\ovl{E}_\bullet$, $\ovl{F}_\bullet$, ${\cone} (f)$ are meager, so is the third.
	\item Every shift of a meager complex, is meager.
\end{enumerate}
\end{Def}
Here, $\cone(f)$ is equipped with the orthognal metric induced by the metric on both summands.
\begin{Rque}
We will call a family of complexes of hermitian vector bundles satisfying the above conditions, a hermitian admissible class. It is easy to see that the intersection of a family of hermitian admissible classes is still an hermitian admissible class, thus the class of meager complexes is the intersection of all hermitian admissible classes.
\end{Rque}
\begin{Prop}
Let $\ovl{E}_\bullet$ be a meager complex, then $E_\bullet$ is acyclic.
\end{Prop}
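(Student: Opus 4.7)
The plan is to show, directly from the defining closure properties of meager complexes, that the class
\[
\mathcal{A}(X) = \{\,\ovl{E}_\bullet \in \Vh(X) \mid E_\bullet \text{ is acyclic}\,\}
\]
is itself a hermitian admissible class in the sense of the remark preceding the proposition. Since the class of meager complexes $\mcl M(X)$ is the intersection of \emph{all} hermitian admissible classes, this gives $\mcl M(X) \subset \mathcal{A}(X)$, which is the statement to prove.

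So what I would do is go through conditions (1)--(5) of the definition of a hermitian admissible class for $\mathcal{A}(X)$. For (1), every ortho-split complex is by definition a direct sum of complexes of the form $0 \to E \xrightarrow{\Id} E \to 0$, and each such summand is acyclic; hence any ortho-split complex lies in $\mathcal{A}(X)$. For (2), if $F_\bullet$ is acyclic, then so is the shift $F_\bullet[1]$ and hence so is $F_\bullet \oplus F_\bullet[1]$, since cohomology commutes with direct sums. For (3), the cone of the identity morphism $\Id : \ovl F_\bullet \to \ovl F_\bullet$ is well known to be contractible (an explicit chain-homotopy between $\Id$ and $0$ on the cone is standard), and a contractible complex is acyclic. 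For (5), shifting a complex only relabels its cohomology, so a shift of an acyclic complex is acyclic.

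The step that does the most work is (4). Given a morphism $f : \ovl E_\bullet \to \ovl F_\bullet$, one has the canonical short exact sequence of complexes
\[
0 \to F_\bullet \to \cone(f) \to E_\bullet[1] \to 0,
\]
whose induced long exact sequence in cohomology
\[
\cdots \to H_{n+1}(\cone(f)) \to H_n(E_\bullet) \xrightarrow{H_n(f)} H_n(F_\bullet) \to H_n(\cone(f)) \to H_{n-1}(E_\bullet) \to \cdots
\]
immediately shows that if any two of $H_\bullet(E_\bullet)$, $H_\bullet(F_\bullet)$, $H_\bullet(\cone(f))$ vanish, then so does the third. Hence (4) holds for $\mathcal{A}(X)$.

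Having checked (1)--(5), $\mathcal{A}(X)$ is a hermitian admissible class, and therefore $\mcl M(X) \subseteq \mathcal{A}(X)$, which is exactly the assertion that every meager complex has acyclic underlying complex. I do not foresee any genuine obstacle: the main point is the conceptual observation that the definition of ``meager'' is built precisely so as to be a metric refinement of ``acyclic'' and all five closure conditions have obvious acyclic counterparts, the long exact sequence of a mapping cone doing the essential work.
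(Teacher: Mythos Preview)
Your proof is correct and follows essentially the same approach as the paper: you define the class of hermitian complexes with acyclic underlying complex, verify it is a hermitian admissible class by checking conditions (1)--(5), and conclude that it contains the meager complexes since the latter form the smallest such class. The paper's proof is organized identically, with the long exact sequence of the mapping cone doing the work for condition (4).
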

\begin{proof}
Set $\mcl{ACL}(X)$ for the class of all complexes of hermitian bundles, $\ovl{E}_\bullet$ such that the underlying complex, $E_\bullet$ is acyclic. It will be sufficient to prove that $\mcl{ACL}(X)$ is an hermitian admissible class.\\
It is clear that ortho-split complexes are acyclic, just as clear is the fact that the cone of the zero map of an acyclic complex and of the identity map of any complex, is acyclic. Now let's consider $f:E_\bullet \to F_\bullet$ a morphism of complexes, we have a long exact sequence in cohomology $$...\to H_{i-1}(\cone(f))\to H_i(E_\bullet) \to H_i(F_\bullet)\to H_i (\cone(f)) \to ...$$
that ensures that if any two of the three complexes, $E_\bullet, F_\bullet, \cone(f)$ are acyclic, the the third is too.\\
Of course the shift of any acyclic complex is still acyclic.
\end{proof}
In the same way that a quasi-isomorphism has an acyclic cone, we define an equivalence relation between metric resolutions by declaring equivalent two resolutions differing by a meager cone.
\begin{Def}
A morphism $f:\ovl E_\bullet \to \ovl F_\bullet$ is said to be tight iff $ \cone (f)$ is meager.
\end{Def}
The preceding proposition admits the following translation
\begin{Cor}
A tight morphism between two complexes of hermitian vector bundles is a quasi-isomorphism.
\end{Cor}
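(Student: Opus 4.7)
The statement is essentially an immediate unpacking of the previous definitions and proposition, so my plan is correspondingly short. The whole argument should fit in two lines once the ingredients are assembled.

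First I would recall the definition: saying that $f:\ovl{E}_\bullet\to\ovl{F}_\bullet$ is tight means exactly that the cone $\cone(f)$, viewed as a complex of hermitian vector bundles with its orthogonal metric, lies in the class $\mcl{M}(X)$ of meager complexes. This is pure definition chasing, nothing more.

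Next I would invoke the proposition proved just above: every meager complex has acyclic underlying complex. Applied to $\cone(f)$ this yields that $\cone(f)$ is acyclic as a plain (unmetrized) complex of vector bundles.

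The final step is the classical homological-algebra input, which was already spelled out in the proof of the preceding proposition: the distinguished triangle $E_\bullet\to F_\bullet\to\cone(f)\to E_\bullet[1]$ produces a long exact sequence
\[\cdots\to H_{i-1}(\cone(f))\to H_i(E_\bullet)\to H_i(F_\bullet)\to H_i(\cone(f))\to\cdots\]
and the vanishing of all $H_i(\cone(f))$ forces $H_i(f):H_i(E_\bullet)\to H_i(F_\bullet)$ to be an isomorphism for every $i$, so that $f$ is a quasi-isomorphism. There is no serious obstacle here; the only thing to be a bit careful about is that passing from the hermitian morphism $\ovl{f}$ to its underlying morphism of complexes of vector bundles is harmless, because meagerness is defined precisely so as to be stronger than acyclicity of the underlying complex, as was just established.
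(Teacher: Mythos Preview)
Your argument is correct and matches the paper's approach exactly: the paper presents this corollary as a direct ``translation'' of the preceding proposition (meager $\Rightarrow$ acyclic) without giving further details, and your unpacking via the long exact sequence of the cone is precisely the intended one-line justification.
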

Let us state
\begin{Def}
We will say that two complexes of hermitian vector bundles, $\ovl{E}_\bullet$ and $\ovl{F}_\bullet$, are quasi-isometric\footnote{In \cite{BFL2} the term used is tightly related.} iff there exists a complex of hermitian bundles $\ovl H_\bullet$ such that we have a diagram $$\xymatrix{
   &\ovl H_\bullet \ar[rd] \ar[ld] &\\
     \ovl F_\bullet  & & \ovl E_\bullet}$$
     where the two arrows are tight morphisms.
\end{Def}
We have the following characterization of the quasi-isometry relation 
\begin{Prop}\label{MaigreTight}
Two complexes of hermitian vector bundles, $\E_\bullet$ and $\ovl F_\bullet$ are quasi-isometric iff we can find a complex of hermitian vector bundles $\ovl{H}_\bullet$ such that we have a diagram$$\xymatrix{
   &\ovl{H}_\bullet \ar[rd]^{f} \ar[ld]_{g} &\\
     \ovl{E}_\bullet  & & \ovl{F}_\bullet   }$$
      with $g$ a quasi-isomorphism and such that complex $\ovl{\cone}(f)\oplus \ovl \cone (g) [1]$ is meager.
\end{Prop}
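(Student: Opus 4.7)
The plan is to split the equivalence into its two implications, the forward direction being essentially bookkeeping from the axioms, while the reverse direction requires a genuine construction.

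For the direct implication, suppose $\ovl{E}_\bullet$ and $\ovl{F}_\bullet$ are quasi-isometric via some $\ovl{H}_\bullet$ with tight morphisms $f$ and $g$. Unwinding the definition, $\ovl{\cone}(f)$ and $\ovl{\cone}(g)$ are meager. Axiom 5 gives $\ovl{\cone}(g)[1]$ meager, and applying axiom 4 to the zero morphism $\ovl{\cone}(f) \to \ovl{\cone}(g)[1]$ yields that the direct sum $\ovl{\cone}(f)\oplus\ovl{\cone}(g)[1]$ is itself meager (since the cone of the zero morphism is the direct sum up to a shift, and two of the three terms are known meager). The fact that $g$ is a quasi-isomorphism is then immediate from the preceding corollary.

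For the converse, I would start by noting that the hypothesis already forces $f$ to be a quasi-isomorphism: the underlying complex of $\ovl{\cone}(f)\oplus\ovl{\cone}(g)[1]$ is acyclic, and homology commutes with direct sums, so $\cone(f)$ is acyclic and $f$ is a quasi-isomorphism. The core of the argument is then to construct a new middle complex $\ovl{H}'_\bullet$ with \emph{both} structural morphisms tight. The natural strategy is to keep $\ovl{E}_\bullet$ and $\ovl{F}_\bullet$ fixed and enlarge $\ovl{H}_\bullet$ by a well-chosen meager complex, extending $f$ and $g$ in such a way that the new cones become meager rather than merely acyclic. The candidate construction is $\ovl{H}'_\bullet = \ovl{H}_\bullet \oplus \ovl{N}_\bullet$ for $\ovl{N}_\bullet$ built from $\ovl{\cone}(g)$ (for instance using a cone of identity construction applied to pieces of $\ovl{\cone}(g)$, which is automatically meager by axiom 3), with $f$ and $g$ extended by suitably chosen hermitian morphisms.

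The main obstacle, and where the technical work concentrates, is the simultaneous meagerness of $\ovl{\cone}(g')$ and $\ovl{\cone}(f')$ for the extended maps. The key trick is that the combined cone of $(g',f')$ decomposes as a direct sum whose meagerness follows from the joint hypothesis $\ovl{\cone}(f)\oplus\ovl{\cone}(g)[1]$ meager, combined with the meagerness of auxiliary pieces coming from $\ovl{N}_\bullet$. Once one of the individual cones is shown meager, applying axiom 4 to the inclusion of $\ovl{\cone}(g)$ (resp.\ $\ovl{\cone}(f)$) into its extended version, together with closure under shifts and direct sums, should give the meagerness of the other. The delicate point is to choose $\ovl{N}_\bullet$ and the extensions so that all these two-out-of-three applications go through; this is where the hermitian metric data must be tracked carefully, as the axioms for meager complexes are metric-sensitive (unlike their counterparts for acyclic complexes).
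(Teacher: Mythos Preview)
The paper does not actually prove this proposition; it simply cites \cite[Lemma 2.20]{BFL2}. So there is no in-paper argument to compare against, and your proposal must stand on its own.

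Your forward direction is correct up to a harmless bookkeeping slip: the cone of the zero map $A\to B$ is $A[1]\oplus B$, so to land directly on $\ovl{\cone}(f)\oplus\ovl{\cone}(g)[1]$ you should take the zero map $\ovl{\cone}(g)\to\ovl{\cone}(f)$ rather than the one you wrote, or apply axiom~5 once more. Either way the conclusion holds.

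The reverse direction, however, is a plan rather than a proof. You correctly recognise that one cannot conclude that $\ovl{\cone}(f)$ and $\ovl{\cone}(g)$ are individually meager from the meagerness of their shifted direct sum: axiom~2 already exhibits meager sums $A\oplus A[1]$ with $A$ merely acyclic. So a genuine construction of a new roof $\ovl{H}'_\bullet$ is required, and this is where your argument stops. You do not specify $\ovl{N}_\bullet$ or the extended maps $f'$, $g'$; you only gesture at ``a cone of identity construction applied to pieces of $\ovl{\cone}(g)$'' and then assert that the verifications ``should'' go through. In particular, the step where you hope to obtain meagerness of one extended cone first and then deduce the other via axiom~4 is exactly the step that has no content without a concrete construction: axiom~4 relates $\ovl{E}_\bullet$, $\ovl{F}_\bullet$ and $\ovl{\cone}(\ovl{E}_\bullet\to\ovl{F}_\bullet)$ for a \emph{given} morphism, and you have not said which morphism between the two extended cones you intend, nor why its cone is already known to be meager. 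Until $\ovl{N}_\bullet$, $f'$, $g'$ are written down and the two-out-of-three applications are checked explicitly, this direction remains open. The actual construction in \cite[Lemma 2.20]{BFL2} is short but not guessable from the axioms alone; you should consult it.
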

\begin{proof}This is \cite[Lemma 2.20]{BFL2}
\end{proof}

Moreover we have
\begin{Prop}\label{ToitTight}
Any diagram of tight morphisms of the form $$\xymatrix{
     \ovl E_\bullet \ar[rd] & & \ovl F_\bullet \ar[ld]\\
     &\ovl G_\bullet& }$$
		can be completed into a diagram 
		$$\xymatrix{
   &\ovl H_\bullet \ar[rd] \ar[ld] &\\
     \ovl E_\bullet \ar[rd] & & \ovl F_\bullet \ar[ld]\\
     &\ovl G_\bullet & } $$
		which is comutative up to homotopy and where all the arrows are tight.
\end{Prop}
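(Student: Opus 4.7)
The plan is to apply Proposition \ref{toit} at the level of underlying complexes of vector bundles (this is legitimate because a tight morphism is a quasi-isomorphism by the preceding corollary) and then metrize the resulting complex by taking orthogonal direct sums. To ease notation, write $\alpha : \ovl{E}_\bullet \to \ovl{G}_\bullet$ and $\beta : \ovl{F}_\bullet \to \ovl{G}_\bullet$ for the two tight morphisms of the hypothesis. Proposition \ref{toit} then produces a complex $H_\bullet$ with $H_n = E_n \oplus F_n \oplus G_{n-1}$, together with projections $\pi_E : H_\bullet \to E_\bullet$ and $\pi_F : H_\bullet \to F_\bullet$, and a homotopy making the full diagram commute. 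The homotopy is purely at the level of complexes, so no additional data is needed once we endow $\ovl{H}_\bullet$ with the orthogonal direct sum of the metrics of its three summands.

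The remaining task is to verify that $\pi_E$ and $\pi_F$ are tight morphisms of hermitian complexes. For $\pi_E$, write the mapping cone as
\[
\cone(\pi_E)_n \;=\; E_n \oplus H_{n-1} \;=\; (E_n \oplus E_{n-1}) \,\oplus\, (F_{n-1} \oplus G_{n-2}),
\]
where the rightmost decomposition is orthogonal in the inherited metric. A direct sign check on the differential, coming from the explicit iterated cone construction of $H_\bullet$, shows that the second summand is a subcomplex isometric to a shift of $\cone(\beta)$, while the quotient is isometric to $\cone(\mathrm{id}_{\ovl{E}_\bullet})$. This produces an ortho-split short exact sequence of hermitian complexes whose two endpoints are both meager: the quotient by axiom 3 of the definition of meager complex, and the shift of $\cone(\beta)$ by axiom 5 combined with the tightness of $\beta$.

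Being ortho-split, the sequence realizes $\cone(\pi_E)$ as the mapping cone of its connecting morphism, so applying axiom 4 to that morphism (whose source and target are both meager) yields the meagerness of $\cone(\pi_E)$. Hence $\pi_E$ is tight. Swapping the roles of $(E,\alpha)$ and $(F,\beta)$ gives the analogous argument for $\pi_F$, using this time that $\cone(\alpha)$ is meager and that the orthogonal complement in $\cone(\pi_F)_n$ provides a copy of $\cone(\mathrm{id}_{\ovl{F}_\bullet})$.

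The main obstacle I expect is the sign-and-convention bookkeeping needed to confirm that $F_{n-1} \oplus G_{n-2}$ is genuinely a subcomplex of $\cone(\pi_E)$ (rather than merely a graded direct summand) and to identify it precisely as a shift of $\cone(\beta)$. Once this identification is in place, the conclusion is formal from axioms 3, 4 and 5 and from the characterization of meager morphisms of hermitian complexes already recorded in the text.
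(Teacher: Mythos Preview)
Your approach is sound and is essentially the standard argument. The paper itself does not give a proof here: it simply cites \cite[Lemma 2.21]{BFL2}, so there is no in-paper argument to compare against. Your strategy—take the explicit iterated-cone $H_\bullet$ from Proposition~\ref{toit}, endow it with the orthogonal sum metric, and then show each projection has meager cone by exhibiting an ortho-split filtration with meager graded pieces—is exactly the natural way to prove this, and is presumably what underlies the cited lemma.

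Two small points of precision are worth tightening. First, the phrase ``swapping the roles of $(E,\alpha)$ and $(F,\beta)$'' is slightly misleading: the construction of $H_\bullet$ is \emph{not} symmetric in $E$ and $F$, so you cannot literally swap them. What you mean (and what your last sentence indicates you are actually doing) is to run the \emph{same} decomposition argument on $\cone(\pi_F)$ for the given $H_\bullet$, obtaining this time $\cone(\mathrm{id}_{\ovl F_\bullet})$ as one piece and a shift of $\cone(\alpha)$ as the other. That works, but say it that way.

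Second, your use of the ``connecting morphism'' deserves one more line: if $0\to \ovl A_\bullet \to \ovl B_\bullet \to \ovl C_\bullet \to 0$ is degreewise ortho-split, the splitting identifies $\ovl B_\bullet$ isometrically with $\cone(\tau:\ovl C_\bullet[1]\to \ovl A_\bullet)$ for a chain map $\tau$ built from the off-diagonal part of $d_B$; then axiom~4 applies directly with source $\ovl C_\bullet[1]$ and target $\ovl A_\bullet$, both meager by hypothesis and axiom~5. Making this explicit removes any doubt that axiom~4 is being invoked correctly.
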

\begin{proof}
This is \cite[Lemma 2.21]{BFL2}
\end{proof}
It is important to note that \begin{Prop}
The quasi-isometry is an equivalence relation.
\end{Prop}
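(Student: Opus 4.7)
The plan is to verify the three axioms of an equivalence relation. Reflexivity and symmetry will be essentially immediate from the definitions, while transitivity will require the key technical input of Proposition~\ref{ToitTight} together with the fact that the composition of tight morphisms is tight.

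For \emph{reflexivity}, I would take as witness the roof with apex $\E_\bullet$ and both legs the identity $\Id_{\E_\bullet}$; axiom 3 in the definition of meager complexes asserts precisely that $\cone(\Id)$ is meager, so $\Id$ is tight. \emph{Symmetry} is automatic since the defining diagram is symmetric in $\E_\bullet$ and $\ovl F_\bullet$.

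For \emph{transitivity}, suppose we are given two quasi-isometries realised by
$$\xymatrix{
   & \ovl H_\bullet \ar[rd] \ar[ld] & & \ovl K_\bullet \ar[rd] \ar[ld] & \\
   \E_\bullet & & \ovl F_\bullet & & \ovl G_\bullet}$$
with all four slanted arrows tight. I would first apply Proposition~\ref{ToitTight} to the cospan of tight morphisms $\ovl H_\bullet \to \ovl F_\bullet \leftarrow \ovl K_\bullet$, producing a complex $\ovl L_\bullet$ of hermitian vector bundles with tight morphisms $\ovl L_\bullet \to \ovl H_\bullet$ and $\ovl L_\bullet \to \ovl K_\bullet$ whose composition to $\ovl F_\bullet$ agrees up to homotopy. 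Post-composing with the two outer tight legs yields morphisms $\ovl L_\bullet \to \E_\bullet$ and $\ovl L_\bullet \to \ovl G_\bullet$, and it remains to show that these compositions are themselves tight, which delivers a roof realising $\E_\bullet \sim \ovl G_\bullet$.

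The main obstacle, and the only real technical step, is thus the lemma that \emph{the composition of tight morphisms is tight}. The strategy I would use is to construct, for two composable morphisms $f:\ovl A_\bullet \to \ovl B_\bullet$ and $g:\ovl B_\bullet\to \ovl C_\bullet$ of hermitian complexes, a natural short exact sequence of hermitian complexes
$$0 \longrightarrow \cone(f) \longrightarrow \cone(g\circ f) \longrightarrow \cone(g) \longrightarrow 0,$$
which at the level of underlying complexes is the standard octahedral sequence. Interpreting the middle term as (quasi-isometric to) the cone of the map $\cone(f)\to \cone(g\circ f)$, axiom 4 (two-out-of-three) applied to meager complexes forces $\cone(g\circ f)$ to be meager as soon as $\cone(f)$ and $\cone(g)$ are. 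The delicate point here is purely metric: one must verify that the orthogonal decompositions implicit in the definition of the cones are compatible with this octahedral sequence, possibly after replacing $\cone(g\circ f)$ by a quasi-isometric model. Once this is in hand, transitivity follows, and the equivalence relation is established.
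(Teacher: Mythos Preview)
Your approach is essentially the same as the paper's: reflexivity and symmetry are immediate, and transitivity is obtained by applying Proposition~\ref{ToitTight} to the cospan over $\ovl F_\bullet$ and then invoking that the composition of tight morphisms is tight. The paper simply asserts this last fact, whereas you sketch an argument for it.

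One small caveat on your sketch: the ``short exact sequence'' $0\to\cone(f)\to\cone(g\circ f)\to\cone(g)\to 0$ is not literally exact at the level of complexes (the left map $(a,b)\mapsto(a,g(b))$ need not be injective). What is true, and what you correctly fall back on, is that the cone of the natural map $\cone(f)\to\cone(g\circ f)$ agrees with $\cone(g)$ in $\Vh(X)/\mcl M(X)$; this is exactly Proposition~\ref{AcyCal}(4), and the two-out-of-three axiom for meager complexes then gives the conclusion. So your argument is fine once the ``short exact sequence'' language is dropped in favour of the cone identity.
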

\begin{proof}
The only part that is not obvious is the transitivity of this relation.\\
Let us consider $\ovl E^i_\bullet$, for $i=1,2,3$ three complexes of hermitian vector bundles, we assume that $\ovl E^1_\bullet$ and $\ovl E^2_\bullet$ are quasi-isometric and that $\ovl E^2_\bullet$ and $\ovl E^3_\bullet$ are also quasi-isometric. We thus have a diagram $$\xymatrix{
   \ovl H_\bullet\ar[d]\ar[rd] &&\ovl H'_\bullet \ar[ld]\ar[d]\\
   \ovl E^1_\bullet &\ovl E^2_\bullet & \ovl E^3_\bullet\\}
   $$
   We can complete this diagram into a diagram which is commutative up to homotopy $$\xymatrix{
   &\ovl G_\bullet \ar[rd] \ar[ld] &\\
     \ovl H_\bullet \ar[rd] & & \ovl H'_\bullet \ar[ld]\\
     &\ovl E^2_\bullet & } $$
     where all the arrows are tight thanks to \ref{ToitTight}, as the composition of tight morphisms is tight, we deduce the result.

\end{proof}
Let us set $\Vh(X)/\mcl M(X)$ to be the class of hermitian vector bundle modulo the quasi-isometry relation and $\KA(X)$ be the subset of $\Vh(X)/\mcl M(X)$ corresponding to the image of complexes of hermitian vector bundles such that the underlying complex is acyclic. \par
One can endow $\Vh(X)/\mcl M(X)$ with a structure of monoid using the orthogonal sum as the addition, the image of a complex $\E_\bullet$ in $\Vh(X)/\mcl M(X)$ will be denoted $[\E_\bullet]$.\par
This object inherits several properties summing up some diagram constructions, and that make proofs much less cumbersome that we list in the following proposition
\begin{Prop}\label{AcyCal}In $\Vh(X)/\mcl M(X)$ we have
\begin{enumerate}
	\item A complex $[\ovl E_\bullet]$ is invertible iff it is acyclic and then its inverse is given by the shift $[\ovl E_\bullet[1]]$.
	\item For every arrow $\ovl E_\bullet\to \ovl F_\bullet$, if $\ovl E_\bullet$ is acyclic (resp. $\ovl F_\bullet$ acyclic) then $$[ \cone (E, F)_\bullet]=[\ovl F_\bullet]-[\ovl E_\bullet]$$
	$$(\text{ resp. } [ \cone (E, F)_\bullet]=[\ovl E_\bullet[1]]+[\ovl F_\bullet])$$
	
	\item For every diagram $$\xymatrix{
   &\ovl G_\bullet \ar[rd] \ar[ld] &\\
     \ovl H_\bullet \ar[rd] & & \ovl H'_\bullet \ar[ld]\\
     &\ovl E_\bullet & } $$ which is commutative up to homotopy, we have $$[\cone( \cone (G, H),\cone(H',E))]=[\cone( \cone (G, H'), \cone(H,E))]$$

     \item If $f:\ovl E_\bullet \to\ovl F_\bullet; g:\ovl F_\bullet\to \ovl G_\bullet$ are two morphism between metrized complexes then we have $$[\cone(  \cone (g\circ f), \cone(g))]=[\cone(f)[1]]$$
     $$[\cone(  \cone (f), \cone(g\circ f))]=[\cone(g)]$$
     Moreover if $g$ or $ f$ is a quasi-isomorphism (resp. if $g\circ f$ is a quasi-isomorphism) then $$[( \cone (g\circ f)]=[ \cone(g))]+[\cone(f)]$$
    
    $$\text{ (resp. }[  \cone (g\circ f)]+[ \cone(f)[1]]=[\cone(g)])$$
\end{enumerate}
\end{Prop}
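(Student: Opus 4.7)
Each of the four items is a direct consequence of the axioms (1)--(5) defining meager complexes, combined with Propositions \ref{MaigreTight} and \ref{ToitTight}. I organize the proof item by item.

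For (1), the forward implication is axiom (2) in disguise: for $\ovl E_\bullet$ acyclic, the orthogonal sum $\ovl E_\bullet \oplus \ovl E_\bullet[1]$ is exactly the cone of the zero endomorphism of an acyclic complex, hence meager, so $[\ovl E_\bullet] + [\ovl E_\bullet[1]] = 0$ in the monoid and $[\ovl E_\bullet[1]]$ is an inverse. For the converse, if $[\ovl E_\bullet] + [\ovl F_\bullet] = 0$ then $\ovl E_\bullet \oplus \ovl F_\bullet$ is meager, hence acyclic by the preceding proposition; examining the cohomology degreewise shows that each summand must be acyclic.

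For (2), assume $\ovl E_\bullet$ is acyclic (the other case is symmetric via a shift and (1)). I would apply Proposition \ref{MaigreTight} with $\ovl H_\bullet = \ovl F_\bullet$, $f' = \mathrm{id}_{\ovl F_\bullet}$, and $g : \ovl F_\bullet \to \cone(f) \oplus \ovl E_\bullet$ equal to the natural inclusion $\iota_F : \ovl F_\bullet \to \cone(f)$ into the first factor. A direct computation shows $\cone(\iota_F)$ is an extension of $\ovl E_\bullet[1]$ by $\cone(\mathrm{id}_{\ovl F_\bullet})$, the latter being meager by axiom (3); combining this with the identification $\ovl E_\bullet \oplus \ovl E_\bullet[1]$ meager from axiom (2) and iterated applications of the 2-of-3 property (axiom (4)), one shows that $\cone(f') \oplus \cone(g)[1]$ is meager. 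This exhibits $\cone(f) \oplus \ovl E_\bullet$ as quasi-isometric to $\ovl F_\bullet$, which is the stated equality.

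For (3) and (4), the asserted identities are octahedral-axiom-type relations. In each case I would first invoke Proposition \ref{ToitTight} to rectify the given homotopy-commutative diagram into a strictly commutative one up to tight morphisms, then perform the iterated cone computations, expressing both sides as the same underlying direct sum of shifted components with differentials differing only by terms of the shape $\cone(\mathrm{id})$ (meager by axiom (3)). The first two equalities of (4) are just the octahedral identification of the cone of the natural morphism $\cone(f) \to \cone(g \circ f)$ (resp. $\cone(g \circ f) \to \cone(g)$); the ``moreover'' part then follows by applying (2) to these morphisms, using the acyclicity hypothesis on $f$, $g$ or $g \circ f$ to guarantee that their cones are acyclic.

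The main technical obstacle is the careful bookkeeping in (2): no natural chain map $\cone(f) \to \ovl F_\bullet \oplus \ovl E_\bullet[1]$ exists directly (the obvious candidate fails to be a chain map by a term equal to $f$), so one must genuinely pass through the extension structure of $\cone(\iota_F)$ and use Proposition \ref{MaigreTight} rather than exhibit an explicit isometry. Once (2) is in hand, items (3) and (4) follow from the same style of cone manipulations without any essentially new ideas.
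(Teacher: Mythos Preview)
The paper does not actually prove this proposition; it simply cites \cite[Theorem 2.27]{BFL2}. So there is no ``paper's approach'' to compare against, and your sketch is an attempt to reconstruct the argument from scratch.

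Your item (1) is essentially correct. For the converse you silently use that $[\ovl E_\bullet \oplus \ovl F_\bullet]=0$ forces $\ovl E_\bullet \oplus \ovl F_\bullet$ to be \emph{meager} (not just quasi-isometric to zero); this is true, but requires tracing through the roof definition and axiom (4), which you should make explicit.

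Your item (2) has a genuine gap. You correctly observe that $\cone(g)=\cone(\iota_F)\oplus \ovl E_\bullet$ and that $\cone(\iota_F)$ sits in a short exact sequence $0\to\cone(\mathrm{id}_F)\to\cone(\iota_F)\to \ovl E_\bullet[1]\to 0$. But this sequence is \emph{not} ortho-split (the orthogonal complement $a\mapsto(0,a,0)$ fails to be a chain map unless $f=0$), so you cannot directly invoke the meager axioms to conclude that $\cone(\iota_F)\oplus \ovl E_\bullet$ is meager. The ``iterated 2-of-3'' you allude to does not go through without an actual \emph{morphism} whose cone you control, and the inclusion $\cone(\mathrm{id}_F)\hookrightarrow\cone(\iota_F)$ has a cone that is not obviously $\ovl E_\bullet[1]$ on the nose. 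The clean route is to apply Lemma \ref{PtiLem} to a suitable commutative square (for instance with one row $f$ and the other the identity of $F$), which yields the needed isometry of iterated cones directly.

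For items (3) and (4) you invoke the wrong lemma. Proposition \ref{ToitTight} completes a \emph{valley of tight morphisms} into a homotopy-commutative square; it says nothing about the arbitrary morphisms appearing in (3) and (4). The correct tool is Lemma \ref{PtiLem}, which for any homotopy-commutative square gives an isometry between the two iterated cones. Item (3) is then immediate, and the first two equalities of (4) follow by applying \ref{PtiLem} to the square with rows $f$ and $\mathrm{id}_G$ and columns $g\circ f$ and $g$. Your ``moreover'' deduction from (2) is correct once (2) is properly established.
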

\begin{proof}
This is \cite[Theorem 2.27]{BFL2}.
\end{proof}

\subsection{Metric resolutions}
In their article \cite{BFL}, Burgos, Freixas et Litcanu, define a notion of equivalence for hermitian structure on the derived category of coherent sheaves, here we will simply restrict their definition to the case of a single coherent sheaf over a projective complex variety $X$.\par
\begin{Def}
We say that two hermitian structures $\E_\bullet \to \mcl F$ and $\ovl F_\bullet \to \mcl F$ on a coherent sheaf are quasi-isometric if there exists a complex of hermitian vector bundles $\ovl H_\bullet$ and a commutative diagram of resolutions $$\xymatrix{
   &\ovl{H}_\bullet \ar[rd]^{f} \ar[ld]_{g} &\\
     \ovl{E}_\bullet \ar[rd] & & \ovl{F}_\bullet \ar[ld]\\
     &\mcl F& }$$
		such that $f$ and $g$ are tight morphisms.
\end{Def}
Notice that as $f$ and $g$ are tight, they're quasi-isomorphisms and therefore $H_\bullet$ is a resolution of $\mcl F$.\par
We will need the following lemma
\begin{Lem}\label{PtiLem}
Assume that we have a diagram of complex of hermitian vector bundles $$\xymatrix{
   \ovl{E}_\bullet \ar[r]^{f}\ar[d]^g  & \ovl{E_\bullet}'\ar[d]^{g'}\\
     \ovl{F}_\bullet \ar[r]^{f'} & \ovl{F'}_\bullet }$$
		that commutes up to a homotopy, say $h$.\par
		Then $h$ induces two morphisms of complex $$\psi:\cone(f)\to \cone(f')$$
		and $$\fii: \cone(-g)\to \cone(g')$$ and we have a natural isometry $$\cone(\psi)\simeq \cone(\fii)$$
\end{Lem}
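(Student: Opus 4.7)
The plan is to construct $\psi$, $\varphi$ explicitly from the homotopy $h$, and then exhibit the isometry $\cone(\psi)\simeq\cone(\varphi)$ as the permutation of two summands in a four-term direct sum. Fix the sign convention that $\cone(u\colon \ovl{A}_\bullet\to\ovl{B}_\bullet)$ is $\ovl{B}_\bullet\oplus\ovl{A}_\bullet[-1]$ with differential $\bigl(\begin{smallmatrix} d_B & u \\ 0 & -d_A\end{smallmatrix}\bigr)$, equipped with the orthogonal metric, and write the homotopy relation as $g'f-f'g=d_{F'}h+h\,d_E$.

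Assuming this, define
\[
\psi\colon \cone(f)_n=E'_n\oplus E_{n-1}\longrightarrow F'_n\oplus F_{n-1}=\cone(f')_n,\qquad \psi(x',x)=(g'x'+h(x),\,g(x)),
\]
\[
\varphi\colon\cone(-g)_n=F_n\oplus E_{n-1}\longrightarrow F'_n\oplus E'_{n-1}=\cone(g')_n,\qquad \varphi(y,x)=(f'y+h(x),\,-f(x)).
\]
A direct computation (the cross term $d_{F'}h+h d_E$ equals $g'f-f'g$ by hypothesis) shows that both are morphisms of complexes, and they are evidently morphisms of hermitian complexes since they are built from $f,f',g,g',h$ with no rescaling.

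Now observe that as graded hermitian vector bundles, with the orthogonal metric on each cone, one has
\[
\cone(\psi)_n=F'_n\oplus F_{n-1}\oplus E'_{n-1}\oplus E_{n-2},\qquad \cone(\varphi)_n=F'_n\oplus E'_{n-1}\oplus F_{n-1}\oplus E_{n-2},
\]
and define $\sigma_n\colon\cone(\psi)_n\to\cone(\varphi)_n$ to be the permutation exchanging the middle two summands. Since the metrics on both sides are the orthogonal sums of the same four hermitian bundles, $\sigma_n$ is tautologically an isometry in each degree.

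The only remaining verification is that $\sigma$ intertwines the two differentials. Writing out $d_{\cone(\psi)}$ on $(y',y,x',x)$ one gets
\[
\bigl(d_{F'}y'+f'y+g'x'+h x,\;-d_F y+g x,\;-d_{E'}x'-f x,\;d_E x\bigr),
\]
while $d_{\cone(\varphi)}$ on $(y',x',y,x)=\sigma(y',y,x',x)$ gives
\[
\bigl(d_{F'}y'+g'x'+f'y+h x,\;-d_{E'}x'-f x,\;-d_F y+g x,\;d_E x\bigr),
\]
which is $\sigma$ applied to the first expression. Hence $\sigma$ is a chain isometry, proving the lemma.

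The only real obstacle is bookkeeping: making sure the sign conventions in $\cone(f)$, $\cone(-g)$, $\cone(g')$, $\cone(f')$ and in the homotopy relation are mutually compatible, so that the cross terms involving $h$ land in the right place with the right sign. Once the convention is fixed, both $\psi,\varphi$ being chain maps and the compatibility of $\sigma$ with the differential reduce to matching four pairs of formulas, and the isometry statement is automatic from the orthogonal-sum metric.
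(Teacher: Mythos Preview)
Your proof is correct. The paper itself does not prove this lemma but simply cites \cite[Lemma 2.3]{BFL2}; you have supplied the explicit verification, which is the straightforward unravelling of the cone differentials and the observation that the orthogonal four-term decompositions of $\cone(\psi)$ and $\cone(\varphi)$ differ only by swapping the two middle summands.
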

\begin{proof}
This is \cite[Lemma 2.3]{BFL2}
\end{proof}
\begin{Prop}
Let $\mcl F$ be a coherent sheaf on a complex algebraic variety and let ${\E}_\bullet\to\mcl F$ and ${\ovl G}_\bullet \to \mcl F$ be two metric resolutions of $\mcl F$, the following conditions are equivalent
\begin{itemize}
\item[i)] The two metric structures $\ovl{E}_\bullet\to\mcl F$ and $\ovl{G}_\bullet \to \mcl F$  are quasi-isometric.
\item[ii)] There exists $\ovl H_\bullet \to \mcl F$ a metric resolution such that we have a diagram of resolutions $$\xymatrix{
   &\ovl{H}_\bullet \ar[rd]^{f} \ar[ld]_{g} &\\
     \ovl{E}_\bullet \ar[rd] & & \ovl{F}_\bullet \ar[ld]\\
     &\mcl F& }$$
      with $g$ a quasi-isomorphism and such that complex $\ovl{\cone}(f)\oplus \ovl \cone (g) [1]$ is meager.
\item[iii)]For any metric resolution $ \ovl{H}_\bullet \to \mcl F$ such that we have a diagram of resolutions $$\xymatrix{
   &\ovl{H}_\bullet \ar[rd]^{f} \ar[ld]_{g} &\\
     \ovl{E}_\bullet \ar[rd] & & \ovl{F}_\bullet \ar[ld]\\
     &\mcl F& }$$
     where $g$ is a quasi-isomorphism, the complex $\ovl{\cone}(f)\oplus \ovl \cone (g) [1]$ is meager.
\end{itemize}
\end{Prop}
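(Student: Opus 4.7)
The plan is to establish the cyclic chain of implications (iii) $\Rightarrow$ (ii) $\Rightarrow$ (i) $\Rightarrow$ (iii). The first implication is a matter of mere existence: by the dominating-resolution lemma following Proposition \ref{toit}, applied to the two resolutions $E_\bullet \to \mcl F$ and $F_\bullet \to \mcl F$, one obtains at least one locally free resolution $H_\bullet \to \mcl F$ mapping as a resolution to both of them; endowing it with any hermitian metric produces a candidate for the roof in (ii) on which the hypothesis of (iii) can be invoked to supply the meagerness of $\ovl{\cone}(f)\oplus \ovl{\cone}(g)[1]$.

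For (ii) $\Rightarrow$ (i), I forget the augmentations to $\mcl F$: the assumed meagerness, together with the quasi-isomorphism hypothesis on $g$, is verbatim the characterization of quasi-isometry between the hermitian vector bundle complexes $\ovl{E}_\bullet$ and $\ovl{F}_\bullet$ supplied by Proposition \ref{MaigreTight}, so one obtains a new $\ovl{H}''_\bullet$ together with tight morphisms to $\ovl{E}_\bullet$ and $\ovl{F}_\bullet$. Since tight morphisms are quasi-isomorphisms, $H''_\bullet$ has $\mcl F$ as its only nontrivial cohomology and is automatically a resolution of $\mcl F$; the two augmentations induced via $E_\bullet \to \mcl F$ and $F_\bullet \to \mcl F$ can be made to coincide after composing one of the arrows with a suitable automorphism of the underlying complex (an operation that preserves tightness), thus promoting the diagram into the commutative diagram of resolutions required by (i).

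The substantive implication is (i) $\Rightarrow$ (iii). Given a witness $\ovl{K}_\bullet$ of (i), with tight morphisms $f_K: K \to F$ and $g_K: K \to E$, and an arbitrary candidate $\ovl{H}'_\bullet$ for (iii) whose morphism $g'$ is a quasi-isomorphism (this forces $f'$ to be one as well, since both are morphisms of resolutions of $\mcl F$), I apply the dominating-resolution lemma once more to produce a common refinement $L_\bullet \to \mcl F$ together with morphisms of resolutions $p: L \to K$ and $q: L \to H'$, equipped with any hermitian metric. Every arrow in the resulting diagram is then a quasi-isomorphism. Applying item 4 of Proposition \ref{AcyCal} to the two factorizations of the induced composite $L \to F$, and invoking Lemma \ref{PtiLem} to handle the fact that the two composites $f_K \circ p$ and $f' \circ q$ coincide only up to chain homotopy, one deduces
\[
[\cone(f')] + [\cone(q)] = [\cone(f' \circ q)] = [\cone(f_K \circ p)] = [\cone(f_K)] + [\cone(p)] = [\cone(p)]
\]
in $\Vh(X)/\mcl M(X)$, the last equality using that $f_K$ is tight, hence $[\cone(f_K)] = 0$. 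The symmetric computation for the arrows into $E_\bullet$ yields $[\cone(g')] + [\cone(q)] = [\cone(p)]$, whence $[\cone(f')] = [\cone(g')]$. Since $g'$ is a quasi-isomorphism, $\cone(g')$ is acyclic and $[\cone(g')]$ is invertible with inverse $[\cone(g')[1]]$ by item 1 of Proposition \ref{AcyCal}, so $[\cone(f') \oplus \cone(g')[1]] = 0$, i.e.\ the complex is meager, which is exactly (iii).

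The main obstacle in this scheme is the careful extraction of equalities of cone classes from diagrams that are only commutative up to chain homotopy, specifically the middle equality $[\cone(f' \circ q)] = [\cone(f_K \circ p)]$ in the displayed calculation; Lemma \ref{PtiLem} is tailor-made for this, but its invocation here requires simultaneously inspecting the outer square with corners $L,K,H',F$ and combining the resulting isometry of iterated cones with item 3 of Proposition \ref{AcyCal} to reorganize iterated cones into the needed identity. Once this homotopy-compatibility point is secured, the rest is formal manipulation in the monoid $\Vh(X)/\mcl M(X)$.
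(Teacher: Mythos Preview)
Your approach is essentially the paper's, with the cycle of implications run in the opposite direction: the paper proves i) $\Rightarrow$ ii) $\Rightarrow$ iii) $\Rightarrow$ i), while you prove iii) $\Rightarrow$ ii) $\Rightarrow$ i) $\Rightarrow$ iii). The substantive step in both proofs is the same --- construct a common refinement over the two given roofs and manipulate cone classes in $\Vh(X)/\mcl M(X)$ via Lemma~\ref{PtiLem} and Proposition~\ref{AcyCal}. Your i) $\Rightarrow$ iii) is the paper's ii) $\Rightarrow$ iii) with the slightly stronger hypothesis that the witness roof is tight (so $[\cone(f_K)]=[\cone(g_K)]=0$ directly), and your identification of the homotopy-commutativity issue and the tools to resolve it matches the paper's treatment.

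There is, however, a genuine gap in your ii) $\Rightarrow$ i). After invoking Proposition~\ref{MaigreTight} to obtain a tight roof $\ovl H''_\bullet$ over the \emph{complexes} $\ovl E_\bullet$ and $\ovl F_\bullet$, you need $\ovl H''_\bullet$ to sit in a commutative diagram \emph{of resolutions} over $\mcl F$. Your proposed fix --- composing one of the tight arrows with ``a suitable automorphism of the underlying complex'' to align the two induced augmentations $H''_\bullet \to \mcl F$ --- is not justified: the discrepancy between the two augmentations is an automorphism of $\mcl F$, and there is no reason it lifts to an automorphism of the complex $H''_\bullet$. The paper handles the analogous step (its iii) $\Rightarrow$ i)) by citing Proposition~\ref{MaigreTight} directly; implicitly this relies on the fact that the construction in \cite[Lemma 2.20]{BFL2} upgrades the \emph{given} roof (which already is a diagram of resolutions) via cone operations that preserve the augmentation. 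You should either argue along those lines or simply cite \ref{MaigreTight} as the paper does, rather than attempting the automorphism patch.

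A minor point: in your iii) $\Rightarrow$ ii), the existence of at least one commutative roof of resolutions over both $E_\bullet$ and $F_\bullet$ does not follow from a single application of the dominating-resolution lemma as stated (that lemma produces a map to \emph{one} resolution from a sheaf map); you need either an iterated-cone construction in the spirit of Proposition~\ref{toit} or a direct inductive fiber-product argument. This is standard but worth citing precisely.
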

\begin{proof}The fact that $i)$ implies $ii)$ is obvious as the orthogonal sum of two meager complexes is meager.\par
Let's prove that $ii)$ implies $iii)$, so assume that there exists a metric resolution $\ovl H$ of $\mcl F$  like the one in the proposition.\par
Now let us consider $\ovl H'_\bullet$ any other metric resolution giving a diagram which is just like the one in the proposition. We can, using proposition \ref{toit}, find a complex of vector bundles, say $G_\bullet$ such that we have a commutative diagram up to homotopy $$\xymatrix{
   & G_\bullet \ar[rd]^\beta\ar[ld]_\alpha & \\
   H_\bullet \ar[rrd]_\delta\ar[d]^g & &H'_\bullet\ar[lld]_{f}\ar[d]_{\delta'} &\\
     E_\bullet & &F_\bullet  }$$
		with $\alpha$ and $\beta$ being quasi-isomorphisms. Let us endow $G_\bullet$ with any metric.\par
     
     By the previous lemma, we have $\cone(\cone(\alpha),\cone(\delta))$ isometric to $\cone(\cone(\beta),\cone(f))$ and $\cone(\cone(\alpha),\cone(\delta'))$ isometric to $\cone(\cone(\beta),\cone(g))$\par 
		Therefore, in $\Vh(X)/\mcl M(X)$, using that $\alpha$ and $\beta$, as well as $g$ and $\delta$ are quasi-isomorphisms and \ref{AcyCal} we have
		$$-[\cone(\alpha)]+[\cone(\delta)]=-[\cone(\beta)]+[\cone(g)]$$
		$$-[\cone(\alpha)]+[\cone(\delta')]=-[\cone(\beta)]+[\cone(f)]$$		
		by subtracting the first equation to the second (which is possible because all the cones appearting in the first equation are acyclic), we get $$[\cone(\delta')]-[\cone(\delta)]=[\cone(f)]-[\cone(g)]=[\cone(g)[1]]+[\cone(f)]=0$$
		and we are done.\par
For the implication $iii)\Rightarrow i)$ it results from \ref{MaigreTight}
\end{proof}
 
The following proposition is already proved because two resolutions define the same metric structure if the complexes of hermitian bundles obtained by truncating the $\mcl F$ are quasi-isometric.
\begin{Prop}
The relation of quasi-isometry is an equivalence relation over the set of metrized locally free finite resolutions of a given sheaf $\mcl F$
\end{Prop}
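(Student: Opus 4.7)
The plan is to reduce this to the already-established transitivity of quasi-isometry for complexes of hermitian vector bundles, as the author's comment above suggests. Reflexivity will be immediate by taking $\ovl H_\bullet = \E_\bullet$ with identity morphisms on both legs (the cone of the identity is meager by axiom~3 of the meager class), and symmetry is built into the symmetric form of the roof. The only substantive property is therefore transitivity.

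Given quasi-isometries $\E_\bullet \sim \ovl F_\bullet$ through a roof $\ovl H_\bullet \to \mcl F$ and $\ovl F_\bullet \sim \ovl G_\bullet$ through a roof $\ovl H'_\bullet \to \mcl F$, I will apply Proposition~\ref{ToitTight} to the cospan of tight morphisms $\ovl H_\bullet \to \ovl F_\bullet \leftarrow \ovl H'_\bullet$, producing a hermitian complex $\ovl K_\bullet$ equipped with tight morphisms to $\ovl H_\bullet$ and $\ovl H'_\bullet$ making the square commute up to a chain homotopy $h$. Composing with the existing tight maps will then yield tight morphisms $\ovl K_\bullet \to \E_\bullet$ and $\ovl K_\bullet \to \ovl G_\bullet$, since tightness is preserved under composition (as already used in the transitivity proof at the complex level).

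What remains is to promote $\ovl K_\bullet$ into a resolution of $\mcl F$ in a way compatible with both legs. The two candidate augmentations, obtained by composing along the two paths through $\ovl F_\bullet$, will a priori differ in degree~$0$ by a term of the form $d_1^{\ovl F} \circ h_0$ coming from the homotopy provided by Proposition~\ref{ToitTight}; once composed with the augmentation $\ovl F_\bullet \to \mcl F$, this discrepancy vanishes because the augmentation kills the image of $d_1^{\ovl F}$. The two induced maps $\ovl K_\bullet \to \mcl F$ therefore coincide on the nose, providing a valid roof over $\mcl F$ and hence the quasi-isometry $\E_\bullet \sim \ovl G_\bullet$ as metric resolutions. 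I expect this compatibility of augmentations to be the only genuine subtlety: it is the single step where $\mcl F$ intervenes as a sheaf (concentrated in degree $0$) rather than as an abstract complex, everything else being formally inherited from the complex-level transitivity result proven earlier.
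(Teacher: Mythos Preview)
Your proof is correct and follows the same route the paper intends: reduce transitivity to Proposition~\ref{ToitTight} and the composition of tight morphisms, exactly as in the complex-level transitivity argument already given. The paper's own proof is a single sentence (``already proved because two resolutions define the same metric structure if the complexes of hermitian bundles obtained by truncating the $\mcl F$ are quasi-isometric''), which sweeps under the rug precisely the point you isolate---that the roof $\ovl K_\bullet$ produced by Proposition~\ref{ToitTight} must be a resolution of $\mcl F$ making the outer triangle \emph{commute}, not just commute up to homotopy. Your observation that the homotopy discrepancy in degree~$0$ is $d_1^{\ovl F}\circ h_0$ and hence dies under the augmentation $F_0\to\mcl F$ is exactly the missing verification, and it is correct (note $K_{-1}=0$ so there is no $h_{-1}d_0^K$ term). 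So your argument is the paper's argument made honest.
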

\begin{Rque}
The group $\KA(X)$ is identified with the metric structures on the zero sheaf over $X$, it has been named the group of universal secondary characteristic classes, or the group of acyclic $K$-theory.\end{Rque}

\subsection{Secondary classes}
In this section we review the notion of secondary characteristic classes for hermitian sheaves. Those classes are concrete realization of universal classes built up in \cite{BFL}. In this whole section, $X$ will design a complex algebraic variety, which we may assume to be projective and smooth over $\mbb C$.\\

\begin{Theo}\label{Bott-Chern} Let $\ovl {\mcl F_\bullet}$ be an acyclic complex of hermitian coherent sheaves, that is hermitian sheaves equipped with a metric structure. There exists a unique way of attaching to every such complex a Bott-Chern secondary characteristic form, denoted $\wt \ch(\ovl{\mcl F}_\bullet)\in \wt A^{\bullet,\bullet}(X)$, satisfying the following conditions.
\begin{enumerate}
	\item (Compatibility with Bott-Chern forms) If $\mcl E: 0\to \ovl E_1\to \ovl E_2\to \ovl E_3\to 0$ is an exact sequence of hermitian vector bundles, then $$\wt \ch(\mcl E)=\wt\ch^{BC}(\mcl E)$$ where $\wt\ch^{BC}$ is the Bott-Chern form associated to the exact sequence (see \cite{GS1})
	\item (Normalization) If $\ovl E_\bullet \to \mcl F$ is the metric resolution defining the hermitian structure over  $\ovl{\mcl F}$, then $\wt \ch(\E_\bullet \to \ovl{\mcl F})=0$
	\item (Devissage) If we have a complex of acyclic coherent metrized sheaves $\ovl{\mcl F}_\bullet$ that can be split up into exact sequences $${\mcl E_i}, 1\leq i\leq n-1: 0\to \mcl G_i\to \mcl F_i \to \mcl G_{i-1} \to 0$$ with $\mcl G_{-1}=\mcl F_0$ et $\mcl G_{n-1}=\mcl F_n$. We have $$\wt\ch(\ovl{\mcl F}_\bullet )+\sum_{i\geq 1}(-1)^{i}\wt\ch(\ovl{\mcl E_i})=0$$ for every choice of metric structure on the sheaves $\mcl G_i$ for $1\leq i\leq n-2$
	
	\item (Exactness) If we have a commutative diagram with exact rows and columns
	$$\xymatrix{
	& 0\ar[d]& 0\ar[d]&0\ar[d]&\\
   0 \ar[r]&\mcl F_{1,1} \ar[r]\ar[d]&\mcl F_{1,2} \ar[r]\ar[d]&\mcl F_{1,3} \ar[r]\ar[d]&0\\
     0 \ar[r]&\mcl F_{2,1} \ar[r]\ar[d]&\mcl F_{2,2} \ar[r]\ar[d]&\mcl F_{2,3} \ar[r]\ar[d]&0\\
     0 \ar[r]&\mcl F_{3,1} \ar[r]\ar[d]&\mcl F_{3,2} \ar[r]\ar[d]&\mcl F_{3,3} \ar[r]\ar[d]&0\\
     & 0& 0&0& }$$
     then we have the following equality $\wt \ch(\ovl{\mcl L_1})-\wt \ch(\ovl{\mcl L_2})+\wt \ch(\ovl{\mcl L_3})=\wt \ch(\ovl{\mcl C_1})-\wt \ch(\ovl{\mcl C_2})+\wt \ch(\ovl{\mcl C_3})$ where $\mcl C_i$ (resp. $\mcl L_i$) designs the i-th exact column (resp. the i-th exact row).
\end{enumerate}
\end{Theo}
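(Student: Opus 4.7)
The plan is to reduce to the analogous (and already established) construction for acyclic complexes of hermitian vector bundles from \cite{BFL2}, by replacing each hermitian coherent sheaf in $\ovl{\mcl F}_\bullet$ with its metric resolution and totalizing. Given $\ovl{\mcl F}_\bullet$ with fixed metric resolutions $\ovl{E}^i_\bullet \to \mcl F_i$, I would use the dominating resolution lemma together with Proposition \ref{toit} and an induction on $i$ to produce a bounded double complex of hermitian vector bundles $\ovl{E}^{\bullet,\bullet}$ whose $i$-th row is quasi-isometric to the given metric resolution of $\mcl F_i$ and whose vertical differentials lift the $d_i \colon \mcl F_i \to \mcl F_{i+1}$. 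Since $\mcl F_\bullet$ is acyclic, the totalization $\mathrm{Tot}(\ovl{E}^{\bullet,\bullet})$ is an acyclic complex of hermitian vector bundles, and I would declare $\wt\ch(\ovl{\mcl F}_\bullet)$ to be the classical Bott-Chern form of this totalization.

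Well-definedness reduces to showing independence of the choice of compatible lifts. Two such totalizations can be majorized by a third via Proposition \ref{ToitTight}, and the connecting cones are meager by construction, so Proposition \ref{AcyCal} ensures that the two Bott-Chern forms coincide in $\wt A^{\bullet,\bullet}_{\mbb R}(X)$. The four axioms then drop out: (1) is automatic since an exact sequence of hermitian vector bundles is its own natural metric resolution, so the totalization returns the original sequence; (2) holds because the totalization of a metric resolution is quasi-isometric to a meager complex, by iterated application of Proposition \ref{AcyCal}; (3) is the additive behavior of Bott-Chern forms under composition of acyclic complexes, exactly item 4 of Proposition \ref{AcyCal}; and (4) follows by choosing a compatible $3 \times 3$ diagram of metric resolutions of the nine hermitian sheaves and applying Lemma \ref{PtiLem} together with the $3 \times 3$ identity for acyclic complexes of vector bundles already present in \cite{BFL2}.

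Uniqueness then follows by a standard devissage. If $\wt\ch_1, \wt\ch_2$ both satisfy the axioms, their difference vanishes on short exact sequences of hermitian vector bundles by (1) and on metric resolutions by (2); axiom (3) reduces an arbitrary acyclic complex to the short exact sequences $0 \to \mcl G_i \to \mcl F_i \to \mcl G_{i-1} \to 0$, and axiom (4) applied to the $3 \times 3$ diagram formed by compatible metric resolutions of such a short exact sequence reduces it to terms already annihilated by (1) and (2). The main obstacle is the careful bookkeeping of meager corrections needed to show that $\mathrm{Tot}(\ovl{E}^{\bullet,\bullet})$ is well-defined up to quasi-isometry, and that the uniqueness devissage indeed lands in the subspace where $\wt\ch_1 = \wt\ch_2$; both reduce to a systematic use of the acyclic calculus \ref{AcyCal}, which is precisely what \cite{BFL2} was designed to handle.
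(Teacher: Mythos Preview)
The paper does not give its own proof of this theorem: the entire proof reads ``This is proved in \cite{Zha}.'' So there is no argument in the paper to compare your proposal against. Your sketch is a reasonable reconstruction of how such a construction should go, and the overall strategy --- replace each hermitian sheaf by its defining metric resolution, assemble these into a double complex, totalize, and invoke the vector-bundle Bott--Chern theory --- is indeed the expected one and is consistent with how the surrounding results in the paper (e.g.\ Lemma~\ref{NoDepend} and Theorem~\ref{PoincareIso}) manipulate such data.

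One point in your uniqueness argument deserves tightening. You write that axiom~(4) ``applied to the $3\times 3$ diagram formed by compatible metric resolutions'' of a short exact sequence of hermitian sheaves reduces to the vector-bundle case. But axiom~(4) as stated is a statement about a $3\times 3$ square of \emph{single} hermitian sheaves, not of complexes; the metric resolutions $\ovl E^i_\bullet$ are complexes, so they do not literally slot into such a square. What you presumably mean is to choose compatible dominating resolutions $0\to \ovl G''_\bullet\to \ovl G_\bullet\to \ovl F'_\bullet\to 0$ over the short exact sequence (as in the proof of Theorem~\ref{PoincareIso}), and then use devissage~(3) and normalization~(2) together with~(1) on each degree of this short exact sequence of complexes to force $\wt\ch_1=\wt\ch_2$. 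That works, but it is a devissage-plus-normalization argument rather than a direct application of the $3\times 3$ axiom; as written, your final sentence is slightly misleading about which axiom is doing the work.
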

\begin{proof}
This is proved in \cite{Zha}.
\end{proof}
Let us simply note that these Bott-Chern classes are in fact defined for hermitian sheaves up to quasi-isometry in the sense of Burgos, Freixas, Litcanu. Notice that, now that we have at our disposition the notion of hermitian structure for a sheaf it is easy to prove the analog of \ref{PtiLem} where the complexes of vector bundles are replaced with complexes of hermitian sheaves.
\begin{Lem}\label{NoDepend}
Assume that we have a short exact sequence of the form $0\to (\mcl F, h_1)\to (\mcl F, h_2)\to 0$ where the hermitian structures on both copies of $\mcl F$ are quasi-isometric, then its secondary Bott-Chern form $\wt\ch (\mcl F,h^1,h^2)$ vanishes.

\end{Lem}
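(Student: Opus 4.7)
The plan is to realize $\wt\ch(\mcl F,h^1,h^2)$ as $\wt\ch(\cone(f))$ for a suitable lift $f$ of $\Id_{\mcl F}$ between the two defining metric resolutions, and then to prove that $\cone(f)$ is meager by means of the acyclic calculus of Proposition \ref{AcyCal}.

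First, I would pick metric resolutions $\ovl E^i_\bullet \to (\mcl F, h_i)$, $i=1,2$, representing the two hermitian structures, together with a morphism of complexes $f : \ovl E^1_\bullet \to \ovl E^2_\bullet$ lifting $\Id_{\mcl F}$ (whose existence is guaranteed by the standard dominating-resolution lemma of Section 1.1). By the construction underlying Theorem \ref{Bott-Chern}, the Bott-Chern form of the acyclic two-term complex $(\mcl F, h_1) \to (\mcl F, h_2)$ of hermitian sheaves is then equal to $\wt\ch(\cone(f))$.

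Next, the quasi-isometry hypothesis yields a complex $\ovl H_\bullet$ of hermitian vector bundles together with tight quasi-isomorphisms $g_i : \ovl H_\bullet \to \ovl E^i_\bullet$ above $\Id_{\mcl F}$. The morphisms $f \circ g_1$ and $g_2$ from $\ovl H_\bullet$ to $\ovl E^2_\bullet$ both lift $\Id_{\mcl F}$, hence are homotopic, so their cones coincide in $\Vh(X)/\mcl M(X)$. Applying Proposition \ref{AcyCal}(4) to the composable pair $(g_1, f)$, in which $g_1$ is a quasi-isomorphism, one obtains
$$[\cone(g_2)] = [\cone(f \circ g_1)] = [\cone(g_1)] + [\cone(f)]$$
in $\Vh(X)/\mcl M(X)$. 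Since $g_1$ and $g_2$ are tight, both $[\cone(g_1)]$ and $[\cone(g_2)]$ vanish, forcing $[\cone(f)] = 0$; that is, $\cone(f)$ is meager.

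To finish, I would invoke the fact that meager complexes have vanishing Bott-Chern secondary form, which one verifies by induction on the five axioms defining $\mcl M(X)$, using the compatibility with the Gillet-Soul\'e forms on ortho-split sequences, the shift relation $\wt\ch(\cdot[1]) = -\wt\ch(\cdot)$, and the exactness axiom of Theorem \ref{Bott-Chern} applied to a short exact sequence of the form $0 \to \ovl F \to \cone(f) \to \ovl E[1] \to 0$ to handle the two-out-of-three property. The hard part is really the opening identification $\wt\ch(\mcl F, h^1, h^2) = \wt\ch(\cone(f))$: it requires unpacking the definition of the Bott-Chern form for a two-term complex of hermitian sheaves in terms of a chosen resolution, and checking independence of the homotopy class of the lift $f$ via the hermitian-sheaf analogue of Lemma \ref{PtiLem} noted just after the theorem. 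Granted this, one concludes $\wt\ch(\mcl F, h^1, h^2) = \wt\ch(\cone(f)) = 0$.
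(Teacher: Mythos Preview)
Your proof is correct and follows essentially the same strategy as the paper: both arguments hinge on first establishing that every meager complex has vanishing Bott--Chern secondary class (by checking that the class $\mcl{CSN}(X)$ of acyclic complexes with vanishing $\wt\ch$ is hermitian admissible), and then exhibiting a specific meager complex whose $\wt\ch$ computes $\wt\ch(\mcl F,h^1,h^2)$.

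The only difference is in the choice of that complex. The paper works directly with the meager complex $\ovl\cone(f)\oplus\ovl\cone(g)[1]$ handed to it by the quasi-isometry characterization of Proposition~\ref{MaigreTight}, and then unwinds its $\wt\ch$ via the normalization condition. You instead construct an explicit lift $f:\ovl E^1_\bullet\to\ovl E^2_\bullet$ of $\Id_{\mcl F}$, identify $\wt\ch(\mcl F,h^1,h^2)$ with $\wt\ch(\cone(f))$, and then use the roof together with Proposition~\ref{AcyCal}(4) to force $[\cone(f)]=0$ in $\Vh(X)/\mcl M(X)$. This is a slightly longer path (it requires the extra lift and the homotopy-invariance of cone classes, which does hold in the BFL calculus via Lemma~\ref{PtiLem}), but it has the mild advantage of making the identification $\wt\ch(\mcl F,h^1,h^2)=\wt\ch(\cone(f))$ explicit rather than leaving it embedded in the normalization bookkeeping. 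Either route is fine.
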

\begin{proof}
Set $\ovl E^1_\bullet$ and $\E^2_\bullet$ two metric structures on $\mcl F$, that are assumed to be quasi-isometric. Then there exists a metric resolution, say $\ovl H_\bullet$, and a diagram of resolutions  $$\xymatrix{
   &\ovl H_\bullet \ar[rd]^g \ar[ld]_f &\\
     \ovl E^1_\bullet \ar[rd] & & \ovl E^2_\bullet \ar[ld]\\
     &\mcl F& }$$
     such that $\ovl{\cone}(f)\oplus \ovl \cone (g) [1]$ is meager, it will be sufficient to prove that for every meager complex $\ovl M_\bullet$, we have $\wt\ch(\ovl M_\bullet)=0$. Indeed, if such a result is satisfied, we have \begin{eqnarray*}0&=&\wt\ch(\ovl{\cone}(f)\oplus \ovl \cone (g) [1])\\
		&=&\wt\ch(\ovl E^1_\bullet\to (\mcl F,h^1))-\wt\ch(\ovl E^2_\bullet\to (\mcl F,h^1))\\
		&=&-\wt\ch(\ovl E^1_\bullet\to (\mcl F,h^2))+\wt\ch(\ovl E^2_\bullet\to (\mcl F,h^2))\end{eqnarray*}
     which will imply the result by the normalization condition. This also proves that in the general case, if $\ovl E_\bullet\to \mcl F$ is a metric structure on $\mcl F$ and $\ovl E'_\bullet$ another metric structure on $\mcl F$, then as expected $$\wt\ch(\ovl E_\bullet\to \mcl (F,h'))=\wt\ch (\mcl F,h',h)$$\par
     So let us first prove that a meager complex has a vanishing secondary class.\par
     Let us first consider $0\to \ovl E\to \ovl F\to \ovl G\to 0$ a short exact sequence of hermitian bundles, that is orhto-split, then using the compatibility condition with traditional Bott-Chern classes we have $\wt\ch(0\to \ovl E\to \ovl F\to \ovl G\to 0)=0$, so let us set $\mcl{CSN}(X)$ the class of acyclic complex of hermitian bundles that have vanishing secondary classes.\par
     This class certainly contains the cone of the identity map, and of the zero map of acyclic complexes, but also ortho-split complexes, and according to the preceding remark, it is also true that if any two of the three complexes $\ovl{E}_\bullet$, $\ovl{F}_\bullet$, $\ovl{\cone} (f)$ (where, of course, $f$ is an arrow from $E_\bullet$ to $F_\bullet$) have zero secondary classes, then so does the third, finally a shifting of an acyclic complex only changes the sign of the secondary classes of the complex in question; hence $\mcl{CSN}(X)$ is an admissible class, and as such, contains the class of meager complexes, which make the proof complete.
\end{proof}
\begin{Cor}
Secondary characteristic classes, only depend on the quasi-isometry class of the metric structure on the sheaves and not on the particular choice of a resolution within this quasi-isometry class.
\end{Cor}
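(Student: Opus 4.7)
The plan is to deduce the Corollary directly from Lemma \ref{NoDepend}, whose proof established the stronger statement that every meager complex has vanishing secondary Bott--Chern characteristic class (the class $\mcl{CSN}(X)$ of acyclic complexes with vanishing $\wt\ch$ was shown to be hermitian admissible, hence contains $\mcl M(X)$).

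First I would reduce to changing the metric structure on a single sheaf at a time. Given an acyclic complex $\ovl{\mcl F}_\bullet$ of hermitian coherent sheaves, one wants to show that replacing each metric resolution $h_i$ of $\mcl F_i$ by a quasi-isometric one $h'_i$ does not alter $\wt\ch(\ovl{\mcl F}_\bullet)$. By iterating, it is enough to treat the case in which $h_j$ is modified for a single index $j = i$; the general statement then follows by composing a finite sequence of such replacements.

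For the single-change step, let $\ovl E^1_\bullet, \ovl E^2_\bullet \to \mcl F_i$ be metric resolutions representing $h_i$ and $h'_i$. By Proposition \ref{MaigreTight}, one can find a dominating resolution $\ovl H_\bullet \to \mcl F_i$ with tight morphisms $f : \ovl H_\bullet \to \ovl E^1_\bullet$ and $g : \ovl H_\bullet \to \ovl E^2_\bullet$, such that $\ovl\cone(f) \oplus \ovl\cone(g)[1]$ is meager. The argument is then to apply the exactness axiom of Theorem \ref{Bott-Chern} to the $3\times 3$ commutative diagram whose middle column is the change-of-metric exact sequence $0 \to (\mcl F_i, h_i) \to (\mcl F_i, h'_i) \to 0$ placed at position $i$, whose outer columns are identities at the indices $j\neq i$, and whose top and bottom rows are copies of $\ovl{\mcl F}_\bullet$ carrying the metrics $h$ and $h'$ respectively. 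Lemma \ref{NoDepend} supplies the vanishing of the middle column; the outer columns have vanishing secondary class because they are identities on sheaves whose metric structure is unchanged. The relation supplied by the exactness axiom then forces $\wt\ch$ of the two rows to coincide.

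The main obstacle, and the only non-trivial bookkeeping, is checking that the $3\times 3$ diagram can be completed into a valid configuration of exact rows and columns compatible with the chosen resolutions; once this is in place, the axiomatic identities enjoyed by $\wt\ch$ combine with Lemma \ref{NoDepend} to yield the Corollary. A cleaner alternative avoiding such diagrams is to invoke the devissage axiom: split $\ovl{\mcl F}_\bullet$ into short exact sequences $\mcl E_j$ and observe that modifying the metric on $\mcl F_i$ only affects the two $\mcl E_j$ in which $\mcl F_i$ appears; Lemma \ref{NoDepend} applied to each, together with the exactness axiom, shows these two variations cancel in the alternating sum computing $\wt\ch(\ovl{\mcl F}_\bullet)$.
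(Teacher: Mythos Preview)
Your first approach is confused: the exactness axiom of Theorem~\ref{Bott-Chern} applies to a $3\times 3$ grid of short exact sequences, but you describe ``top and bottom rows'' as copies of the full acyclic complex $\ovl{\mcl F}_\bullet$, which may have arbitrary length. There is no way to fit a length-$n$ complex as a row of a $3\times 3$ diagram with ``outer columns'' indexed by all $j\neq i$, so this part does not parse as written.

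Your ``cleaner alternative'' via devissage is essentially the paper's argument, only organised slightly differently. The paper first invokes devissage to reduce to the case of a single short exact sequence $0\to\ovl{\mcl F_1}\to\ovl{\mcl F_2}\to\ovl{\mcl F_3}\to 0$, and then changes \emph{all three} metrics at once: one writes the obvious two-row diagram with rows $(\ovl{\mcl F_i})$ and $(\ovl{\mcl F'_i})$ and columns the identity maps $(\mcl F_i,h_i)\to(\mcl F_i,h'_i)$, pads to a $3\times 3$ grid with a row of zeros, and applies exactness together with Lemma~\ref{NoDepend} (which kills the three columns) to conclude equality of the two row classes. This is more economical than your one-index-at-a-time scheme, which would require an extra cancellation argument across the two adjacent short exact sequences containing $\mcl F_i$; the paper's version sidesteps that bookkeeping entirely.
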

\begin{proof}
We readily see that it is enough to prove that for every exact sequence $0\to \ovl{\mcl F_1}\to \ovl{\mcl F_2} \to \ovl{\mcl F_3}\to 0 $ the associated secondary class does not depend on the quasi-isometry class of the hermitian sheaves, the general result will follow by devissage.\par
Let us consider exact sequence $0\to \ovl{\mcl F'_1}\to \ovl{\mcl F'_2} \to \ovl{\mcl F'_3}\to 0$ where the sheaves are the same, but where the hermitian structures on the $\mcl F'_i$'s are quasi-isometric to the ones on the $\mcl F_i$'s, then we certainly have a commutative diagram with exact rows and columns. $$\xymatrix{0\ar[r] &\ovl{\mcl F_1}\ar[r]\ar[d]& \ovl{\mcl F_2} \ar[r]\ar[d]& \ovl{\mcl F_3}\ar[r]\ar[d]& 0 \\0\ar[r] &\ovl{\mcl F'_1}\ar[r]& \ovl{\mcl F'_2} \ar[r]& \ovl{\mcl F'_3}\ar[r]& 0 }$$
So, using exactness, and the previous lemma, we get $$\wt\ch(0\to \ovl{\mcl F_1}\to \ovl{\mcl F_2} \to \ovl{\mcl F_3}\to 0)=\wt\ch(0\to \ovl{\mcl F'_1}\to \ovl{\mcl F'_2} \to \ovl{\mcl F'_3}\to 0)$$
\end{proof}

\begin{Theo}\label{PropBottChern}
Let $\mcl F:0\to \ovl{\mcl F_1}\to \ovl{\mcl F_2} \to \ovl{\mcl F_3}\to 0$ a short exact sequence of coherent sheaves, and let $\ovl E$ be a hermitian vector bundle.
We have
\begin{enumerate}
	\item $dd^c\wt\ch(\mcl F)=\ch(\ovl{\mcl F_2})-\ch(\ovl{\mcl F_1})-\ch(\ovl{\mcl F_3}) $
  \item $\wt\ch(\mcl F\otimes \ovl E)=\wt\ch(\mcl F).\ch(\ovl E)$
\end{enumerate}

\end{Theo}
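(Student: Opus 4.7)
The plan is to reduce both identities to the corresponding classical statements for exact sequences of hermitian vector bundles: the formula $dd^c \wt\ch^{BC}(\mcl E) = \ch(\ovl E_2) - \ch(\ovl E_1) - \ch(\ovl E_3)$ for the Bott--Chern form of a short exact sequence, and its multiplicativity under tensor product with a hermitian vector bundle, both of which are proved in \cite{GS1}. The bridge between the sheaf-theoretic identities and their vector bundle counterparts is provided by the axiomatics of Theorem \ref{Bott-Chern}, applied to a well-chosen family of compatible metric resolutions.

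First I construct metric resolutions $\ovl E^i_\bullet \to \mcl F_i$ for $i = 1,2,3$ that assemble into a short exact sequence of complexes of locally free sheaves
\[
0 \to E^1_\bullet \to E^2_\bullet \to E^3_\bullet \to 0
\]
whose restriction to each degree $k$ is a short exact sequence of vector bundles. The underlying exact sequence of resolutions (without metrics) can be built by iterating the dominating lemma of the previous subsection, starting from arbitrary resolutions of $\mcl F_1$ and $\mcl F_3$ and producing a resolution of $\mcl F_2$ dominating both. I then endow each $E^2_k$ with an arbitrary hermitian metric, so that each row $\mcl E_k: 0 \to \ovl E^1_k \to \ovl E^2_k \to \ovl E^3_k \to 0$ becomes a short exact sequence of hermitian vector bundles. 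By Lemma \ref{NoDepend} and its corollary, the resulting metric structure on $\mcl F_2$ lies in the prescribed quasi-isometry class, and both sides of the identities to be proved are independent of this auxiliary metric choice.

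For part (1), stacking the $3 \times 3$ diagrams of hermitian coherent sheaves obtained by pairing each row $\mcl E_k$ with the short exact sequences of kernels of $d_{k-1}$ (with $\ker d_{-1}^{(i)} := \mcl F_i$), and iterating the exactness property (condition 4) of Theorem \ref{Bott-Chern}, I obtain a relation expressing $\wt\ch(\mcl F)$ in terms of the secondary classes of the rows and of the columns. The columns are the metric resolutions themselves, whose secondary classes vanish by the normalization property (condition 2) combined with the devissage property (condition 3) applied degree by degree. What remains is the alternating sum identity
\[
\wt\ch(\mcl F) = \sum_{k \geq 0} (-1)^k \wt\ch^{BC}(\mcl E_k).
\]
Applying $dd^c$ to this identity, invoking the classical formula for $dd^c \wt\ch^{BC}(\mcl E_k)$, and using the definition $\ch(\ovl{\mcl F_i}) = \sum_k (-1)^k \ch(\ovl E^i_k)$ yields (1).

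For part (2), tensoring the entire bicomplex of resolutions by $\ovl E$ produces compatible metric resolutions of the sheaves $\mcl F_i \otimes \ovl E$, and rerunning the above argument reduces the identity to the multiplicativity $\wt\ch^{BC}(\mcl E_k \otimes \ovl E) = \wt\ch^{BC}(\mcl E_k) \cdot \ch(\ovl E)$ applied termwise. The main technical obstacle is the construction of the compatible resolutions with exact rows and the careful bookkeeping in the iterated application of the exactness axiom; both are essentially formal once the dominating lemma and Theorem \ref{Bott-Chern} are granted.
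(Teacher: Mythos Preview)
Your approach is essentially the same as the paper's (reduce to the classical vector-bundle Bott--Chern formula via compatible resolutions), only spelled out in much more detail; the paper compresses the whole thing to two sentences and leans on Zha's construction cited in Theorem \ref{Bott-Chern}.

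One point needs correction. You write that ``by Lemma \ref{NoDepend} and its corollary, the resulting metric structure on $\mcl F_2$ lies in the prescribed quasi-isometry class.'' This is not what Lemma \ref{NoDepend} says, and it is false in general: the horseshoe resolution $\ovl E^2_\bullet$ with arbitrary metrics defines a \emph{new} hermitian structure on $\mcl F_2$, typically not quasi-isometric to the given one. What you actually need is that both sides of (1) change by the same amount when the metric on $\mcl F_2$ is altered. This is not circular provided you first establish the special case $\mcl F_1=0$, i.e.\ the change-of-metric sequence $0\to(\mcl F,h)\to(\mcl F,h')\to 0$: there a common dominating resolution reduces $\wt\ch$ directly to an alternating sum of classical Bott--Chern forms of vector bundles, and the $dd^c$ identity follows. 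Then the exactness axiom applied to the obvious $2\times 3$ diagram gives $\wt\ch(\mcl F_{h})-\wt\ch(\mcl F_{h'})=\wt\ch(\mcl F_2,h,h')$, and the special case shows $dd^c$ of this equals $\ch(\ovl{\mcl F_2},h)-\ch(\ovl{\mcl F_2},h')$, matching the change on the right. With this adjustment your argument for (1) is complete, and (2) goes through as you wrote since tensoring by $\ovl E$ preserves the whole setup.
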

\begin{proof}
The first formula is immediate, it results from the fact that the formula is known to hold for complex of bundles, and from the the definitions we have given for secondary forms.\par
The second one is easy too, it follows from the fact that tensoring with a vector bundle is an exact functor and thus preserves dominating resolutions, and, there again, from the fact that the result is known to hold for complex of bundles for classical Bott-Chern forms.
\end{proof}
From the first formula one can deduce the following result, which is the one that interests us.
\begin{Cor}
The Chern form associated to a metrized sheaf only depends on the quasi-isometry class of the metric structure on the sheaf.
\end{Cor}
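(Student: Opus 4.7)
The plan is to extract the independence of the Chern form directly from the first formula of Theorem~\ref{PropBottChern} combined with Lemma~\ref{NoDepend}. Given two quasi-isometric metric structures $h^1, h^2$ on a coherent sheaf $\mcl F$, I would consider the degenerate short exact sequence
$$\mcl E\colon\quad 0 \longrightarrow (\mcl F, h^1) \xrightarrow{\;\Id\;} (\mcl F, h^2) \longrightarrow 0 \longrightarrow 0$$
of hermitian coherent sheaves, in which the third term is the zero sheaf (endowed with its only hermitian structure). This is a bona fide short exact sequence of metrized sheaves, so the formalism of Theorem~\ref{PropBottChern} applies to it.

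By Lemma~\ref{NoDepend}, the associated secondary Bott--Chern class vanishes: $\wt\ch(\mcl E)=0$ in $\wt A^{\bullet,\bullet}(X)$. The operator $dd^c$ descends to the quotient $\wt A^{\bullet,\bullet}(X)=A^{\bullet,\bullet}/(\im\d+\im\db)$ since it annihilates $\d$- and $\db$-exact forms, so applying it to the vanishing above, together with the first formula of Theorem~\ref{PropBottChern}, yields
$$0 \;=\; dd^c\,\wt\ch(\mcl E) \;=\; \ch(\mcl F, h^2) - \ch(\mcl F, h^1) - \ch(0) \;=\; \ch(\mcl F, h^2) - \ch(\mcl F, h^1),$$
since $\ch(0)=0$. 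This gives $\ch(\mcl F, h^1) = \ch(\mcl F, h^2)$ as forms in $A^{\bullet,\bullet}(X)$, which is the desired conclusion.

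The essential idea is that a pair of quasi-isometric hermitian structures on the same sheaf can be packaged as a ``trivial'' short exact sequence of metrized sheaves whose secondary Bott--Chern form is forced to vanish by Lemma~\ref{NoDepend}; the $dd^c$-identity of Theorem~\ref{PropBottChern} then promotes this vanishing in the quotient $\wt A^{\bullet,\bullet}$ to an equality of genuine Chern forms in $A^{\bullet,\bullet}$. There is no real obstacle beyond this packaging observation: all the analytic content has already been absorbed into the construction of the secondary classes and into the verification of their basic $dd^c$-property, so the corollary is essentially a one-line consequence of results already at hand.
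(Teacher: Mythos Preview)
Your proof is correct and is exactly the argument the paper has in mind: the corollary is stated immediately after Theorem~\ref{PropBottChern} with the remark ``From the first formula one can deduce the following result,'' and your unpacking --- applying $dd^c$ to the vanishing secondary class of Lemma~\ref{NoDepend} and reading off the equality of Chern forms --- is precisely that deduction.
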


\newpage
\section{Weak Arithmetic Theories}
In this section, we detail the main features of arithmetic Chow and $K$-theory that we want to preserve in arithmetic cobordism. Unfortunately the general features of the groups $\wh K_0(X)$ and $\wh \CH(X)$ are very different, not only with each other, but also from their geometric counterparts $K_0(X)$ and $\CH(X)$, we thus need weaker objects, who behave, at least on the functorial level much more closely with each other and with their geometric counterparts. Such objects had been introduced and studied before, by Burgos, Gillet-Soul\'{e}, Moriwaki, Zha...

\subsection{Weak Arithmetic Chow Groups}
Let $X$ be an algebraic projective smooth variety over a number field $k$. Let $V$ be a subvariety of $X$ of dimension $d+1$, and let $f$ be any rational function on $V$, recall that $\Div(f)$ is the cycle on $X$ defined as $$\Div(f)=\sum_{\text{irreducible }W\subset V; \codim_V(W)=1} \ord_W(f)[W]$$
We also set $\log|f|^2$ to be the current over $X$ defined in the following manner; let $\omega$ be any real smooth compactly supported form over $X$, of type $(d+1,d+1)$, we set $$\bra \log|f|^2, \omega\ket=\int_{V^{\text{ns}}}\log|f|^2\omega$$
where $V^{\text{ns}}$ denotes the open subset of $V(\mbb C)$ consisting of smooth points. As the singular locus of $V$ is of codimension at least $1$ in $V$, and as $\log|f|^2$ is a locally integrable function over $V^{ns}$, this is a well defined current over $X$, notice that $\log|f|^2$ is of type $(d_X-d_V, d_X-d_V)$. We could also define $\log|f|^2$ using the resolution of singularities of $V$. \\

We define the weak arithmetic Chow group in the following manner
\begin{Def}
We call the arithmetic weak Chow group of $X$ and we denote by $\CHw(X)$ the group $\wh{Z(X)}/\wh{\text{Rat}(X)}$ where 
\begin{itemize}
	\item The group $\wh Z(X)$ is the direct sum of the free abelian groups built on symbol $[Z]$ for every, $Z$, closed irreducible subset of $X$ and the group $\Dt(X)$.
	\item The subgroup $\wh{\text{Rat}}(X)$ is the subgroup of $\wh Z(X)$ generated by $([\div(f)],-\log|f|^2)$ for every $f\in k(V)^*$ for every subvariety $V$ of $X$.
\end{itemize}
\end{Def}
\begin{Rque}
We have a natural grading over $\CHw(X)$, where the homogenous piece of degree $d$ is given by $$\wh Z_d(X)=\bigoplus_{\dim Z=d} \mbb Z[V] \oplus \Dtp{d_X-1-d}(X)$$
For any $d+1$-dimensional subvariety $V$, $\div(f)$ is of degree $d$, and $\log|f|^2$ being of type $(d_X-(d+1), d_X-(d+1))$ is of degree $d$, thus $\wh{\text{Rat}}(X)$ is a homogenous subgroup of $\wh Z_\bullet(X)$, and $\CHw(X)$ inherits the grading.
\end{Rque}
From now on, we will simplify notations a bit, by writing $[Z,g]$, instead of $([Z],g)$, of course we have two natural maps $a: \Dt(X)\to \CHw(X)$ sending $g$ to $[0,g]$ and $\zeta: \CHw(X)\to \CH(X)$ sending $[Z,g]$ to $[Z]$.\par
Let us briefly examine the different operations that we can define on such groups.
\begin{Def}
Let $\pi: \ovl X\to \ovl Y$ be a projective morphism between arithmetic varieties, we define $$\pi_*[Z,g]=[\pi_*[Z],\pi_*g]$$
where $\pi_*[Z]$ is the push forward of geometric cycles defined in \cite{Fulton}; and $\pi_*(g)$ is the push forward of currents.
\end{Def}
It is a well known fact (see for instance \cite[Theorem 3.6.1]{GSA}) that this push forward is well defined and gives a functorial map $$\CHw(X)\ds{\pi_*}\CHw(Y)$$
this map is degree preserving.\\
In the same manner we can define a pull back-operation.
\begin{Def}(Pull-Back)
Let $f: \ovl X'\to \ovl X$ be a smooth equidimensional morphism between arithmetic varieties, we define $$f^*[Z,g]=[f^*[Z],f^*g]$$
where $f^*[Z]$ is the cycle associated to the equidimensional scheme $X'\times_X Z$, see \cite{Fulton}; and $f^*(g)$ is the pull-back of currents.
\end{Def}
The proof that this map is well defined on the level of the $\CHw$, and is functorial can be found in \cite[Theorem 3.6.1]{GSA}
\begin{Rque}
Here, the morphism $f^*$, for $f$ equidimensional of relative dimension $d$ increases degree by $d$, the relative dimension.
\end{Rque}
We can also define a first Chern class operator, but to do so let us fist notice that if $f\in k(V)^*$ is a rational function defined on a subvariety $V$ of $X$, then for every closed subvariety $Z$ of $X$ generically transverse to $V$, we can restrict $\log|f|^2$ to a (locally integrable) function defined on $V\cap Z$, that defines a current on $X$ by integration along the smooth locus of $V\cap Z$ (with the appropriate coefficient for each irreducible component of $V\cap Z$, namely its geometric multiplicity), we will denote such current as $\delta_Z\wedge \log|f|^2$, notice that we have a projection formula $$i_*i^*(\log|f|^2)=\delta_Z\wedge \log|f|^2=i_*i^*(1_Z)\wedge \log|f|^2$$
For every (regular) closed immersion $Z\ds{i}X$ it is a well known fact that we can find for every closed subvariety $V$ of $X$, another variety, say $W$, rationally equivalent to $V$ and transverse to $i$ so that we can extend that procedure to arbitrary arithmetic cycles on $X$, in fact we can also extend this definition to rational sections of hermitian bundles, as locally such a section can be represented as a rational function via a holomorphic trivialization, for details see \cite[1.3]{GSA}.\par
We can now define a \emph{First Chern class operator}
\begin{Def}(First Chern class operator)\\
Let $\ovl L\in \wh{\Pic}(X)$ be a hermitian line bundle over $\ovl X$, we define $\c1(\L)$ as an endomorphism of $\CHw(X)$  by the following formula $$\c1(\ovl L)[Z,g]=[\Div(s).[Z], c_1(\L)\wedge g-\log\|s\|^2\wedge \delta_Z]$$
where $s$ is any rational section of $L$ over $Z$, and $c_1(\L)$ is the curvature of the bundle $L$, which can be defined locally as $(-2i\pi)^{-1}\partial\ovl{\partial}\log\|s\|^2$ for any holomorphic local section of $L$.
\end{Def}
We need to check that this first Chern class operator is well-defined, but this again follows from \cite{GSA} or in \cite[paragraph before Theorem 2.4]{FaltBook}.
\begin{Rque}
Notice that $\c1(\L)$ decreases degrees by $1$, and that on $\Dt(X)$, $\c1(\L)$ only acts as $g\mapsto g\wedge c_1(\L)$.
\end{Rque}
Let's sum up the fundamental properties of these operations in the following proposition
\begin{Prop}(Borel-Moore properties)\label{BMforCH}\\
Let $X, Y, Y',S$ and $S'$, be smooth projective varieties and let $\pi: X\to Y$ and $\pi': Y\to Y'$ be projective morphisms and $f:S\to X$ and $f': S'\to S$ be smooth equidimensional morphism. We also fix $\M$ (resp. $\L$ and $\L'$), a (resp. two) hermitian bundle on $Y$ (resp. $X$), we have 
\begin{enumerate}
\item (Functoriality of the push forward) $(\pi' \circ\pi)_*=\pi'_*\pi_*$
\item (Functoriality of the pull back) $(f\circ f')^*=f'^*f^*$
\item (Naturality of the 1st Chern class) $f^* \circ \c1(\L)=\c1(f^*\L)\circ f^*$.
\item (Projection Formula) $\pi_*\circ \c1(\pi^*\M)=\c1(\M)\circ \pi_*$
\item (Commutativity of the 1st Chern Classes) $\c1(\L)\circ \c1(\L')= \c1(\L')\circ \c1(\L)$
\item (Grading) The degree of $\pi_*$ is $0$, the degree of $f^*$ is $d$, the degree of $\c1(\L)$ is $-1$.
\end{enumerate}
\end{Prop}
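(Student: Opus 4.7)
The plan is to verify each of the six assertions on representatives $[Z,g]$ with $Z$ a cycle and $g\in\Dt(X)$. Since $\pi_{*}$ and $f^{*}$ act diagonally on such pairs, assertions (1), (2) and (6) reduce, on the cycle component, to the corresponding classical statements for $Z_{\bullet}(X)$ (see \cite{Fulton}), and on the current component to the well-known functoriality and degree properties of direct and inverse images of currents; see for instance \cite[Theorem 3.6.1]{GSA}.

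For (3), I would unwind both sides applied to $[Z,g]$. After choosing a rational section $s$ of $\L$ over $Z$, the class $f^{*}\c1(\L)[Z,g]$ has cycle component $f^{*}(\Div(s).[Z])$ and current component $f^{*}\bigl(c_1(\L)\wedge g\bigr)-f^{*}\bigl(\log\|s\|^2\wedge\delta_Z\bigr)$. The right-hand side $\c1(f^{*}\L)\,f^{*}[Z,g]$ expands analogously using $f^{*}s$. Since $f$ is smooth and equidimensional one has $f^{*}\Div(s)=\Div(f^{*}s)$ and $f^{*}\delta_Z=\delta_{f^{*}Z}$; together with $f^{*}c_1(\L)=c_1(f^{*}\L)$ and $f^{*}\log\|s\|^2=\log\|f^{*}s\|^2$, this yields the identity.

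For the projection formula (4), the same kind of unwinding applies but requires the rational section used to compute $\c1(\pi^{*}\M)[Z,g]$ to be of the form $\pi^{*}t|_{Z}$; this is arranged, after moving $[Z,g]$ modulo $\wh{\mathrm{Rat}}(X)$ if needed, by lifting a rational section $t$ of $\M$ defined on $\pi(Z)$. The cycle component then follows from Fulton's projection formula, and the current component from the identity $\pi_{*}(\pi^{*}\alpha\wedge\eta)=\alpha\wedge\pi_{*}\eta$ for a smooth form $\alpha$ and a current $\eta$. Finally (5) follows on cycles from the classical commutativity of intersection with Cartier divisors, and on currents from the commutativity of the wedge product of smooth forms with currents.

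The main obstacle lies in (4) together with the underlying well-definedness of $\c1(\L)$: both require moving the rational section $s$ to ensure transversality with $Z$ and, for (4), to ensure it comes from downstairs. These are precisely the moving-lemma issues treated by Gillet--Soul\'e for the strong arithmetic Chow groups, and all six assertions can alternatively be deduced from \cite[Theorem 3.6.1]{GSA} via the natural surjection $\CHh(X)\to\CHw(X)$ which forgets the Green equation on the current component.
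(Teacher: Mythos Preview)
Your proposal is correct and follows essentially the same approach as the paper: verify each identity on representatives by treating the cycle and current components separately, reducing the former to \cite{Fulton} and the latter to standard current identities, with \cite[Theorem~3.6.1]{GSA} doing most of the heavy lifting. The paper's proof differs only in presentation---it splits more explicitly into classes $[Z,0]$ and $[0,g]$, and for (5) it cites \cite[Thm~2.4]{FaltBook} and \cite[Cor~2.2.9]{GSA} rather than arguing directly---but the substance is the same; your final remark about the surjection $\CHh(X)\to\CHw(X)$ is a clean alternative the paper does not mention.
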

\begin{proof}In each case we can evaluate the veracity of these statements on cycles of the form $[Z,0]$ and $[0,g]$
\begin{enumerate}
	\item[1, 2.] This is \cite[Theorem 3.6.1]{GSA} for smooth pull-backs; notice that the push-forward is clearly functorial in the case where one the morphisms is a closed immersion, and for the composition of two smooth morphisms it is done in \cite[Theorem 3.6.1]{GSA}.\par
	If $f$ is a general projective morphism, that we may factor as $p\circ i$, then $p_*i_*$ does not depend on the choice of that factorization and thus with obvious notations $f'_*f_*=p'_*i'_*p_*i_*=p'_*(i'\circ p\circ i)_*=p'_*(p''\circ k)_*=p'_* p''_* k_*=(p'\circ p''\circ k)_*=(f'\circ f)_* $
	\item[3] For cycles $[Z,0]$ both sides are equal to $$[\div(f^*s).f^*Z, \log(\|f^*s\|^2)\wedge \delta_{f^*Z})$$ and on classes of the form $[0,g]$ it follows from the naturality of the Chern form for currents.
	\item[4] This results from the geometric projection formula which is true on the level of cycles and from the naturality of the Chern form for forms which implies the formula for currents by duality. Alternatively, one may argue that the formula is obvious if $\pi$ is a closed immersion, and is proved in \cite[(7) Thm p.158]{GSA} when $\pi$ is a smooth morphism as the action of the first Chern class, is simply the intersection with $[\div(s), \log\|s\|^2]$ where we choose $s$ a rational section of $L$ that is generically transverse to $Z$.
	\item[5] This is \cite[Thm 2.4]{FaltBook} for cycles $[Z,0]$ and a special case of \cite[Cor 2.2.9]{GSA} for currents. Another proof can be given using \ref{Transverse}.
	\item[6] This results from the definitions.
\end{enumerate}
\end{proof}
This properties give the functor $X \mapsto \CHw(X)$ the properties of a Borel-Moore functor. The following ones illustrate the "arithmetic" nature of this functor.
\begin{Prop}(Arithmetic Type of $\CHw$)\label{ATforCH}\\
Let $X$ be a projective smooth variety over $k$ of dimension $d$, we have
\begin{enumerate}
\item For any hermitian line bundles over $X$, $\L_1,...,\L_{d+2}$, we have $$\c1(\L_1)\circ...\circ \c1(\L_{d+2})=0$$ as an endomorphism of $\CHw(X)$.
\item Let $\L$ be a hermitian line bundle over $X$, with $s$ a global section of $L$ that is transverse to the zero section. Let $Z$ be the zero scheme of such a section, and $i:Z\to X$ the corresponding immersion. We have $$i_*(1_Z)=\c1(\L)(1_X)+a(\log\|s\|^2)$$
\item Given two hermitian bundles $\L$ and $M$ over $X$ we have $$\c1(\L\otimes \M)=\c1(L)+\c1(M)$$
\end{enumerate}

\end{Prop}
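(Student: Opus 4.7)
The plan is to verify each of the three assertions by evaluating on the two types of generators of $\CHw(X)$, namely geometric cycles $[Z,0]$ with $Z\subset X$ closed irreducible, and purely analytic classes $[0,g]$ with $g\in\Dt(X)$. Statements (2) and (3) follow by unwinding the definition of $\c1$, while statement (1) is essentially a simultaneous bidegree and codimension count.

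Statement (3) is purely formal. Picking rational sections $s$ of $L$ and $t$ of $M$ in general position with respect to $Z$, one computes directly that
\[
\c1(\L\otimes\M)[Z,g]=[\div(s\cdot t)\cdot[Z],\; c_1(\L\otimes\M)\wedge g-\log\|s\cdot t\|^2\wedge\delta_Z].
\]
The additivity relations $\div(s\cdot t)=\div(s)+\div(t)$, $\log\|s\cdot t\|^2=\log\|s\|^2+\log\|t\|^2$, and the classical identity $c_1(\L\otimes\M)=c_1(\L)+c_1(\M)$ for curvature under tensor product of hermitian line bundles immediately split this expression as $\c1(\L)[Z,g]+\c1(\M)[Z,g]$. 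Statement (2) is equally direct: $i_*(1_Z)=[Z,0]$ by definition, while transversality of $s$ to the zero section of $\L$ gives $\div(s)=[Z]$ as a geometric cycle, so $\c1(\L)(1_X)=\c1(\L)[X,0]=[Z,-\log\|s\|^2]$ (using that $\delta_X$ is the unit current on $X$); adding $a(\log\|s\|^2)=[0,\log\|s\|^2]$ collapses the Green component and recovers $[Z,0]$.

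For the nilpotency assertion (1), the case of a class $[0,g]$ is trivial: each $\c1(\L_i)$ wedges the current with the closed $(1,1)$-form $c_1(\L_i)$, so $d+2$ applications produce a current of bidegree at least $(d+2,d+2)$, which must vanish on a complex manifold of complex dimension $d$. For a cycle class $[Z,0]$ with $\codim_X Z=c$, choose successive rational sections $s_1,\dots,s_{d+2}$ in general position, movable within their linear equivalence classes, so that all iterated intersections are generically transverse. After $j$ applications the cycle component is supported in codimension $c+j$ and thus vanishes once $c+j>d$. The Green currents produced obey a uniform pattern: a new current is born at step $k\le j$ attached to a cycle of codimension $c+k-1$, hence of bidegree $(c+k-1,c+k-1)$, and is then wedged with $j-k$ further curvature forms of bidegree $(1,1)$, so its final bidegree is $(c+j-1,c+j-1)$, independently of $k$. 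Taking $j=d+2$ and using $c\ge 0$, both $c+j>d$ and $c+j-1>d$, so every cycle and every Green summand vanishes.

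The main subtlety is the uniform bookkeeping of the Green currents generated by the cascade of Chern operators in (1): distinct currents appear at distinct stages, yet they all converge to the same final bidegree $(c+j-1,c+j-1)$, because each is born at precisely the bidegree matching the codimension of its support. This coincidence is what makes $d+2$ the sharp universal bound, and its verification is the only nontrivial content in the proposition; statements (2) and (3) carry no real difficulty beyond the direct computations sketched above.
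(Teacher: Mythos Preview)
Your proof is correct and, for items (2) and (3), matches the paper's argument essentially verbatim. For item (1) you carry out an explicit bidegree and codimension count, whereas the paper compresses this into a single line: $\CHw(X)$ is graded in degrees $-1,0,\ldots,d$ and $\c1(\L)$ has degree $-1$, so $d+2$ iterations vanish automatically. Your tracking of the Green currents born at each step, all landing in bidegree $(c+j-1,c+j-1)$, is exactly the computation underlying that grading statement; the paper simply invokes the grading it set up in the remark following the definition of $\CHw(X)$ rather than redoing the bookkeeping. Neither approach buys anything the other does not, but the grading formulation is shorter and makes clear that the bound $d+2$ is forced by the range of the grading rather than by any delicate cancellation.
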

\begin{proof}
\begin{enumerate}
\item This results simply from the fact that we have a decomposition of abelian group $$\CHw(X)=\CHw_d(X)\oplus...\oplus \CHw_0(X)\oplus \CHw_{-1}(X)$$ and from the fact that the first Chern class operator is of degree $-1$.
\item Keeping the notation of the proposition we have $i_*(1_Z)=[Z,0]$, and $\c1(\L)(1_X)=[\Div(s), -\log\|s\|^2]=[Z,0]-a(\log\|s\|^2)$, and the result follows.
\item We have $\div(s\otimes t)=\div(s)+\div(t)$ as cycles, and by the very definition of the tensor product metric we have $\log\|s\otimes t\|^2=\log[\|s\|^2\|t\|^2]$, which implies the result.
\end{enumerate}
\end{proof}
To complete our description let us note that the weak arithmetic Chow groups are an extension of classical geometric Chow groups by the space of real currents modulo $\im \d +\im \db$.
\begin{Prop}
We have an exact sequence $$\Dt(X)\ds a \CHw(X) \ds \zeta \CH(X)\to 0$$
that breaks up into $$\Dtp{d-1-p}(X)\ds a \CHw_p(X) \ds \zeta \CH_p(X)\to 0$$
\end{Prop}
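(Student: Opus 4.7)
The plan is to verify exactness at the two relevant places and then observe that both maps respect the grading. The statement is essentially a translation of the definitions of $\CHw(X)$ and $\CH(X)$ as quotients of free abelian groups on cycles; no analytic input is needed.

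First I would establish surjectivity of $\zeta$. The composition $Z(X)\to \wh Z(X)\to \CHw(X)\ds{\zeta}\CH(X)$, where the first arrow sends $[Z]$ to $[Z,0]$, factors through the usual quotient $Z(X)\to \CH(X)$: indeed a generator $[\div(f)]$ of $\text{Rat}(X)$ lifts to $[\div(f), -\log|f|^2]\in \wh{\text{Rat}}(X)$, whose image in $\CHw(X)$ vanishes. Since $Z(X)\to \CH(X)$ is surjective by definition, so is $\zeta$. The identity $\zeta\circ a=0$ is immediate since $a(g)=[0,g]$ has vanishing geometric cycle part.

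The main step is the inclusion $\ker \zeta\subseteq \im a$. Let $\xi=[Z,g]\in\CHw(X)$ with $\zeta(\xi)=0$ in $\CH(X)$. Then the cycle $Z$ lies in $\text{Rat}(X)$, so there exist subvarieties $V_i\subseteq X$, rational functions $f_i\in k(V_i)^*$ and integers $n_i$ such that $Z=\sum_i n_i \div(f_i)$ in $Z(X)$. The element $\sum_i n_i([\div(f_i)],-\log|f_i|^2)$ belongs to $\wh{\text{Rat}}(X)$, so modulo $\wh{\text{Rat}}(X)$ we have
\[
[Z,g]=[Z,g]-\sum_i n_i \bigl([\div(f_i)],-\log|f_i|^2\bigr)=\Bigl[0,\,g+\sum_i n_i \log|f_i|^2\Bigr]=a\Bigl(g+\sum_i n_i\log|f_i|^2\Bigr),
\]
which exhibits $\xi$ as an element of $\im a$.

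Finally I would check the grading. For a closed subvariety $V$ of dimension $p+1$ and $f\in k(V)^*$, the cycle $\div(f)$ has dimension $p$ while $\log|f|^2$ is a current of bidegree $(d-p-1,d-p-1)$, so $\wh{\text{Rat}}(X)$ is a homogeneous subgroup of $\wh{Z}_\bullet(X)$ under the grading announced earlier in the paper; in particular the argument above produces, from a degree-$p$ element of $\ker\zeta$, an element in $a(\Dtp{d-1-p}(X))$. Since $\zeta$ and $a$ are clearly degree-preserving, the global sequence splits into the announced graded pieces. No step looks genuinely difficult; the only thing to watch is the bookkeeping of bidegrees in the definition of $\wh Z_p(X)$, which is why the shift by $d-1-p$ appears on the current side.
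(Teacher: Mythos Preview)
Your proof is correct and follows essentially the same approach as the paper's: the key step is to subtract off the relations $([\div(f_i)],-\log|f_i|^2)$ from an element with trivial geometric part, and then observe that both maps are graded. You are simply more explicit about the surjectivity of $\zeta$ and the vanishing of $\zeta\circ a$, which the paper leaves implicit.
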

\begin{proof}
Let $\alpha=\sum n_i[Z_i,g_i]$ be a weak arithmetic cycle. The fact that $\sum n_{i} [Z_i]$ is trivial in $\CH(X)$ is equivalent to the existence of subvarieties $V_j$ of $X$ and $f_j\in k(V_j)$ rational functions over $V_j, $such that $\sum [Z_i]=\sum \Div(f_j)$ as cycles, we thus have $\alpha=\sum a(g_i)+\sum \div(f_j)=\sum a(g_i)+\sum a(\log|f_j|^2)$ which is evidently in the image of $a$.\par
The fact that the first exact sequence implies the others is simply a reformulation of the fact that the maps $a$ and $\zeta$ preserve the grading.
\end{proof}
\begin{Rque}The reader will compare this exact sequence to the one found in \cite{GSA}, and see that we have just replaced the space of real smooth forms by the space of general real currents, which has the advantage of having much better functoriality properties. This is why we have replaced the notion of arithmetic cycle presented in \cite{GSA} using a green current, by the notion of weak arithmetic cycle.
\end{Rque}

\subsection{Higher Analytic Torsion of Bismut-K\"ohler}
Recall the definition of arithmetic $\Kh$-theory given by Gillet and Soul\'{e} in \cite{GS1}
\begin{Def}Set $\Kh_0(\X)$ to be the free abelian group $\bigoplus \mbb Z[\ovl{E}]\times \At(X)$ where $\E$ is an isometry class of hermitian vector bundle over $X$, subject to the following relations: for every exact sequence $\mcl E: 0\to \ovl{E''}\to \ovl{E}\to \ovl{E'}\to 0 $, $$[\ovl{E},0]=[\ovl{E''},0]+[\ovl{E'},0]+[0,\cht(\mcl E)]$$
\end{Def}
One of the most profound problem in Arakelov theory is to define a direct image for such groups and to compute it, firstly we need to fix a metric on $\pi_* E$, unfortunately \emph{a priori} this is only a sheaf, and not a vector bundle, so Gillet and Soul\'{e} chose to examine a particular situation of utmost interest.\par
Consider a holomorphic proper submersion between complex manifolds, $\pi: M\to B$. Let $g$ be a hermitian metric on the holomorphic relative tangent bundle to $\pi$, denoted $T_{M/B}$, and let $J$ be the complex structure on the underlying real bundle to $T_{M/B}$ and $H_{M/B}$ the choice of a horizontal bundle i.e a smooth subbundle of $TM$ such that we have $TM=T_{M/B}\oplus H_{M/B}$.
\begin{Def} (K\"ahler Fibration)\\
 We say that this data defines a K\"ahler Fibration is there exists a smooth $(1,1)$-real form, say $\omega$ over $M$ such that\begin{enumerate}
	\item The form $\omega$ is closed
	\item The real bundles ${H_{M/B}}_{\mbb R}$ and ${T_{M/B}}_{\mbb R}$ are orthogonal with respect to $\omega$
	\item We have $\omega(X,Y)=g(X,JY)$ for $X$ and $Y$ vertical real vector fields.
\end{enumerate}
\end{Def}

Let us take $\E$ a hermitian bundle, we will make the two following assumptions
\begin{enumerate}
\item[(A1)] Assume that $\pi$ is a (proper) smooth submersion that is equipped with a structure of K\"ahler fibration.
\item[(A2)] Assume that $E$ is $\pi_*$-acyclic, meaning that $R^q\pi_*E=0$ as soon as $q>0$.
\end{enumerate}
In this case, the upper-semi-continuity theorem ensures that $\pi_*E$ is a vector bundle over $Y$, and for each (closed) point of $y$ we have $j_y^*\pi_*E=H^0(X_y, E_{|X_y})$. Now, using the K\"ahler fibration structure on $\pi$, we get a smooth family of metrics over the relative tangent spaces to $\pi$, that give a K\"ahler structure to the fiber $X_y$. Now, we can identify the space $H^0(X_y, E_{|X_y})$ to the subspace of $A^0(X_y, E)$ of smooth forms with coefficients in $E$, that are holomorphic, i.e killed by $\db$.
Now, on $A^0(X_y, E)$ we have a natural hermitian form given by the K\"ahler metric, defined by $$\bra s,t \ket=\int_{X_y} h^E(s(x),t(x)) \omega$$
where $\omega$ is the volume form defined by the K\"ahler metric on $X_y$.\par
We have thus defined a (punctual) metric on each fiber of the vector bundle $\pi_*E$, which we will call \emph{the $L^2$-metric associated to the K\"ahler fibration}.
\begin{Theo}
Assuming the previous conditions, on $X, Y, \pi$ and $\ovl E$, the $L^2$ metric is smooth and thus define a hermitian vector bundle structure on $\pi_*E$.
\end{Theo}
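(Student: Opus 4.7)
The strategy is to reduce smoothness of the $L^2$ metric to the classical fact that proper fibre integration of smooth forms produces smooth forms, after first producing a local holomorphic frame of $\pi_*E$ whose members are honest holomorphic sections of $E$ on the total space.

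First, I would invoke Grauert's base change theorem: the vanishing $R^q\pi_*E=0$ for $q>0$ (assumption (A2)), combined with the fact that $\pi$ is a proper submersion of complex manifolds, ensures that $\pi_*E$ is a holomorphic vector bundle whose formation commutes with base change. In particular, around any point $y_0\in Y$ there exists an open neighbourhood $U$ and sections $e_1,\dots,e_r\in H^0(\pi^{-1}(U),E)=H^0(U,\pi_*E)$ whose restrictions to each fibre $X_y$, $y\in U$, form a basis of $H^0(X_y,E_{|X_y})$. Each $e_i$ is by construction holomorphic, hence smooth, on $\pi^{-1}(U)$.

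Next, the Kähler fibration data $(g,J,H_{X/Y},\omega)$ provides a closed relative Kähler form whose top vertical exterior power $\omega^n/n!$ (with $n$ the relative complex dimension) restricts on each fibre to the Kähler volume form used in the definition of the $L^2$ pairing. Hence, in the frame $(e_i)$, the matrix of the $L^2$ metric reads
$$h^{L^2}_{ij}(y)=\int_{X_y} h^E(e_i,e_j)\,\frac{\omega^n}{n!}.$$
The integrand is a globally defined smooth differential form on $\pi^{-1}(U)$, and since $\pi$ is a proper submersion, integration along fibres sends smooth forms of the relevant degree on $\pi^{-1}(U)$ to smooth forms on $U$. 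This yields the smoothness of each $h^{L^2}_{ij}$, hence of the $L^2$ metric in the local holomorphic trivialisation provided by $(e_i)$.

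The main obstacle I expect to negotiate is the production of the holomorphic frame $(e_i)$: the identification of sections of $\pi_*E$ with genuine holomorphic sections of $E$ on the preimage rests squarely on the acyclicity assumption (A2) through Grauert's theorem, and one must be careful in passing from the abstract coherent sheaf $\pi_*E$ to a concrete holomorphic bundle with explicit transition functions. Once this step is granted, the rest is routine: the matrix $(h^{L^2}_{ij})$ is hermitian symmetric and fibrewise positive definite directly from its definition as an $L^2$ inner product, so together with the smoothness statement it endows $\pi_*E$ with a bona fide hermitian vector bundle structure.
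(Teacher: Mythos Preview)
Your argument is correct and follows the standard route: Grauert's theorem yields local holomorphic frames of $\pi_*E$ consisting of actual sections of $E$, and then the matrix entries of the $L^2$ metric are fibre integrals of smooth forms along a proper submersion, hence smooth. The paper itself does not give an argument at all---it simply refers the reader to \cite[p.~278]{BGV} and \cite[p.~2176]{RGS}---so your sketch is already more detailed than what appears there; the proof found in those references proceeds along essentially the same lines you outline.

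One small imprecision worth tightening: you speak of the ``top vertical exterior power $\omega^n/n!$'', but $\omega$ is a global closed $(1,1)$-form on the total space, not a purely vertical object. What you need is simply that the restriction of $\omega$ to each fibre $X_y$ is the K\"ahler form $\omega_y$ determined by $g$, so that $\omega^n/n!$ restricts to the fibrewise volume form; the fibre integral $\int_{X/Y} h^E(e_i,e_j)\,\omega^n/n!$ then picks out exactly the $L^2$ pairing you want. This is what you mean, and it is correct, but the phrasing could be cleaner.
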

\begin{proof}
See \cite[p.278]{BGV} or \cite[p.2176]{RGS}
\end{proof}
We will denote $\ovl{\pi_*E}^{L^2}$ the vector bundle $\pi_*E$ equipped with its $L^2$-metric subordinated to the choice of metrics on both $X$ and $Y$. The problem now is to choose a form say $\Xi$ such that $$\pi_*[\E,0]=[\ovl{\pi_*E}^{L^2}, \Xi]$$
A first step to understand what this $\Xi$ should be, is to investigate the (cohomological) Riemann-Roch formula that says
 $$\ch(\ovl{\pi_*E}^{L^2})-\pi_*[\ch( \ovl E)\Td(\ovl{T_{X/Y}})]$$ must be the $\d \db$ of some smooth form over $Y$. Our form $\Xi$ should be one of those forms, indeed we have a generalized cycle map $$\omega: \wh K_0(X)\to A^{\bullet,\bullet}(X,\mbb R)$$ characterized by $\omega([\E,0])=\ch(\ovl E)$ and $\omega([0,\alpha])=dd^c\alpha$
, moreover this map is not compatible with pushforwards of forms in a naive sense, indeed we have by Riemann-Roch again $$\omega([\ovl{\pi_*E}^{L^2},0])=\pi_*[\omega(\ovl E)\Td(\ovl{T_{X/Y}})]$$ we thus see that our form $\Xi$ should \emph{double transgress} the Riemann-Roch formula i.e satisfy the equation $$\ch(\ovl{\pi_*E}^{L^2})-\pi_*[\ch( \ovl E)\Td(\ovl{T_{X/Y}})]=dd^c\Xi$$
such a form is \emph{a priori} only determined up to $\im \d+\im \db$, and of course there are many possible choices.\par
Bismut and K\"ohler were able to give a satisfying choice for $\Xi$, (see \cite[Def 1.7, Def 1.8, the paragraph before Thm 3.4 and Def 3.7]{BismutKohler} for the definition of the different terms, which we will not need)
\begin{Theo}(Bismut, K\"ohler)\\
Let $\E$ be a hermitian bundle and $\pi: X\to Y$ a holomorphic submersion endowed with a K\"ahler fibration structure, set $T(\ovl{T_{X/Y}}, \E)=\zeta_E'(0)$ where 
$$\zeta_E(s)=\frac{1}{\Gamma(s)}\int_0^\infty \frac{u^s}{u} \left[\fii\Trs( N_ue^{-B_u^2})-\fii \Trs( N_Ve^{-\nabla_\pi^2})\right]du$$ then $$\pi_*[\ch( \ovl E)\Td(\ovl{T_{X/Y}})]-\ch(\ovl{\pi_*E}^{L^2})=dd^cT(\ovl{T_{X/Y}}, \E)$$
The form $T(\ovl{T_{X/Y}}, \E)$ is called the \emph{Higher Analytic Torsion form} associated to $\E$ and $\ovl{T_{X/Y}}$
\end{Theo}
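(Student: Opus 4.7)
The plan is to realize both sides of the claimed identity as limits of a one-parameter family of smooth forms on $Y$ built from the Bismut superconnection $B_u$ on the infinite-dimensional bundle obtained by viewing sections of $E$ along the fibres, and then to extract the analytic torsion as a Mellin-regularized integral of a transgression. Concretely one studies the closed form $\fii\Trs(e^{-B_u^2})\in A^{\bullet,\bullet}(Y)$ and uses Bismut's fundamental transgression identity, which in the K\"ahler setting reads
$$\frac{\d}{\d u}\fii\Trs(e^{-B_u^2})=-\frac{1}{u}dd^c\fii\Trs(N_u e^{-B_u^2}),$$
so that a naive integration from $0$ to $\infty$ would already produce the sought double transgression up to the boundary values of $\fii\Trs(e^{-B_u^2})$.

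The two boundary values are identified by heat-kernel asymptotics. As $u\to 0$ the local family index theorem of Bismut gives $\fii\Trs(e^{-B_u^2})\to \pi_*[\ch(\ovl E)\Td(\ovl{T_{X/Y}})]$; as $u\to\infty$, the $\pi_*$-acyclicity hypothesis together with the $L^2$-Hodge theorem forces the superconnection to concentrate on harmonic representatives in each fibre, which are canonically $\ovl{\pi_*E}^{L^2}$, giving $\fii\Trs(e^{-B_u^2})\to \ch(\ovl{\pi_*E}^{L^2})$. Subtracting the finite-dimensional counter-term $\fii\Trs(N_V e^{-\nabla_\pi^2})$ from the integrand is precisely what renders the $u\to\infty$ behaviour exponentially small, so that $\zeta_E(s)$ is defined for $\Re(s)$ large; the small-$u$ asymptotic expansion then yields a meromorphic continuation, and regularity at $s=0$ follows from a careful inspection of the constant term of that expansion combined with the pole structure of $1/\Gamma(s)$. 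Differentiating at $s=0$ and using $\frac{d}{ds}\bigl[u^s/\Gamma(s)\bigr]\big|_{s=0}=1$ then turns the formal integration of the transgression identity into the stated equality
$$\pi_*[\ch(\ovl E)\Td(\ovl{T_{X/Y}})]-\ch(\ovl{\pi_*E}^{L^2})=dd^cT(\ovl{T_{X/Y}},\E).$$

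The hard part is the small-$u$ analysis: one must show that $\fii\Trs(N_u e^{-B_u^2})$ admits an asymptotic expansion of the form $\sum_k a_k(y)u^{-k/2}$ with smooth form-valued coefficients on $Y$, and that the divergent contributions cancel against $1/\Gamma(s)$ in the Mellin integral so that $\zeta_E'(0)$ is well-defined and satisfies the transgression formula above. This requires the family version of Getzler's rescaling together with uniform estimates on the heat kernel of $B_u^2$, and is essentially where the local family index theorem is proved in parallel with the torsion theorem. I would follow Bismut--K\"ohler almost verbatim for this part, since the estimates are very technical and admit no illuminating shortcut in this generality; the contribution of the present setting is simply to package the output into the Arakelov-theoretic formulation stated above.
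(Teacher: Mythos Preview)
The paper does not prove this theorem at all: its entire proof reads ``This is \cite[Theorem 0.2]{BismutKohler}''. The result is imported as a black box from the original Bismut--K\"ohler paper, as is standard for deep analytic inputs of this kind in Arakelov-theoretic work.

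Your proposal is therefore not comparable to the paper's argument in the usual sense; rather, it is a reasonable high-level sketch of what actually happens in the cited reference. The outline you give --- Bismut superconnection, the transgression identity in the K\"ahler fibration setting, local family index asymptotics at $u\to 0$, concentration on harmonic sections at $u\to\infty$ under the acyclicity hypothesis, and Mellin regularization with the finite-dimensional counterterm --- is faithful to the structure of the Bismut--K\"ohler proof. You are also right that the hard content is the uniform small-$u$ analysis, and your acknowledgment that this part must be borrowed verbatim is appropriate. For the purposes of this paper, however, none of this is expected: a citation suffices, and attempting to reproduce the analysis here would be out of scope.
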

\begin{proof} This is \cite[Theorem 0.2]{BismutKohler}\end{proof}

\begin{Rque}\label{ConvT}
In the previous theorem the higher analytic torsion form $T(\ovl{T_{X/Y}}, \E)$ is associated to a K\"ahler fibration structure on $\pi$, note however that when $T_X$ and $T_Y$ are equipped with K\"ahler metrics, we have a natural structure of K\"ahler fibration on $\pi$ (see \cite[Thm 1.5]{BGS22}), in this case, we will then denote $T(\ovl T_X, \ovl T_Y,\E)$ instead of $T(\ovl{T_{X/Y}}, \ovl E)$ to mean the higher analytic torsion form associated to the K\"ahler fibration structure induced by the K\"ahler metrics over $T_X$ and $T_Y$
\end{Rque}
 We list here the fundamental properties of this higher analytic torsion that we may need.
\begin{Prop}\label{PropAT}
Let $\pi:X\to Y$ be a smooth submersion equipped with a K\"ahler fibration structure, let $\E$ be a $\pi_*$-acyclic vector bundle, and let's endow $\pi_* E$ with its $L^2$-metric. The \emph{analytic torsion} associated to this data, $T(\ovl{T_{X/Y}}, \E)$ is a smooth form in $\At(Y)$ that satisfy 
\begin{enumerate}
\item (Naturality) Let $g:Y'\to Y$ be projective morphism, then $X_{Y'}\to Y'$ is a K\"ahler fibration, and we have $$T(\ovl{g^*{T_{X/Y}}}, g^*\E)=g^*T(\ovl{T_{X/Y}}, \E)$$
\item (Additivity) For every pair $\E_1, \E_2$ of hermitian vector bundles on $X$, we have $$T(\ovl{T_{X/Y}}, \E_1\oplus^\perp \E_2)=T(\ovl{T_{X/Y}}, \E_1)+T(\ovl{T_{X/Y}},\E_2)$$
\item (Compatibility with the projection formula) For $F$ a hermitian vector bundle on $Y$, we have $$T(\ovl{T_{X/Y}}, \E\otimes \pi^*\ovl F)=T(\ovl{T_{X/Y}}, \E)\otimes \ch(\ovl F)$$ 
\item (Transitivity) If $\pi:X \to Y$ and $\pi': Y \to Z$ are two K\"ahler fibration structures and $E$ is a bundle $\pi_*$-acyclic, such that $\pi_*E$ is also $\pi'_*$-acyclic 
we have the following relation between the different analytic torsions \begin{eqnarray*}T(\ovl{T_{X/Z}}, \E)&=&T(\ovl{T_{Y/Z}}, \ovl{\pi_*E}^{L^2})+\pi'_*(T(\ovl{T_{X/Y}}, \E)\Td(\ovl{T_{Y/Z}}))\\
& &+\cht(\ovl{\pi'\circ \pi_*E}^{L^2},\ovl{\pi'_* (\ovl \pi_*E^{L^2})}^{L^2})\\
& &+\pi'_*\pi_*(\ch(\E)\wt{Td}(\mcl E)\Td(\ovl T_{X/Y})\Td^{-1}(\ovl T_X))\end{eqnarray*}
where $\mcl E$ is the exact sequence  $$\mcl E:0\to \ovl{T_{X/Y}} \to \ovl{T_{X/Z}} \to \pi^*\ovl{T_{Y/Z}}\to 0$$ 
\end{enumerate}
\end{Prop}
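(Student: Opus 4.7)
My plan is to handle the four properties separately, since they are of rather different depth: properties 1--3 are essentially formal consequences of the definition of $T(\ovl{T_{X/Y}}, \E)$ as the zeta-regularized derivative at $s=0$ of a fiberwise integral built from the Bismut superconnection $B_u$, while property 4 (transitivity) is the genuinely deep statement, proved in its general form by X.~Ma, and I would invoke it as a black box rather than attempt to reprove it.

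For naturality, my first step would be to observe that for a morphism $g: Y' \to Y$ the fibers of $X_{Y'} \to Y'$ are canonically identified, as K\"ahler manifolds equipped with a hermitian bundle, with the corresponding fibers of $X \to Y$; this is precisely what it means for the K\"ahler fibration structure to be pulled back. Consequently, the fiberwise Dolbeault complexes, the number operators $N_u, N_V$ and the operator $B_u$ itself all pull back under $g$, so the integrand defining $\zeta_E(s)$ is $g$-natural and the zeta-regularized derivative at $s=0$ commutes with $g^*$. Additivity is equally formal: the superconnection attached to $\E_1 \oplus^\perp \E_2$ is the orthogonal direct sum of those attached to each summand, so the supertrace of the heat operator is additive and the same holds for $\zeta_E$ coefficient by coefficient. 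For the projection formula, the canonical map $\pi_*(E \otimes \pi^*F) \simeq (\pi_*E)\otimes F$ is an isometry of $L^2$-metrics, since $\ovl F$ lives on $Y$ and passes out of the fiberwise integration; the superconnection on $\E \otimes \pi^*\ovl F$ differs from the one on $\E$ only by a twist by the pullback of the hermitian connection on $\ovl F$, and feeding this twist into the supertrace produces the factor $\ch(\ovl F)$ on the base.

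The main obstacle is transitivity. A direct proof would require a careful comparison of the Bismut superconnection for $\pi'\circ\pi: X\to Z$ with the iterated superconnections for $\pi:X\to Y$ and $\pi':Y\to Z$, performed via an adiabatic limit in which the vertical metric along $\pi$ is rescaled; this is delicate analysis that I would not try to reproduce. My plan is instead to cite the transitivity theorem of Ma, and merely check that the correction terms have the geometric meaning one expects: the $\Tdt(\mcl E)$ factor measures the Chern--Weil failure of the short exact sequence $\mcl E$ of relative tangent bundles to be orthogonally split, while the $\cht$ term compares the direct $L^2$ metric on $(\pi'\circ\pi)_*E$ with the iterated $L^2$ metric on $\pi'_*(\ovl{\pi_*E}^{L^2})$, which are in general distinct and must be reconciled whenever an iterated direct image is computed. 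Consistency with the previously established naturality, additivity and projection formula is what allows one to check that the stated formula is the only one compatible with the three easy cases applied to suitable test situations, which is how one would sanity-check the cited result.
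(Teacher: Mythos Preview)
Your proposal is correct in spirit and in content. The paper's own proof is simply a one-line citation to \cite[Cor 8.10, Cor 8.11]{BFL}; it does not argue anything directly. Your sketch for properties 1--3 (naturality of the fiberwise Dolbeault data and the Bismut superconnection under base change, additivity of supertraces on orthogonal sums, and the isometry $\pi_*(E\otimes\pi^*F)\simeq(\pi_*E)\otimes F$ producing the $\ch(\ovl F)$ factor) is the standard reasoning behind those corollaries and is fine. For transitivity you cite Ma's adiabatic-limit theorem, which is exactly the input BFL themselves rely on, so the two approaches coincide at the level of dependencies; you are simply being more explicit about where the difficulty lies and what the correction terms $\cht$ and $\Tdt(\mcl E)$ are doing, which the paper's bare citation does not convey.
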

\begin{proof}
This is \cite[Cor 8.10, Cor 8.11]{BFL}.
\end{proof}

\subsection{Generalized Analytic Torsion of Burgos, Freixas, Litcanu}
To investigate the situation for a general projective morphism, Burgos, Freixas and Litcanu have split the problem into two different ones. First one wants to construct direct images for projective spaces $\P^r_Y \to Y$ and for closed immersions, and ask for a compatibility condition that would ensure a general functoriality property.\par
That's why Burgos, Freixas and Litcanu defined 
\begin{Def}(Generalized Theory of Analytic torsion for submersions)\\
A theory of generalized analytic torsion forms for submersions is an assignment of a smooth real form, $T(\ovl{T_X}, \ovl{T_Y}, \E)$ in $\At(Y)$ to every smooth submersion $X \ds{\pi} Y$ and every hermitian bundle $\E$ over $X$, with $T_X$ and $T_Y$ equipped with a K\"ahler metric, and $\E$ being $\pi_*$-acyclic, satisfying

$$\pi_*[\ch( \ovl E)\Td(\ovl{T_{X/Y}})]-\ch(\ovl{\pi_*E}^{L^2})=dd^cT(\ovl T_X, \ovl T_Y, \ovl E)$$
We say that a theory of generalized analytic torsion forms for submersions is well behaved, if it satisfies the following properties
\begin{enumerate}
\item (Naturality) Let $g:Y'\to Y$ be projective morphism, then $X'=X_{Y'}\ds{g'} Y'$ is a also a smooth submersion, and for any choice of metrics over $T_{X'}$ and $T_{Y'}$ such that we have an isometry $\ovl{T_{X'/Y'}} \simeq g'^*\ovl{T_{X/Y}}$ we have $$T(\ovl{{T_{X'}}}, \ovl{T_{Y'}}, g'^*\E)=g^*T(\ovl{T_X}, \ovl{T_Y}, \E)$$
\item (Additivity) For every pair $\E_1, \E_2$ of hermitian vector bundles on $X$, we have $$T(\ovl{T_{X}}, \ovl{T_{Y}}, \E_1\oplus^\perp \E_2)=T(\ovl{T_{X}}, \ovl{T_{Y}}, \E_1)+T(\ovl{T_{X}}, \ovl{T_{Y}},\E_2)$$
\item (Compatibility with the projection formula) For $F$ a hermitian vector bundle on $Y$, we have $$T(\ovl{T_X}, \ovl{T_Y}, \E\otimes \pi^*\ovl F)=T(\ovl{T_X}, \ovl{T_Y}, \E)\otimes \ch(\ovl F)$$ 
\end{enumerate}
\end{Def}
\begin{Rque}
The theory of analytic torsion defined by Bismut and K\"ohler, is an example of such a well-behaved theory, we will denote it $T^{BK}$.
\end{Rque}

Let's now turn to the case of a closed immersion, so assume now, that we've been given $i$ a (regular) closed immersion between projective smooth complex varieties. The geometric situation is somewhat more complicated given that in general $i_*E$ will not be a vector bundle on $Y$, but we can just arbitrarily choose a hermitian structure on $i_*E$, given by a resolution of that sheaf on $Y$, in the sens of the first section, let us choose such a resolution $\E_\bullet\to i_*E\to 0$.\par
We thus have a current, represented by a smooth form $\ch(\ovl{i_*E})$ which is equal to $\sum (-1)^i \ch(\ovl E_i)$, we want to compare it to the current $i_*[\ch(\ovl E)\Td(\ovl N)^{-1}]$ for a choice of metric over the normal bundle to $i$.\par
Notice here that $\ch(\ovl E)\Td(\ovl N)^{-1}$ is a well-defined smooth form on $X$, but is only a current on $Y$, when we push it forward through $i_*$, however as the cohomology of currents coincide with that of forms (\cite{Griffith-Harris}), we do know that there exists a current $\Xi \in \Dt(Y)$ depending \emph{a priori} on the choice of the metric on $N$, and the metric structure on $i_*E$ such that $$ i_*[\ch(\ovl E)\Td(\ovl N)^{-1}]-\sum (-1)^i \ch(\ovl E_i)=\d\db \Xi$$
we can, here again, try to give an explicit formula, let us examine the case of a smooth effective divisor.\par
Assume that we've been given a hermitian line bundle $\L$ over $Y$, such that $X$ is the zero locus of a section $s$ of $L$, transverse to the zero section. We have the following exact sequence $$0\to L^{\vee}\to \mcl O_Y\to i_*\mcl O_X\to 0$$
that gives a resolution of $i_*\mcl O_X$ over $Y$, if we equip $\mcl O_X$ with the trivial metric, and $L^\vee$ with the dual metric, we get a hermitian structure on $i_*\mcl O_X$. Moreover as $i^*L$ is naturally isomorphic to $N_{X/Y}$ we also have a natural metric on the normal bundle to $i$. For this data, a current $\Xi$ solving the Grothendieck-Riemann-Roch equation is easy to compute.
\begin{Prop}
Let $\L$ be a hermitian vector bundle over a smooth complex variety, say $Y$, and let $X$ be the zero scheme of a transverse section. We denote by $j$ the corresponding regular immersion. We have $$\ch(\ovl{j_*{O}_X})=\ch([\ovl O_Y]-[\L^\vee])=j_*(\ch(\ovl  O_X)\Td(\ovl{j^*L})^{-1})-dd^c(\Td(\L)^{-1}\wedge \log\|s\|^2 )$$
\end{Prop}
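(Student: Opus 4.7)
The first equality is essentially unpacking the definition. The hermitian structure on $j_*\mcl O_X$ is precisely the one defined by the Koszul resolution $0\to \ovl{L^\vee}\to \ovl O_Y \to \mcl O_X\to 0$, where $\mcl O_Y$ carries the trivial metric and $L^\vee$ the dual metric. With the conventions of the preceding section (the sheaf in degree $-1$), this gives $\ch(\ovl{j_*\mcl O_X}) = \ch(\ovl O_Y)-\ch(\ovl{L^\vee}) = 1-e^{-c_1(\ovl L)}$.

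For the second equality the plan is to compute each of the two terms on the right as currents on $Y$. The first input is the Poincar\'e--Lelong formula $dd^c\log\|s\|^2 = \delta_X - c_1(\ovl L)$, available because $s$ is transverse to the zero section. The second input is the projection formula applied to the smooth closed form $\Td(\ovl L)^{-1}$: since $j^*\Td(\ovl L)^{-1} = \Td(\ovl{j^*L})^{-1}$, one has $j_*(\Td(\ovl{j^*L})^{-1}) = \Td(\ovl L)^{-1}\wedge \delta_X$. Now, because $\Td(\ovl L)^{-1}$ is closed and smooth, differentiating as a current gives
\[
 dd^c\bigl(\Td(\ovl L)^{-1}\wedge \log\|s\|^2\bigr) = \Td(\ovl L)^{-1}\wedge dd^c\log\|s\|^2 = \Td(\ovl L)^{-1}\wedge \delta_X - \Td(\ovl L)^{-1}\wedge c_1(\ovl L).
\]
Subtracting, the $\Td(\ovl L)^{-1}\wedge \delta_X$ terms cancel, so the right-hand side reduces to $c_1(\ovl L)\cdot \Td(\ovl L)^{-1}$. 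It remains to invoke the formal identity $x\cdot \Td(x)^{-1}=1-e^{-x}$ in the power-series ring, applied with $x=c_1(\ovl L)$, giving exactly $1-e^{-c_1(\ovl L)}$ and hence matching the left-hand side.

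The only points requiring care are the sign convention in Poincar\'e--Lelong (fixed by checking, e.g., a trivially metrized line bundle on $\mbb C$ with $\|s\|^2=|z|^2$, which yields $dd^c\log\|s\|^2=\delta_0$), and the fact that $\Td(\ovl L)^{-1}$ really is closed, which is immediate since $c_1(\ovl L)$ is a closed smooth $(1,1)$-form and $\Td(\ovl L)^{-1}$ is a polynomial in it. No genuine obstacle arises; the content of the proposition is that the explicit Bott--Chern-type current $\Td(\ovl L)^{-1}\wedge \log\|s\|^2$ double-transgresses the Riemann--Roch identity for the particular resolution at hand, and the verification is purely formal once Poincar\'e--Lelong and the projection formula are in place.
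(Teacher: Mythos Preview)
Your proof is correct and follows essentially the same route as the paper's: both compute the left-hand side as $1-e^{-c_1(\ovl L)}$ from the Koszul resolution, then use the projection formula, the Poincar\'e--Lelong equation, closedness of the Todd form, and the identity $x\cdot\Td(x)^{-1}=1-e^{-x}$ to match the right-hand side. The only cosmetic difference is that the paper manipulates $j_*(\Td(\ovl{j^*L})^{-1})$ directly into the form $dd^c(\ldots)+(1-e^{-c_1(\ovl L)})$, whereas you compute the two right-hand terms separately and subtract; the ingredients and logic are identical.
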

\begin{proof}
First let us compute 
\begin{eqnarray*}
\ch(\ovl{j_* O_X})&=&\ch([\ovl  O_Y]-[\L^\vee])\\
 &=& e^{c_1(\ovl  O_Y)}-e^{c_1(\L^\vee)}\\
 &=& 1-e^{-c_1(\L)}
\end{eqnarray*}
Let's now compute 
 
\begin{eqnarray*}
j_*(\ch(\ovl  O_X)\Td(\ovl j^*L)^{-1})&=&j_*(\Td[j^*\L]^{-1})\\
 &=& \Td(\L)^{-1}j_*(1_X)\\
 &=& \Td(\L)^{-1}\wedge \delta_X\\
&=&\Td(\L)^{-1}\wedge(dd^c\log\|s\|^2+c_1(\L))\\
&=& dd^c(\Td(\L)^{-1}\wedge \log\|s\|^2)+c_1(\L)\wedge \Td(\L)^{-1}\\
&=&dd^c(\Td(\L)^{-1}\wedge \log\|s\|^2)+(1-e^{-c_1(\L)})
\end{eqnarray*}
where we have used the projection formula, the definition of $\delta_X$, the Poincar\'e-Lelong formula, the fact that the Todd form is closed, and finally the definition of the Todd form. The proposition follows.
\end{proof}

To generalize this phenomenon, we need the following definition.
\begin{Def}(Singular Bott-Chern Current)\\
Let $i: Z\to X$ be a (regular) immersion between smooth projective complex varieties and $\ovl E$ a hermitian bundle over $Z$, we assume that we've been given a hermitian structure on $i_*E$ and on $N=N_{Z/X}$.\par A singular Bott-Chern current for this data, which we will denote $\bc(\N, \ovl{i_*E})$ is current defined up to $\im \d+\im \db$ satisfying the following differential equation $$\ch(\ovl{i_*E})=\sum_{i\geq 0} (-1)^i\ch(\E_i)=i_*(\ch \E \Td(\N)^{-1})-dd^c(\bc(\N, \ovl{i_*E}))$$
\end{Def}
\begin{Rque}
Notice that we have chosen to compare $i_*[\ch(\ovl E)\Td(\ovl N)^{-1}]$ with $\sum (-1)^i \ch(\ovl E_i)$ but there is another choice, just as natural, namely to compare $\sum (-1)^i \ch(\ovl E_i)$ with $i_*[\ch(\ovl E)\Td(i^*\ovl T_X)^{-1}\Td(\ovl{T_Z})]$ as both classes are mapped to the same cohomology class.\par
We could of course give the same definition replacing $\bc(\N, \ovl{i_*E})$ by $\bc(\ovl T_Z, \ovl T_X, \ovl{i_*E})$ which would satisfy the equation $$\ch(\ovl{i_*E})=\sum_{i\geq 0} (-1)^i\ch(\E_i)=i_*[\ch(\ovl E)\Td(i^*\ovl T_X)^{-1}\Td(\ovl{T_Z})]-dd^c(\bc(\ovl T_Z, \ovl T_X, \ovl{i_*E}))$$
If we have a singular Bott-Chern current for one of these two choices, it is easy to find a singular Bott-Chern current for the other by the following formula $$\bc(\ovl T_Z, \ovl T_X, \ovl{i_*E})=\bc(\N, \ovl{i_*E})+i_*[\ch(\E)\wt\Td^{-1}(\mcl E)\Td(\ovl T_Z)]$$
where $\mcl E$ is the exact sequence $$0\to T_X\to i^*T_Y\to N\to 0$$
Of course if that exact sequence were to be meager for the different metrics chosen, then the two Bott-Chern singular currents would agree.\par
In the situations that will be of prime interest to us, we will have metrics on $X$ and $Y$ instead of $N$, so we will use the singular Bott-Chern determined by the tangent metrics rather than the normal one, nevertheless in the literature, the formulae for singular Bott-Chern currents are usually given for a choice of metric on the normal bundle, that's why we chose to follow this convention in the remainder of this section.\par
We hope that the reader will have no problem in making the occasional switch between properties for $\bc(\N, \ovl{i_*E})$ and $\bc(\ovl T_Z, \ovl T_X, \ovl{i_*E})$
\end{Rque}
The previous proposition gives us an obvious candidate for a singular Bott-Chern current in the case of the immersion of a divisor.\par
Notice that, the fact that a singular Bott-Chern current always exists is a trivial consequence of the Grothendieck-Riemann-Roch theorem (and the $\d\db$-lemma for currents), but notice also that there are many possible choices \emph{a priori} for $\bc(\N, \ovl{i_*E})$, all differing by a cohomology class.\par
The general case of a (regular) immersion is much more complicated but Bismut-Gillet-Soul\'e in \cite{BGS1} gave an explicit formula for a singular Bott-Chern current. Just like for the case of the analytic torsion a little bit of technology (which we will not explicitly describe) is needed to be able to state the result.\par
We can state the following formula (see \cite{BGS1} for the definition of the terms, we won't need those in the following) 
\begin{Theo}( Bismut-Gillet-Soul\'{e})\\
Let $j: X\to Y$ be an immersion between compact complex manifold and $\ovl E$ a hermitian vector bundle on $X$, let us endow $N_{X/Y}$ with a hermitian metric and $i_*E$ with a hermitian structure $\E_i\to i_*E$ satisfying the condition (A) of Bismut, then $\zeta_E'(0)$ is a singular Bott-Chern Current for this data, where $$\zeta_E(s)=\frac{1}{\Gamma(s)}\int_0^\infty \frac{u^s}{u} \left[\Trs( Ne^{-A_u^2})-i_*\int_{X}\Trs( Ne^{-B^2})\right]du \label{BKC}$$
This current agrees with $-\log\|s\|^2\Td(\L)^{-1}$ in the case of the immersion of a smooth divisor.
\end{Theo}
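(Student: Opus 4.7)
The plan is to build the proof on the superconnection formalism à la Bismut-Quillen, reducing the assertion to two analytic inputs: the meromorphic continuation of $\zeta_E(s)$ past $s=0$ and the computation of a supertransgression identity on currents. First I would fix the geometric data: a resolution $\E_\bullet \to i_*E$ satisfying Bismut's assumption (A), which in particular provides near $X$ a comparison with a Koszul complex on $N^\vee$, and a Hermitian metric on $N=N_{X/Y}$. The superconnection $A_u$ on the total $\mbb Z/2$-graded vector bundle underlying $\E_\bullet$ is then the rescaling $A_u = \nabla + \sqrt{u}(V + V^*)$, where $V$ is the differential of the resolution, while $B$ is the Koszul-type superconnection living on the pullback of $\E_\bullet$ to the total space of $N$. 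Once this is in place the heat kernel $e^{-A_u^2}$ makes sense as a smoothing operator and $\Trs(Ne^{-A_u^2})$ is a smooth form on $Y$, while the subtraction $i_*\int_X \Trs(Ne^{-B^2})$ lives \emph{a priori} as a current supported on $X$.

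Next I would establish the analytic properties of the integrand. The large-$u$ behaviour is controlled by the localization of $e^{-A_u^2}$ near $X$: a Getzler-type rescaling together with the Bismut-Lebeau asymptotic expansion shows that $\Trs(Ne^{-A_u^2})$ converges, as a current on $Y$, precisely to $i_*\int_X \Trs(Ne^{-B^2})$, and that the difference decays like $u^{-1/2}$, which makes the integral converge at infinity. The small-$u$ behaviour is handled by the standard local index theorem for families applied to $A_u$ and $B$, giving asymptotic expansions in powers of $u$; subtracting off the two polar parts yields an integrand that has an expansion in $u$ with no constant term in the combined quantity, and this is what ensures that $\zeta_E(s)$ admits a meromorphic extension to $\mbb C$ with at worst a simple pole at $s=0$, and in fact is regular there because of cancellation between the two asymptotic expansions. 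The derivative $\zeta_E'(0)$ is then a well-defined current on $Y$.

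The core step, and the main obstacle, is the supertransgression identity $\tfrac{d}{du}\Trs(Ne^{-A_u^2}) = dd^c(\cdots)$ modulo pieces that, upon Mellin transform, recover the two terms of the Grothendieck-Riemann-Roch equation. Integrating this identity from $0$ to $\infty$, using the small-$u$ expansion (which produces the term $\sum (-1)^i \ch(\E_i)$ via the McKean-Singer formula applied to the finite-dimensional complex $\E_\bullet$) and the large-$u$ expansion (which produces $i_*[\ch(\E)\Td(\N)^{-1}]$ by integrating out the Koszul superconnection along the fibres of $N$, using the explicit Mathai-Quillen Thom form representative of $\Td^{-1}$), one obtains the differential equation
\[
\sum_{i\geq 0}(-1)^i\ch(\ovl E_i) - i_*[\ch(\ovl E)\Td(\ovl N)^{-1}] = -dd^c\,\zeta_E'(0),
\]
which is exactly the singular Bott-Chern equation. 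The delicate point here is the exchange of $dd^c$ with the Mellin integral and the identification of boundary terms as currents: this requires uniform estimates on the heat kernel near $X$, and it is essentially the content of the Bismut-Lebeau analysis of immersions of complex manifolds.

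Finally, for the divisor case, I would specialize the construction to the Koszul complex $0 \to \L^\vee \ds{s} \mcl O_Y \to i_*\mcl O_X \to 0$, where the supertransgression simplifies dramatically: $A_u^2$ becomes, up to a rescaling, multiplication by $u\|s\|^2$ plus curvature terms, and the integrand $\Trs(Ne^{-A_u^2})$ can be computed in closed form. Substituting into the Mellin transform, differentiating at $s=0$, and matching with the previous proposition identifying $\ch(\ovl{j_*\O_X})$, one recovers $\zeta_E'(0) = -\log\|s\|^2\,\Td(\L)^{-1}$ up to $\im\d+\im\db$, as required.
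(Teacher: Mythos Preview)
The paper does not actually prove this theorem: its entire proof consists of the single line ``See \cite[Theorem 1.9, Theorem 3.17]{BGS1}'', i.e.\ a reference to the original Bismut--Gillet--Soul\'e paper. So there is nothing to compare your argument against in the present paper itself.

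That said, your sketch is a reasonable outline of what the cited source actually does. The superconnection $A_u$, the large-$u$ localization onto $X$ via Getzler-type rescaling, the small-$u$ local index input, the Mellin-transform packaging, and the supertransgression identity are all the right ingredients, and the divisor specialization at the end is consistent with the explicit computation the paper records just before this theorem. Two cautions if you intend this as more than a sketch: first, the Bismut--Lebeau estimates you invoke for the interchange of $dd^c$ with the Mellin integral near $X$ are genuinely the hard analytic core and cannot be waved through; second, the regularity of $\zeta_E(s)$ at $s=0$ (rather than a simple pole) is not automatic from ``cancellation between the two asymptotic expansions'' and requires the precise matching of the constant terms in the small-$u$ expansions of $\Trs(Ne^{-A_u^2})$ and of the subtracted term. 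But as a roadmap to the content of \cite{BGS1}, your proposal is sound.
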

\begin{proof}
See \cite[Theorem 1.9, Theorem 3.17]{BGS1}
\end{proof}
Here again, there are a priori many possible choices for a singular Bott-Chern current, and we wish to determine uniquely a choice for it that agrees with our explicit choice for divisors. Fortunately the classification of the theories of singular Bott-Chern currents has been accomplished by Burgos and Litcanu in \cite{BL}.\par
We now turn to a brief description of their theory, to do so we need to describe a paradigmatic situation in which we will state the analog of the properties of \ref{PropAT} for a singular Bott-Chern current.\par
Let $i: X\to Y$ and $j:Y\to Z$, be regular immersions of complex varieties, we have the following exact sequence \[ \ 0\to N_{X/Y}\to N_{X/Z}\to j^*N_{Y/Z}\to 0  \label{eq:NNN}\tag{$\dagger$} \]
and assume that we have chose a hermitian structure on $i_*E$ given by a complex $\E_{\bullet}$, as $j_*$ is exact we have an exact sequence \[0\to j_*E_n\to...\to j_*E_1\to j_*i_*E \to 0 \]
if we equip all the $E_i$'s with hermitian structures, say $\ovl{E_{i,\bullet}}$ we get a global resolution of $j_*i_*E$ given by the total complex of the double complex $E_{\bullet, \bullet}$. This is the hermitian structure that $j_*i_*E$ will be equipped with.

\begin{Def}\label{PropBCC}
A \emph{theory of singular Bott-Chern classes} is an assignment of a current $\bc(\ovl N, \E, \ovl{i_*E})$ to each immersion $i: X\to Y$ between smooth projective complex varieties and a hermitian bundle $\E$ over $X$, equipped with a hermitian structure on both $N_{X/Y}$ and $i_*E$, satisfying $$\ch(\ovl{i_*E})=i_*(\ch \E \Td(\N)^{-1})-dd^c\bc(\ovl N, \E, \ovl{i_*E})$$
A theory of singular Bott-Chern Classes is said to be
\begin{enumerate}
\item natural:  if given $g: Y'\to Y$ a morphism transverse to $i$ (e.g smooth), recall that the transversality condition implies $g^*N_{X/Y}\simeq N_{X'/Y'}$, we have $$\bc(\ovl{g^*{N_{X/Y}}}, g^*\E, g^*\ovl{i_*E})=g^*\bc(\ovl N_{X/Y}, \E, \ovl{i_*E})$$
\item additive: if $$\bc(\ovl N, \E_1\oplus^\perp \E_2, \ovl{i_*E_1}\oplus^\perp\ovl{i_*E_2})=\bc(\ovl N, \E_1, \ovl{i_*E_1})+\bc(\ovl N, \E_2, \ovl{i_*E_2})$$
\item compatible with the projection formula: if $$\bc(\ovl N, \E\otimes i^*\ovl F, \ovl{i_*E}\otimes \ovl F)=\bc(\ovl N, \E_1, \ovl{i_*E_1})\ch(\ovl F)$$ where $\ovl F$ is a hermitian vector bundle on $Y$
\item transitive: if it is additive and if for every composition of closed immersion $i: X\to Y$, and $j:Y\to Z$, and for every choice of metrics on the normal bundles, we have \begin{eqnarray*}\bc(\ovl N_{X/Z}, \E, \ovl{j_*i_*E})&=&\sum_{r \geq 0}(-1)^r \bc(\ovl N_{Y/Z}, \E_r, \ovl{j_*E_r})\\
& &+j_*[\bc(\ovl N_{X/Y}, E, \ovl{i_*E})\Td(\ovl N_{Y/Z})^{-1}]+j_*i_*[\ch(\E)\wt{\Td}^{-1}(\dagger)]\end{eqnarray*}
\end{enumerate}
\end{Def}

\begin{Rque}
If the different conditions in the previous definition are only satisfied for a particular class of metric structure we will say that the corresponding theory of singular Bott-Chern is the corresponding adjective with respect to that particular choice of metrics.\par
If a theory of Bott-Chern singular currents satisfies all of the assumptions above, we will say that it is well-behaved. 
\end{Rque}
We have the following proposition 
\begin{Prop}
The theory of singular Bott-Chern Classes defined by Bismut and given for a metric satisfying condition $(A)$ by formula \ref{BKC} is well-behaved.
\end{Prop}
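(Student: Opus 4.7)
The plan is to verify each of the four well-behaved conditions in turn, leaning heavily on the construction of Bismut's current as the derivative at zero of the zeta function built from the supertraces $\Trs(Ne^{-A_u^2})$ and $\Trs(Ne^{-B^2})$. The general strategy is that each property already holds at the level of the integrand (the supertraces of the relevant superconnections), and then passing to the Mellin transform, analytic continuation and derivative at $s=0$ preserves the identity; the delicate point in each case is controlling the small-$u$ and large-$u$ asymptotics that govern the analytic continuation.

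First I would dispense with naturality and additivity. Given $g\colon Y'\to Y$ transverse to $i$, the normal bundle, the resolution $\ovl E_\bullet$ and the superconnections $A_u$, $B$ all pull back to $Y'$ (this uses that transversality ensures $g^*N_{X/Y}\simeq N_{X'/Y'}$ and that the construction of $A_u$ depends only on the local data near $X$). The supertraces and the current $i_*\int_X\Trs(Ne^{-B^2})$ are therefore natural under $g^*$, so the zeta function and its derivative at zero are too. For additivity, an orthogonal direct sum of metric structures on $i_*E_1\oplus i_*E_2$ produces an orthogonal direct sum of superconnections; since $\Trs$ is additive and $e^{-(A^1\oplus A^2)^2}=e^{-(A^1)^2}\oplus e^{-(A^2)^2}$, the integrand decomposes as a sum and so does $\zeta'_E(0)$. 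Compatibility with the projection formula is similar: tensoring $\ovl E$ by $i^*\ovl F$ corresponds to tensoring the resolution $\ovl E_\bullet$ by $\ovl F$, which tensors the superconnection and contributes a factor of $\ch(\ovl F)$ in the supertrace (because $\ovl F$ carries the trivial superconnection and $\ch$ is the supertrace of $e^{-\nabla^2_F}$); multiplicativity of $\ch$ and linearity of the Mellin transform conclude.

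The main obstacle is transitivity, which is genuinely the hardest property. Given composable immersions $X\xrightarrow{i}Y\xrightarrow{j}Z$, one must relate the singular Bott-Chern current associated to the composed immersion $j\circ i$ (constructed from the resolution $\bigoplus E_{\bullet,\bullet}$ of $(j\circ i)_*E$) to the sum of the current for $i$ (pushed forward by $j$ and corrected by $\Td(\ovl N_{Y/Z})^{-1}$), the currents for each $j\colon (Y,\ovl E_r)\to Z$, and the secondary Todd contribution coming from the exact sequence \eqref{eq:NNN}. My plan is to construct a one-parameter family of superconnections interpolating between the superconnection associated to $j\circ i$ and the iterated one coming from $i$ followed by $j$. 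Differentiating the integrand along this deformation and applying the transgression machinery of Bismut-Gillet-Soulé produces a total $dd^c$-exact term plus the defect measured by the secondary Todd class $\wt{\Td}^{-1}$ of \eqref{eq:NNN}, exactly as in the proof of the analogous transitivity for higher analytic torsion in Proposition \ref{PropAT}(4). The main technical care goes into the uniform estimates needed to exchange $\zeta'_E(0)$ with the deformation derivative; here I would cite, rather than reprove, the core local-index-theorem estimates of \cite{BGS1}, since the argument is parallel to the submersion case.

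In short, naturality, additivity and the projection formula are direct consequences of the naturality, additivity and multiplicativity of the supertrace applied to the explicit integrand in \eqref{BKC}, while transitivity reduces to the double-transgression argument of Bismut-Gillet-Soulé applied to a family of superconnections interpolating between the two natural resolutions of $(j\circ i)_*E$. The agreement with $-\log\|s\|^2\Td(\ovl L)^{-1}$ for the immersion of a smooth divisor, already noted in the statement of the theorem, shows that this well-behaved theory specializes correctly in the codimension-one case, which is the compatibility needed to match the universal theory to be constructed in Section 3.
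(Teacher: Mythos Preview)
The paper does not prove this statement at all; it simply cites \cite[Prop 9.28]{BL}. Your proposal instead attempts a direct verification from the explicit zeta-function construction, which is a genuinely different route.

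Your sketches of naturality, additivity and the projection-formula compatibility are sound: these are indeed formal consequences of the corresponding properties of supertraces and superconnections, and the analytic continuation introduces no new difficulty for identities that hold at the level of the integrand for all $u$. For transitivity, however, your outline is not a proof. The deformation-of-superconnections argument you describe is the right shape, but the actual verification is substantial: one must construct the interpolating family precisely, establish uniform small-$u$ asymptotics along the family so that the Mellin transforms and their derivatives at $s=0$ vary continuously, and identify the boundary term with the secondary $\wt{\Td}^{-1}$ of \eqref{eq:NNN}. Saying ``I would cite, rather than reprove, the core local-index-theorem estimates of \cite{BGS1}'' is effectively what the paper does, except that the paper cites a source where the full transitivity statement is actually proved in the form needed here (Burgos--Litcanu's axiomatic treatment), whereas \cite{BGS1} does not contain a transitivity theorem in this form. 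If you want to keep your direct approach, you should replace the vague deformation sketch by a pointer to a place where transitivity for the Bismut current is genuinely established; otherwise, citing \cite{BL} as the paper does is the cleaner choice.
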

\begin{proof}
This is \cite[Prop 9.28]{BL}
\end{proof}
Therefore the singular Bott-Chern current constructed by Bismut-Gillet-Soul\'{e} (that we will denote $\bc^{BGS}$) is an example of a well-behaved theory of singular Bott-Chern current, but it is far from being the only one. Indeed we have
\begin{Theo}
For any choice of a real additive genus $S$ there exists a unique theory of well behaved Bott-Chern singular currents satisfying $$\bc(\E, \ovl{i_*E}, \N)=\bc^{BGS}(\E, \ovl{i_*E}, \N)+i_*[\ch(E)\Td(N)S(N)]$$
\end{Theo}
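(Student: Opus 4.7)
My plan is to split the argument into existence and uniqueness. The main obstacle in both directions will be the compatibility of the correction term with the transitivity axiom along the normal-bundle exact sequence $(\dagger)$.

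For existence, given a real additive genus $S$ I would define $\bc := \bc^{BGS} + i_*[\ch(\E)\Td(\N)S(\N)]$ and verify the four conditions of the previous definition one by one. The defining differential equation is inherited from $\bc^{BGS}$ since the correction is the pushforward of a closed Chern--Weil form, hence $dd^c$-closed. Naturality under a transverse base change $g\colon Y' \to Y$ follows from the naturality of characteristic forms combined with $g^* i_* = i'_* g'^*$; additivity in $\E$ follows from the additivity of $\ch$; and projection-formula compatibility uses the ordinary projection formula for $i_*$ combined with $\ch(\E \otimes i^*\ovl F) = \ch(\E) \cdot i^*\ch(\ovl F)$.

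The delicate step is transitivity for a composition $X \ds{i} Y \ds{j} Z$. I would compare the correction term for $j\circ i$ with the sum of the correction terms appearing on the right-hand side of the transitivity relation applied to $\bc^{BGS}$. The key input is that, because $S$ is an additive genus and $\Td$ is multiplicative, the class $\Td\cdot S$ decomposes along $(\dagger)\colon 0\to \N_{X/Y}\to \N_{X/Z}\to j^*\N_{Y/Z}\to 0$, modulo $\im\d+\im\db$, as
$$\Td(\N_{X/Z}) S(\N_{X/Z}) \equiv \Td(\N_{X/Y}) S(\N_{X/Y}) \Td(j^*\N_{Y/Z}) + \Td(\N_{X/Y}) \Td(j^*\N_{Y/Z}) S(j^*\N_{Y/Z}),$$
up to a $dd^c$-exact term provided by the secondary Bott--Chern forms of $\Td$ and of $S$ along $(\dagger)$. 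Pushing this identity forward by $j_*i_*$ and applying projection formulas in each direction reassembles the correction precisely into the three pieces demanded by the transitivity axiom.

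For uniqueness, I would let $\bc'$ be a second well-behaved theory and set $T := \bc' - \bc^{BGS}$. Then $T$ is $dd^c$-closed, natural, additive, projection-compatible, and satisfies the transitivity axiom with vanishing primary contribution. Naturality and additivity force $T$ to be linear in $\ch(\E)$ and to depend only on universal characteristic data attached to $\N$, so $T = i_*[\ch(\E)\Phi(\N)]$ for some natural closed form $\Phi$. Specialising to smooth divisor immersions and applying the splitting principle identifies $\Phi$ with a universal symmetric power series in the Chern roots of $\N$; finally the transitivity axiom applied to iterated divisor immersions forces this power series to be of the shape $\Td\cdot S$ for a uniquely determined real additive genus $S$, completing the classification.
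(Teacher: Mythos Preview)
The paper does not prove this statement itself; its ``proof'' is a one-line citation to \cite[7.14]{BFL}. Your proposal, by contrast, outlines a direct verification of the axioms. The overall strategy (define the modified class, check the differential equation and the four properties for existence; take the difference of two theories and pin down its shape for uniqueness) is exactly the right one, and your treatment of the differential equation, naturality, additivity, and the projection formula is fine.

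The gap is in the transitivity step. Write $\Phi(\N)$ for the correction factor, so that the modified theory is $\bc^{BGS}+i_*[\ch(\E)\Phi(\N)]$. Plugging into the transitivity axiom and using $\sum_r(-1)^r\ch(\E_r)\equiv i_*[\ch(\E)\Td(\N_{X/Y})^{-1}]$ together with the projection formula, the condition one actually obtains (modulo $\im\d+\im\db$, on $X$) is
\[
\Phi(\N_{X/Z})=\Td(\N_{X/Y})^{-1}\,\Phi(i^*\N_{Y/Z})+\Phi(\N_{X/Y})\,\Td(i^*\N_{Y/Z})^{-1}.
\]
This is equivalent to the assertion that $\Phi\cdot\Td$ is additive, i.e.\ $\Phi=\Td^{-1}\cdot S$ for an additive genus $S$, \emph{not} $\Phi=\Td\cdot S$. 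Your displayed decomposition of $\Td(\N_{X/Z})S(\N_{X/Z})$ is correct as an identity of forms, but when you push it forward and compare with the transitivity axiom you will find the $\Td(\N_{Y/Z})^{-1}$ factor in the second term of that axiom produces exactly the wrong powers of $\Td$ to match. (Also note the exact sequence $(\dagger)$ lives on $X$, so the restriction of $\N_{Y/Z}$ is $i^*\N_{Y/Z}$, not $j^*\N_{Y/Z}$.) The same computation governs your uniqueness argument: the transitivity constraint forces $\Phi\cdot\Td$ to be additive, not $\Phi\cdot\Td^{-1}$.

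So either the statement in the paper carries a typo (the formula in the cited source \cite{BFL} has $\Td^{-1}$), or your transitivity verification needs to be redone; in either case the sentence ``reassembles the correction precisely into the three pieces'' is where the argument breaks, and you should carry out that comparison explicitly rather than asserting it.
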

\begin{proof}
This is \cite[7.14]{BFL}
\end{proof}
\begin{Rque}
If $\Lambda$ is a ring, a genus over $\Lambda$ is simply a power series over $\Lambda$. We will say that a genus $g$ is multiplicative (resp. additive) if we extend it as a power series given by $$g(T_1,...,T_n)=g(T_1)...g(T_n) (\text{resp. } g(T_1)+...+g(T_n))$$

We can associate to such a genus over $\Lambda$ a characteristic form with coefficients in $\Lambda$, which will be also called a genus. 
\end{Rque}
Up to this point we have considered two kinds of secondary objects, the (higher) analytic torsion form for K\"ahler fibrations $\pi: X\to Y$, and the singular Bott-Chern classes for immersions $i: Y\to Z$, anticipating just a bit on the following section, we will see that these two secondary objects help us define a direct image in arithmetic weak $\wh{K}$-theory, each construction will assure functoriality of this direct image with respect to the kind of morphism it is defined with, i.e we will have $(ij)_*=i_*j_*$ for composition of closed immersions, and we will have $(\pi\pi')_*=\pi_*\pi'_*$ for composition of K\"ahler fibrations.\par
In order to have a general functoriality property for arbitrary projective morphism, one needs to impose a compatibility condition between analytic torsion, and singular Bott-Chern classes. Burgos, Freixas, and Litcanu have studied that question in \cite{BFL} and it turns out that it can be done as soon as we have compatibility for them in a mild situation.\par
Let's consider the following diagram
$$\xymatrix{
  \mbb P^n \ar[r]^{\Delta} \ar[rd]^{id} &\mbb P^n\times \mbb P^n\ar[r]^{p_1}\ar[d]^{p_2}& \mbb P^n\ar[d]\\
     & \mbb P^n\ar[r]& \Spec k}$$
     
     If we want to achieve functoriality in $\wh{K}$-theory, the least we can ask is that $p_{2,*}\Delta_*=\Id$, let us write down explicit equations for this condition to be true for the trivial bundle over $\mbb P^n$.\par
     We have an explicit resolution of $\Delta_*\mcl O_{P_n}$ given by the Koszul complex
     $$0\to \bigwedge^r (p_2^*Q\otimes p_1^*\mcl O(1)) \to ...\to \bigwedge^2(p_2^*Q\otimes p_1^*\mcl O(1))\to (p_2^*Q\otimes p_1^*\mcl O(1)) \to \mcl O_{\mbb P^n\times \mbb P^n}\to \Delta_*\mcl O_X\to 0$$
     where $Q$ is the universal subbundle on $\mbb P^n$.
     Now if we choose a trivial metric on the trivial bundle of rank $n+1$ on $\mbb P^n$ we get a Fubini-Study metric on $\mcl O(1)$, and on $T_{\mbb P^n}$ and also on $T_{\mbb P^n\times \mbb P^n}$, moreover, the universal exact sequence $$0\to Q\to q^*\mcl O_{\mbb P^n}^{n+1}\to \mcl O(1)$$
     enables us to equip $Q$ with a metric too.\par
Therefore we have a metric structure on $\Delta_* \mcl O_{\mbb P^n}$, now let us define $p_{2*} \ovl{\Delta_* \mcl O_{\mbb P^n}}$ as $\sum (-1)^i\ovl{p_{2*}\bigwedge^r (p_2^*\ovl Q\otimes p_1^*\ovl{\mcl O(1)})}^{L^2}$ where we use the structure of K\"ahler fibration defined by the Fubini Study metrics over $\mbb P^n$ and $\mbb P^n\times \mbb P^n$, of course as the normal bundle to the diagonal immersion is naturally isomorphic to the tangent bundle of $\P^n$, we also have a metric on it.\par
We need to compare this class in $\wh{K}$-theory, with the class of $\mcl O_{\mbb P^n}$ with trivial metric; in order to have compatibility of the two kinds of secondary objects that have been added, the difference between them must be zero, and this justifies the following definition extracted form \cite[Def 6.2]{BFL}
\begin{Def}
We say that a well behaved theory of Bott-Chern singular class is compatible with a well behaved theory of analytic torsion if the following identity holds for the situation previously defined \begin{eqnarray*}0&=&\sum (-1)^r T(\mbb P^n\times \P^n, \P^n, \ovl{p_{2*}\bigwedge^rp_2^*\ovl Q\otimes p_1^*\ovl{\mcl O(1)})}^{L^2})\\
 & & +p_{2 *}[\bc(\ovl N_{\P^n/\P^n\times \P^n}, \ovl{i_*\mcl O_{\P^n}}). \Td(\ovl{T_{\P^n\times \P^n/\P^n}})]\end{eqnarray*}
\end{Def}
The following proposition is due to Burgos, Freixas and Litcanu
\begin{Theo}
For any choice of well-behaved theory of singular Bott-Chern currents for closed immersions, there exists a well-behaved theory of higher analytic torsion classes compatible with it.\end{Theo}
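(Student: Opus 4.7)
The plan is to exploit the classification of well-behaved theories by additive genera, parallel to what has been established for singular Bott-Chern currents. By the theorem stated just above, the given well-behaved theory of singular Bott-Chern currents can be written uniquely as
$$\bc(\E,\ovl{i_*E},\N) = \bc^{BGS}(\E,\ovl{i_*E},\N) + i_*[\ch(\E)\Td(\N)S(\N)]$$
for some real additive genus $S$. The strategy is then to produce, by an analogous ansatz, a well-behaved theory $T$ of higher analytic torsion, obtained from Bismut–Köhler's $T^{BK}$ by addition of a suitable genus term, and to show that the compatibility condition for the pair $(T,\bc)$ on the diagonal of $\mathbb{P}^n$ forces a universal linear equation between the genus $S$ (given) and the genus $R$ (to be determined).

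The first step is to establish the analog of the classification theorem for analytic torsion: any two well-behaved theories of higher analytic torsion differ by a term of the form $\pi_*[\ch(\E)\Td(\ovl{T_{X/Y}})R(\ovl{T_{X/Y}})]$ for some real additive genus $R$, and conversely any such additive genus produces a well-behaved theory starting from $T^{BK}$. Additivity, naturality and compatibility with the projection formula for the correction term follow from standard properties of characteristic forms, so the classifying data is reduced to a single additive genus.

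The second step is to use the known compatibility of the reference pair $(T^{BK},\bc^{BGS})$ in the diagonal/Koszul situation: this is, in essence, one of the analytic inputs behind the arithmetic Riemann–Roch theorem (via Bismut's immersion formula comparing the Bismut–Köhler torsion and the Bismut–Gillet–Soulé singular Bott-Chern current). Taking this identity as a known starting point, one writes the candidate $T = T^{BK} + \pi_*[\ch(\E)\Td(\ovl{T_{X/Y}}) R(\ovl{T_{X/Y}})]$ and substitutes both the $S$-correction of $\bc$ and the $R$-correction of $T$ into the compatibility relation on $\Delta:\mathbb{P}^n\to\mathbb{P}^n\times\mathbb{P}^n$. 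Using the projection formula, the explicit Koszul resolution of $\Delta_*\mathcal{O}_{\mathbb{P}^n}$ recalled above, and the canonical isometry $N_{\mathbb{P}^n/\mathbb{P}^n\times\mathbb{P}^n}\simeq T_{\mathbb{P}^n}$, the terms involving $S$ and the terms involving $R$ decouple from the reference compatibility and one obtains a single universal linear identity relating $R$ to $S$.

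The main obstacle is the last step, namely solving this equation in $R$. The subtle point is that the identity must be shown to have a solution uniformly over all $n$, and that the Chern roots of the Fubini–Study tangent bundle of $\mathbb{P}^n$ are "generic enough" to determine $R$ as a formal power series: intuitively, varying $n$ probes arbitrarily many symmetric functions of Chern roots, so the identity between genera propagates from the $\mathbb{P}^n$-case to the universal level. Once $R$ is produced, it remains only to check that the resulting theory $T$ is well-behaved, which is automatic from the first step since each defining property is additive in the genus correction, and that $(T,\bc)$ satisfies the compatibility condition, which is precisely the equation that determined $R$.
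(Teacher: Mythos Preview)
The paper does not prove this statement at all: it simply cites \cite[Thm 7.7]{BFL}. So there is nothing to compare at the level of the paper's own argument, and your sketch is really an outline of the Burgos--Freixas--Litcanu proof rather than an alternative to anything in the present paper.

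That said, your sketch contains a genuine inaccuracy that you should fix. You assert that the reference pair $(T^{BK},\bc^{BGS})$ is compatible in the diagonal/Koszul situation, and that the remaining work is to solve a linear equation matching the $S$-correction of $\bc$ with an $R$-correction of $T$. But $(T^{BK},\bc^{BGS})$ is \emph{not} compatible: the very next theorem in the paper (Theorem~\ref{Rgenus}, due to Bismut and Burgos--Freixas--Litcanu) states that the analytic torsion compatible with $\bc^{BGS}$ is $T^{BK}$ \emph{minus} the $R$-genus term. The appearance of the Gillet--Soul\'e $R$-genus is precisely the measure of the failure of compatibility of the naive reference pair. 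This does not wreck your strategy---one can still take a known pair with a \emph{known} defect and solve the shifted linear equation---but your ``second step'' as written is false, and the equation you arrive at in the ``third step'' will be off by exactly this $R$-genus contribution. If you rewrite the argument starting from a genuinely compatible reference pair (e.g.\ the homogeneous one of \cite{BFL}, or $T^{BK}$ corrected by the $R$-genus paired with $\bc^{BGS}$), the rest of your outline is the standard route.
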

\begin{proof}
This is \cite[Thm 7.7]{BFL}
\end{proof}
\begin{Rque}
In \cite{BFL} they use a different normalization of the singular Bott-Chern current of Bismut-Gillet-Soul\'e, due to the fact that they work with the Deligne complex, instead of general currents, therefore they have to multiply the singular Bott-Chern current by $-\frac{1}{2}$ in order to have compatibility.
\end{Rque}
Recall the the $R$-genus of Bismut-Gillet-Soul\'e is the additive characteristic class determined by the following equation $$R(L)=\sum_{m \text{ odd}}(2\zeta(-m)+\zeta'(-m)(1+\frac{1}{2}+...+\frac{1}{m}))\frac{c_1(L)^m}{m!}$$
\begin{Theo}\label{Rgenus}(Bismut; Burgos-Freixas-Litcanu)\\
The theory of analytic torsion for K\"ahler fibrations associated to the singular Bott-Chern current $\bc^{BGS}$ is given by $$T(\ovl{T_X}, \ovl{T_Y}, \E)=T^{BK}(\ovl{T_X}, \ovl{T_Y}, \E)-\int_{X/Y}\ch(E)\Td(T_{X/Y})R(T_{X/Y})$$
where $R$ is the $R$-genus of Bismut-Gillet-Soul\'{e}.
\end{Theo}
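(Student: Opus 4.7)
The plan is to verify that the right-hand side of the asserted formula defines a well-behaved theory of generalized analytic torsion compatible with $\bc^{BGS}$, and then to invoke the uniqueness part of the preceding theorem to identify it with the theory of analytic torsion associated to $\bc^{BGS}$. Throughout, denote the right-hand side by $\tau(\ovl{T_X}, \ovl{T_Y}, \E)$.

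First I would verify that $\tau$ satisfies the defining $dd^c$-equation of a theory of generalized analytic torsion for submersions. The correction term $\int_{X/Y}\ch(\E)\Td(\ovl{T_{X/Y}})R(\ovl{T_{X/Y}})$ is the fiber integral of a sum of real, $d$-closed, pure type $(p,p)$ smooth characteristic forms on $X$; any such form is separately $\d$- and $\db$-closed, and fiber integration commutes with $\d$ and $\db$, so the correction is itself $dd^c$-closed on $Y$. Consequently $\tau$ satisfies the same $dd^c$-equation as $T^{BK}$.

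Next I would check that $\tau$ inherits from $T^{BK}$ the three axioms of well-behavedness listed in Proposition \ref{PropAT}: naturality reduces to the base-change compatibility of the characteristic forms $R(\ovl{T_{X/Y}})$ and $\Td(\ovl{T_{X/Y}})$ together with the base-change property of integration along the fiber; additivity follows from additivity of $\ch$ in the argument $\E$; compatibility with the projection formula follows from the identity $\ch(\E\otimes \pi^*\ovl F)=\ch(\E)\,\pi^*\ch(\ovl F)$ together with the classical projection formula $\int_{X/Y}(\alpha\wedge\pi^*\beta)=(\int_{X/Y}\alpha)\wedge\beta$. In each case the correction term itself satisfies the axiom, and since $T^{BK}$ is well behaved, so is $\tau$.

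The main obstacle is the verification that $\tau$ is compatible with $\bc^{BGS}$, which by definition reduces to checking the vanishing of an explicit cocycle on the universal diagram $\P^n\ds{\Delta}\P^n\times\P^n\ds{p_2}\P^n$ equipped with the Fubini-Study metrics. Substituting $\tau$ into the compatibility identity expresses the defect as the sum of the corresponding defect for $T^{BK}$ and a universal characteristic-class expression built from the Chern character of the Koszul resolution of the diagonal, the Todd form of $T_{\P^n\times\P^n/\P^n}$, and the $R$-genus of $T_{\P^n}$. The crux is the assertion that this universal expression coincides exactly with the obstruction that prevents $T^{BK}$ itself from being compatible with $\bc^{BGS}$; this is precisely Bismut's identification of the $R$-genus as the anomaly arising in the heat-kernel asymptotics that compare the Bismut-K\"ohler torsion of the Koszul complex on projective space with the Bismut-Gillet-Soul\'e singular Bott-Chern current. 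Granting this computation, compatibility of $\tau$ with $\bc^{BGS}$ is established on the test diagram, the preceding theorem then upgrades it to all submersions, and its uniqueness assertion forces $\tau$ to equal the theory of analytic torsion associated to $\bc^{BGS}$.
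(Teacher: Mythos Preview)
The paper's own proof consists entirely of citations to \cite[Thm 7.14]{BFL} and \cite[Thm 0.1 and 0.2]{Bismut1}; your proposal is a faithful unpacking of the strategy those references implement, and the overall architecture you describe---verify the $dd^c$-equation, check well-behavedness of the correction, reduce compatibility to the diagonal test diagram, identify the defect there with Bismut's $R$-genus computation, conclude by uniqueness---is correct.

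Two points deserve flagging. First, you invoke ``the uniqueness part of the preceding theorem,'' but as stated in this paper that theorem asserts only existence of a compatible torsion theory; the uniqueness you need is indeed in the BFL framework (it follows from their classification of well-behaved torsion theories by an additive genus, parallel to the classification of singular Bott-Chern theories quoted just before), but it is not recorded here, so you are implicitly importing a result the paper does not state. Second, you correctly isolate the genuine analytic content as Bismut's computation that the obstruction to compatibility of $T^{BK}$ with $\bc^{BGS}$ on the projective-space test diagram is exactly the $R$-genus term; this is the substance of \cite{Bismut1}, and since you explicitly grant it rather than reprove it, your argument is at the same depth as the paper's citation---an honest outline rather than an independent proof.
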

\begin{proof}
This is the conjunction of \cite[Thm 7.14]{BFL} and \cite[Thm 0.1 and 0.2]{Bismut1}
\end{proof}
From now on we will only work with the theory of singular Bott-Chern current of Bismut-Gillet-Soul\'{e}, whose Bott-Chern current associated to the immersion of a divisor is given by $-\log\|s\|^2\Td(\L)^{-1}$, which we will simply denote $\bc$ and with the analytic torsion for submersions compatible to it, which we will simply denote $T$.

\subsection{Weak $\Gw$ and $\Kw$-theory}
We're now able to define a weak arithmetic analog of $K$-theory, and of $G$-theory. Recall that $G_0(X)\simeq K_0(X)$ for regular schemes, and \emph{a fortiori} for smooth projective varieties over a number field. During this part, we will fix $\X$ an arithmetic variety.
\begin{Def}(Arithmetic variety)\\
Let $X$ be a smooth algebraic variety over a field $k$, such that its (holomorphic) tangent bundle is equipped with a hermitian K\"ahler metric $\ovl{T_X}$, that is invariant under complex conjugation.
\end{Def}
\begin{Rque} We will often denote $d_X$ the dimension of $X$ as a complex manifold, moreover, note that on the algebraic variety $\Spec k$ there is only one metric. We will thus simply denote $\Spec k$ for the arithmetic variety $\ovl{\Spec k}$ equipped with that metric.
\end{Rque}
\begin{Rque}
It is convenient for our purpose to have finite coproducts in our category of arithmetic varieties, if $\wh{H}$ is a hermitian Borel-Moore functor (see \ref{HBM}) defined only for arithmetic varieties, we can extend it to the category of (finite) disjoint union of arithmetic varieties by setting $$\wh H(\coprod_{i=1}^n \X_i)=\bigoplus_{i=1}^n \wh H(\X_i)$$
in that case the operations defined are additively-extended. This convention will be followed in the rest of the paper, in particular we will only define hermitian Borel-Moore functors on arithmetic varieties, the extension to a finite disjoint of arithmetic varieties being implicit.
\end{Rque}
\begin{Def}(Weak $\Gw$-theory)\\
We set  $\Gw_0(\X)$ to be the free abelian group $\bigoplus \mbb Z[\ovl{\mcl F}]\times \Dt(X)$ where $\ovl{\mcl F}$ is a quasi-isometry class of hermitian coherent sheaves over $X$, subject to the following relations: for every exact sequence $\mcl E: 0\to \ovl{\mcl F''}\to \ovl{\mcl F}\to \ovl{\mcl F'}\to 0 $, $$[\ovl{\mcl F},0]=[\ovl{\mcl F''},0]+[\ovl{\mcl F'},0]+[0,\cht(\mcl E)]$$
\end{Def}
Similarly we define 
\begin{Def}(Weak $\Kw$-theory)
We set  $\Kw_0(\X)$ to be the free abelian group $\bigoplus \mbb Z[\ovl{E}]\times \Dt(X)$ where $\E$ is an isometry class of hermitian vector bundle over $X$, subject to the following relations: for every exact sequence $\mcl E: 0\to \ovl{E''}\to \ovl{E}\to \ovl{E'}\to 0 $, $$[\ovl{E},0]=[\ovl{E''},0]+[\ovl{E'},0]+[0,\cht(\mcl E)]$$
\end{Def}
\begin{Rque}
As in the case of weak arithmetic Chow theory, we have maps $$\zeta: \Gw_0(\X)\to G_0(X); \  \zeta: \Kw_0(\X)\to K_0(X)$$ and maps $$a: \Dt(X)\to \Gw_0(\X) ; \  a: \Dt(X)\to \Kw_0(\X) $$
this last map will not be functorial with respect to push-forwards.
\end{Rque}
For the case of smooth projective varieties it makes no difference to work with either $\Kw$ or $\Gw$-theory, as the next proposition shows
\begin{Theo}\label{PoincareIso}
Let $X$ be a smooth variety, then we have a natural isomorphism $$\Gw_0(\X) \ds \sim \Kw_0(\X)$$
\end{Theo}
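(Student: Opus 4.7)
The plan is to construct mutually inverse maps
\[ \psi: \Kw_0(\X) \to \Gw_0(\X), \qquad \phi: \Gw_0(\X) \to \Kw_0(\X). \]
The map $\psi$ is tautological: a hermitian vector bundle $\ovl E$ is in particular a hermitian coherent sheaf (with itself concentrated in degree zero as its metric resolution), and the compatibility of $\wt\ch$ with classical Bott-Chern forms in Theorem \ref{Bott-Chern} guarantees that a $\Kw_0$-defining relation coming from an exact sequence of hermitian bundles is also a valid $\Gw_0$-relation. For $\phi$ I would set $\phi([0,\alpha])=[0,\alpha]$, and for a hermitian sheaf $\ovl{\mcl F}$ with chosen metric resolution $\ovl E_\bullet \to \mcl F$,
\[ \phi([\ovl{\mcl F},0]) \;:=\; \sum_{i\geq 0} (-1)^i [\ovl E_i, 0]. \]

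The crux is the well-definedness of $\phi$. The key lemma I would prove first is that for any meager complex $\ovl M_\bullet$ of hermitian vector bundles, $\sum(-1)^i[\ovl M_i,0]=0$ in $\Kw_0(\X)$. Since meager implies acyclic, splitting $\ovl M_\bullet$ into short exact sequences $\mcl E_i$ (after choosing auxiliary metrics on the kernels) and telescoping the defining relations of $\Kw_0$ gives
\[ \sum(-1)^i[\ovl M_i,0] \;=\; [0,\ -\wt\ch(\ovl M_\bullet)], \]
where the devissage axiom of Theorem \ref{Bott-Chern} identifies the right-hand form; it vanishes by the fact, established inside the proof of Lemma \ref{NoDepend}, that meager complexes have vanishing secondary Bott-Chern form. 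Next, given two resolutions $\ovl E_\bullet,\ \ovl E'_\bullet$ in the same quasi-isometry class, Proposition \ref{MaigreTight} provides a common dominating $\ovl H_\bullet$ with tight arrows $f,g$ such that $\ovl{\cone}(f)\oplus\ovl{\cone}(g)[1]$ is meager. A degree-wise computation using $\cone(f)_n=E_n\oplus H_{n-1}$ and the usual shift convention shows
\[ \sum(-1)^i\bigl[(\ovl{\cone}(f)\oplus\ovl{\cone}(g)[1])_i,0\bigr] \;=\; \sum(-1)^i[\ovl E_i,0] - \sum(-1)^i[\ovl E'_i,0], \]
so the key lemma forces the two sums to coincide.

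To verify that $\phi$ respects the exact-sequence relation of $\Gw_0(\X)$, I would apply the dominating-resolutions lemma to a short exact sequence $0\to\ovl{\mcl F''}\to\ovl{\mcl F}\to\ovl{\mcl F'}\to 0$ to produce a short exact sequence of metric resolutions $0\to\ovl E''_\bullet\to\ovl H_\bullet\to\ovl E'_\bullet\to 0$ with $\ovl H_\bullet$ a resolution of $\mcl F$. By the independence just proved, $\phi([\ovl{\mcl F},0])$ may be computed using $\ovl H_\bullet$; since each fixed degree furnishes a short exact sequence of hermitian vector bundles, summing the corresponding $\Kw_0$-relations with alternating signs yields, via devissage once more, exactly the form $\cht$ attached to the original sheaf sequence.

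Finally, $\phi\circ\psi=\Id$ is immediate from the definition applied to a one-term resolution, and $\psi\circ\phi([\ovl{\mcl F},0])=[\ovl{\mcl F},0]$ follows by inductively splitting $\ovl E_\bullet\to\ovl{\mcl F}$ into short exact sequences and applying the $\Gw_0$-relations, the total secondary contribution collapsing to $\wt\ch(\ovl E_\bullet\to\ovl{\mcl F})=0$ by the normalization condition of Theorem \ref{Bott-Chern}. The main technical obstacle is the vanishing lemma for alternating sums of meager complexes in $\Kw_0(\X)$; everything else is formal bookkeeping inside the acyclic calculus of Proposition \ref{AcyCal}.
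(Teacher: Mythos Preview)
Your proposal is correct and follows essentially the same strategy as the paper: build $\phi$ via alternating sums of a defining resolution, establish independence of the resolution through the vanishing of $\sum(-1)^i[\ovl M_i,0]$ for meager $\ovl M_\bullet$ in $\Kw_0(\X)$, and check compatibility with the exact-sequence relation by passing to dominating resolutions and invoking devissage. Your explicit justification of the key lemma (telescoping to $[0,-\wt\ch(\ovl M_\bullet)]$ and then using that meager complexes have vanishing secondary class) is in fact more detailed than the paper's, which simply asserts the vanishing ``by the very definition of $\Kw_0(\X)$''.

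One point deserves care. In the exact-sequence step you write that ``by the independence just proved, $\phi([\ovl{\mcl F},0])$ may be computed using $\ovl H_\bullet$'', but the horseshoe resolution $\ovl H_\bullet$ you construct need not lie in the quasi-isometry class of the originally given resolution $\ovl F_\bullet$ of $\ovl{\mcl F}$, so the independence you established does not literally apply. What is true is that $\sum(-1)^i[\ovl F_i]-\sum(-1)^i[\ovl H_i]=[0,\wt\ch(\ovl H_\bullet\to\ovl{\mcl F})]$ in $\Kw_0(\X)$, again by the telescoping argument; the paper makes exactly these correction terms explicit (for both $\mcl F$ and $\mcl F''$), and it is the combination of these with devissage on the augmented double complex that produces $\cht(\mcl E)$. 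Your phrase ``via devissage once more'' is presumably meant to absorb this, but the intermediate step should be stated. With that adjustment your argument and the paper's coincide; your final verification of $\psi\circ\phi=\Id$ is equivalent to the paper's surjectivity-of-$\psi$ argument.
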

\begin{proof}We have an obvious map form $\Kw_0(\X)$ to $\Gw_0(\X)$ that maps a hermitian bundle $\E$ to the same bundle equipped with the hermitian structure $0\to \E \ds \Id E \to 0$, and that maps $[0,g]$ to itself. Let us construct a map from $\Gw_0(\X)$ to $\Kw_0(\X)$, let $\E_\bullet \to \mcl F$ be a hermitian coherent sheaf, we map $\ovl {\mcl F}$ to $\sum (-1)^i [\E_i]$.\par
This map is well defined, if $\E'_\bullet \to \mcl F$ is another hermitian structure on $\mcl F$ quasi-isometric to the first one, then we have a diagram of resolutions $$\xymatrix{
   &\ovl H_\bullet \ar[rd] \ar[ld] &\\
     \ovl E_\bullet \ar[rd] & & \ovl E'_\bullet \ar[ld]\\
     &\mcl F& }$$
		where the two top arrows are tight.\par		Now, by the very definition of $\Kw_0(\X)$, if $\ovl{M}_{\bullet}$ is a meager complex of vector bundles then $\sum (-1)^i\ovl M_i=0$, therefore the complex $\cone(\ovl H_\bullet, \E_\bullet)[1]\oplus \cone(\ovl H_\bullet, \ovl E'_\bullet)$ (which is a hermitian structure on the zero sheaf over $X$) being meager gives us the following identity in $\Kw_0(\X)$, $$ \sum (-1)^i\ovl H_i -\sum (-1)^i\ovl E_i -\sum (-1)^i\ovl H_i+\sum (-1)^i\ovl E'_i=0$$
		which ensures that our map depends only on the quasi-isometry class of the chosen metric structure.\par
		Now if $$\mcl E: 0\to \ovl{\mcl F''}\to \ovl{\mcl F}\to \ovl{\mcl F'}\to 0 $$
		is an exact sequence of hermitian sheaves, where the hermitian structure on $\mcl F^\star$ is given by a resolution $\ovl F_\bullet^\star$, we have to prove that \[ \sum (-1)^i\ovl F''_i+\sum (-1)^i\ovl F'_i-\sum (-1)^i\ovl F_i+\cht(\mcl E)=0 \tag{$\star$} \label{eq:etoile}\]
		Let us take another choice of resolutions of $\mcl F$ and $\mcl F''$, dominating the previous ones, such that we have a diagram with exact rows and columns $$\xymatrix{
     0 \ar[r]&  G''_{\bullet} \ar[r]\ar[d]& G_{\bullet} \ar[r]\ar[d]& F'_{\bullet} \ar[r]\ar[d]&0\\
     0 \ar[r]&{\mcl F''} \ar[r]\ar[d]&{\mcl F} \ar[r]\ar[d]&{\mcl F'} \ar[r]\ar[d]&0\\
     & 0& 0&0& }$$
		let us endow $G''_{\bullet}$ and $G_{\bullet}$ with arbitrary metrics.
		Now, by devissage and normalization in \ref{Bott-Chern}, we have \begin{eqnarray*}\cht(0\to \ovl G''_{\bullet}\to \ovl G_{\bullet} \to \ovl F'_{\bullet}\to 0 )-\wt\ch(\ovl G''_{\bullet}\to \ovl{\mcl{F}}'')+\wt\ch( \ovl G_{\bullet}\to  \ovl{\mcl F}))=\cht(\mcl E)\end{eqnarray*}				
on the other hand we have in $\Kw_0(\X)$, $$\wt\ch( \ovl G_{\bullet}\to \ovl{\mcl{F}})=\sum (-1)^i\ovl F_i-\sum (-1)^i\ovl G_i$$
and similarly $$\wt\ch(\ovl{G''_{\bullet}}\to \ovl{\mcl{F}}'')=\sum (-1)^i\ovl{ F}_i''-\sum (-1)^i\ovl G''_i$$
and of course $$\cht(0\to \ovl G''_{\bullet}\to \ovl G_{\bullet} \to \ovl F'_{\bullet}\to 0 )=\sum(-1)^i \ovl G_i-\sum(-1)^i \ovl G''_i-\sum(-1)^i \ovl F'_i$$
Putting all this together yields \eqref{eq:etoile}, and our map is well defined.\par
This obviously gives a left inverse to the natural map from $\Kw_0(\X)$ to $\Gw_0(\X)$, it suffices thus to prove the surjectivity of this map, but this results immediately from the definition.
\end{proof}
\begin{Rque}
One may wonder where the smoothness hypothesis intervene in the previous proof. It does not. By our very definition we have restricted ourselves to sheaves that admit finite locally free resolutions in $\Gw_0$ but the smoothness hypothesis implies that every coherent sheaf admits such resolutions, and this in turn will imply the surjectivity of the forgetful arrow $\zeta:\Gw_0(\X) \to G_0(X)$ which will be important for us, when given an arbitrary coherent sheaf, we want to equip it with a metric and view it as an element of the $\Kw_0$ as we will often do.
\end{Rque}
\begin{Rque}
In the same spirit we could have defined an intermediate group where the vector bundles are equipped with hermitian structures given by resolutions instead of "classical metrics", we leave it to the reader to check that this group would have also been isomorphic to the $\Kw_0$ we defined.
\end{Rque}

Let's define a {first Chern class} operator and a pull-back operation.
\begin{Def}(Pull-Back)\\
Let $f: \ovl X'\to \ovl X$ be a smooth equidimensional morphism between arithmetic varieties, we define $f^*:\Kw_0(\X) \to \Kw_0(\X')$ by the following formula $$f^*[\E,g]=[f^*\E,f^*g]$$
where $f^*\E$ is the pull-back of $E$ equipped with the hermitian metric that renders isometric the isomorphism $f^*E_x\simeq E_{f(x)}$; and $f^*(g)$ is the pull back of currents.
\end{Def}
The fact that this operation is well defined follows from the naturality of the secondary Bott-Chern classes.
\begin{Def}(First Chern class operator)\\
Let $\ovl L\in \wh{\Pic}(X)$ be a hermitian line bundle over $\ovl X$, we define $$\c1(\ovl L)[\E,g]=[\E,g]-[\E\otimes \L^\vee, g\wedge \ch(\L^\vee)]$$
we will call this operator \emph{the first Chern class operator}.
\end{Def}
We have to check that this operator is well defined, namely that it sends a class (coming from an exact sequence $\mcl E: 0\to \ovl{E''}\to \ovl{E}\to \ovl{E'}\to 0 $) of the form $$\ovl{E}-[\ovl{E''}+\ovl{E'}+\cht(\mcl E)]$$ to zero, but this follows from the second point of \ref{PropBottChern}, as the following sequence is exact $$\mcl E\otimes \L^\vee: 0\to \ovl{E''\otimes L^\vee}\to \ovl{E\otimes L^\vee}\to \ovl{E'\otimes L^\vee}\to 0 $$ therefore $\ovl{E\otimes L^\vee}=\ovl{E''\otimes L^\vee}+\ovl{E'\otimes L^\vee}+\cht(\mcl E\otimes L^\vee)=\ovl{E''\otimes L^\vee}+\ovl{E'\otimes L^\vee}+\cht(\mcl E)\ch(\L^\vee)$
\begin{Rque}
Let's precise the action of $\c1(\L)$ on classes of the form $[\E,0]$ and $[0,g]=a(g)$, we see that 
\begin{itemize}
	\item $\c1(\L)[\E,0]=[\E,0]-[\E\otimes \L^\vee,0]$
	\item $\c1(\L)a(g)=a(g\wedge(1-\ch(\L^\vee)))=a(g\wedge c_1(\L)\Td(\L)^{-1})$
\end{itemize}
\end{Rque}

\begin{Rque}
The attentive reader will have noticed that up to this point, nothing depends on the K\"ahler metric we have chosen on $X$. This dependency will play a part in the definition of the push-forward that we will give now.
\end{Rque}
Let us prove now that, as expected, arithmetic $\Kw$-theory is an extension of classical $K$-theory by the space of currents. 
\begin{Prop}
We have an exact sequence $$\Dt(X)\ds a \Kw_0(\X) \ds \zeta K_0(X)\to 0$$
\end{Prop}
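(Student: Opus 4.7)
The plan is to verify the three ingredients of exactness in sequence: surjectivity of $\zeta$, the inclusion $\im(a)\subseteq \ker(\zeta)$, and the reverse inclusion $\ker(\zeta)\subseteq \im(a)$.

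Surjectivity of $\zeta$ is immediate: on the smooth projective complex manifold $X(\mbb C)$ every algebraic vector bundle $E$ admits a hermitian metric (produced by a partition of unity and averaged over $F_\infty$ to ensure invariance under complex conjugation). Hence every generator $[E]\in K_0(X)$ lifts to $[\ovl E,0]\in \Kw_0(\X)$, and one extends $\mbb Z$-linearly. The inclusion $\im(a)\subseteq \ker(\zeta)$ is tautological since $\zeta[0,g]=[0]=0$.

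For the reverse inclusion I would not argue directly with a chosen presentation of an element in the kernel, but rather construct an inverse to the induced surjection $\bar\zeta\colon \Kw_0(\X)/\im(a)\to K_0(X)$. Define a candidate section $s$ by choosing, for each isomorphism class of algebraic vector bundle $E$ on $X$, some hermitian metric giving $\ovl E$, and setting $s([E])=[\ovl E,0]\bmod \im(a)$. Two verifications are required: (i) independence of the metric choice, and (ii) compatibility with the defining relations of $K_0(X)$.

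For (i), given two metrics $h_1,h_2$ on a fixed bundle $E$, apply the defining relation of $\Kw_0(\X)$ to the exact sequence
$$\mcl E \colon 0\to (E,h_1)\xrightarrow{\;\Id\;} (E,h_2)\to 0\to 0,$$
which yields $[(E,h_2),0]=[(E,h_1),0]+[0,\cht(\mcl E)]$, so the two lifts differ by $a(\cht(\mcl E))\in \im(a)$. For (ii), given $\mcl E\colon 0\to E''\to E\to E'\to 0$, choose arbitrary metrics and invoke the defining relation to get $[\ovl E,0]-[\ovl E'',0]-[\ovl E',0]=a(\cht(\mcl E))\in\im(a)$. Therefore $s$ descends to a well-defined homomorphism $s\colon K_0(X)\to \Kw_0(\X)/\im(a)$, satisfying $\bar\zeta\circ s=\Id$ by construction. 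For the other composite, every class in $\Kw_0(\X)$ may be written $[\ovl E,g]=[\ovl E,0]+a(g)$ from the direct-sum description of the free abelian group, so $s(\bar\zeta[\ovl E,g])=s([E])\equiv [\ovl E,0]\equiv [\ovl E,g]\bmod \im(a)$, using (i). Hence $s$ is a two-sided inverse to $\bar\zeta$, which forces $\ker(\zeta)=\im(a)$.

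The only mildly non-trivial step is the metric-independence assertion in (i); but this is precisely the raison d'être of the secondary Bott-Chern form, which by construction absorbs the discrepancy between two hermitian structures on the same underlying bundle. Everything else is a formal manipulation with the defining relations of $\Kw_0(\X)$.
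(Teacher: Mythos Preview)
Your argument is correct. The paper's proof takes the direct route you explicitly avoided: given $\alpha=\sum_i n_i[\ovl E_i,g_i]$ with $\zeta(\alpha)=0$, it writes $\sum_i n_i[E_i]$ in the free abelian group on bundles as a $\mbb Z$-linear combination of relations $[F_j]-[F'_j]-[F''_j]$ coming from short exact sequences, and then observes that in $\Kw_0(\X)$ each such combination equals $a(\cht(\ovl{\mcl F_j}))$, so $\alpha\in\im(a)$. Your approach instead builds an explicit inverse $s\colon K_0(X)\to \Kw_0(\X)/\im(a)$ and checks it is two-sided. The underlying content is identical---both hinge on the fact that the defining relation of $\Kw_0$ converts a $K_0$-relation into an element of $\im(a)$---but your packaging is cleaner: it isolates metric-independence (your step (i)) and compatibility with relations (your step (ii)) as separate, reusable facts, whereas the paper's element-chase leaves these implicit. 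The paper's version is shorter; yours makes the structure more transparent and avoids the slightly informal ``more precisely each $E_i$ is isomorphic to an $F_i$'' in the paper's argument.
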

\begin{proof}
Let $\alpha=\sum_i n_i[E_i,g_i]$ be any element in the kernel of $\zeta$. We thus have $\sum_i n_i [E_i]=\sum_j m_j ([F_j]-[F'_j]-[F''_j])$ for some exact sequences $\mcl F_j:0\to F'_j\to F_j\to F''_j \to 0$, more precisely each $E_i$ is isomorphic to an $F_i$. Therefore in $\Kw_0(\X)$ we have $$\alpha=\sum_j m_j ([\ovl F_j]-[\ovl F'_j]-[\ovl F''_j])+a(g)=\sum_j a(\cht(\ovl{\mcl F_j}))+a(g)$$ and we are done.
\end{proof}
As we said earlier, it is non trivial to define a push forward in arithmetic $\Kw$-theory, but we now have the tools to make such a construction, in order to do this let us state a lemma due to Quillen.
\begin{Lem}
Let $f: X\to Y$ be a projective morphism between separated noetherian schemes, then $K_0(X)$ in generated as a group by $f_*$-acyclic vector bundles.
\end{Lem}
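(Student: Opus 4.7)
The plan is to combine Serre's vanishing with the Koszul resolution built from the $N+1$ standard coordinate sections of $\mcl O(1)$ on a relative projective space. Let $H\subset K_0(X)$ denote the subgroup generated by classes of $f_*$-acyclic vector bundles; since $K_0(X)$ is generated by classes $[E]$ of arbitrary vector bundles, it suffices to show that $[E]\in H$ for every such $E$.

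First I would use the projectivity of $f$ to fix a closed immersion $i:X\hookrightarrow \mbb P^N_Y$ over $Y$ and set $\mcl O_X(1)=i^*\mcl O_{\mbb P^N_Y}(1)$. Serre's vanishing theorem (in its relative form) then yields an integer $n_0$ such that $E(n):=E\otimes \mcl O_X(n)$ is $f_*$-acyclic for every $n\geq n_0$. Next I would exploit the Koszul complex of the $N+1$ standard sections $x_0,\dots,x_N$ of $\mcl O_{\mbb P^N}(1)$; since these have no common zero, the complex is exact:
$$0\to \mcl O(-N-1)\to \mcl O(-N)^{\binom{N+1}{1}}\to \cdots \to \mcl O(-1)^{\binom{N+1}{N}} \to \mcl O\to 0.$$
Being a bounded acyclic complex of locally free sheaves, it splits locally, so its restriction to $X$ stays exact. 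Tensoring the restricted complex with the locally free sheaf $E(n+N+1)$ yields, for every integer $n$, an exact sequence of vector bundles on $X$
$$0\to E(n)\to E(n+1)^{\binom{N+1}{1}}\to \cdots \to E(n+N)^{\binom{N+1}{N}} \to E(n+N+1)\to 0,$$
which in $K_0(X)$ gives the relation
$$[E(n)]=\sum_{k=1}^{N+1}(-1)^{k+1}\binom{N+1}{k}[E(n+k)].$$

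With this identity in hand I would conclude by downward induction on $n$. For $n\geq n_0$ the bundle $E(n)$ is itself $f_*$-acyclic, so $[E(n)]\in H$ trivially. For $n<n_0$ the displayed Koszul relation expresses $[E(n)]$ as an integer combination of the classes $[E(n+1)],\dots,[E(n+N+1)]$, each of which lies in $H$ by the induction hypothesis (their indices all exceed $n$). Applying this inductively down to $n=0$ yields $[E]\in H$, as desired.

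The only non-formal point, and the step I expect to be the main obstacle to write up carefully, is the exactness of the Koszul complex after pullback along $i$ and tensor with $E(n+N+1)$; this relies on the fact that a bounded acyclic complex of locally free sheaves is universally exact (either by local splitting or by the vanishing of $\mathrm{Tor}$ against flat modules). Once this is granted the argument reduces to Serre vanishing together with an elementary alternating-sum identity in $K_0(X)$, so no further serious obstacle is to be expected.
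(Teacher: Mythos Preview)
Your argument is correct and complete. The paper itself does not give a proof at all: it simply cites Quillen's \emph{Higher Algebraic K-theory I}, p.~41, paragraph~2.7. What you have written is in fact the standard argument behind that citation---Serre vanishing for $E(n)$ with $n\gg 0$, together with the Koszul relation coming from the regular sequence $x_0,\dots,x_N$ on $\mbb P^N_Y$, followed by downward induction on the twist---so your approach is not merely correct but essentially the one being invoked. The point you flag as the ``main obstacle'' (exactness after pullback and tensoring) is handled exactly as you say: a bounded exact complex of locally free sheaves is locally split, hence universally exact, and tensoring with the locally free $E(n+N+1)$ preserves exactness; there is nothing delicate here.
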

\begin{proof} This is \cite[p.41, paragraph 2.7]{Quillen}
\end{proof}
For $f$ a projective morphism between arithmetic varieties, $\X$ and $\Y$, we set $\Kw_0^f(X)$ to be the free abelian group built on symbols $[\E]$ where $E$ is a $f_*$-acyclic vector bundle, times $\Dt(X)$, modulo $[\ovl{E}]=[\ovl{E''}]+[\ovl{E'}]+\cht(\mcl E)$ for every exact sequence $\mcl E: 0\to \ovl{E''}\to \ovl{E}\to \ovl{E'}\to 0 $ where the bundles are $f_*$-acyclic. We can reformulate Quillen's lemma as 
\begin{Lem}
The natural map $$\Kw_0^f(X) \ds c \Kw_0(X)$$ is an isomorphism.
\end{Lem}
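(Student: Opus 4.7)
The map $c$ is well-defined since the generating symbols and defining relations of $\Kw_0^f(X)$ form a subset of those of $\Kw_0(X)$. The strategy is to establish surjectivity via Quillen's lemma lifted to the arithmetic setting by tracking secondary forms, and to establish injectivity by constructing an explicit left inverse built from $f_*$-acyclic resolutions of arbitrary hermitian bundles.

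\textbf{Surjectivity.} Any class in $\Kw_0(X)$ can be written as $\alpha = \sum_i n_i[\ovl{E_i},0] + a(g)$. By Quillen's lemma each $[E_i] \in K_0(X)$ is a $\mbb Z$-linear combination of classes of $f_*$-acyclic bundles, an equality witnessed by a finite collection of short exact sequences $\mcl F_j: 0 \to F_j' \to F_j \to F_j'' \to 0$ of (not necessarily acyclic) bundles. Choose arbitrary hermitian metrics on every term of every $\mcl F_j$. The defining relation of $\Kw_0(X)$ then gives $[\ovl{F_j},0] = [\ovl{F_j'},0] + [\ovl{F_j''},0] + a(\cht(\ovl{\mcl F_j}))$, so iterating one obtains $[\ovl{E_i},0] = \sum_j m_{ij}[\ovl{A_{ij}},0] + a(h_i)$, where all $A_{ij}$ are $f_*$-acyclic and $h_i$ is a $\mbb Z$-combination of secondary Chern forms. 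Collecting yields $\alpha = c\bigl(\sum_{i,j} n_i m_{ij}[\ovl{A_{ij}},0] + a(g + \sum_i n_i h_i)\bigr)$.

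\textbf{Injectivity.} I will produce a left inverse $c': \Kw_0(X) \to \Kw_0^f(X)$. For any hermitian bundle $\ovl E$ on $X$, choose a finite left resolution $\ovl A_\bullet \to \E$ by $f_*$-acyclic hermitian bundles; such resolutions exist by Serre vanishing applied to $E(n)$ for $n$ large, combined with the domination lemma from Section~1. Set $c'([\ovl E, 0]) = \sum_i (-1)^i [\ovl A_i, 0]$ and $c'([0,g]) = a(g)$. Independence of the resolution follows from the acyclic calculus of Proposition~\ref{AcyCal}: two $f_*$-acyclic resolutions of $\E$ are quasi-isometric in $\Vh(X)/\mcl M(X)$, so their difference is an alternating sum over a meager complex of acyclic bundles, which vanishes in $\Kw_0^f(X)$ by the very defining exact-sequence relations. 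Compatibility with the short-exact-sequence relation for an arbitrary $\mcl E: 0 \to \ovl{E'} \to \ovl E \to \ovl{E''} \to 0$ is obtained by assembling, via the domination lemma, compatible acyclic resolutions of the three terms into a double complex; the exactness axiom of Theorem~\ref{Bott-Chern} then shows that the alternating sums match up to a secondary contribution equal to $\cht(\mcl E)$. Since on an $f_*$-acyclic bundle the identity resolution is available, $c' \circ c = \Id$.

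The \textbf{main obstacle} is verifying that $c'$ respects the exact-sequence relation: one must produce acyclic resolutions of $E'$, $E$, $E''$ that fit into a horizontally exact double complex, and then control the accumulated secondary forms so that they assemble precisely into $\cht(\mcl E)$. All the tools for this bookkeeping are provided by Section~1, namely the domination lemma, the acyclic calculus of Proposition~\ref{AcyCal}, and the devissage and exactness properties of $\cht$ from Theorem~\ref{Bott-Chern}.
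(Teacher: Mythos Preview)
The paper offers no argument for this lemma beyond the sentence ``We can reformulate Quillen's lemma as\ldots'', so you are supplying far more than the paper does. Your surjectivity argument is correct and is exactly the kind of lift-with-secondary-forms that the paper's exact-sequence philosophy suggests.

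For injectivity, however, two steps in your sketch are shakier than you indicate. First, the claim that a bundle $E$ admits a finite \emph{left} resolution $\ovl A_\bullet \to \ovl E$ by $f_*$-acyclic bundles ``by Serre vanishing'' is not what Serre vanishing gives you: vanishing for $E(n)$ with $n\gg 0$ produces an \emph{embedding} $E \hookrightarrow \O(n)^{r}$ into an acyclic bundle (dualize a surjection $\O^r \twoheadrightarrow E^\vee(n)$), i.e.\ the first step of a \emph{right} resolution. The natural surjections $\O(-n)^r \twoheadrightarrow E$ go the wrong way, since $\O(-n)$ is not $f_*$-acyclic. One can get finite right resolutions by iterating and observing that the cohomological amplitude of the successive cokernels drops, but this is not the construction you wrote down. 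Second, the domination lemma of Section~1 chooses, at each stage, an \emph{arbitrary} locally free sheaf surjecting onto a given coherent kernel; nothing in its proof guarantees that this surjecting bundle can be taken $f_*$-acyclic, so invoking it to produce compatible \emph{acyclic} resolutions of the three terms of a short exact sequence is a genuine gap. The horseshoe you need (on the right-resolution side) requires extending $E' \hookrightarrow A'$ along $E' \hookrightarrow E$, and acyclic bundles are not injective objects, so this step needs a separate argument (e.g.\ an induction on the maximal $i$ with $R^if_*\neq 0$ for the terms involved).

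None of this is fatal: your overall architecture (build an explicit inverse via acyclic resolutions, control independence via the meager calculus, and control the SES relation via d\'evissage/exactness of $\cht$) is the right one and is a hermitian upgrade of Quillen's resolution theorem. But the two points above are precisely where the work lies, and neither is handled by the references you cite.
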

In view of the previous lemma, it will be sufficient to construct a direct image for $f_*$-acyclic vector bundles: we will give below a definition for a map $f_*: \Kw_0^f(X) \to \Kw_0(Y)$, the morphism $f_*:\Kw_0(X) \to \Kw_0(Y)$ will simply be $f_*\circ c^{-1}$. \par
We will examine separately the case of a smooth submersion and that of an immersion.
\begin{Def}(Direct image for a submersion)\\
Let $\pi:\X \to \Y$ be a smooth submersion, the K\"ahler metrics on $\X$ and $\Y$ induce a structure of K\"ahler fibration on $\pi$, if $\E$ is a $\pi_*$-acyclic hermitian bundle on $X$ we set $$\pi_*[\E,g]=\left[\ovl{\pi_*E}^{L^2}, T(\ovl{T_{X/Y}}, \E)+\int_{X/Y}g\wedge \Td(T_{X/Y})\right]$$
\end{Def}
We need to check that this definition makes sense, namely that if $$\mcl E: 0\to \ovl{E''}\to \ovl{E}\to \ovl{E'}\to 0 $$
is an exact sequence of hermitian vector bundle that are $\pi_*$-acyclic, then as the exact sequence $\pi_*\mcl E$ remains acyclic, $$\pi_*[\ovl{E},0]=\pi_*[\ovl{E''}+\ovl{E'},\cht(\mcl E)]$$
This is achieved by the following anomaly formula 
\begin{Prop}(Anomaly Formula for the analytic torsion)\\
Let $\mcl E: 0\to \ovl{E''}\to \ovl{E}\to \ovl{E'}\to 0 $ be an exact sequence of $\pi_*$-acyclic vector bundles where $\pi$ is a K\"ahler fibration from $X$ to $Y$, we have $$T(\ovl{T_{X/Y}},\E)-T(\ovl{T_{X/Y}},\E')-T(\ovl{T_{X/Y}},\E'')-\cht(\pi_* \mcl E)=\int_{X/Y}\cht(\mcl E)\Td(T_{X/Y})$$
\end{Prop}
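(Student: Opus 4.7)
The plan is to verify the formula in two stages: first show that both sides have the same image under $dd^c$, then pin down the remaining ambiguity in $\wt{A}^{\bullet,\bullet}_{\mbb R}(Y)$ by a splitting-and-deformation argument that reduces to the orthogonally split case, where additivity applies directly.

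For the first stage, I would apply the defining transgression equation $dd^c\, T(\ovl{T_{X/Y}},\ovl F) = \pi_*[\ch(\ovl F)\Td(\ovl{T_{X/Y}})] - \ch(\ovl{\pi_*F}^{L^2})$ to each of $\E, \E', \E''$ and take the alternating sum. Combining this with Theorem \ref{PropBottChern}(1) applied to $\mcl E$, which yields $dd^c\cht(\mcl E) = \ch(\E) - \ch(\E'') - \ch(\E')$, and applied to the short exact sequence $\pi_*\mcl E$ equipped with $L^2$-metrics (which remains exact thanks to the $\pi_*$-acyclicity hypothesis), one sees that the alternating sum of transgressions on the torsion side matches the alternating sum of Chern characters produced by the Bott-Chern forms. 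Since $dd^c$ commutes with integration along the fiber and $\Td(\ovl{T_{X/Y}})$ is closed, this shows that the difference of the two sides is $dd^c$-closed in $\wt{A}^{\bullet,\bullet}_{\mbb R}(Y)$.

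To remove the remaining ambiguity, I would choose a $C^\infty$-splitting of $\mcl E$, giving an isomorphism $E \simeq E''\oplus E'$ of smooth bundles, and interpolate linearly between the orthogonal sum metric and the original metric on $E$ along a one-parameter family $h_t$, $t \in [0,1]$. At $t=0$ the sequence becomes orthogonally split: the additivity statement of Proposition \ref{PropAT}(2) gives $T(\ovl{T_{X/Y}},\E_0) = T(\ovl{T_{X/Y}},\E'') + T(\ovl{T_{X/Y}},\E')$, while both $\cht(\mcl E_0)$ and $\cht(\pi_*\mcl E_0)$ vanish by the compatibility of the secondary Bott-Chern forms with orthogonally split sequences. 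Hence both sides of the identity vanish at $t=0$, and it remains to integrate their $t$-derivatives from $0$ to $1$ and verify that they agree.

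The hard part is precisely this variation computation, which is the analytic heart of the anomaly formula: it requires the explicit representation of $T$ as $\zeta_E'(0)$ and the detailed short-time asymptotics of the supertraces $\Trs(N_u e^{-B_u^2})$ entering Bismut-K\"ohler's construction. Rather than reproduce that analysis, I would invoke the anomaly theorem of \cite{BismutKohler} (and its refinement in \cite{BFL}), which computes this variation exactly and, combined with the first-stage consistency check, yields the stated identity.
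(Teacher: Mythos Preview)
The paper does not give an argument here: its proof consists entirely of a citation to equation~(47) of \cite{ZGS}, combined with Theorem~\ref{Rgenus} to pass from the Bismut--K\"ohler torsion $T^{BK}$ to the compatible torsion $T$ (the $R$-genus correction terms, being of the form $\int_{X/Y}\ch(\ovl F)\Td R$, cancel in the alternating sum modulo $\im\partial+\im\bar\partial$ since $\ch(\E)-\ch(\E')-\ch(\E'')=dd^c\cht(\mcl E)$). Your proposal is more expansive: you supply a $dd^c$-consistency check and a deformation-to-the-split-case scaffold, and only then defer to \cite{BismutKohler} and \cite{BFL} for the variation computation. Both approaches ultimately import the analytic core from the literature; yours has the merit of explaining \emph{why} the formula has the shape it does and isolating exactly where the hard local-index-theoretic input enters, while the paper's has the merit of brevity. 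One point you leave implicit is the $T$ versus $T^{BK}$ distinction that the paper handles via \ref{Rgenus}, but since you also cite \cite{BFL} and the correction cancels as noted above, this is not a genuine gap.
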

\begin{proof}
This is the equation (47) of \cite[p. 46]{ZGS} and \ref{Rgenus}
\end{proof}
We thus obtain a well defined direct image for smooth submersions from $X$ to $Y$. Let us now turn to the case of an immersion.
\begin{Def}(Direct image for an immersion)\\
Let $i:\X \to \Y$ be a (regular) immersion between arithmetic varieties, for any hermitian vector bundle $\E$, we equip $i_*E$ with any hermitian structure, we set $$i_*[\E,g]=\left[\ovl{i_*E}, \bc(\ovl{T_{X}}, \ovl{T_{Y}}, \E, \ovl{i_*E}')+i_*[g\wedge \Td(i^*\ovl{T_{X}})^{-1}\Td(\ovl{T_{Z}})]\right]$$
\end{Def}
Here again we need to check that everything is well defined, we have to check that $$[\ovl{i_*E}, \bc(\ovl{T_{X}}, \ovl{T_{Y}}, \E, \ovl{i_*E}')+i_*[g \Td(i^*\ovl{T_{X}})^{-1}\Td(\ovl{T_{Z}})]$$ does not depend on the hermitian structure chosen on $i_*E$, and that for an exact sequence $\mcl E: 0\to \ovl{E''}\to \ovl{E}\to \ovl{E'}\to 0 $, we have $$i_*[\ovl{E},0]=i_*[\ovl{E''}+\ovl{E'},\cht(\mcl E)]$$
We have the following anomaly formulae that ensure us that this is the case.
\begin{Prop}(Anomaly Formulae for the singular Bott-Chern current)\\
Let $\mcl E: 0\to \ovl{E''}\to \ovl{E}\to \ovl{E'}\to 0 $ be an exact sequence vector bundles and $i$ be a closed immersion from $\X$ to $\Y$, let us chose hermitian structures on $i_*E$, $i_*E'$, and $i_*E''$, we have an exact sequence $i_*\mcl E: 0\to \ovl{i_*E''}\to \ovl{i_*E}\to \ovl{i_*E'}\to 0 $
\begin{enumerate} 
\item If $\ovl{i_*E}$ and $\ovl{i_*E}'$ denote two different hermitian structures on ${i_*E}$, we have $$\bc(\ovl{T_{X}}, \ovl{T_{Y}}, \E, \ovl{i_*E})-\bc(\ovl{T_{X}}, \ovl{T_{Y}}, \E, \ovl{i_*E}')=\cht(0\to \ovl{i_*E}'\to \ovl{i_*E}\to 0)$$
\item Furthermore we have \begin{eqnarray*}& &\bc(\ovl{T_{X}}, \ovl{T_{Y}},\E, \ovl{i_*E})-\bc(\ovl{T_{X}}, \ovl{T_{Y}},\E',\ovl{i_*E'})-\bc(\ovl{T_{X}}, \ovl{T_{Y}},\E'', \ovl{i_*E''})\\
&=&i_*[\cht(\mcl E)\Td(i^*\ovl{T_{X}})^{-1}\Td(\ovl{T_{Z}})]+\cht(i_* \mcl E)\end{eqnarray*}
\end{enumerate}
\end{Prop}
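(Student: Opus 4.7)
The plan is to derive both anomaly formulae by first applying the defining differential equation for $\bc$, which reduces each identity to a statement about $dd^c$-closed currents, and then to exploit the axioms satisfied by the Bismut-Gillet-Soulé theory (the well-behavedness listed in Definition \ref{PropBCC}, together with the acyclic calculus of Proposition \ref{AcyCal}) in order to pin down the representative modulo $\im \d + \im \db$.

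For Part 1, I would first write down the defining equation for both $\bc(\ovl{T_X},\ovl{T_Y},\E,\ovl{i_*E})$ and $\bc(\ovl{T_X},\ovl{T_Y},\E,\ovl{i_*E}')$. Subtracting them yields
\[
-dd^c\bigl[\bc(\E,\ovl{i_*E}) - \bc(\E,\ovl{i_*E}')\bigr] = \ch(\ovl{i_*E}) - \ch(\ovl{i_*E}'),
\]
while Theorem \ref{PropBottChern} identifies the right-hand side with $dd^c\cht(0\to \ovl{i_*E}'\to \ovl{i_*E}\to 0)$. So the difference of the two sides of Part 1 is $dd^c$-closed in $\Dt(Y)$. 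To show it actually vanishes, I would choose a hermitian resolution $\ovl H_\bullet$ dominating both $\ovl E_\bullet$ and $\ovl E'_\bullet$ (Proposition \ref{toit}, applied twice), and use the naturality and transitivity axioms of the theory to reduce the comparison to a computation on the meager cones $\cone(\ovl H_\bullet \to \ovl E_\bullet)$ and $\cone(\ovl H_\bullet \to \ovl E'_\bullet)$, where both $\bc$ and $\cht$ behave functorially by the acyclic calculus.

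For Part 2, I would perform an orthogonal splitting. Endow $\E$ with the metric $\ovl{E}^{\mathrm{sp}}$ induced (via a smooth splitting of $\mcl E$) by $\ovl{E'}\oplus^\perp\ovl{E''}$, and endow $i_*E$ with the corresponding $\ovl{i_*E'}\oplus^\perp\ovl{i_*E''}$. The additivity axiom gives
\[
\bc(\ovl{E}^{\mathrm{sp}},\ovl{i_*E'}\oplus^\perp\ovl{i_*E''}) = \bc(\E',\ovl{i_*E'}) + \bc(\E'',\ovl{i_*E''}).
\]
The difference $\bc(\E,\ovl{i_*E}) - \bc(\ovl{E}^{\mathrm{sp}},\ovl{i_*E'}\oplus^\perp\ovl{i_*E''})$ then decomposes into two pieces: (i) the change of metric on $\E$ alone, which the same $dd^c$-argument as in Part 1 (now applied to the variable $\E$ and using that $\ch(\E) - \ch(\ovl{E}^{\mathrm{sp}}) = dd^c\cht(\mcl E)$) identifies with $i_*[\cht(\mcl E)\Td(i^*\ovl{T_Y})^{-1}\Td(\ovl{T_X})]$; and (ii) the change of metric on $i_*E$, which by Part 1 contributes exactly $\cht(i_*\mcl E)$. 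Adding these produces the claimed formula.

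The main obstacle I anticipate is precisely the rigidification step, i.e.\ passing from ``equal up to a $dd^c$-closed current'' to ``equal in $\Dt(Y)$''. The $dd^c$-lemma for currents does not by itself kill closed currents, so one must genuinely use the structural axioms (additivity, naturality, transitivity) of the well-behaved theory together with the compatibility with the acyclic calculus to ensure that no cohomological residue survives. A clean route is to invoke the Burgos-Litcanu classification of well-behaved theories of singular Bott-Chern currents (underlying Theorem \ref{Rgenus}): the correction by any additive genus $S$ enters both sides of each anomaly formula in the same manner, so it suffices to verify the identities for a single reference theory, which one can do directly from the Bismut-Gillet-Soulé zeta-regularized formula \eqref{BKC} by a standard deformation-of-superconnection computation in the spirit of \cite{BGS1}.
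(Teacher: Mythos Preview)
Your proposal is not incorrect in spirit, but it is substantially more elaborate than what the paper does, and it ultimately relies on the same external input. The paper's proof consists of two sentences: Part~1 is simply the special case of Part~2 where the exact sequence is the ``very short'' one $0 \to \ovl{i_*E}' \to \ovl{i_*E} \to 0$, and Part~2 is \cite[Thm 2.9]{BGS1}, cited without further argument.

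Your route attempts an axiomatic derivation (differential equation, additivity, transitivity, acyclic calculus) before conceding at the end that the rigidification step still requires either the Burgos--Litcanu classification or a direct superconnection computation ``in the spirit of \cite{BGS1}''. That concession is exactly the content of the paper's citation, so the intermediate machinery you set up does not actually bypass the analytic input from \cite{BGS1}; it only repackages it. In particular, your Part~2 decomposition into ``change of metric on $\E$'' plus ``change of metric on $i_*E$'' is morally correct, but the identification of the first piece with $i_*[\cht(\mcl E)\Td(i^*\ovl{T_Y})^{-1}\Td(\ovl{T_X})]$ \emph{in} $\Dt(Y)$ (not merely up to a closed current) is precisely the anomaly computation carried out in \cite{BGS1}. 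One genuine simplification you could retain: the observation that Part~1 follows from Part~2 as a degenerate case is cleaner than treating Part~1 separately via dominating resolutions.
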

\begin{proof}
The first formula is a particular case of the second one for a very short exact sequence. The second formula is \cite[Thm 2.9]{BGS1}
\end{proof}
This completes the definition of the direct image for a closed immersion, to have a fully fledged definition we need the following proposition 
\begin{Theo}(Direct image in $\Kw$-theory)\label{PFK}\\
Let $f:\X \to \Y$ be a projective morphism between two arithmetic varieties, and let $$\xymatrix{   &\P^r_Y\ar[rd]^p& \\
     X \ar[ru]^i\ar[rd]^j & & Y\\
     &\P^\ell_Y \ar[ru]^q& }$$
		be two decompositions of $f$ into an immersion followed by a smooth morphism\footnote{In fact we would replace the projective spaces over $Y$ by any variety smooth over $Y$ equipped with a K\"ahler metric.} (where the projective spaces are endowed with the Fubini-Study metric and $\P^\bullet_Y$ with the product metric). Then $p_*i_*=q_*j_*$ and this morphism only depends on the hermitian metrics on $X$ and $Y$.
\end{Theo}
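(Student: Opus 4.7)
My plan is to reduce both factorizations to a common refinement. Let $M = \P^r_Y \times_Y \P^\ell_Y$, equipped with the product K\"ahler metric (so that the relative tangent bundle splits orthogonally and the two projections $\pi_r : M \to \P^r_Y$ and $\pi_\ell : M \to \P^\ell_Y$ are K\"ahler fibrations). The combined morphism $k=(i,j) : X \to M$ is a closed immersion, and by construction $i = \pi_r \circ k$, $j = \pi_\ell \circ k$, and the two original smooth maps agree on $M$, defining a single smooth submersion $p_M := p\circ \pi_r = q\circ \pi_\ell : M \to Y$. By symmetry, it suffices to prove the single identity $p_* i_* = (p_M)_* k_*$; the analogous argument with $\pi_r$ replaced by $\pi_\ell$ then yields $q_* j_* = (p_M)_* k_*$ and we conclude.

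I would split the key identity into two sub-claims: (i) $(p_M)_* = p_* \circ (\pi_r)_*$ as morphisms defined purely for smooth submersions, and (ii) $i_* = (\pi_r)_* \circ k_*$, the compatibility between the pure immersion $i$ and its factorization as an immersion followed by a smooth submersion. For (i), I would apply the transitivity of the analytic torsion (item 4 of Proposition \ref{PropAT}) to the composition $M \to \P^r_Y \to Y$: the various secondary terms (the difference of $L^2$ metrics, the Todd transgression of the exact sequence of tangent bundles, and the fiber integrals of $\ch \cdot \Td$) combine in exactly the way dictated by the definition of $(p_M)_*$, and the verification is a direct computation in $\Dt(Y)$.

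The main obstacle is (ii). This is precisely the compatibility between singular Bott-Chern currents and analytic torsion that is built into our setup: it was imposed on the single test configuration $\P^n \xrightarrow{\Delta} \P^n \times \P^n \xrightarrow{p_2} \P^n$, and one must propagate it from that universal example to every pair $(k,\pi_r)$ of closed immersion followed by smooth submersion. Concretely, on a $f_*$-acyclic hermitian bundle $\E$ on $X$, expanding both sides using the definitions of immersion and submersion pushforwards produces, on the one hand, $\bc(\ovl T_X,\ovl T_{\P^r_Y},\E,\ovl{i_*E})$, and on the other, the sum of $\bc(\ovl T_X,\ovl T_M,\E,\ovl{k_*E})$ pushed through $(\pi_r)_*$ together with $T(\ovl T_M,\ovl T_{\P^r_Y}, \ovl{k_*E})$ plus corrections coming from the exact sequence $0\to T_{M/\P^r_Y}\to T_{M/Y}\to \pi_r^* T_{\P^r_Y/Y}\to 0$. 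Matching these is the content of \cite[Thm 7.7]{BFL}, which is precisely the compatibility statement upgraded from the projective test case to arbitrary configurations. Once (i) and (ii) are combined, independence from the particular projective space $\P^r_Y$ or $\P^\ell_Y$ is built into the symmetric form of the argument, so the resulting push-forward depends only on the K\"ahler metrics on $X$ and $Y$.
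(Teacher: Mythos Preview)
Your outline is correct and is essentially the standard strategy that the references \cite[Theo 10.7]{BFL} and \cite[Prop 5.8]{BFLGGR} implement; since the paper's own proof is nothing more than a pointer to those references, there is no substantive difference in approach to report.

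One small correction: the result you invoke for step (ii) is not \cite[Thm 7.7]{BFL}. As quoted in this paper, that theorem is the \emph{existence} of a well-behaved analytic torsion compatible with a given theory of singular Bott-Chern currents; it does not by itself assert that compatibility on the diagonal test configuration $\P^n \xrightarrow{\Delta} \P^n\times\P^n \xrightarrow{p_2} \P^n$ propagates to an arbitrary pair (closed immersion, smooth submersion). That propagation is the content of the later sections of \cite{BFL} culminating in their Theorem 10.7 (and is what \cite[Prop 5.8]{BFLGGR} packages). Once you replace the citation, your sketch of (ii) is accurate: one unfolds both sides on a $f_*$-acyclic $\E$, and the required identity among $\bc$, $T$, the $L^2$-metric comparison and the $\wt{\Td}$ correction is exactly what those references establish.
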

\begin{proof} The proof of this result can essentially be found in the litterature, for instance in \cite[Theo 10.7]{BFL} albeit using a slightly different language, or \cite[Prop 5.8]{BFLGGR}
\end{proof}
Therefore the following definition makes sense
\begin{Def}
Let $f: \X \to \Y$ be a projective morphism between arithmetic varieties, we set $f_*=p_*i_*$ for any choice\footnote{we can choose $\ell=0$ if $f$ is an immersion} of factorization of $f$ into $\X\ds i \ovl{\P_Y^\ell} \ds p \Y$
\end{Def}

We see now that arithmetic $\Kw$-theory satisfies the same properties as arithmetic weak Chow theory except for the fact that the latter is graded whereas the former is not.
\begin{Prop}(Borel-Moore properties)\label{BMforK}\\
Let $X, Y, Y',S$ and $S'$, be smooth projective varieties and let $\pi: X\to Y$ and $\pi': Y\to Y'$ be projective morphisms and $f:S\to X$ and $f': S'\to S$ be smooth equidimensional morphism. We also fix $\M$ (resp. $\L$ and $\L'$), a (resp. two) hermitian bundle on $Y$ (resp. $X$), we have 
\begin{enumerate}
\item (Functoriality of the push forward) $(\pi' \circ\pi)_*=\pi'_*\pi_*$
\item (Functoriality of the pull back) $(f\circ f')^*=f'^*f^*$
\item (Naturality of the 1st Chern class) $f^* \circ \c1(\L)=\c1(f^*\L)\circ f^*$.
\item (Projection Formula) $\pi_*\circ \c1(\pi^*\M)=\c1(L)\circ \pi_*$
\item (Commutativity of the 1st Chern Classes) $\c1(\L)\circ \c1(\L')= \c1(\L')\circ \c1(\L)$
\end{enumerate}
\end{Prop}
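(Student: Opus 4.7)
The plan is to follow the same strategy as in the proof of Proposition \ref{BMforCH}: verify each equality separately, evaluated on generating classes of the form $[\ovl E, 0]$ and $a(g) = [0,g]$, using the anomaly formulae and transitivity properties assembled in the previous subsections.

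For assertion (2), functoriality of the pull-back is immediate: on $[\ovl E, 0]$ one has a canonical isometry $f'^* f^* \ovl E \simeq (f \circ f')^* \ovl E$ coming from the construction of the pull-back metric, and on currents $a(g)$ it is the standard functoriality of pull-back of currents. For (3), evaluating on $[\ovl E, 0]$ we get on both sides $[f^*\ovl E,0] - [f^*\ovl E \otimes f^*\ovl L^{\vee}, 0]$ using $f^*(E \otimes L^\vee) = f^*E \otimes f^*L^\vee$ as hermitian bundles; on currents, it follows from $f^*c_1(\ovl L) = c_1(f^*\ovl L)$ and $f^*\Td(\ovl L)^{-1} = \Td(f^*\ovl L)^{-1}$. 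For (5), commutativity of the two first Chern class operators reduces to the obvious fact that the tensor product of hermitian line bundles is symmetric (so $[\ovl E \otimes \ovl L^\vee \otimes \ovl L'^\vee, 0]$ is unambiguous) and to the commutativity of the wedge product of the forms $c_1(\ovl L)\Td(\ovl L)^{-1}$ acting on currents.

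For (1), the main input is Theorem \ref{PFK}: the push-forward of a projective morphism is independent of the chosen factorization into an immersion followed by a smooth submersion. Pick factorizations $\pi = p \circ i$ and $\pi' = p' \circ i'$; the composite $\pi' \circ \pi$ admits a factorization obtained by pulling $i'$ back along $p$, producing a closed immersion composed with a smooth morphism. Thanks to Theorem \ref{PFK}, it is enough to prove functoriality separately for compositions of closed immersions and for compositions of smooth submersions. The first is exactly the content of the transitivity condition in Definition \ref{PropBCC}(4), combined with the functoriality of the push-forward of coherent sheaves and of currents. The second is precisely the transitivity property in Proposition \ref{PropAT}(4), again combined with the transitivity of the direct image of sheaves and currents. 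In both cases the point is that the correction term produced by transitivity is exactly what the double application of the defining formula for $f_*$ adds.

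The hardest item is (4). On a class $a(g)$ it follows formally by integration along the fiber and commutativity of wedge products with $\pi^*\ch(\ovl M)$. On $[\ovl E, 0]$, write $\c1(\pi^*\ovl M)[\ovl E,0] = [\ovl E,0] - [\ovl E \otimes \pi^*\ovl M^\vee, 0]$; after reducing to $\pi_*$-acyclic bundles via Quillen's lemma, the classical projection formula $\pi_*(E \otimes \pi^*M^\vee) = \pi_*E \otimes M^\vee$ matches the geometric part of the two sides. The analytic correction terms match in the smooth submersion case by Proposition \ref{PropAT}(3) (compatibility of the higher analytic torsion with the projection formula) and in the closed immersion case by Definition \ref{PropBCC}(3) (compatibility of the Bott-Chern singular current with the projection formula). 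Combining these via the chosen factorization $\pi = p \circ i$ and invoking Theorem \ref{PFK} yields the identity in $\Kw_0(\Y)$. This bookkeeping between the geometric and secondary contributions is the only delicate point; the rest is formal.
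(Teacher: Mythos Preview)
Your proposal is correct and follows essentially the same route as the paper's own proof: items (2), (3), (5) are dismissed as formal, item (1) is reduced to Theorem~\ref{PFK} (the paper simply cites \ref{PFK} and the references therein rather than spelling out the reduction to immersions and submersions as you do), and item (4) is handled by checking separately on currents and on classes $[\ovl E,0]$, using the compatibility of the singular Bott-Chern current (Definition~\ref{PropBCC}(3)) and of the analytic torsion (Proposition~\ref{PropAT}(3)) with the projection formula. The only cosmetic difference is that the paper writes out the explicit chain of equalities for a closed immersion and then says ``the same proof applies'' for K\"ahler fibrations, whereas you package both cases through a factorization $\pi=p\circ i$; the content is identical.
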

\begin{proof}
The first point is \ref{PFK}, the second point is obvious, so is the third using the naturality of the Chern character, and the last one is just as obvious. Let us prove the projection formula, we have $$\pi_*(\Td(\ovl T_\pi)\wedge g\wedge (1-\ch(\pi^*\L^\vee)))=\pi_*(\Td(\ovl T_\pi)\wedge g)\wedge (1-\ch(\L^\vee)))$$
To prove the result on classes $[\ovl E, 0]$ we prove it first for a closed immersion, we have \begin{eqnarray*}
i_*[\c1(i^*\L)[\E]]&=&i_*([\E]-[\E\otimes i^*\L^\vee])\\
&=&[i_*\E]-[i_*(\E\otimes i^*\L^\vee)]+\bc(\ovl{T_{X}}, \ovl{T_{Y}}, \E, i_*\E)-\bc(\ovl{T_{X}}, \ovl{T_{Y}}, \E\otimes i^*\L^\vee, i_*\E\otimes \L^\vee)\\
&=&[i_*\E]-[i_*\E\otimes\L^\vee]+\bc(\ovl{T_{X}}, \ovl{T_{Y}}, \E, i_*\E)-\bc(\ovl{T_{X}}, \ovl{T_{Y}}, \E , i_*\E)\ch(\L^\vee)\\
&=&\c1(\L)\left[[i_*\E]+\bc(\ovl{T_{X}}, \ovl{T_{Y}}, \E, i_*\E)\right]\\
&=&\c1(\L)i_*[\E]
\end{eqnarray*}
where we have used the definition of the first Chern class, the definition of the direct image, the isometry of the chosen resolutions $i_*\E\otimes\L^\vee\simeq i_*(\E\otimes i^*\L^\vee)$ and the compatibility of the Bott-Chern singular current with the projection formula.\par
To prove the result for K\"ahler fibrations, we may assume that $\E$ is $\pi_*$-acyclic, and the same proof applies.
\end{proof}
As before, for arithmetic weak Chow groups, we have more, let us give now the properties that encode the arithmetic nature of this functor
\begin{Prop}(Arithmetic Type of $\Kw$)\label{ATforK}\\
Let $\X$ be an arithmetic variety of dimension $d$, we have
\begin{enumerate}
\item For any hermitian line bundles over $X$, $\L_1,...,\L_{d+2}$, we have $$\c1(\L_1)\circ...\circ \c1(\L_{d+2})=0$$ as an endomorphism of $\Kw(X)$.
\item Let $\L$ be a hermitian line bundle over $X$, with $s$ a global section of $L$ that is transverse to the zero section. Let $Z$ be the zero scheme of such a section, and $i:Z\to X$ the corresponding immersion. We have $$i_*(1_Z)=\c1(\L)(1_X)+a(\log\|s\|^2\Td(\L)^{-1})+i_*[\wt\Td^{-1}(\mcl E)\Td(\ovl T_Z)]$$
where $\mcl E$ is the exact sequence $0\to \ovl{T_Z}\to i^*\ovl{T_X}\to i^*\L\to 0$ associated to the immersion.
\item Given two hermitian bundles $\L$ and $\M$ over $X$ we have $$\c1(\L\otimes \M)=\c1(L)+\c1(M)-\c1(\L)\c1(\M)$$
\end{enumerate}

\end{Prop}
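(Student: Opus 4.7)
The plan is to verify the three assertions in turn. Parts 1 and 3 reduce to formal computations from the definition of $\c1$, while Part 2 carries the geometric content and will be extracted by applying the direct-image formula for a closed immersion to the Koszul resolution of $i_*\mcl O_Z$.

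For Part 3 I would expand $\c1(\L\otimes\M)[\E,g]$ and $\bigl(\c1(\L)+\c1(\M)-\c1(\L)\c1(\M)\bigr)[\E,g]$ side by side, using $\c1(\L)[\E,g]=[\E,g]-[\E\otimes\L^\vee,\,g\wedge\ch(\L^\vee)]$. The metric identity $\L^\vee\otimes\M^\vee\simeq(\L\otimes\M)^\vee$ together with the multiplicativity $\ch(\L^\vee)\ch(\M^\vee)=\ch((\L\otimes\M)^\vee)$ makes the four terms collapse to exactly $[\E,g]-[\E\otimes\L^\vee\otimes\M^\vee,\,g\wedge\ch((\L\otimes\M)^\vee)]=\c1(\L\otimes\M)[\E,g]$.

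For Part 1 I would decompose $[\E,g]=[\E,0]+a(g)$; as recorded in the remark following the definition of $\c1$, the operator $\c1(\L_i)$ preserves this splitting, acting by $[\E,0]\mapsto[\E,0]-[\E\otimes\L_i^\vee,0]$ on the first summand and by $a(g)\mapsto a(g\wedge c_1(\L_i)\Td(\L_i)^{-1})$ on the second. On the current component the form $\prod_{i=1}^{d+2}c_1(\L_i)\Td(\L_i)^{-1}$ starts in bidegree $(d+2,d+2)$, which exceeds the top bidegree $(d,d)$ available to a current on an $X$ of complex dimension $d$, so the outcome vanishes. On the vector-bundle component the action is multiplication by $\prod(1-[\L_i^\vee])$ in $K_0(X)$, and the required vanishing follows from the standard nilpotency $F^{d+1}_{\gamma}K_0(X)=0$ for smooth projective $X$ of dimension $d$.

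Part 2 is the main point. The plan is to start from the definition of the direct image for the immersion $i\colon Z\to X$,
$$i_*(1_Z)=i_*[\mcl O_Z,0]=\bigl[\,\ovl{i_*\mcl O_Z}\,,\,\bc(\ovl{T_Z},\ovl{T_X},\mcl O_Z,\ovl{i_*\mcl O_Z})\bigr],$$
with $i_*\mcl O_Z$ equipped with the hermitian structure given by the Koszul resolution $0\to\L^\vee\ds{s}\mcl O_X\to i_*\mcl O_Z\to 0$ (trivial metric on $\mcl O_X$, dual metric on $\L^\vee$). Through the Poincar\'e isomorphism $\Gw_0(\X)\simeq\Kw_0(\X)$ this identifies $[\ovl{i_*\mcl O_Z},0]$ with $[\mcl O_X,0]-[\L^\vee,0]=\c1(\L)(1_X)$. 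The explicit computation carried out in the earlier proposition on $\ch(\ovl{j_*\mcl O_X})$ shows that, relative to the metric on the normal bundle $\ovl N\simeq i^*\L$, one has $\bc(\ovl N,\mcl O_Z,\ovl{i_*\mcl O_Z})=\log\|s\|^2\Td(\L)^{-1}$; the normal-to-tangent conversion recorded in the remark following the definition of a theory of singular Bott--Chern classes then gives
$$\bc(\ovl{T_Z},\ovl{T_X},\ovl{i_*\mcl O_Z})=\bc(\ovl N,\ovl{i_*\mcl O_Z})+i_*\bigl[\wt\Td^{-1}(\mcl E)\Td(\ovl{T_Z})\bigr],$$
and assembling the pieces yields the claimed identity. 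The hard part will be keeping sign conventions, the degree chosen for the Koszul resolution, and the normal-to-tangent conversion consistent with one another; once those are aligned the remainder is routine bookkeeping.
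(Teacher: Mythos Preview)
Your treatments of Parts 2 and 3 are fine and in line with the paper. Part 2 is essentially identical to the paper's argument; Part 3 you verify directly from the definition, while the paper obtains it as a by-product of the computation for Part 1, but your route is just as valid and arguably cleaner.

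Part 1, however, has a genuine gap on the vector-bundle component. Applying $\c1(\L_1)\circ\cdots\circ\c1(\L_{d+2})$ to $[\E,0]$ yields an alternating sum of classes $[\E\otimes\bigotimes_{i\in S}\L_i^\vee,\,0]$ in $\Kw_0(\X)$, and you want this sum to vanish \emph{in $\Kw_0(\X)$}. Knowing that the corresponding element $\prod_i(1-[\L_i^\vee])$ vanishes in $K_0(X)$ is not sufficient: by the exact sequence $\Dt(X)\to\Kw_0(\X)\to K_0(X)\to 0$, this only tells you the class lies in the image of $a$, not that it is zero. Concretely, when you rewrite a $K_0$-relation using exact sequences, each step in $\Kw_0$ produces a Bott--Chern correction term $a(\cht(\cdot))$ which you have not controlled. (As a side issue, the integral vanishing of $F^{d+1}_\gamma K_0(X)$ that you invoke is not a standard fact either; only the rational statement follows from comparison with Chow groups.)

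The paper fixes this by working one level up: it views $\Kw_0(\X)$ as a module over $\Kh_0(X)$, with $\c1(\L)$ acting by multiplication by the element $[\mcl O_X]-[\L^\vee]\in\Kh_0(X)$, and then shows that $\prod_i([\mcl O_X]-[\L_i^\vee])=0$ already in $\Kh_0(X)$. The argument is geometric: first reduce to very ample $\L_i$ using the formal group law relation (your Part 3), then use $\Kh_0(X)\simeq\Gh_0(X)$ to identify $[\mcl O_X]-[\L_i^\vee]$ with $[\ovl{i_*\mcl O_{Z_i}}]$ for the zero scheme $Z_i$ of a section, and finally choose the sections generically transverse via Bertini so that $Z_1\cap\cdots\cap Z_{d+1}=\emptyset$. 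This kills the product in $\Kh_0(X)$, hence its action on all of $\Kw_0(\X)$. Your approach cannot be completed without some argument of this kind that lives in the hermitian (hatted) $K$-group rather than the bare $K_0$.
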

\begin{proof}
Let us keep the notation of the proposition
\begin{enumerate}
\item Firstly, we see that $\c1(\L_1)\circ...\circ \c1(\L_{d+2}).a(g)=0$ as the action of the first Chern class increases the type by $(1,1)$.\par 
It is now a good time to notice that we can see $\Kw_0(X)$ as a module over $\Kh_0(X)$ and that the action of the $\c1$ is given by multiplication by $[\O_X]-[\L^\vee]$, so the identity we want to prove is in fact an identity in $\Kh_0(X)$, where a product is defined in a way that the composition of the actions of the first Chern classes is just the multiplication of the corresponding classes.\par
Now since $X$ is regular we have $\Kh_0(X)\simeq \Gh_0(X)$ (\cite[Lem. 13]{GSinv}) and in $\Gh_0(X)$ we have $[\O_X]-[\L^\vee]=[\ovl{i_*\O_Z}]$ as soon as $L$ is effective, where $Z$ is the zero scheme of any global section of $L$ (where the hermitian structure on $\ovl{i_*\O_Z}$ is of course given by the obvious exact sequence).\par
Let us assume for a moment, that all the $L_i$'s are very ample, we want to prove $[i_{1*}\mcl O_{Z_1}]...[i_{(d+2)*}\mcl O_{Z_{d+2}}]=0$, using Bertini's theorem \cite[Theo 2.3]{Bert}, we can choose global sections of each $L_i$ such that $Z_i=\Div(s_i)$ is generically transverse to $\bigcap_{j<i} Z_j$, but this is tantamount to saying that $\bigcap_{i=1...d+1} Z_i$ is empty, therefore $[i_{1*}\mcl O_{Z_1}]...[i_{(d+2)*}\mcl O_{Z_{d+1}}]=0$.\par
Let us now assume that all the bundles $L_i$ are very ample, except one, which is anti-very-ample i.e its dual is very ample, and assume for simplicity that it is $\L_1=\L$. As for any line bundles $\M$ and $\M'$ we have (in $\Kh_0(X)$ or $\Gh_0(X)$ where the product is defined) $$[\O_X]-[\M^\vee\otimes \M']=([\O_X]-[\M^\vee])+([\O_X]-[\M'])-([\O_X]-[\M^\vee]).([\O_X]-[\M'])$$ (which incidentally proves the third point), we see, plugging $\L=\M_i=\M'_i$, that $$0=[\O_X]-[\L^\vee\otimes \L]=([\O_X]-[\L])+([\O_X]-[\L^\vee])-([\O_X]-[\L])([\O_X]-[\L^\vee])$$
therefore we can replace $([\O_X]-[\L])$ by $([\O_X]-[\L])([\O_X]-[\L^\vee])-([\O_X]-[\L^\vee])$, and as $\L^\vee$ is very ample, the results follow from the previous case.\par
Now let us consider the general case recall that each $L_i$ can be written as $M_i\otimes {M'_i}^\vee$ with $M_i$ and $M'_i$ very ample, let's endow these two line bundles with any metric rendering the previous isomorphism, isometric. As $$[\O_X]-[\M_i^\vee\otimes \M'_i]=([\O_X]-[\M_i^\vee])+([\O_X]-[\M'_i])-([\O_X]-[\M_i^\vee]).([\O_X]-[\M_i'])$$ we're reduced to the case of ample and anti-ample line bundles which yields the result.
\item In view of the exact sequence $$0\to L^\vee \to \mcl O_X\to i_*\mcl O_Z\to 0$$ We have \begin{eqnarray*}i_*(1_Z)&=&[\O_X]-[\L^\vee]+\bc(\ovl T_Z, \ovl T_X, \O_Z, \ovl{i_*O_Z})\\
&=&[\O_X]-[\L^\vee]+\Td(\L)^{-1}\wedge \log\|s\|^2 +i_*[\wt\Td^{-1}(\mcl E)\Td(\ovl T_Z)]\end{eqnarray*}
and on the other hand $\c1(\L)(1_X)=[\O_X]-[\L^\vee]$, the result follows.
\item It has been proven in the course of the demonstration of the first point.
\end{enumerate}
\end{proof}

\newpage

\section{Weak Arithmetic Cobordism}

We now proceed to the construction of a weak arithmetic cobordism group, we take the common properties of $\CHw(X)$ and $\Kw(X)$, namely \ref{BMforCH}, \ref{ATforCH}, \ref{BMforK}, \ref{ATforK} as a guideline for our construction. To generalize both the formulae $(2)$ in \ref{ATforCH} and \ref{ATforK} we need to construct a universal Todd class that would give back the traditionnal Todd class in $\Kw$-theory and $1$ in $\CHw$-theory by appplication of a forgetful functor. The first task we need to tackle is to find a good ring of coefficients over which our universal Todd class will be defined.
\subsection{Arithmetic Lazard Ring, Universal Todd class and secondary forms associated to it}
I will define here a modified version of the Lazard ring.
\begin{Def}
We set the \emph{arithmetic Lazard ring} to be the ring $$\Lh=\mbb Z[a^{ij}, t_k, (i,j)\in \mbb N\times \mbb N, k\in \mbb N ]$$ divided by the ideal $I$ such that the following relations hold in $\Lh[[u,v,w]]$, \begin{itemize}\item $\displaystyle \sum_{i\geq 0} t_i(u+v)^{i+1}=\sum_{i\geq 0, j\geq 0} a^{i,j} u^iv^j \left(\sum_k t_k u^k\right)^i\left(\sum_r t_r u^r\right)^j$
\item $\mbb F(u, \mbb F(v,w))=\mbb F(\mbb F(u,v),w)$
\item $\mbb F(u,v)=u+v \text{  mod } (u,v)^2$
\item $\mbb F(u,v)=\mbb F(v,u)$
\item $\mbb F(0,u)=u$
\item $t_0=a^{1,0}=a^{0,1}=1$
\end{itemize}
where $\mbb F$ is the universal law group $\mbb F(u,v)=\sum a^{i,j}u^iv^j$.
\end{Def}
We can readily check that this ring is not zero. To do so, we can build a quotient of that ring that is not zero. Let us consider the map $\{a^{i,j}, t_i\}\to \mbb Q$ defined by $a^{1,1}\mapsto -1$ and $a^{i,j}\mapsto 0$ for $(i,j)\notin\{(1,1), (1,0), (0,1)\}$ and $t_i \mapsto (-1)^{i}/(i+1)!$ for $i>0$. This map induces a map form $\Lh$ to $\mbb Q$ that is not $0$ ensuring that $\Lh$ is not trivial.\par
We will let $\g(u)$ denote the universal power series over $\mbb Z[\t]$, $$\g(u)=\sum_{r}t_r u^r$$ we can re-write the first axiom as $$\mbb F(u\g(u),v\g(v))=(u+v)\g(u+v)$$
Let us denote the unique power series $\mfr h$ over $\mbb Q[\t]$, defined by $\mfr h(\g(u)u)=u$, we see that $$\mfr h(u)+\mfr h(v)=\mfr h(\mbb F(u, v))$$
in other words, $\h$ is a morphism of formal group laws, from the universal group law to the additive group law; as it turns out $\h$ is in fact an isomorphism after tensorization by $\mbb Q$.\par
We can now prove
\begin{Prop} As rings the arithmetic Lazard ring and the Lazard ring are isomorphic after tensorization by $\mbb Q$, $$\mbb L_{\mbb Q} \simeq \Lh_{\mbb Q}$$
\end{Prop}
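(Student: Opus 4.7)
The strategy is to construct mutually inverse ring homomorphisms between $\mbb L_{\mbb Q}$ and $\Lh_{\mbb Q}$. In one direction there is a tautological map $\pi: \mbb L \to \Lh$ sending each Lazard generator $a^{ij}$ to the element of the same name in $\Lh$; its well-definedness at the integral level is immediate, since the defining relations of $\Lh$ contain all the formal group law axioms for $\mbb F = \sum a^{ij} u^i v^j$. Tensoring with $\mbb Q$ yields $\pi_{\mbb Q}$.

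For the inverse, the plan is to invoke the classical fact that over any $\mbb Q$-algebra every commutative one-dimensional formal group law admits a unique exponential, i.e.\ a unique power series $E(u) = u + \sum_{n \geq 2} e_n u^n$ satisfying $\mbb F(E(u), E(v)) = E(u+v)$. Applying this to the universal law over the $\mbb Q$-algebra $\mbb L_{\mbb Q}$ yields a series $E(u) = u\cdot G(u)$ with $G(u) = 1 + \sum_{i \geq 1} e_{i+1} u^i$. Define $\rho : \Lh \to \mbb L_{\mbb Q}$ by $a^{ij} \mapsto a^{ij}$ and $t_i \mapsto e_{i+1}$; in particular $t_0 \mapsto e_1 = 1$, matching the normalization relation of $\Lh$. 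The crucial relation of $\Lh$ rewrites as $\mbb F(u\g(u), v\g(v)) = (u+v)\g(u+v)$, whose image under $\rho$ is $\mbb F(E(u), E(v)) = E(u+v)$ --- precisely the defining property of the exponential. The remaining relations of $\Lh$ involve only the $a^{ij}$ and are formal group law axioms, clearly preserved. Hence $\rho$ is well-defined and descends to $\rho_{\mbb Q}$.

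It then remains to verify that $\pi_{\mbb Q}$ and $\rho_{\mbb Q}$ are mutually inverse. Since $\mbb L_{\mbb Q}$ is generated by the $a^{ij}$, which are fixed by $\rho_{\mbb Q}\circ\pi_{\mbb Q}$, that composition is the identity. For the other composition, only the identity $\pi_{\mbb Q}(\rho_{\mbb Q}(t_i)) = t_i$ in $\Lh_{\mbb Q}$ needs checking. Now the series $u\g(u) \in \Lh_{\mbb Q}[[u]]$ linearizes $\mbb F$ by the very defining relation of $\Lh$; by uniqueness of the exponential over the $\mbb Q$-algebra $\Lh_{\mbb Q}$, it must coincide with $\pi_{\mbb Q}(E)(u)$, and comparing coefficients yields $t_i = \pi_{\mbb Q}(e_{i+1})$ as required. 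The main delicate point is the uniqueness of the exponential of a commutative FGL over a $\mbb Q$-algebra; this reduces to the standard observation that an additive power series $\psi(x) = x + O(x^2)$ satisfying $\psi(x+y) = \psi(x)+\psi(y)$ must be the identity, which in turn uses the non-vanishing of binomial coefficients in characteristic zero. Everything else is routine bookkeeping.
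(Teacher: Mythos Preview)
Your proof is correct. The underlying ingredient is the same as the paper's---namely, that over a $\mbb Q$-algebra every commutative one-dimensional formal group law admits a unique strict isomorphism to the additive law (equivalently, a unique exponential/logarithm)---but the two arguments are organized differently.

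The paper first uses this fact to produce an isomorphism $\psi: \mbb L_{\mbb Q} \xrightarrow{\sim} \mbb Q[\t]$ (via the bijection between formal group laws and genera over a $\mbb Q$-algebra), then identifies $\Lh_{\mbb Q}$ with $(\mbb L_{\mbb Q} \otimes_{\mbb Q} \mbb L_{\mbb Q})/I$, where $I$ is generated by $a \otimes 1 - 1 \otimes \psi(a)$; the isomorphism with $\mbb L_{\mbb Q}$ is then extracted from the twisted multiplication map $a \otimes b \mapsto \psi(a)b$ together with an explicit section. Your argument is more direct: you write down explicit ring homomorphisms $\pi_{\mbb Q}$ and $\rho_{\mbb Q}$ and verify they are mutually inverse, with uniqueness of the exponential doing all the work in the check $\pi_{\mbb Q}(\rho_{\mbb Q}(t_i)) = t_i$. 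Your route is leaner and avoids the tensor-product detour; the paper's route has the minor advantage of making explicit along the way that the $t_i$'s alone already generate $\Lh_{\mbb Q}$ (i.e.\ that $\mbb Q[\t] \to \Lh_{\mbb Q}$ is itself an isomorphism), which is a slightly sharper intermediate statement.
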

\begin{proof}
Let $\mfr h$ be the power series over $\mbb Q[\t]$ previously defined, this power series define a formal law group on $\mbb Q[\t]$ given by \begin{equation}\mfr h^{-1}(\mfr h(u)+\mfr h(v))=\mbb F(u,v)\tag{$\star$}\label{eq: FGL}\end{equation} and this defines a morphism $$\mbb L_{\mbb Q}\to \mbb Q[\t]$$ which is an isomorphism, because there is a natural bijection between \begin{enumerate}
	\item $\Lambda \mapsto \Hom_{\mbb Q-\text{algebras}}(\mbb L_{\mbb Q}, \Lambda)$.
	\item $\Lambda \mapsto \FGL(\Lambda)$ (the set of formal group laws over the $\mbb Q$-algebra $\Lambda$).
	\item $\Lambda \mapsto \text{Genera}(\Lambda)$ (the set of genera over $\Lambda$ i.e satisfying $g(u)=u \mod u^2$)
\end{enumerate}
where the bijection between $2$ and $3$ is given by \eqref{eq: FGL} (see \cite{Ochanine}), let us denote $\psi$ the isomorphism $\mbb L_{\mbb Q}\ds\sim \mbb Q[\t]$ defined in this way. As $\mbb L_{\mbb Q}$ is evidently a $\mbb Q[\t]$-algebra, we have a twisted action of $\mbb L_{\mbb Q}$ in itself given by $\psi(a).b$, we will also denote by $\psi(a)$ the element $\psi(a).1$ in $\mbb L_{\mbb Q}$\par
With this in mind, we see that $\Lh_{\mbb Q}$ is isomorphic (as a left $\mbb L_{\mbb Q}$-module) to $\mbb L_{\mbb Q}\otimes \mbb L_{\mbb Q}/I$ where $I$ is the ideal generated by $a\otimes 1-1\otimes \psi(a)$. Let us consider the arrow $m:\mbb L_{\mbb Q}\otimes \mbb L_{\mbb Q}\to \mbb L_{\mbb Q}$ given by multiplication $a\otimes b\mapsto \psi(a)b$, this map certainly factors through $\Lh_{\mbb Q}$.\par
To see that it is an isomorphism we have to prove that the kernel of the multiplication map is exactly $I$, but this is easy, as we have a section $s:\mbb L_{\mbb Q}\to \mbb L_{\mbb Q}\otimes\mbb L_{\mbb Q}$ given by $a\mapsto \psi^{-1}(a)\otimes 1$, and the kernel of the multiplication is generated as a left $\mbb L_{\mbb Q}$-module by elements of the form $\sum 1\otimes x_i$ with $\sum m(1\otimes x_i)=0$ thus $$\sum 1\otimes x_i=\sum 1\otimes x_i-0=\sum 1\otimes x_i-s\circ m(1\otimes x_i)=\sum 1\otimes x_i-\psi^{-1}(x_i)\otimes 1$$ and the proof is complete.
\end{proof}
We see that $\Lh_\mbb Q$ doesn't have a richer structure than $\mbb L_\mbb Q$, because it is equipped with a formal group law and a genus that corresponds to it, this is essentially the fact that in characteristic zero, there is only one formal group law. In a way $\Lh_\mbb Q$ is just a different way of looking at $\mbb L_\mbb Q$.
\begin{Cor}(Mishenko\footnote{There's a typo in the first appearance of the formula in the paper, the correct formula is in its appendix}, \cite[Appendix 1, p. 72]{Mishenko})\label{Mishenko}\par
We have through the identification $\mbb L_{\mbb Q}\simeq \Omega(k)_{\mbb Q}\simeq \mbb Q[\P^1, \P^2,...]$,$$\mfr h(u)=\sum_{i\geq 0} \frac{[\P^i]}{i+1} u^{i+1}$$
\end{Cor}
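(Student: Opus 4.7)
The plan is to identify $\mfr h$ as the logarithm of $\mbb F$, transport it through the ring isomorphisms to the algebraic cobordism ring, and then invoke the Mishenko-type computation for the universal formal group law.

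First, I would observe that $\mfr h$ is characterized intrinsically as the \emph{logarithm} of the formal group law $\mbb F$: the identity $\mfr h(u\g(u))=u$ combined with $t_0=\g(0)=1$ forces $\mfr h(u)=u+O(u^2)$, and we have already noted that the relation $\mfr h(u)+\mfr h(v)=\mfr h(\mbb F(u,v))$ holds. Since a formal group law over a $\mbb Q$-algebra admits a unique logarithm with normalization $\mfr h(u)=u+O(u^2)$, this determines $\mfr h\in \Lh_{\mbb Q}[[u]]$ completely. Transporting through the isomorphism $\Lh_{\mbb Q}\simeq \mbb L_{\mbb Q}$ proved in the previous proposition, $\mfr h$ becomes the logarithm of the universal formal group law over $\mbb L_{\mbb Q}$; then transporting further through Levine--Morel's isomorphism $\mbb L\simeq \Omega(k)$, it becomes the logarithm of the formal group law on algebraic cobordism, which by construction computes $c_1^{\Omega}(L_1\otimes L_2)$ from $c_1^{\Omega}(L_1)$ and $c_1^{\Omega}(L_2)$.

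Second, I would compute this logarithm geometrically, following Mishenko's classical argument adapted to algebraic cobordism. Write $\ell(u)=\sum_{n\geq 0} m_n\, u^{n+1}$ with $m_n\in\Omega(k)_{\mbb Q}$ and $m_0=1$. Letting $\pi_n:\P^n\to \Spec k$ and $x=c_1^{\Omega}(\mcl O(1))\in \Omega^*(\P^n)$, Levine--Morel's projective bundle formula gives $\Omega^*(\P^n)=\Omega^*(k)[x]/(x^{n+1})$ together with $\pi_{n,*}(x^n)=1$ and $[\P^n]=\pi_{n,*}(1)$. A residue/generating-function computation (using that $\ell$ linearizes $\mbb F$, so $\ell(x)$ behaves additively in products of line bundles) yields the identity
\[
[\P^n]\;=\;\pi_{n,*}(1)\;=\;(n+1)\,m_n,
\]
equivalently $m_n=[\P^n]/(n+1)$, which is Mishenko's formula. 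Substituting back into $\ell=\mfr h$ gives the claimed expression.

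The main technical point is the residue/generating-function step: one must verify that $(n+1)m_n$ equals the pushforward $\pi_{n,*}(1)$, which amounts to checking that the coefficient of $u^n$ in $1/\ell'(u)$ is $[\P^n]$, or dually that $\pi_{n,*}(\ell(x)^k)$ vanishes for $k<n$ and is concentrated in the right degree for $k=n$. This is exactly the algebraic version of Mishenko's argument as reproduced in \cite{Mishenko}, and in the present context it can be quoted directly from there once the identification $\mfr h \leftrightarrow \ell$ is in place. Everything else in the proof is formal manipulation with the isomorphism $\Lh_{\mbb Q}\simeq \mbb L_{\mbb Q}$ established earlier.
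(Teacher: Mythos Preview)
Your proposal is correct and in fact does more than the paper: the paper gives no proof at all here, simply stating the result as a corollary with a citation to Mishenko. Your sketch correctly identifies $\mfr h$ as the logarithm of the universal formal group law (via the identity $\mfr h(u)+\mfr h(v)=\mfr h(\mbb F(u,v))$ already established), transports it along $\Lh_{\mbb Q}\simeq\mbb L_{\mbb Q}\simeq\Omega(k)_{\mbb Q}$, and then invokes Mishenko's classical computation, which is exactly what the citation points to.
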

\begin{Rque}
We have a natural grading for the Lazard ring given by $\deg(a^{i,j})=i+j-1$, if we set $\deg(t_i)=i$ then we have a natural grading on $\Lh$ given by the grading on the tensor product, namely $\deg(a^{i,j}t_k)=k+i+j-1$
\end{Rque}
\begin{Rque} Before proceeding to the study of arithmetic cobordism let us introduce a couple notations defined below $$ \begin{matrix} \mbb Z[t_0,t_1,..] &=&\mbb Z[\t] & & &\\
\mbb Z[\t]\otimes \At(X)&=&\Att(X)&  \mbb Z[\t]\otimes A^{\bullet,\bullet}_{\mbb R}(X)&=&A^{\bullet,\bullet}_{[\t]}(X)\\
\mbb Z[\t]\otimes \Dt(X)&=&\Dtt(X)&\mbb Z[\t]\otimes D^{\bullet,\bullet}_{\mbb R}(X)&=&D^{\bullet,\bullet}_{[\t]}(X)\\
\mbb Z[\t]\otimes Z^{\bullet,\bullet}_{\mbb R}(X)&=&Z^{\bullet,\bullet}_{[\t]}(X) & & &\\
\Lh\otimes \Dt(X)&=&\DL(X) & & &\end{matrix}$$
where we've extended the usual operations defined on $D^{\bullet,\bullet}$, such as $\d, \db$, the pull back and push forward operations for suited maps etc..., by $\mbb Z[\t]$-linearity. Notice that we still have a product $$Z^{\bullet,\bullet}_{[\t]}(X)\otimes \Dtt(X) \to \Dtt(X)$$
that preserves $\Att(X)$
\end{Rque}
We now wish to construct both multiplicative characteristic forms associated to $\g$ with value in $A^{\bullet,\bullet}_{[\t]}(X)$, and secondary Bott-Chern forms with value in $\Att(X)$. This can be done in a straightforward manner, let's quickly review the way to do so.\par
If we have $X$ a complex manifold, with $\E$ a hermitian vector bundle over it, then $\E$ comes equipped with a natural Chern connection charracterized by  
\begin{enumerate}
\item preservation of the metric $d\bra s, t\ket=\bra \nabla s, t\ket +\bra s, \nabla t\ket$
\item compatibility with the $\db$ operator, $\nabla^{0,1}=\db$.
\end{enumerate} 
Let us consider the power series $\fii(T_1,...,T_n)\in Z[\t][[T_1,...,T_n]]$ defined by $$\fii(T_1,...,T_n)=\prod_{i=1}^n \g(T_i)$$
we can write $\fii$ as a sum of $\fii^{(\ell)}$ with each $\fii^{(\ell)}$ homogenous of degree $\ell$ (in $T_1,...,T_n$). There exists a unique map, still denoted $\fii^{(\ell)}$ defined on matrix with coefficients in $A^{1,1}(X)$ and invariant by conjugation such that $$\fii^{(\ell)}\left(\begin{pmatrix}\omega_1 & & \\
& \ddots & \\
& &\omega_n
\end{pmatrix} \right)=\fii^{(\ell)}(\omega_1,...,\omega_n)$$
By identifying locally $\End(E)$ with the space of matrix with complex coefficients, we can define $$\g(\E)=\sum_k \fii^{(k)}(\frac{i}{2\pi}\nabla^2)\in A^{\bullet,\bullet}_{[\t]}(X)$$
We get a closed form whose cohomology class (with coefficients in $\mbb Z[\t]$) does not depend on the metric chosen on $\E$. Let's sum up the properties of this characteristic class.
\begin{Prop}\label{CharG}
The characteristic form $\g(\E)\in A^{\bullet,\bullet}_{[\t]}(X)$ associated to a hermitian bundle on a projective complex manifold $X$, satisfies the following properties \begin{enumerate}
	\item (Naturality) For any holomorphic map of complex manifold $f: Y\to X$ we have $\g(f^*\E)=f^*(\g(\E))$.
	\item (Definition for a line bundle) For a hermitian line bundle $\L$, we have $\g(\L)=\sum_r t_r c_1(\L)^r$.
	\item (Mulitplicativity) If $0\to \E'' \to \E \to \E'\to 0$ is an ortho-split exact sequence of hermitian bundles on $X$ we have $$\g(\E)=\g(\E')\g(\E'')$$
	\item (Closedness) The form $\g(\E)$ satisfies $d\g(\E)=0$
\end{enumerate}
\end{Prop}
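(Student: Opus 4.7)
The plan is to verify each of the four properties by reducing to standard Chern–Weil theory applied coefficient-by-coefficient in the polynomial ring $\mbb Z[\t]$. More precisely, writing $\fii=\sum_\ell \fii^{(\ell)}$ with $\fii^{(\ell)}$ homogeneous of degree $\ell$, the form $\g(\E)$ is by construction built from the closed $\mbb Z[\t]$-linear combinations of the classical Chern–Weil forms $\fii^{(\ell)}(\tfrac{i}{2\pi}\nabla^2)$; each coefficient $t_{r_1}\cdots t_{r_n}$ merely reweights a product of entries of the curvature matrix. Consequently one only needs to check the four properties on each monomial $\fii^{(\ell)}(\tfrac{i}{2\pi}\nabla^2)$, and then extend by $\mbb Z[\t]$-linearity.

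For naturality (1), the plan is to invoke that pullback respects the Chern connection: if $f\colon Y\to X$ is holomorphic and $\E$ is hermitian on $X$, then $f^*\nabla$ is the unique connection on $f^*\E$ that preserves the pulled-back metric and whose $(0,1)$-part is $\db_{f^*\E}$, hence it coincides with the Chern connection of $f^*\E$. Since pullback of forms is a ring homomorphism commuting with matrix-valued polynomial evaluation, $\fii^{(\ell)}(\tfrac{i}{2\pi}(f^*\nabla)^2)=f^*\fii^{(\ell)}(\tfrac{i}{2\pi}\nabla^2)$ for each $\ell$, and summing over $\ell$ gives the result.

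For (2), a line bundle has $\End(\L)$ canonically trivial and the curvature is the scalar $(1,1)$-form $c_1(\L)=\tfrac{i}{2\pi}\nabla^2$. Here $\fii(T)=\g(T)=\sum_r t_r T^r$ has $\fii^{(\ell)}(T)=t_\ell T^\ell$, and substitution gives the formula immediately. For multiplicativity (3), ortho-splitness gives an isometry $\E\simeq \E''\oplus\E'$ under which the Chern connection becomes the orthogonal direct sum of the Chern connections on the summands; the curvature matrix is therefore block-diagonal. Since $\fii(T_1,\dots,T_n)=\prod_i\g(T_i)$ is multiplicative with respect to disjoint sets of variables, a direct expansion yields $\g(\E)=\g(\E')\g(\E'')$.

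For closedness (4), the plan is to appeal to the standard Chern–Weil lemma: by the Bianchi identity $\nabla(\nabla^2)=0$ and the invariance of $\fii^{(\ell)}$ under conjugation by $\End(\E)$, each form $\fii^{(\ell)}(\tfrac{i}{2\pi}\nabla^2)$ is $d$-closed on $X$, and closedness of $\g(\E)$ follows by $\mbb Z[\t]$-linearity. I do not expect a genuine obstacle in this proposition: the only subtlety is bookkeeping — tracking that introducing the formal variables $t_r$ does not interact with the differential-geometric arguments, which it does not since $\mbb Z[\t]$ is just an extension of scalars.
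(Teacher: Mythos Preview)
Your proposal is correct and is exactly the standard Chern--Weil argument that the paper takes for granted: the paper does not give a proof of this proposition at all, treating it as routine, with only a one-line remark afterward on closedness. Your coefficient-by-coefficient reduction to classical Chern--Weil theory is the natural way to fill this in.

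One small observation worth making: for closedness the paper's remark invokes ``Bianchi's second identity using the fact that the Chern connection is torsion free on a K\"ahler manifold,'' which is more than is needed and slightly misleading (torsion-freeness concerns the Chern connection on the \emph{tangent} bundle of a K\"ahler manifold, not an arbitrary hermitian bundle). Your argument is cleaner: the Bianchi identity $\nabla(\nabla^2)=0$ holds for any connection on any bundle, and together with conjugation-invariance of $\fii^{(\ell)}$ this already gives $d\,\fii^{(\ell)}(\tfrac{i}{2\pi}\nabla^2)=0$, with no K\"ahler hypothesis required. So your approach is in fact the more robust one here.
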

\begin{Rque}
The closedness property can easily be deduced from Bianchi's second identity using the fact that the Chern connection is torsion free on a K\"ahler manifold.
\end{Rque}
\begin{Rque} As $\g(\E)^{(0)}=1$, the class $\g(\E)$ is invertible in $A^{\bullet,\bullet}_{[\t]}(X)$.
\end{Rque}
\begin{Rque}
If $f:\X \to \Y$ is a morphism between arithmetic varieties we will denote $\g(\ovl{T_f})$ for $\g(\ovl T_X)\g(f^*\ovl{T_Y})^{-1}$
\end{Rque}
We can now construct secondary forms with value in $\Att(X)$ to measure the defect of multiplicativity in the case of an arbitrary exact sequence of hermitian bundles.
\begin{Prop}
To each exact sequence $$\mcl E:0\to \E'\to \E\to \E''\to 0$$ of hermitian vector bundles on $X$ we can associate a form in $\Att(X)$, denoted $\wt{\g}(\mcl E)$ uniquely determined by the following properties \begin{enumerate}
	\item (Naturality) For any holomorphic map of complex manifold $f: Y\to X$ we have $\wt\g(f^*\mcl E)=f^*(\wt\g(\mcl E))$.
	\item (Differential equation) We have $$\g(\E)=\g(\E')\g(\E'')+dd^c\wt\g(\mcl E)$$
	\item (Vanishing) When $\mcl E$ is orhto-split, $\wt\g(\mcl E)=0$
	\end{enumerate}
\end{Prop}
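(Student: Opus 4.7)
The plan is to adapt the classical Bott-Chern construction (already used in Theorem \ref{Bott-Chern} for $\wt\ch$) to the multiplicative characteristic form $\g$ of Proposition \ref{CharG}. The central idea is to deform the given extension to its orthogonally split version over $X\times\P^1$ and then to average against $\log|z|^2$.

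First, given $\mcl E:0\to \ovl{E'}\to \ovl{E}\to \ovl{E''}\to 0$, I will produce a hermitian bundle $\wt E$ on $X\times\P^1$, sitting in an exact sequence $0\to p^*\ovl{E'}\to \wt E\to p^*\ovl{E''}\to 0$, whose restriction to $X\times\{0\}$ recovers $\ovl E$ isometrically and whose restriction to $X\times\{\infty\}$ is the orthogonal sum $\ovl{E'}\oplus^\perp \ovl{E''}$. This is the standard first transgression bundle: one patches the metric of $\ovl E$ near $0$ to the split metric near $\infty$ by a partition of unity on the overlap. I will then define
$$\wt\g(\mcl E):=-\int_{\P^1}\g(\wt E)\,\log|z|^2\ \in\ \Att(X),$$
where the fibre integration is extended by $\mbb Z[\t]$-linearity.

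The three axioms will then follow from fairly routine verifications. Naturality is inherited from the naturality of both the transgression construction and $\g$. For the differential equation I will apply $dd^c$ under the integral sign, use the closedness of $\g(\wt E)$ stated in Proposition \ref{CharG}, integrate by parts, and invoke the Poincar\'e--Lelong equation $dd^c\log|z|^2 = \delta_0-\delta_\infty$; the boundary contributions at $0$ and $\infty$ will yield $\g(\ovl E)$ and $\g(\ovl{E'})\g(\ovl{E''})$ respectively, the latter by multiplicativity of $\g$ on ortho-split extensions. For vanishing on an ortho-split $\mcl E$, I will take $\wt E = p^*\ovl E$, so that $\g(\wt E)$ is pulled back from $X$ and has no $(1,1)$-component in the $\P^1$-direction, forcing the fibre integral to vanish.

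The hard part will be uniqueness. The difference of two candidates is a natural, $dd^c$-closed assignment that kills ortho-split sequences, and I will need to show it is zero in $\Att(X)$. The strategy is to apply it to the universal transgression sequence on $X\times\P^1$, which becomes ortho-split near $\infty$; combining the vanishing at $\infty$ with naturality under restriction to the two fibres of $\P^1$ and the $dd^c$-closedness, one should force the difference into $\im\d+\im\db$. This is the same mechanism that underlies uniqueness in Theorem \ref{Bott-Chern}, and it should propagate to $\g$ by applying it coefficient by coefficient of the power series $\g(u)=\sum_r t_r u^r$.
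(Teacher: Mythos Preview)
Your proposal is correct and follows essentially the same approach as the paper, which simply defers to \cite[Theo 1.2.9]{BGS2}; indeed the explicit formula you write down, $\wt\g(\mcl E)=-\int_{\P^1}\g(\wt E)\log|z|^2$, is exactly the one the paper records in the remark immediately following the proposition. The only cosmetic difference is that the paper (following \cite{GS1}) builds the transgression bundle algebraically as $\wt E=(p_2^*E\oplus p_2^*E'(1))/p_2^*E'$ rather than by patching metrics with a partition of unity, but either construction yields the same secondary class and the verifications proceed identically.
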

\begin{proof}
This is the same proof as \cite[Theo 1.2.9]{BGS2}, almost verbatim.
\end{proof}
\begin{Rque}
We can give an explicit expression of $\wt\g(\mcl E)$, if $$0\to\E'\to \E\to \E''\to 0$$ is a short exact sequence of hermitian bundles, we set $\wt E$ to be $(p_2^*E\oplus p_2^*E'(1))/p_2^*E'$ over $\P^1_X$ where $\O(1)$ is equipped with its Fubini-Study metric, and we endow $\wt E$ with any metric rendering isometric the isomorphisms over the fibers of $\wt E$ at 0 and $\infty$ with $\E$ and $\E'\oplus \E''$ respectively then we have $$\wt\g(\mcl E)=-\int_{\P^1_X/X}\log|z|^2\g(\wt E)$$
See \cite[1.2]{GS1} for details.
\end{Rque}

\begin{Prop}(Naturality with respect to $\t$)\label{NatuT}\par
Let $R$ be a ring equipped with a morphism $\fii:\mbb Z[\t]\to R$ and let $\g_R$ be the formal power series over $R$ given by $\sum \fii(t_i)u^i$ then we have \begin{enumerate}
	\item In $A_R^{\bullet,\bullet}(X)$, $\g_R(\ovl{E})=\fii(\g(\ovl E))$ for every hermitian bundle $\E$ over a manifold $X$.
	\item In $\wt{A}_R^{\bullet,\bullet}(X)_R$, $\wt\g_R(\mcl{E})=\fii(\wt\g(\mcl E))$ for every exact sequence of hermitian bundles over $X$.
	where $\g(\E)$ (resp. $\wt\g(\mcl E)$) is obtained by the same process as above.
\end{enumerate}
\end{Prop}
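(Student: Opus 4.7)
The plan is to reduce both statements to the observation that the morphism $\fii$ extends to a map of graded rings $\fii_*: A^{\bullet,\bullet}_{[\t]}(X) \to A^{\bullet,\bullet}_R(X)$ (and similarly on $\wt A$) which commutes with $d$, $d^c$, pullbacks, and integration along fibers, simply by acting on the coefficient ring and leaving the differential-form factors untouched. This is well-defined because the tensor products $\mbb Z[\t] \otimes A^{\bullet,\bullet}_{\mbb R}(X)$ and $R \otimes A^{\bullet,\bullet}_{\mbb R}(X)$ are flat base changes on the coefficient side, and $d$, $d^c$, $f^*$ and $\int_{Y/X}$ act only on the form factor.

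For part (1), I would unfold the Chern--Weil definition of $\g(\E)$: one writes $\prod_{i=1}^n \g(T_i) = \sum_\ell \fii^{(\ell)}(T_1,\ldots,T_n)$ with each $\fii^{(\ell)}$ a homogeneous polynomial in the $T_i$ whose coefficients lie in $\mbb Z[\t]$, and evaluates the associated conjugation-invariant function on $\frac{i}{2\pi}\nabla^2$. Applying $\fii$ coefficientwise turns $\fii^{(\ell)}$ into the $\ell$-th homogeneous component of $\prod \g_R(T_i)$, so $\fii_*\bigl(\fii^{(\ell)}(\tfrac{i}{2\pi}\nabla^2)\bigr)$ is by definition the analogous Chern--Weil expression for $\g_R$. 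Summing over $\ell$ gives $\fii_*(\g(\E))=\g_R(\E)$.

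For part (2), I would invoke the uniqueness in the preceding proposition. It suffices to verify that the element $\fii_*(\wt\g(\mcl E)) \in \wt A^{\bullet,\bullet}_R(X)$ satisfies the three characterizing properties of $\wt\g_R$:
\begin{itemize}
\item Naturality: for $f: Y \to X$, since $f^*$ commutes with $\fii_*$, one has $f^*\fii_*\wt\g(\mcl E)=\fii_* f^*\wt\g(\mcl E)=\fii_*\wt\g(f^*\mcl E)$.
\item Differential equation: apply $\fii_*$ to the identity $\g(\E)=\g(\E')\g(\E'')+dd^c\wt\g(\mcl E)$ and use part (1) together with the fact that $\fii_*$ is a ring homomorphism on forms and commutes with $d$ and $d^c$.
\item Vanishing: if $\mcl E$ is ortho-split then $\wt\g(\mcl E)=0$, hence $\fii_*\wt\g(\mcl E)=0$.
\end{itemize}
By the uniqueness clause of the proposition, $\fii_*\wt\g(\mcl E)=\wt\g_R(\mcl E)$.

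No serious obstacle is expected; the only subtlety is to make sure that $\fii_*$ is indeed well-defined on $\wt A^{\bullet,\bullet}_{[\t]}(X)$, i.e.\ that it descends modulo $\im\d+\im\db$, which is immediate from the commutation with $\d$ and $\db$. Alternatively, for part (2) one can give a direct proof using the explicit Bott--Chern-type formula $\wt\g(\mcl E)=-\int_{\P^1_X/X}\log|z|^2\,\g(\wt E)$: applying $\fii_*$, using that $\fii_*$ commutes with $\int_{\P^1_X/X}$ and with multiplication by the form $\log|z|^2$, and invoking part (1) for $\wt E$ over $\P^1_X$, one obtains the same formula for $\wt\g_R$ and hence the equality.
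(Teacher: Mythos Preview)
Your proposal is correct, and the paper's proof is morally the same; the only difference is that you and the paper swap tactics between the two parts. For part (1), the paper does not unfold the Chern--Weil expression directly but instead argues that the properties listed in Proposition~\ref{CharG} (naturality, value on line bundles, multiplicativity on ortho-split sequences, closedness) characterize $\g_R$ as a form, and then checks that $\fii_*(\g(\E))$ satisfies them because $\fii_*$ commutes with $d$ and pullbacks. For part (2), the paper takes exactly your ``alternative'' route: it invokes the explicit formula $\wt\g_R(\mcl E)=-\int_{\P^1_X/X}\log|z|^2\,\g_R(\wt E)$ and the fact that $\fii_*$ commutes with integration along the fiber, reducing (2) to (1) applied to $\wt E$ on $\P^1_X$. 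Your primary argument for (2) via the uniqueness clause is equally valid and arguably more robust, since it does not depend on having chosen that particular explicit representative for the secondary class.
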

\begin{proof}
The second point results from the first one as we have $$\wt\g_R(\E)=-\int_{\P^1_X/X}\log|z|^2\g_R(\wt E)$$ and the integration over the fiber commutes with $\fii$ by definition.\par
Let us prove the first one, in fact the properties mentioned in \ref{CharG}, characterize the form $\g_R$ over $R$, but using the fact that $\fii$ commutes with $d$ and pull-back ensures us that $\fii(\g)=\g_R$ and the proposition follows.

\end{proof}
In every case that we will consider $R$ will be a subring of $\mbb R$.
If $$\mcl E: 0\to\E'\to \E\to \E''\to 0$$ is a short exact sequence of hermitian vector bundles, we set $$\wt{\g^{-1}}(\mcl E)=-\wt \g(\mcl E)\g^{-1}(\E'')\g^{-1}(\E')\g^{-1}(\E)$$ 
using the fact that $\g^{-1}$ is closed and the differential equation satisfied by $\wt\g$ we see that $$dd^c\wt{\g^{-1}}(\mcl E)=\g^{-1}(\E)-\g^{-1}(\E')\g^{-1}(\E'')$$
moreover $\wt{\g^{-1}}(\mcl E)=0$ as soon as $(\mcl E)$ is ortho-split, and natural with respect to pull back, thus it is the secondary Bott-Chern form (with coefficient in $\mbb Z[\t]$) associated to $\g^{-1}$.\par
\begin{Rque}
If $\alpha$ is a closed $(p,p)$-form (or current), we see that $$\alpha \g(\E)-\alpha \g(\E')\g(\E'')=\alpha dd^c\wt\g(\mcl E)=dd^c(\alpha\wt\g(\mcl E))$$
therefore if we work in $\DL(X)$, we have $\alpha \g(\E)=\alpha \g(\E')\g(\E'')$ for any closed form $\alpha$, a fact that we will use in the following.
\end{Rque}
Let us finish by some basic comments about degrees in $\DL(X)$, in the context of the weak cobordism group we will set (see \ref{Degree}) $$\deg(\wt D_{\mbb R}^{p,p}(X))=d_X-(p+1)$$
that gives us a graded group structure on $\DL(X)$, of course this grading is not compatible with the product of currents even when it is defined because our theory will be \emph{homological} in nature.\par
However we see that $\deg(\g(\ovl E)\fii(t)g)=\deg(\fii(t).g)$ (and the same thing for $\g^{-1}$), that $\deg(dd^c(\fii(t).g))=\deg(\fii(t)g)-1$ and therefore $\deg(\wt\g(\mcl E))=d_X$, these observations will ensure that the class
$$a[i_*[\wt{\g}(\mcl E)\g^{-1}(\ovl T_Z)]]+a(\g(\L)\log\|s\|^2)$$
that will appear later, is homogenous of degree $d_X-1$.
\subsection{Construction of the Borel Moore Functor}
We first construct a Borel-Moore functor on arithmetic varieties
\begin{Rque}
In the following sections, many of our definitions could still make sense for arithmetic varieties over a Dedekind domain, or even a Dedekind scheme, however since we do not know how to prove even the geometric version of the properties of the arithmetic cobordism group I've chosen to remain in the context of varieties over a field, where the geometric theory is known to be well behaved.
\end{Rque}
\begin{Def}Let $\ovl X$ be an arithmetic variety over $k$.
We set $\mcl Z(\X)$ as the group $$\mcl Z'(\X)/\mcl R'(\X)\times \left[\mbb Z[\t]\otimes \Dt(X) \right]$$ 
where $\mcl Z'(\X)$ denotes the free abelian group built on symbols $$[\ovl Z\ds f \ovl X, \ovl L_1,...,\ovl L_r]$$
with
\begin{itemize}
	\item The morphism $f$ is a projective morphism between arithmetic varieties.
	\item The variety $\ovl Z$ is integral (connected).
	\item The line bundle $\L_i$ is a hermitian line bundle over $Z$.
\end{itemize}
The group $\mcl R'(\X)$ denotes the subgroup of $\mcl Z'(\X)$ generated by the classes $$[\ovl Z\ds f \ovl X, \ovl L_1,...,\ovl L_r]-[\ovl Z'\ds {f'} \ovl X, \ovl L'_1,...,\ovl L'_r]$$
such that there exists $h$ an $X$-isometry of $\ovl Z$ on $\ovl Z'$, that is to say an isomorphism $$\xymatrix{
   & Z \ar[rr]^h \ar[rd]_f& &Z'\ar[ld]^{f'}\\
     & &X \\}$$
inducing an isometry from $\Z(\mbb C)$ to $\Z'(\mbb C)$; and such that there exists a permutation $\sigma\in \mfr S_r$ and isomorphisms of hermitian line bundles $\ovl L_i\simeq \ovl L'_{\sigma(i)}$, in other words, we allow re-indexing of the (classes of) hermitian line bundles.
\end{Def}
\begin{Rque}In other, simpler, terms, we make no difference between two arithmetic varieties as long as they are isometric, ibidem for line bundles and we allow to permute the line bundles.
\end{Rque}
We naturally have a map $$a: \left\{\begin{matrix}\Dtt(X)&\to &\mcl Z(\X)\\ \fii(t).g &\mapsto & (0,\fii(t).g)\end{matrix}\right.$$

We will sometimes write $[\ovl Z\ds f \ovl X, \ovl L_1,...,\ovl L_r,\fii(t).g]$ for the element $[\ovl Z\ds f \ovl X, \ovl L_1,...,\ovl L_r]+a(\fii(t).g)$.
\newline The group $\mcl Z(\X)$ is equipped with a natural grading, defined in the following way.

\begin{Def}\label{Degree}
We set $\deg([\ovl Z\to \ovl X, \ovl L_1,...,\ovl L_r])=d_Z-r$, $\deg(\wt D_{\mbb R}^{p,p}(X))=d_X-(p+1)$, and $\deg(t_i)=i$.
On set $\mcl Z_d(\X)$ the subgroup of $\mcl Z(\X)$ of $d$-degree, and we shall note $\mcl Z_\bullet(\X)$ the graded group.
\end{Def}
\begin{Rque}
A word should be said about our conventions on coproducts and the extension of the definition to finite disjoint union of arithmetic varieties. Notice that the degree of $\Dtp p(X)$ depends on $p$ and also on the dimension of $X$, thus is $\Y=\Y_1\amalg \Y_2$, with $\dim_k(Y_1)\neq \dim_k(\Y_2)$ then $\Dtp p(Y)$ is not homogenous.
\end{Rque}
\begin{Rque}
If $\Z\ds i \X$ is the closed immersion of a smooth divisor, then $[\Z\to \X]$ has degree $d-1$, where $d$ is the dimension of $X$, and a Green current for $Z$ is given by a current of $\wt D_{\mbb R}^{0,0}(X)$, which has degree $d-1$, hence for such a current and for any $(1,1)$ closed smooth form $\omega$, the class $[\Z\to \X]-a(\g(\omega)\wedge g)$ is homogenous of degree $d_X-1$, which should explain the different choices in the grading, that differ slightly from the usual ones used in Arakelov geometry where we tend to grade by the codimension, which is not possible here.
\end{Rque}

\begin{Rque}
We will call a class of the form $[\ovl Z\to \ovl X, \ovl L_1,...,\ovl L_r]+a(g)$ a standard class, and we will refer to the term $[\ovl Z\to \ovl X, \ovl L_1,...,\ovl L_r]$ as the geometric part of the class, and to the term $a(g)$ as the analytic part. A class $[\ovl Z\to \ovl X]$ will be called a purely geometric class.
\end{Rque}

\subsection{Dynamics of the group $\mcl Z(X)$}
Let's have a closer look on the functoriality properties of the group $\mcl Z(\X)$.
\begin{Def}(\label{PF}Push-forward)\\
Let $\pi: \ovl X\to \ovl Y$ be a projective morphism between arithmetic varieties, we define $$\pi_*[\ovl Z\ds f \ovl X, \ovl L_1,...,\ovl L_r,g]=[\ovl Z\ds{\pi \circ f}  \Y, \ovl L_1,...,\ovl L_r,\pi_*(g\wedge \g^{-1}(\ovl{T_\pi}))]$$
we extend this morphism by linearity and we get a morphism $$\pi_*:\mcl Z(\ovl X)\to\mcl Z(\ovl Y)$$ whose functoriality is easy to verify.
\end{Def}
\begin{Rque}
Let us note that if $\pi$ is a projective morphism between smooth equidimensional varieties, and if $d$ designs the relative codimension of $\pi$, then $\pi_*$ induces a morphism from $D_{\mbb R}^{p,p}(X)$ to $D_{\mbb R}^{p-d,p-d}(Y)$, as $\dim(Y)-\dim(X)=-d$, as $\deg(\g(\ovl{T\pi})g)=\deg(g)$, we have $\deg(\pi_*a(g)))=\dim(Y)-p+d=\dim(X)-p=\deg(a(g))$, thus $\pi_*$ is a graded morphism.
\end{Rque} 
\begin{Rque}
Notice here, that we have been a bit sloppy and used the same notation for two different things: the natural push forward of currents and the "twisted" push-forward of currents are both denoted $\pi_*$. 
\end{Rque}

It is also possible to define the pull back of any element in $\mcl Z(\X)$ along a smooth morphism.

\begin{Def}(\label{PB}Pull-back)\\
Let $f: \ovl S\to \ovl X$  be a smooth equidimensional morphism between arithmetic varieties, we define $$f^*[\ovl Z\ds f \ovl X, \ovl L_1,...,\ovl L_r,g]=[\ovl{Z\times_X  S}\ds p_2 \ovl S, \ovl{p_1^*L_1},...,\ovl{p_1^*L_r},f^*(g)]$$
The metric on $Z\times_X  S$ is defined in the following way, as $X/k$ is separated, we have a closed immersion  $Z\times_X  S\to Z\times_k  S$, which gives an embedding $T_{Z\times_X  S/k}\to T_{Z\times_k  S/k}\simeq p_1^*T_{Z/k}\oplus p_2^*T_{S/k}$, this former bundle being equipped with a natural metric, we can induce this metric on $T_{Z\times_X  S/k}$.\par 
We extend this morphism by linearity and we get a morphism $$f^*:\mcl Z(\ovl X)\to\mcl Z(\ovl Y)$$ whose functoriality is easy to verify.
\end{Def}
\begin{Rque}
Here again, for equidimensional varieties (e.g connected), this morphism is a graded morphism with degree the relative codimension $\delta=\dim(S)-\dim(X)$.
\end{Rque}
At last, it is also possible to define a first Chern class operator.
\begin{Def}(\label{C1}First Chern Class)\\
Let $\ovl L\in \wh{\Pic}(X)$ be a hermitian line bundle over $\ovl X$, we define $$\c1(\ovl L)[\ovl Z\ds f \ovl X, \ovl L_1,...,\ovl L_r,g]=[\ovl Z\ds f \ovl X, \ovl L_1,...,\ovl L_r,\ovl{f^*L},c_1(\L)\wedge\g(\L)\wedge g]$$
We extend this morphism by linearity and we get a morphism $$\c1(\L):\mcl Z_\bullet(\ovl X)\to\mcl Z_{\bullet-1}(\ovl X)$$
\end{Def}
\begin{Rque}
It will be useful to keep in mind the "different parts" of the action of $\c1(L)$, on geometric classes we have $$\c1(\ovl L)[\ovl Z\ds f \ovl X, \ovl L_1,...,\ovl L_r]=[\ovl Z\ds f \ovl X, \ovl L_1,...,\ovl L_r,\ovl{f^*L}]$$
whereas on analytic classes $\c1(\L)$ acts by multiplication by $c_1(\L)\g(\L)$, which we will sometimes denote $\h^{-1}(\L)$ because it is the (composition) inverse of the $\h$ class we've defined earlier\footnote{A word of warning, $\g^{-1}$ denotes the multiplicative inverse of $\g$ whereas $\h^{-1}$ denotes the composition inverse of $\h$, it maybe unfortunate to use the same notation for two different things, but it shouldn't confuse the reader as $\h$ does not have any multiplicative inverse, and $\g$ doesn't have any composition one.}.
\end{Rque}
We list in the next proposition, the different compatibility properties between these morphisms.
\begin{Prop}
\label{Compatibility}
Let $\X, \Y$ and $\S$ be arithmetic varieties, and let $\pi: \Y\to \X$ be a projective morphism, $f:\X\to \S$ a smooth equidimensional morphism, and $\ovl L$ a hermitian line bundle over $X$.
\begin{enumerate}
	\item Over $\mcl Z(\X)$, $\pi_*\circ\c1(\pi^*\L)=\c1(\L)\circ \pi_*$.
	\item Over $\mcl Z(\X)$, $f^*\circ \c1(\L)=\c1(f^*\L)\circ f^*$.
	\item Over $\mcl Z(\X)$, $\c1(\L)\circ \c1(\M)=\c1(\M)\circ \c1(\L)$.
\end{enumerate}
Finally, if we have a fiber diagram $$\xymatrix{
    X'\ar[r]_{t'} \ar[d]_{\pi'} &X\ar[d]^{\pi}\\
     S'\ar[r]^t& S \\}$$
		with $\pi$ projective, and $t$ smooth equidimensional, and $X'=X\times_S S'$ equipped with its natural metric, then $$\pi'_*t'^*=t^*\pi_*$$
\end{Prop}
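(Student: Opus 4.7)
My plan is to verify each identity separately on the two kinds of generators of $\mcl Z(\X)$: the purely geometric classes $[\ovl Z\xrightarrow{f}\ovl X,\ovl L_1,\dots,\ovl L_r]$ and the analytic classes $a(\fii(\t)g)$. In every case the geometric part should follow formally from the definitions (together with the rule built into $\mcl R'(\X)$ allowing re-indexing of the hermitian line bundles), and the analytic part should reduce to a classical identity on currents combined with the naturality of $c_1$ and $\g$ from Proposition~\ref{CharG}.

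For the projection formula (1), on a geometric class both sides yield $[\ovl Z\xrightarrow{\pi f}\ovl Y,\ovl L_1,\dots,\ovl L_r,\ovl{f^*\pi^*\L}]$, using $(\pi\circ f)^*\L=f^*\pi^*\L$ as hermitian line bundles and the vanishing $\pi_*(0\cdot\g^{-1}(\ovl{T_\pi}))=0$. On an analytic class the identity reduces to
$$\pi_*\!\bigl(c_1(\pi^*\L)\g(\pi^*\L)\,g\,\g^{-1}(\ovl{T_\pi})\bigr)\;=\;c_1(\L)\g(\L)\cdot\pi_*\!\bigl(g\,\g^{-1}(\ovl{T_\pi})\bigr),$$
which is the classical projection formula $\pi_*(\pi^*\alpha\wedge\beta)=\alpha\wedge\pi_*\beta$ applied to $\alpha=c_1(\L)\g(\L)$. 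For naturality (2), on a geometric class both sides produce $[\ovl Z\times_X\ovl S\to\ovl S,\ovl{p_1^*L_1},\dots,\ovl{p_1^*L_r},\ovl N]$ with $N=p_1^*f_Z^*\L$ on one side and $N=p_2^*f^*\L$ on the other; these agree by the universal property of $Z\times_X S$ (namely $f\circ p_2=f_Z\circ p_1$), and the two metrics coincide since both are pulled back from the unique metric on $\L$. On an analytic class both sides equal $a(c_1(f^*\L)\g(f^*\L)f^*g)$. Commutativity (3) is immediate: on a geometric class it is the transposition of two adjacent line bundles, which is an allowed move in $\mcl R'(\X)$; on an analytic class it follows from the commutativity of the product of the even-degree closed forms $c_1(\L)\g(\L)$ and $c_1(\M)\g(\M)$.

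For the base change identity, on a geometric class $[\ovl W\xrightarrow{h}\ovl X,\ovl L_i]$ both $t^*\pi_*$ and $\pi'_*t'^*$ yield the underlying variety $W\times_X X'=W\times_S S'$ over $S'$ together with the line bundles $p_W^*L_i$, so the task reduces to checking that the two prescriptions endow the relative tangent bundle with the same hermitian metric; concretely, one identifies $T_{W\times_X X'/k}$ with $T_{W\times_S S'/k}$ as subbundles and checks that the metrics inherited from the two ambient embeddings coincide. On an analytic class $a(g)$ the identity becomes
$$\pi'_*\!\bigl(t'^*(g)\cdot\g^{-1}(\ovl{T_{\pi'}})\bigr)\;=\;t^*\!\bigl(\pi_*(g\cdot\g^{-1}(\ovl{T_\pi}))\bigr),$$
which follows from the classical base-change formula $\pi'_*t'^*=t^*\pi_*$ on currents (valid because the square is cartesian with $\pi$ proper and $t$ smooth) combined with the isometry $\ovl{T_{\pi'}}\simeq t'^*\ovl{T_\pi}$ forced by the cartesian square.

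The subtlest point, and the only one that really demands care, is the metric-compatibility in the base change on geometric classes: one needs the two natural hermitian metrics on the common underlying bundle $T_{W\times_X X'/k}=T_{W\times_S S'/k}$ to agree, which is a direct but slightly tedious linear-algebra verification using that the redundant $T_X$-components in the first embedding are constrained to equal $dh(v)$ on the fiber product. All the remaining items are straightforward translations into $\mcl Z(\X)$ of the analogous classical facts for algebraic cycles and currents.
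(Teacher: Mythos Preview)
Your approach is correct and matches the paper's: both split every identity into the geometric and the analytic summand and verify each piece directly. The only noteworthy difference is one of emphasis in the base-change identity. You flag the metric compatibility on the geometric term as the ``subtlest point'' and take the identity $\pi'_*t'^*=t^*\pi_*$ on currents as a classical fact; the paper does the exact opposite, declaring the geometric term (metrics included) evident and instead \emph{proving} the current identity by passing to the dual statement $\pi^*t_*\omega=t'_*\pi'^*\omega$ for smooth forms, then invoking Ehresmann's fibration theorem and a partition-of-unity argument to reduce to forms of product type on a trivialized chart. Neither emphasis is wrong, but you should be aware that the current base-change is not an off-the-shelf citation in this mixed setting (pull-back of currents along a smooth map, push-forward along a proper map), and the paper's duality-plus-Ehresmann argument is exactly the sort of justification a careful reader would want there.
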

\begin{proof} It suffices to check all assertions on standard classes, as any standard class $[\Z\to \ovl S, \L_1,...,\L_r,g ]$ can be written as $[\Z\to \ovl S, \L_1,..,\L_r]+a(g)$, it is enough to check the identities on both summands.\par
At the level of the analytic term, the identity $f^*\circ \c1(\L)(a(g))=\c1(f^*\L)\circ f^*(a(g))$ is a consequence of the naturality of the $\g$-class, the naturality of the action of Chern forms on differential smooth forms and the density of smooth forms in the space of currents.\par 
This identity remains true for $f$ projective if we replace the current $g$ by a smooth form, and this, in turns, implies the first one by duality. The first three identities are evident enough for the geometric term.\par 
Let's prove the last one, here again only the $a(g)$ term is not a priori clear, we need to prove that $\pi'_*t'^*a(g)=t^*\pi_*a(g)$, so in other words $$\pi'_*(\g^{-1}(\ovl{T_{X'/S'}})t'^*g)=t^*(\pi_*(\g^{-1}(\ovl{T_{X/S}})g))$$
Now using the naturality of the $\g$-class and the fact that for a Cartesian diagram such as the one in the proposition we have $\g^{-1}(\ovl T_{X'/S'})=t'^*\g^{-1}(\ovl T_{X/S})$  we only need to prove that for any current $\eta$, we have $$\pi'_*t'^*(\eta)=t^*\pi_*(\eta)$$
By duality, it is sufficient to prove that for any smooth compactly supported form $\omega $ on $S'$ we have $\pi^*t_*\omega=t'_*\pi'^*\omega$.  but this is tantamount to proving that $$\int_{X'/X}\pi'^*\omega=\pi^*\int_{S'/S}\omega$$.\par
Notice that $S'$ being proper over $k$, and $S$ being separated over $k$, $t$ is proper, and thus closed, but it is also open because it is flat, we can thus assume that $t(S')$ is a connected component of $S$, and even surjective by making the base changing to the connected component in question. By Ehresmann theorem \cite[Thm 2.4, p. 64]{Ehresmann}, we can thus assume that $S'\to S$ is a proper fibration of typical fiber $F$.\par
Let $(U_i)$ be an open cover of $S$, trivializing the fibration $t$, and let $\mu_i$ be a partition of unity associated with $U_i\times F$, which is an open cover of $S'$. We can choose $U_i$ small enough so that it is isomorphic to an open subset of $\mbb C^n$ As by its very definition, for any smooth form $\omega$, $t_*(\omega)=\sum_i t_*(\mu_i\omega)$, and using the linearity of $\pi^*$, we may assume that $\omega$ is compactly supported in a open subset of the form $U_i\times F$, and can thus be written as a sum of $\alpha \wedge \beta $, where $\alpha$ (resp. $\beta$) is the pull-back of a smooth form on $U_i$ (resp. $F$)\par
But then both sides of the identity we want to prove are equal to $\pi^*(\alpha)\wedge \int_F \beta$
\end{proof}

\subsection{Saturation of a subset of $\mcl Z(\X)$}
Assume we've been given, for every arithmetic variety, $\ovl Y$, an assignment $\Y\mapsto \mu(\Y)\subset\mcl Z(\Y) $.
\begin{Def}(Saturation of $\mu$)\\
We call the saturation of $\mu$ (if it exists) and we denote $\bra \mu \ket$, the map $X\mapsto \bra \mu \ket(X)$, where $\bra \mu \ket(X)$ is the smallest class of subgroups of $\mcl Z(\X)$ satisfying, for every projective morphism $\pi:Y\to X$, for every smooth equidimensional morphism $f:X\to S $, and for every hermitian line bundle $\ovl L\in \wh{\Pic}(X)$,
$$\pi_*(\bra \mu \ket(\Y))\subset \bra \mu \ket(\X); f^*(\bra \mu \ket(\ovl S))\subset \bra \mu \ket(\X); \c1(\ovl L)(\bra \mu \ket(\X))\subset \bra \mu \ket(\X)$$
\end{Def}
\begin{Prop}
If the mapping $\mu$ is such that for every $X$, $\mu(X)$ consists of homogenous elements, then the saturation $\bra \mu \ket$ exists, and the quotient $\mcl Z_{\mu}(X)$ inherits a natural grading from $\mcl Z(\X)$.
\end{Prop}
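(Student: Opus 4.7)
The plan is to construct the saturation $\bra\mu\ket$ directly as the subgroup generated by all elements of $\mcl Z$ obtainable from $\mu$ by iterated application of the three structural operations. This explicit description will automatically deliver the grading property, once one records that $\pi_*$, $f^*$ and $\c1(\L)$ are each graded morphisms of graded abelian groups.

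For each arithmetic variety $\X$, I would let $S(\X)\subset\mcl Z(\X)$ denote the set of all elements obtainable from some $\alpha\in\mu(Y)$, $Y$ an arithmetic variety, by iterated composition of the operations $\pi_*$, $f^*$ and $\c1(\L)$ in any order consistent with their source and target. I would then define $\bra\mu\ket(\X)$ to be the subgroup of $\mcl Z(\X)$ generated by $S(\X)$. Stability of $\bra\mu\ket$ under the three operations is then immediate, since any of the three sends a generator to a generator and extends to the whole subgroup by additivity; containment of $\mu$ follows by taking $\pi=f=\mathrm{id}$. Minimality within the collection of assignments $\X\mapsto H(\X)$ containing $\mu$ and stable under the three operations is proved by induction on the number of operations used to produce an element of $S(\X)$: any such $H$ contains $\mu$, hence inductively contains $S$, hence contains $\bra\mu\ket$.

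For the grading assertion I would invoke the observations recorded right after \ref{PF}, \ref{PB} and \ref{C1}: $\pi_*$ has degree $0$, $f^*$ has degree equal to the relative dimension of $f$, and $\c1(\L)$ has degree $-1$, when $\mcl Z$ is equipped with the grading of \ref{Degree}. Consequently each of the three operations sends a homogeneous element to a homogeneous element. Since by hypothesis the elements of $\mu(\cdot)$ are already homogeneous, every element of $S(\X)$ is homogeneous, and $\bra\mu\ket(\X)$ is therefore generated by homogeneous elements. It is then a graded subgroup of $\mcl Z(\X)$, and the quotient $\mcl Z_\mu(\X)$ inherits a natural grading as a quotient of a graded abelian group by a graded subgroup.

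The only non-formal step is checking that each structural operation is genuinely graded of the advertised degree, which reduces to the bookkeeping displayed after the definitions of $\pi_*$, $f^*$ and $\c1(\L)$; in particular one must verify that the analytic prefactors $\g^{-1}(\ovl{T_\pi})$ and $c_1(\L)\g(\L)$ carry degrees compatible with the convention $\deg(t_i)=i$ from \ref{Degree}. Once this verification is granted, the rest of the argument is essentially formal.
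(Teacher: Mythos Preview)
Your argument is correct, and it is actually more elementary than the paper's. You build $\bra\mu\ket(\X)$ as the subgroup generated by \emph{all} finite compositions of the three operations applied to elements of $\mu(\cdot)$, in arbitrary order; stability, containment, and minimality then follow formally, and the grading comes from the fact that each operation is homogeneous.

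The paper proceeds differently: it first observes that every standard class of $\mcl Z(\X)$ can be written in the \emph{normal form} $f_*\circ\c1(\L_r)\circ\cdots\circ\c1(\L_1)\circ\pi^*(1_k)+a(g)$, and then defines $\bra\mu\ket(\Y)$ as the subgroup generated only by elements of this specific shape $f_*\circ\c1(\L_r)\circ\cdots\circ\c1(\L_1)\circ\pi^*(\alpha)$ with $\alpha\in\mu(\Z)$. To check that this smaller generating set is stable under the three operations, the paper must invoke the compatibility relations of Proposition~\ref{Compatibility} (base change, projection formula, commutation of Chern classes) to rewrite, say, $g^*\circ f_*\circ\cdots$ back into normal form. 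Your approach bypasses this entirely: since you allow arbitrary words in the operations, stability is tautological and \ref{Compatibility} is never needed. What the paper's approach buys is an explicit canonical form for the generators of $\bra\mu\ket$, which is exploited later (for instance when listing explicit generators of $\bra\DIM\ket$, $\bra\SECT\ket$, $\bra\FGL\ket$ in the proof of the ring and module structures on $\cob(k)$). Your route is cleaner for the proposition at hand but does not deliver that normal form as a byproduct.
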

\begin{proof}Let us notice that every standard class in $\mcl Z(\X)$ verifies 
\begin{eqnarray*}[\Z\ds f \X, \L_1,...,\L_r,g]&=&[\Z\ds f \X, \L_1,...,\L_r]+a(g)\\
&=&f_*[\Z\to \Z, \L_1,...,\L_r]+a(g)\\
&=&f_*\circ \c1(\L_r)\circ...\circ \c1(\L_r)[\Z\to \Z]+a(g)\\
&=&f_*\circ \c1(\L_r)\circ...\circ \c1(\L_r)\pi_{\Z}^*(1_k)+a(g)\end{eqnarray*} 
For every arithmetic variety $\ovl Y$, set $\bra \mu\ket(\Y)$ the subgroup of $\mcl Z(\Y)$ generated by the set $$A(\Y)=\{f_*\circ \c1(\L_r)\circ...\circ \c1(\L_r)\pi^*(\alpha)| \alpha \in \mu(\Z), \pi:\ovl T\to \ovl Z \mbox{ projective}, f:\ovl T\to \ovl Y \mbox{smooth}, \L_i \in \wh{\Pic}(T) \}$$
we're left to check that the set $A(\Y)$ is mapped to $A(\Z)$ (resp. $A(\ovl S)$) under the action of a projective (resp. smooth equidimensional) morphism from $Y \to Z$ (resp. from $S$ to $Y$). But this results simply from \ref{Compatibility}.\par
The fact that the quotient is naturally graded if $\mu$ takes only subset of homogenous elements in $\mcl Z$ as value, and the fact that pull-backs, push-forwards and first Chern class operators preserve the grading is immediate.
\end{proof}
If the saturation of $\mu$ exists, we shall denote $\mcl{Z}_{\mu}(X)$ the (possibly graded) quotient $\mcl Z(\X)/\bra\mu\ket(X)$.

\subsection{The final construction}
We will now impose the relations that'll turn our basic object $\mcl Z(\X)$ into an object with a real geometric and arithmetic significance, for that we need to impose the following three relations \begin{eqnarray*}(\DIM)& [\Y\to \X, \L_1,...,\L_{d+2}]=0 
\end{eqnarray*}
 for $d=\dim(Y)$.
\begin{eqnarray*}(\SECT)& [\X\to \X, \L]+a[i_*[\wt{\g}(\mcl E)\g^{-1}(\ovl T_Z)]]=[\Z\to \X] -a(\g(\L)\log\|s\|^2) \end{eqnarray*}
with $s$ a section of $\L$ with smooth zero scheme, and $\|\cdot\|$ the norm induced by the norm on $\L$, where $\mcl E$ is the exact sequence $$\mcl E: 0\to \ovl{T_Z}\to i^*\ovl{T_X}\to i^*\L\to 0$$
and
\begin{eqnarray*}(\FGL)& \c1(\L\otimes \M)=\mbb F(\c1(\L),\c1(\M)) \end{eqnarray*}
where $\mbb F$ is the universal formal law group.\\

Notice that in the axiom $\SECT$ the scheme $Z$ is smooth but not necessarily irreducible (e.g when $X$ is a curve), if $Z$ is not connected, then the notation $[Z\to X]$ is simply meaning the sum of the $[Z_i\to X]$ where the $Z_i$'s are the connected components of $Z$. This is consistent with our conventions on coproducts of arithmetic varieties and in any case $[\Z\to \X]=i_*(1_{\Z})=i_{1*}(1_{\Z_1})+...+i_{p*}(1_{\Z_p})$.\\

In order to do this construction, we first need to impose the $(\DIM)$ condition, and to tensor over $\mbb Z$ by $\mbb L$ the Lazard ring, for the last relation to make any sense.\par
As the set $\SECT+\DIM(X)=\{[Y\to X, \L_1,...,\L_r]| r>\dim(Y)\}\cup \{[X\to X, \L]-[Z\to X] +a(\log\|s\|^2)| s \mbox{ smooth section of } \L \}$ is made up of homogenous elements, we can consider the graded group $\mcl Z_{\DIM, \SECT,\bullet}(X)$.\par

Let's now finish the construction of arithmetic cobordism, we set $\widecheck{ \mcl Z}(X)=\mbb L\otimes_{\mbb Z}\mcl Z(\X)_{\DIM, \SECT} $, we can grade this group via the natural grading on both factors. It is naturally a $\mbb L$-module, and we can extend all operations defined in the previous section, by linearity and we can prove the analog of \ref{Compatibility} for $\mbb L$-modules.

\begin{Def}(Arithmetic weak Cobordism)\\
We set $$ \cob(X)=\widecheck{\mcl Z}_{\FGL}(X)$$ where $$\FGL(X)=\{\c1(\L\otimes \M)(1_X)=\mbb F(\c1(\L),\c1(\M))(1_X)\}\cup \{\c1(\L\otimes \M)(a(g))=\mbb F(\c1(\L),\c1(\M))(a(g))\}$$ It is a graded $\mbb L$-module that we will call the \emph{arithmetic weak cobordism group of $X$}.
\end{Def}
\begin{Rque}Notice that the operator $\c1(\L)$ being locally nilpotent (i.e for every $a$, there exists $n>0$, such that $\c1(\L)^n(a)=0$), the term $\mbb F(\c1(\L),\c1(\M))$ does make sense.\end{Rque}

\begin{Prop}\label{RelationsLh}
Let $\X$ be an arithmetic variety, the map $a: \mbb L\otimes \mbb Z[\t]\otimes \Dt(X) \to \cob(\X)$ factors through $\DL(X)$, we will still denote by $a$ this map $\DL(X)\to \cob(\X)$
\end{Prop}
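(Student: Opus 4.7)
The plan is to verify that the kernel of the surjection $\mbb L\otimes\mbb Z[\t]\to\Lh$ annihilates every $a(g)$ in $\cob(\X)$. Write $\Lh=\mbb L\otimes\mbb Z[\t]/(t_0-1,\,r_{ij})$ with $r_{ij}\in\mbb L\otimes\mbb Z[\t]$ the coefficient of $u^iv^j$ in $\mbb F(u\g(u),v\g(v))-(u+v)\g(u+v)$. The unit axiom $\mbb F(u,0)=u$ in $\mbb L$ gives $r_{i0}=r_{0j}=0$ automatically in $\mbb L\otimes\mbb Z[\t]$, and the normalization $t_0=1$ is in force throughout the construction of $\cob$ (it is used implicitly whenever $\g^{-1}$ appears, notably in the very definition of the push-forward). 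It thus suffices to prove $r_{ij}\cdot a(g)=0$ in $\cob(\X)$ for all $i,j\ge 1$ and all $g\in\Dt(X)$.

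The key input is the $\FGL$ relation applied to $a(g)$. Iterating the defining formula $\c1(\L)(a(g'))=a(c_1(\L)\g(\L)g')$ gives $\c1(\L)^p\c1(\M)^q(a(g))=a(c_1(\L)^p\g(\L)^pc_1(\M)^q\g(\M)^qg)$, so the identity $\c1(\L\otimes\M)(a(g))=\mbb F(\c1(\L),\c1(\M))(a(g))$ rewrites in $\cob(\X)$ as $\sum_{i,j\ge 1}r_{ij}\cdot a(c_1(\L)^ic_1(\M)^jg)=0$, valid for every pair of hermitian line bundles $\L,\M$ on $X$ and every $g$. To extract each relation $r_{i_0j_0}\cdot a(g)=0$ individually, I would specialize to a universal situation: pull back along the smooth projection $\pi:Y=X\times\P^N\times\P^N\to X$ (product K\"ahler metric, Fubini-Study on the projective factors), take $\L$ and $\M$ to be the pull-backs of $\O(1)$ from the two projective factors, and apply the identity on $Y$ to the test current $c_1(\L)^a c_1(\M)^b\pi^*g$ for $a,b\ge 0$.

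Pushing forward by $\pi_*$, using $\ovl{T_\pi}=q^*(\ovl{T_{\P^N}}\oplus\ovl{T_{\P^N}})$ (with $q:Y\to\P^N\times\P^N$) together with the projection formula, one obtains $\sum_{i,j\ge 1}r_{ij}\,\lambda_{i+a}\lambda_{j+b}\cdot a(g)=0$ in $\cob(\X)$, where $\lambda_k:=\int_{\P^N}\g^{-1}(\ovl{T_{\P^N}})\,\omega^k\in\mbb Z[\t]$ and $\omega=c_1(\ovl{\O(1)})$. An Euler-sequence computation gives $\lambda_k=0$ for $k>N$ and $\lambda_N=t_0^{-N}$, which equals $1$ under $t_0=1$. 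Choosing $a=N-i_0$, $b=N-j_0$ collapses the sum to $1\le i\le i_0$, $1\le j\le j_0$, with diagonal term exactly $r_{i_0j_0}\cdot a(g)$ and every off-diagonal $(i,j)$ satisfying $i+j<i_0+j_0$. A double induction on $i_0+j_0$ (base case $(1,1)$ being immediate since only the diagonal survives) then yields $r_{i_0j_0}\cdot a(g)=0$: the off-diagonal terms vanish by the inductive hypothesis after absorbing the commuting $\mbb Z[\t]$-scalars $\lambda_\bullet$ into the test current. The main obstacle is really the unitness $\lambda_N=1$, which without the normalization $t_0=1$ would only give the factorization after localising at $t_0$; this is precisely why $t_0=1$ has to be imposed in $\Lh$, and why the argument is self-consistent with the rest of the construction of $\cob$.
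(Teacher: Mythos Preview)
Your argument is correct and follows the same core idea as the paper: impose the $\FGL$ relation on analytic classes over a product of projective spaces carrying the two $\O(1)$ bundles, and then push forward to isolate the individual coefficients $r_{ij}$. You are, however, more careful than the paper on two points. First, you work on $X\times\P^N\times\P^N$ with the general test current $c_1(\L)^ac_1(\M)^b\pi^*g$, so that the conclusion is $r_{ij}\cdot a(g)=0$ for every $g$; the paper only treats $a(1)$ over $\P^r\times\P^\ell$ and then asserts that the relation ``pulls back'' to $\cob(\X)$, which literally only yields $r_{ij}\cdot a(1_X)=0$. Second, you explicitly track the twisting factor $\g^{-1}(\ovl{T_\pi})$ in the push-forward via the coefficients $\lambda_k$ and unwind the resulting lower-triangular system by induction on $i_0+j_0$; the paper's appeal to the ``key remark'' $\int_{\P^r\times\P^\ell}u^iv^j=\delta_{ir}\delta_{jl}$ tacitly ignores this twist. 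Both refinements are needed, and your version is the one that actually closes the argument.
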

\begin{proof}
The proof hinges on the following key remark $$\int_{\P^r\times \P^\ell}c_1(p_1^*\O(1))^ic_1(p_2^*\O(1))^j=\delta_{ir}\delta_{jl}$$
To exploit this we will compute $$I_{rl}=\int_{\P^r\times \P^\ell}\c1(p_1^*\O(1)\otimes p_2^*\O(1))a(1)$$
in two different ways. To ease notations we will simply write $u$ for $c_1(p_1^*\O(1))$ and $v$ for $c_1(p_2^*\O(1))$.\par
On the one hand, using $\FGL$ and the key remark we see that $$I_{rl}=\sum_{i,j}a^{i,j}\int_{\P^r\times \P^\ell}(\h^{-1}(u))^i(\h^{-1}(v))^j=\sum_{i,j}a^{i,j}\left[(\h^{-1}(u))^i(\h^{-1}(v))^j\right]^{\{(r,l)\}}$$
where $\{(r,l)\}$ denotes the coefficient in front of $u^{r}v^{l}$.\par
On the other hand using the explicit expression of the action of the first Chern operator on $a(1)$ we see that $$I_{rl}=(\h^{-1}(u+v))^{\{(r,l)\}}$$
we thus have $$(\h^{-1}(u+v))^{\{(r,l)\}}a(1)=\sum_{i,j}a^{i,j}\left[(\h^{-1}(u))^i(\h^{-1}(v))^j\right]^{\{(r,l)\}}a(1)$$
in $\cob(k)$, and this pulls back to the same relation in $\cob(\X)$ but those are exactly the relations between the $t_i$'s and the $a^{i,j}$'s in $\Lh$, so the proof is complete.
\end{proof}
We also have an obvious forgetful map $\zeta:\cob(\X)\to \Omega(X)$ sending analytic classes to $0$ and a geometric class $[\Y\to \X, \L_1,...,\L_p]$ to $\zeta([\Y\to \X, \L_1,...,\L_p])=[Y\to X, L_1,...,L_p]$. The fact that this map is well defined and surjective follows immediately from the construction.

\subsection{A remark on Borel-Moore Functors}
We need to restrict the notion of Borel-Moore functor introduced in \cite{LevineMorel}. The reason for this is that the smallest class Levine and Morel consider to define a Borel-Moore functor is the class of quasi-projective smooth varieties over a field $k$ whereas we are solely interested in the class of projective smooth varieties.  We refer the reader to \cite{LevineMorel} for notations and vocabulary that we may not define.
\begin{Def}(Projective Borel-Moore functor, compare with \cite[p.13]{LevineMorel})\par
Let $R$ be a graded ring, we call a (graded) projective $R$-Borel-Moore functor an assignment $X\to H_\bullet(X)$ for each $X$ projective and smooth over $k$, such that 
  \begin{enumerate}
	  \item $H_\bullet(X)$ is a (graded) $R$-module
    \item (direct image homomorphisms) a homomorphism $f_* : H_\bullet(X) \to H_\bullet(Y)$ of degree zero for each projective morphism $f : X \to Y$,
    \item (inverse image homomorphisms) a homomorphism $f^* : H_\bullet(Y) \to H_\bullet(X)$ of degree $d$ for each smooth morphism $f : X \to Y$ of relative dimension $d$,
    \item (first Chern class homomorphisms) a homomorphism $c_1(L) : H_\bullet(X) \to H_\bullet(X)$ of degree -1 for each line bundle $L$ on $X$,
  \end{enumerate}
satisfying the axioms
  \begin{enumerate}
    \item the map $f \mapsto f_*$ is functorial;
    \item the map $f \mapsto f^*$ is functorial;
    \item if $f : X \to Z$ is a projective morphism, $g : Y \to Z$ a smooth equidimensional morphism, and the square
      $$\xymatrix{ W \ar[r]^{g'}\ar[d]_{f'}& X \ar[d]^{f} \\
        Y \ar[r]^{g}& Z\\}$$
    is Cartesian, then one has 
      \[ g^* \circ f_* = f'_* \circ g'^*  \]
    \item if $f : Y \to X$ is projective and $L$ is a line bundle on $X$, then one has
      \[ f_* \circ c_1(f^*(L)) = c_1(L) \circ f_*  \]
    \item if $f : Y \to X$ is a smooth equidimensional morphism and $L$ is a line bundle on $X$, then one has
      \[ c_1(f^*(L)) \circ f^* = f^* \circ c_1(L)  \]
    \item if $X$ is a projective smooth variety and $L$ and $M$ are line bundles on $X$, then one has 
      \[ c_1(L) \circ c_1(M) = c_1(M) \circ c_1(L)  \]
      \end{enumerate}
\end{Def}
We will only be interested in projective Borel-Moore $\mbb L$-functor of a particular type.
\begin{Def}(Geometric type)\par
A \emph{projective oriented Borel-Moore functor with product} is the data of a projective oriented Borel-Moore functor together with the data of
    \begin{enumerate}
      \item (external product) a bilinear graded multiplication map
        \[ \times : H_\bullet(X) \times H_\bullet(Y) \to H_\bullet(X \times Y) \]
      which is associative, commutative, and admits a unit $1_K \in H_0(\Spec k)$,
    \end{enumerate}
satisfying the axioms
    \begin{enumerate}
      \item  if $f : X \to Y$ and $g : X' \to Y'$ are projective morphisms, one has the equality
        \[ \times \circ (f_* \times g_*) = (f \times g)_* \circ \times : H_\bullet(X) \times H_\bullet(X') \to H_\bullet(Y \times Y'); \]
      \item  if $f : X \to Y$ and $g : X' \to Y'$ are smooth equidimensional morphisms, one has the equality
        \[ \times \circ (f^* \times g^*) = (f \times g)^* \circ \times : H_\bullet(Y) \times H_\bullet(Y') \to H_\bullet(X \times X'); \]
      \item  if $L$ is a line bundle on $X$, and $\alpha \in H_\bullet(X)$, $\beta \in H_\bullet(Y)$, then one has the equality
        \[ c_1(L)(\alpha) \times \beta = c_1(p^*(L))(\alpha \times \beta) \]
      in $H_\bullet(X \times Y)$.
    \end{enumerate}
We will say that an $\mbb L$ projective Borel-Moore functor with product, $H_\bullet$ is of geometric type if the following additional properties are satisfied
  \begin{enumerate}
    \item(Dim)  For $X$ a smooth projective variety and $(L_1, \ldots, L_n)$ a family of line bundles on $X$ with $n > \dim(X)$, one has
      \[ c_1(L_1) \circ \cdots \circ c_1(L_n)(1_X) = 0 \]
    in $H_\bullet(X)$.
    \item (Sect)  For $X$ a smooth projective variety, $L$ a line bundle on $X$, and $s$ a section of $L$ which is transverse to the zero section, one has the equality
      \[ c_1(L)(1_X) = i_*(1_Z) \]
    where $i : Z \to X$ is the closed immersion defined by the section $s$.
		\item  (FGL)  There exists a formal law group $F_H$ on $\mbb L$ such that, for $X$ a smooth projective variety and $L,M$ line bundles on $X$, one has the equality
      \[ F_H(c_1(L), c_1(M))(1_Y) = c_1(L \otimes M)(1_Y) \]
    where $F_H$ acts on $H(X)$ via its $\mbb L$-module structure. Moreover we require the different pull-backs and push-forward maps to preserve $F_H$.
  \end{enumerate}
\end{Def}
\begin{Rque}
Two classical examples of (projective) Borel-Moore functor of geometric type are given by $\CH$ and $K_0$ (the latter being non graded\footnote{It is possible to render it graded by considering $K_0(X)\otimes \mbb Z[\beta,\beta^{-1}]$ where $\beta$ is an indeterminate of degree 1.}). In fact one can show (\cite[Thm 1.2.2 and Thm 1.2.3]{LevineMorel}) that $\CH$ is the universal additive\footnote{that means that the formal law group is given by the ordinary addition} Borel-Moore functor of geometric type, while $K_{0}$ is the universal multiplicative unitary\footnote{that means that the formal law group is given by $F(u,v)=u+v-uv$} Borel-Moore functor of geometric type, at least over fields of characteristic zero.
\end{Rque}
\begin{Rque}
To illustrate the depth of the fact that $K_0$ is a universal Borel-Moore functor, let us just note that it readily implies Grothendieck-Riemann-Roch theorem, after noting that we can obtain a multiplicative Borel-Moore functor out of Chow groups, by twisting the direct image by the Todd class.  \cite{LevineMorel}.
\end{Rque}
Getting back to our problem, we can easily construct a universal projective Borel-Moore functor of geometric type by following the exact same procedure as in \cite{LevineMorel}. But what's a bit less obvious it that such a functor should coincide with the restriction of $\Omega$ to the category of smooth projective varieties.\par
The reason for this is that we may have some relationships in $\Omega(X)$ that may "come from quasi-projective varieties", that is to say that in $\Omega(X)$ we may observe the vanishing of classes of the form $\pi_*(a)$ (resp. $f^*(a)$) for $a$ a vanishing class in $\Omega(Y)$ with $Y$ quasi-projective.\par
The first case is totally innocent of course, because the composition of projective morphisms is projective, so no relation in $\Omega(X)$ with $X$ projective can come from the cobordism ring of a quasi-projective variety. Let us take care of the second case.
\begin{Prop}
The cobordism functor restricted to the category of projective smooth varieties is the universal projective Borel-Moore functor of geometric type.
\end{Prop}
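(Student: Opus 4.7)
The plan is to construct the universal projective oriented Borel--Moore $\mbb L$-functor of geometric type, call it $\Omega^{\mathrm{p}}$, by verbatim transcription of the Levine--Morel construction to the category of smooth projective $k$-varieties, and to compare it with $\Omega|_{\mathrm{Sm}^{\mathrm{proj}}_k}$. Since the restriction of $\Omega$ to smooth projective varieties is itself a projective oriented Borel--Moore functor of geometric type, the universal property of $\Omega^{\mathrm{p}}$ yields a canonical natural transformation $\Omega^{\mathrm{p}}\to \Omega|_{\mathrm{Sm}^{\mathrm{proj}}_k}$. I will argue this is an isomorphism by matching generators and relations on the two sides.

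The first step is to match generators. A generator of $\mcl Z(X)$ for $X$ smooth projective is a class $[Y\to X,L_1,\dots,L_r]$ with $Y$ smooth quasi-projective and $Y\to X$ projective. As $X$ is projective and $Y\to X$ is projective, the composition $Y\to\Spec k$ is projective, so $Y$ is proper; being also quasi-projective, $Y$ is projective. Hence the free abelian groups generating $\mcl Z(X)$ and $\mcl Z^{\mathrm{p}}(X)$ coincide.

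The substantive step is to show that the relations also coincide. The relations in $\Omega(X)$ form the saturation, in the sense of the previous subsection, of the basic relations $\mathrm{DIM}\cup\mathrm{SECT}\cup\mathrm{FGL}$ under $\pi_*$, $f^*$ and $c_1$. A typical element of this saturation in $\mcl Z(X)$ has the shape
\[f_*\circ c_1(L_1)\circ\cdots\circ c_1(L_r)\circ g^*(\alpha),\qquad \alpha\in\mu(Z),\ g:T\to Z\text{ smooth},\ f:T\to X\text{ projective}.\]
The basic relations $\mathrm{DIM},\mathrm{SECT},\mathrm{FGL}$ are already indexed by smooth projective varieties, so $\alpha$ may be assumed to come from projective $Z$. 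For $X$ projective and $f:T\to X$ projective, $T$ is proper and quasi-projective, hence projective. Then $g:T\to Z$ is smooth from a projective variety to a quasi-projective one; its image $g(T)$ is open (flatness of $g$) and closed (properness of $T$, separatedness of $Z$), thus a union of connected components of $Z$. Replacing $Z$ by $g(T)$ does not change $g^*(\alpha)$, since $\mcl Z(-)$ is additive on disjoint unions and the basic relations are formed componentwise, so one ends up with all auxiliary varieties $T, Z$ projective. This is exactly the form of a saturation relation in $\Omega^{\mathrm{p}}(X)$, and so the canonical map $\Omega^{\mathrm{p}}(X)\to\Omega(X)$ is an isomorphism.

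The main obstacle, and the only real content of the argument, is the clopen-image observation for smooth morphisms out of a projective variety; once it is in place, the matching of generators and relations is essentially formal, and transcribes verbatim in the graded, $\mbb L$-linear setting with product, since $\times$ is already determined by generators in $\mcl Z$ and preserves projectivity on both factors.
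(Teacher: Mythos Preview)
Your approach is essentially the paper's: both reduce to the observation that a smooth morphism from a projective variety to a quasi-projective one has open-and-closed image, hence (after restricting to the image) projective target. The paper is slightly more precise in that it invokes Levine--Morel's explicit generators (their Lemma~2.4.2, 2.4.7, Remark~2.4.11) to see that the only saturation relations in which a genuinely quasi-projective intermediary can appear are the $\DIM$-type ones $[Y\to X,\pi^*L_1,\dots,\pi^*L_r,M_1,\dots,M_d]$ with $\pi:Y\to Z$ smooth and $\dim Z<r$; the $\SECT$ and $\FGL$ generators already involve only projective data once $X$ is projective. Your uniform saturation treatment works too, but one sentence is misleading: ``the basic relations $\mathrm{DIM},\mathrm{SECT},\mathrm{FGL}$ are already indexed by smooth projective varieties, so $\alpha$ may be assumed to come from projective $Z$''. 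In Levine--Morel's construction the basic relations $\mu(Z)$ are imposed for \emph{all} smooth quasi-projective $Z$, not only projective ones; that $\alpha$ may be taken on a projective $Z$ is exactly what remains to be shown, and it is your subsequent clopen argument (replacing $Z$ by $g(T)$) that does it. Delete that sentence, or move it to after the clopen step as its conclusion.
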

\begin{proof}
It is easy to give explicit generators for the saturation with respect to the relations we want to impose (see \cite[Lemma 2.4.2; 2.4.7 and Remark 2.4.11]{LevineMorel}, whose notations we will use).\par More precisely $\Omega(X)$ can be constructed as the quotient of $\mbb L\otimes \underline{\Omega}$ by the sub $\mbb L$-module generated by the relations $$f_*\circ c_1(L_1)\circ...\circ c_1(L_p)\left([L\otimes M]-[\mbb F(L,M)]\right)$$  where  $f$ is projective between smooth projective varieties.\par
Moreover $\underline{\Omega}$ is the quotient of $\underline{\mcl Z}(X)$ by the subgroup generated by relations $$[Z\to X, L_1,..,L_r]=[Z'\to X, i^*L_1,...,i^*L_{r-1}]$$ where $Z$ and $Z'$ are (of course) projective and smooth.\par
And that $\underline{\mcl Z}(X)$ is the quotient of $\mcl Z(X)$ by the subgroup generated by relation of the form \[ [Y\to X, \pi^*L_1,..\pi^*L_r, M_1,...,M_d]\tag{$\star$}\label{eq: Dimrel}\]
for every smooth equidimensional morphism $\pi: Y\to Z$ where where $Z$ is a smooth quasi-projective variety of dimension strictly lower than $r$.\par
It will be sufficient to prove that we can replace the relations of the form \eqref{eq: Dimrel} by the same relations but where $Z$ is a smooth \emph{projective} variety of dimension strictly lower than $r$.\par
To see this let $f:Y\to Z$ be a smooth quasi-projective morphism from a projective smooth variety to a quasi-projective smooth variety, then $f$ itself is projective and thus closed, but also open as it is flat, it is thus surjective and $Z$ itself is projective which proves the claim and the proposition. 
\end{proof}
From now on, we will only use the term Borel-Moore functor to mean a projective Borel-Moore functor.

\subsection{An exact sequence}
We will begin by a basic observation, notice that is $X$ if a smooth projective variety, the choice of the metric on $T_X$ doesn't change the structure of $\cob(\X)$. To be precise 
\begin{Prop}
The natural map $\ovl X \to \ovl X'$ gives an isomorphism of $\mbb L$-module $$\cob(\ovl X)\to \cob(\ovl X')$$
\end{Prop}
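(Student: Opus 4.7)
The plan is to exhibit the isomorphism as essentially the identity on generators, with the metric-dependent secondary terms shown to cancel in the quotient. First, I observe that the underlying abelian group $\mcl Z(\ovl X)$, together with its operations $f^*$ for smooth equidimensional $f\colon \ovl X\to \ovl S$ and $\c1(\ovl L)$ for hermitian line bundles, does not depend on the K\"ahler metric on $T_X$: a generator $[\ovl Z\ds f \ovl X,\ovl L_1,\ldots,\ovl L_r,g]$ records only the arithmetic variety $\ovl Z$ (with its own K\"ahler structure), the hermitian line bundles $\ovl L_i$ on $Z$, the morphism $f$ of underlying schemes, and a current $g$ on $X(\mbb C)$. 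The metric on $T_X$ enters the construction of $\cob(\ovl X)$ only through (i) the factor $\g^{-1}(\ovl T_\pi)$ in the push-forward $\pi_*$ into $\ovl X$, and (ii) the exact sequence $\mcl E\colon 0\to \ovl T_Z\to i^*\ovl T_X\to i^*\ovl L\to 0$ appearing in the $(\SECT)$ relation.

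I will define $\Phi\colon \cob(\ovl X)\to \cob(\ovl X')$ as the identity on generators, extended $\mbb L$-linearly; equivalently, as the push-forward along the identity morphism $j\colon \ovl X\to \ovl X'$ of underlying schemes (which has $T_j=0$, so $\g^{-1}(\ovl T_j)=1$ and $j_*g=g$ on currents). Compatibility with $(\DIM)$ and $(\FGL)$ is immediate, since neither relation involves the metric on $T_X$. The only nontrivial point is compatibility with $(\SECT)$: writing $\mcl E_1$, $\mcl E_2$ for the sequences obtained from the metrics on $\ovl X$ and $\ovl X'$ respectively, the difference between the two $(\SECT)$ relations is
$$\Delta \;=\; a\bigl[\,i_*\bigl((\wt\g(\mcl E_1)-\wt\g(\mcl E_2))\,\g^{-1}(\ovl T_Z)\bigr)\bigr]\in\cob(\ovl X'),$$
and this must vanish.

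The hard part is showing $\Delta=0$. From the differential equation characterizing $\wt\g$, one has $\d\db(\wt\g(\mcl E_1)-\wt\g(\mcl E_2))=\g(i^*\ovl T_X)-\g(i^*\ovl T_{X'})=i^*\d\db\,\wt\g(\ovl T_X,\ovl T_{X'})$, where $\wt\g(\ovl T_X,\ovl T_{X'})\in\Att(X)$ denotes the universal secondary Bott-Chern form associated to the change of metric on $T_X$. The key identification, obtained via a double-deformation computation analogous to the anomaly formulae developed in Section~2, is that modulo $\im\d+\im\db$ the difference $\wt\g(\mcl E_1)-\wt\g(\mcl E_2)$ is a specific universal expression built from $i^*\wt\g(\ovl T_X,\ovl T_{X'})$ and $\g^{-1}(i^*\ovl L)$, whose $i_*$-pushforward multiplied by $\g^{-1}(\ovl T_Z)$ equals $\d\db\eta$ for an explicit $\eta\in\DL(X)$. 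The projection formula together with Proposition~\ref{RelationsLh} then forces $a(\d\db\eta)=0$ in $\cob(\ovl X')$, yielding $\Delta=0$. The inverse is constructed symmetrically by swapping the roles of $\ovl X$ and $\ovl X'$, and the $\mbb L$-module structure is preserved by construction; the principal technical obstacle throughout is the double secondary Bott-Chern computation identifying $\wt\g(\mcl E_1)-\wt\g(\mcl E_2)$ as a universal expression in $i^*\wt\g(\ovl T_X,\ovl T_{X'})$.
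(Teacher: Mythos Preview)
Your proposal contains a genuine error that leads you badly astray. You correctly identify the candidate map as the push-forward $j_*$ along the identity $j\colon \ovl X\to \ovl X'$, but your computation of how $j_*$ acts on currents is wrong. By the paper's convention (stated just after Proposition~\ref{CharG}), $\g(\ovl{T_f})$ is \emph{defined} as the ratio $\g(\ovl T_X)\g(f^*\ovl T_Y)^{-1}$; it is not the $\g$-class of the relative tangent bundle. Hence $\g^{-1}(\ovl T_j)=\g^{-1}(\ovl T_X)\g(\ovl T_{X'})$, which is \emph{not} $1$ when the two metrics differ. So $j_*$ is not the identity on currents, and your ``equivalently'' is false: the literal identity-on-generators map and $j_*$ are different.

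Once you recognise $j_*$ as the actual push-forward, the proof is one line, exactly as in the paper: by construction $\cob$ is obtained by quotienting by a \emph{saturated} subgroup, so $j_*$ is automatically well-defined on $\cob$, and functoriality of push-forward gives $j'_*\circ j_*=(\Id_{\ovl X})_*=\Id$ and $j_*\circ j'_*=\Id$. No verification of $(\SECT)$, $(\DIM)$, or $(\FGL)$ is needed. Your attempted check of $(\SECT)$ is therefore unnecessary --- and, had you pursued the identity-on-generators map instead, also insufficient: the saturated subgroup contains push-forwards $\pi_*(\text{relation on }\ovl Y)$ whose current parts carry the factor $\g(\pi^*\ovl T_X)$, so these genuinely depend on the metric on $X$, and you would have to control \emph{all} of them, not just the $(\SECT)$ relation on $\ovl X$ itself. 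Your sketched ``double-deformation computation'' does not address this.
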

\begin{proof}
This is simply the functoriality of the push forward.
\end{proof}
\begin{Def}
Let $\ovl X$ be an arithmetic variety, we denote by $\cobna(\ovl X)$ the $\mbb L$-module $\cob(\ovl X)/a(\DL(X))$.
\end{Def}
\begin{Rque}
Notice that, obviously $\Omega(X)$ does not depend on the metric structure chosen on $X$, as a standard class $[\Z\to \X, \L_1,...,\L_r]$ is mapped to $[Z\to X, L_1,...,L_r]$, moreover $\zeta \circ a$ is of course trivial, we will still denote $\zeta$ the induced map from $\cobna(\X)$ to $\Omega(X)$.\par
We will usually denote $[\Z\to \X, \L_1,...,\L_r]\na$ for the image of $[\Z\to \X, \L_1,...,\L_r, g]$ in $\cobna(\X)$
\end{Rque}
With this definition it is not clear how $\cobna(\ovl X)$ depends on the choice of metric over $X$, even though its purely abstract $\mbb L$-module structure $\cobna(\ovl X)$ does not depend on it. In fact as a Borel-Moore functor $\cobna(\X)$ doesn't depend on the metric chosen on $X$ at all. Let us prove that essential fact.\par
Firstly, let us notice that we have commutative diagrams (when they're defined) $$\xymatrix{
    \DL(X)\ar[r]^{a} \ar[d]^{\pi_*} &\cob(Y) \ar[d]^{\pi_*}\\
     \DL(Y)\ar[r]_a&\cob(Y)  \\}, \xymatrix{
    \DL(X)\ar[r]^{a} \ar[d]_{f^*} &\cob(S) \ar[d]^{f^*}\\
     \DL(S)\ar[r]_a&\cob(S)  \\}, \xymatrix{
    \DL(X)\ar[r]^{a} \ar[d]_{\c1(\L)} &\cob(X) \ar[d]^{\c1(L)}\\
     \DL(X)\ar[r]_a&\cob(X)  \\}$$
		that ensure that the maps are well defined on the level of $\cobna$.
\begin{Lem}\label{Inv1}
Let $\X$ and $\X'$ be two arithmetic variety structures on the same underlying algebraic variety, and let $\pi:\X'\to \X$ be the identity morphism. Then $$\pi_*[\X'\ds{\Id} \X']=[\X \ds{\Id} \X] \text{ modulo } a(\DL(X))$$
\end{Lem}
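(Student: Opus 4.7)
The plan is to reduce the statement, via the pushforward formula, to an identity expressing the independence of the fundamental class on the choice of K\"ahler metric, and then to establish that identity through a $\mbb P^1$-deformation argument based on the $(\SECT)$ axiom.

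First, since $\pi\colon \X' \to \X$ is the identity on the underlying algebraic variety, the relative tangent bundle $T_\pi$ is the zero bundle, so $\g^{-1}(\ovl{T_\pi}) = 1$. Applying Definition \ref{PF} to the class $[\X' \ds{\Id} \X', 0]$ therefore gives $\pi_*[\X' \ds{\Id} \X'] = [\X' \ds{\Id} \X]$ with trivial analytic part. The lemma is thus equivalent to the congruence
$$[\X' \ds{\Id} \X] \equiv [\X \ds{\Id} \X] \quad\text{modulo } a(\DL(X)).$$

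To prove this, I set $W := X \times \mbb P^1$ and equip it with a K\"ahler metric $h_W$ whose restrictions to the fibers $X \times \{0\}$ and $X \times \{\infty\}$ are $h_0$ (the metric of $\X$) and $h_1$ (the metric of $\X'$) respectively. Letting $\L := p_2^* \O_{\mbb P^1}(1)$ carry the pulled-back Fubini--Study metric, and choosing sections $s_0, s_\infty$ of $\O_{\mbb P^1}(1)$ vanishing at $0, \infty$, their pullbacks cut out divisors $\iota_0\colon \X \hookrightarrow W$ and $\iota_\infty\colon \X' \hookrightarrow W$ realising the two arithmetic structures on $X$ as smooth divisors in $W$. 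Applying the axiom $(\SECT)$ to each inclusion and subtracting, the common contributions $\c1(\L)(1_W)$ and $a(\g(\L)\log\|s\|^2)$ partially cancel, producing an identity
$$[\X \xrightarrow{\iota_0} W] - [\X' \xrightarrow{\iota_\infty} W] = a(\eta_W)$$
with $\eta_W \in \DL(W)$ explicit, built from the secondary forms $\wt{\g}$ of the two conormal short exact sequences and from $\g(\L)\bigl(\log\|s_\infty\|^2 - \log\|s_0\|^2\bigr)$. Pushing forward along $p_1\colon W \to \X$, using the commutation of $a$ with the projective pushforward (visible from Definition \ref{PF}) and the identifications $p_1 \circ \iota_0 = \Id_\X$ and $p_1 \circ \iota_\infty = \pi$ in the category of arithmetic varieties, yields the desired congruence.

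The most delicate point is the construction of the K\"ahler metric $h_W$: since the K\"ahler class in $H^{1,1}(X,\mbb R)$ of the restriction to a fiber of $p_2$ is constant on the connected base $\mbb P^1$, a direct interpolation is K\"ahler on the nose only when $[\omega_0]=[\omega_1]$, in which case the $dd^c$-lemma furnishes an interpolation of the form $p_1^*\omega_0 + dd^c(\chi(z)\phi(x)) + p_2^*\omega_{\mathrm{FS}}$ for a suitable cutoff $\chi$. For the general case one reduces to this cohomologous situation by tensoring both sides with a compensating hermitian line bundle on $X$ and invoking the $(\FGL)$ axiom together with the explicit action of $\c1$ on analytic classes described in the proof of Proposition \ref{RelationsLh}. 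Handling this cohomological discrepancy carefully is likely the principal technical obstacle of the full proof.
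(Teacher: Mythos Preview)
Your approach coincides with the paper's: both build an interpolating metric on $X\times\mbb P^1$, apply $(\SECT)$ at the two fibers over $0$ and $\infty$, and push forward along the projection to $\X$. The paper simply writes the interpolated metric as a convex combination $h''=\fii(t)h+\fii(1/t)h'$ and does not comment on the K\"ahler condition at all.

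You are right to flag that subtlety---the $(1,1)$-form attached to such a combination is closed only when $[\omega_0]=[\omega_1]$---but your proposed workaround does not work as written. The classes $[\X\to\X]$ and $[\X'\to\X]$ carry no line bundle, so there is nothing to ``tensor both sides'' with; and even if one tried to shift the K\"ahler class by some $c_1(\L)$, the difference $[\omega_1]-[\omega_0]$ is an arbitrary real $(1,1)$-class with no reason to be integral. The axiom $(\FGL)$ governs compositions of $\c1$-operators, not deformations of K\"ahler metrics on the tangent bundle, so invoking it here is a category error. The honest resolution is simply that nothing in the construction of $\cob$ up to and including this lemma uses the K\"ahler condition: the forms $\g(\E)$ and $\wt\g(\mcl E)$ are defined for arbitrary Hermitian metrics, and that is all $(\SECT)$ requires. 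One may therefore let $\Y$ carry merely a Hermitian metric in this intermediate step; the K\"ahler hypothesis in the paper's definition of arithmetic variety is there for the later comparison with $\Kw$, not for this argument.
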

\begin{proof}
Let $Y=\P^1_X$ be the projective line over $X$, and let $s_{[a:b]}$ be the section of $\O(1)$ over $\P^1$ defined $by-ax$, if we see $\P^1$ as $\Proj(k[X,Y])$, for any $(a,b)\in k^2$, $s_{[a:b]}$ is transverse to the zero section, and gives rise to an isomorphism$$ j_{[a,b]}^* T_{\P^1}\ds\sim j_{[a:b]}^*\O(-1)$$ if we look at the fiber square $$\xymatrix{X\ar[d]_{p_1}\ar[r]^{i_{[a,b]}} &Y\ar[d]^{p_1}\\
\Spec k\ar[r]^{j_{[a,b]}}&\P^1}$$ we have an exact sequence over $X$, $$0\to i_{[a,b]}^*p_2^* T_X\to i_{[a,b]}^*p_2^*T_X \oplus i_{[a,b]}^*p_1^*T_{\P^1}\to i_{[a,b]}^*p_1^*\O(1)\to 0  $$
the term in the middle being isomorphic to $i_{[a,b]}^*T_{Y}$. From now on let us assume that both $T_{\mbb P^1}$ and $\O(-1)$ are equipped with metric rendering isometric the isomorphisms given by $s_0$ and $s_\infty$. Now as $T_Y\simeq p_2^*T_X \oplus p_1^*T_{\P^1}$, we can choose on $T_Y$ a metric, say $h$ (resp. $h'$) such that we have an isometry $(T_Y,h) \simeq p_2^*\ovl{T_X} \oplus p_1^*\ovl{T_{\P^1}}$ (resp. $(T_Y, h') \simeq p_2^*\ovl{T_{X'}} \oplus p_1^*\ovl{T_{\P^1}}$). If $\fii$ is now any smooth function over $\P^1(\mbb C)$, such that $\fii(0)=1$ and $\fii(\infty)=1$, let us consider the metric $h''=\fii(t)h+\fii(1/t)h'$ over $Y$, the two following exact sequences are meager (because they're ortho-split)
$$0\to \ovl{T_X}\to i_{0}^*p_2^*\ovl{T_X} \oplus i_{0}^*p_1^*\ovl{T_{\P^1}}\to i_{0}^*p_1^*\ovl{\O(1)}\to 0  $$
$$0\to \ovl{T_{X'}}\to i_{\infty}^*p_2^*\ovl{T_{X'}} \oplus i_{\infty}^*p_1^*\ovl{T_{\P^1}}\to i_{\infty}^*p_1^*\ovl{\O(1)}\to 0  $$
Let us now consider the class $[\ovl Y\to \ovl Y, p_1^*\ovl{\O(1)}]$, where $\Y$ is equipped with the metric $h''$, by $\SECT$ we have, up to terms in $a(\DL(
\Y))$ \begin{eqnarray*}[\ovl Y\to \ovl Y, p_1^*\ovl{\O(1)}]&=&[\X\to \ovl Y]\\
&=&[\ovl X'\to \ovl Y]
\end{eqnarray*}
the result follows from pushing-forward along $p_2: \Y \to \X$.
\end{proof}

Let us further investigate the independence on the metrics in $\cobna(\X)$.
\begin{Lem}\label{Inv2}
Let $\X$ and $\Z$ be two arithmetic varieties, and $f$ any projective morphism between them, let us consider $L\in \Pic(Z)$ and let $h$ and $h'$ be two metrics on $L$, we have, $$[\Z\ds f \X, (L,h)]=[\Z \ds f \X, (L,h')]\text{ modulo } a(\DL(X))$$
\end{Lem}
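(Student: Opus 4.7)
My strategy parallels Lemma \ref{Inv1}: I will deform to $\P^1_Z$ and apply $\SECT$ to compare $(L,h)$ and $(L,h')$ via the two fibers over $0$ and $\infty$.

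First, thanks to the commutative square $f_* \circ a = a \circ f_*$ displayed just before Lemma \ref{Inv1}, any representative of $[\Z \to \Z,(L,h)] - [\Z \to \Z,(L,h')]$ in $a(\DL(Z))$ is mapped by $f_*$ into $a(\DL(X))$; hence it suffices to treat the case $f = \Id_\Z$, which is what I would do from this point on.

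Next I would form $\Y := \P^1 \times \Z$, carrying the product K\"ahler metric $T_\Y \simeq q^*T_{\P^1}\oplus p^*T_\Z$ (with $p : \Y \to \Z$, $q: \Y \to \P^1$ the projections and $\P^1$ equipped with its Fubini--Study metric). On $p^*L$ I would put the interpolating metric $\wt h := (\fii\circ q)\,p^*h + (1 - \fii\circ q)\,p^*h'$, where $\fii$ is a smooth real bump function on $\P^1(\C)$, invariant under complex conjugation, equal to $1$ near $0$ and to $0$ near $\infty$; denote by $\wh{L}$ the resulting hermitian line bundle on $\Y$. If $s_0, s_\infty$ are the two standard sections of the Fubini--Study $\ovl{\O_{\P^1}(1)}$, the pullbacks $q^*s_0, q^*s_\infty$ have smooth zero schemes $W_0 = \Z \times \{0\}$ and $W_\infty = \Z \times \{\infty\}$, canonically isometric to $\Z$ via the closed immersions $i_0, i_\infty$, and by construction $i_0^*\wh{L} \cong (L,h)$, $i_\infty^*\wh{L} \cong (L,h')$.

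Then I would apply $\SECT$ to $q^*\ovl{\O(1)}$ with $q^*s_0$ and with $q^*s_\infty$ and subtract the two identities, obtaining $[\Z \ds{i_0} \Y] - [\Z \ds{i_\infty} \Y] \in a(\DL(\Y))$. Applying $\c1(\wh{L})$ on both sides (which preserves $a(\DL(\Y))$ via the rule $\c1(\wh L)\circ a = a \circ (\h^{-1}(\wh L) \wedge \cdot)$) turns the geometric parts into $[\Z \ds{i_0} \Y, i_0^*\wh{L}]$ and $[\Z \ds{i_\infty} \Y, i_\infty^*\wh{L}]$; pushing forward along $p$ then collapses the geometric terms to $[\Z \to \Z,(L,h)]$ and $[\Z \to \Z,(L,h')]$ (since $p \circ i_0 = p \circ i_\infty = \Id_\Z$), while the analytic part stays inside $a(\DL(\Z))$ by $p_* \circ a = a \circ p_*$. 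This yields the claim.

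The only point that merits careful checking is that the concrete analytic contributions produced by $\SECT$, namely $\g(q^*\ovl{\O(1)})\log\|q^*s_\bullet\|^2$ and $(i_\bullet)_*[\wt\g(\mcl E_\bullet)\g^{-1}(\ovl{T_{W_\bullet}})]$, actually lie in $\DL(\Y)$ and remain in $\DL(\Z)$ under $p_*$. This is routine given the naturality of $\g$ and $\wt\g$ together with the standard $L^1_{\mathrm{loc}}$ behaviour of $\log\|s\|^2$, so I do not expect any conceptual obstacle.
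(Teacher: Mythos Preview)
Your proposal is correct and follows essentially the same route as the paper: interpolate the two metrics on $L$ over $\P^1_Z$, apply $\SECT$ to the two standard sections of $\ovl{\O(1)}$, hit the result with $\c1(\wh L)$, and push down to $\Z$. The only cosmetic difference is the order of the reduction---you reduce to $f=\Id_\Z$ first and then deform, whereas the paper carries out the deformation and invokes $f_*$ at the very end---but the substance is identical.
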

\begin{proof}
This is the same idea as the previous lemma. Let us consider $\Y=\ovl{\P^1_X}$ equipped with its "horizontal" metric that induces the metric on $\X$ over the fiber at $0$ and $\infty$. Let us equip $p_1^* L$ with the metric $h''=\fii(t)h+\fii(1/t)h'$, where as before, $\fii$ is any smooth function over $\P^1(\mbb C)$, such that $\fii(0)=1$ and $\fii(\infty)=1$. Using $\SECT$, we see that the class $[\Y\to \Y, \ovl{p_1^*L}, \ovl{p_2^*\O(1)}]$ equals both $[\X\to \Y, i_0^*\ovl{p_1^*L}]$ and $[\X\to \Y, i_\infty^*\ovl{p_1^*L}]$, up to an analytic class, this yields $$[\X\to \Y, (L,h)]=[\X \to \Y, (L,h')]$$ and pushing forward along $p_1$ we get $$[\X\to \X, (L,h)]=[\X \to \X, (L,h')]$$
but this is enough to prove the proposition as $$[\Z\to \X, (L,h)]=f_*[\Z \to \Z, (L,h)]=f_*[\Z \to \Z, (L,h')]=[\Z \to \X, (L,h)]$$ and we are done.
\end{proof}
\begin{Prop}
Let $\X$ (resp. $\Z$) and $\X'$, (resp. $\Z'$) be two arithmetic variety structures on the same underlying algebraic variety, and let $\pi$ be the identity morphism. Assume that we've been given $L_1,..., L_r$, $r$ line bundles over $Z$, which we will equip with two sets of hermitian metrics, $\L_i$ and $\L'_i$ for each $i$. We have $$\pi_*[\Z'\to \X', \L'_1,...,\L'_r]\na=[\Z \to \X, \L_1,...,\L_r]\na$$
\end{Prop}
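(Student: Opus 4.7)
The plan is to reduce the statement to a combined $\P^1_Z$ deformation that simultaneously interpolates the tangent metric on $Z$ and each of the metrics on the $L_i$, in the spirit of Lemmas \ref{Inv1} and \ref{Inv2}. The first, essentially formal, reduction is to eliminate $\pi$: since $\pi:\X'\to\X$ is the identity on the underlying variety, Definition \ref{PF} applied to a class whose analytic part vanishes gives
$$\pi_*[\Z'\to\X',\L'_1,\ldots,\L'_r]=[\Z'\to\X,\L'_1,\ldots,\L'_r],$$
because the pushforward of the zero current is zero. It will therefore be enough to show
$$[\Z'\to\X,\L'_1,\ldots,\L'_r]\equiv[\Z\to\X,\L_1,\ldots,\L_r]\pmod{a(\DL(X))},$$
both classes now living in $\cob(\X)$.

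To establish this congruence I will, writing $f:Z\to X$ for the underlying map, introduce $W=\P^1_Z$ with projections $q_1:W\to Z$ and $q_2:W\to\P^1$. Choosing a smooth function $\fii$ on $\P^1(\mbb C)$ with $\fii(0)=1$ and $\fii(\infty)=0$, I would put on $W$ the K\"ahler metric whose vertical piece on $q_1^*T_Z$ is $\fii(t)h_Z+\fii(1/t)h_{Z'}$ and whose horizontal piece on $q_2^*T_{\P^1}$ is Fubini--Study, the two being orthogonal, and equip each pullback $q_1^*L_i$ with the interpolated metric $\fii(t)h_i+\fii(1/t)h'_i$; call the resulting hermitian bundle $\ovl{M_i}$. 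By construction the restrictions at the fibers $Z_0=i_0(Z)$ and $Z_\infty=i_\infty(Z)$ recover $(\ovl{T_Z},\ovl{L_1},\ldots,\ovl{L_r})$ and $(\ovl{T_{Z'}},\ovl{L'_1},\ldots,\ovl{L'_r})$ respectively. Finally I give $q_2^*\O(1)$ the Fubini--Study metric.

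The main step is then to compute
$$\alpha=\c1(\ovl{M_1})\cdots\c1(\ovl{M_r})\,\c1(q_2^*\ovl{\O(1)})(1_W)\in\cob(W)$$
in two ways. Applying $\SECT$ to $q_2^*\ovl{\O(1)}$ with the tautological section vanishing along $Z_0$ rewrites $\c1(q_2^*\ovl{\O(1)})(1_W)$ as $i_{0,*}(1_{\Z_0})$ modulo an explicit class in $a(\DL(W))$; commuting the remaining $\c1(\ovl{M_i})$ past $i_{0,*}$ via the projection formula of \ref{Compatibility} and using $q_1\circ i_0=\mathrm{id}_Z$ gives $\alpha\equiv[\Z\to W,\L_1,\ldots,\L_r]$ modulo $a(\DL(W))$. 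The symmetric computation at $Z_\infty$ yields $\alpha\equiv[\Z'\to W,\L'_1,\ldots,\L'_r]$ modulo $a(\DL(W))$, and comparing the two gives the desired congruence over $W$. Pushing forward along the projective morphism $F=f\circ q_1:W\to X$, for which $F\circ i_0=F\circ i_\infty=f$, transports this congruence into $\cob(\X)$, since $F_*$ carries $a(\DL(W))$ into $a(\DL(X))$ by the commutation of $a$ with push-forward noted before \ref{Inv1}.

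The main technical obstacle is the bookkeeping in the $\SECT$ step: one must check that the secondary $\wt\g(\mcl E)$-correction furnished by $\SECT$, together with every analytic class produced whenever a $\c1(\ovl{M_i})$ meets a term of the form $a(g)$, really do stay inside $a(\DL(W))$ and survive $F_*$. Both facts reduce to \ref{Compatibility} and to the explicit formula of Definition \ref{C1}, according to which $\c1(\L)$ acts on $a(g)$ by multiplication by the closed form $c_1(\L)\g(\L)$, and hence preserves $a(\DL)$.
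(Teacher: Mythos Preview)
Your argument is correct. Both your proof and the paper's rest on the same underlying mechanism---a $\P^1$-deformation combined with $\SECT$---but you organise it differently. The paper treats Lemmas~\ref{Inv1} and~\ref{Inv2} as black boxes and combines them by a short induction: starting from $[\Z\to\Z']\na=[\Z'\to\Z']\na$ (Lemma~\ref{Inv1}), it applies $\c1(\L_1)$, uses the projection formula to move it inside $i_*$, invokes Lemma~\ref{Inv2} to change $\L_1$ to $\L'_1$, and iterates; a final push-forward along $f'$ yields the statement. Your approach instead performs a single simultaneous deformation on $\P^1_Z$ interpolating the tangent metric and all the line-bundle metrics at once, then reads off both fibres via $\SECT$ and the projection formula. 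The paper's route is shorter on the page because the analytic bookkeeping has already been absorbed into the two lemmas; yours is more self-contained and avoids the induction, at the cost of re-running the deformation argument. Either way the essential content is the same.
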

\begin{proof}
Let us denote, for precision's sake, the different morphisms as in the following commutative diagram $$\xymatrix{\Z\ar[d]_{f}\ar[r]^{i} &\Z'\ar[d]^{f'}\\
\X\ar[r]^{\pi}&\X'}$$ where $i$ the identity morphism from $\Z$ to  $\Z'$. Of course we have $i^*\L_i=\L_i$ because the morphism $i$ is the identity on the underlying variety. Using \ref{Inv1} and \ref{Inv2} we see that \begin{eqnarray*}[\Z\to \Z', \L_1]\na&=&\c1(\L_1)[\Z\to \Z']\na \\
&=&\c1(\L_1)[\Z'\to \Z']\na\\
&=&\c1(\L_1)i_*[\Z'\to \Z]\na\\
&=&i_*\c1(\L_1)[\Z'\to \Z]\na\\
&=&i_*\c1(\L'_1)[\Z'\to \Z]\na\\
&=&i_*[\Z'\to \Z, \L'_1]\na\\
&=&[\Z'\to \Z', \L'_1]\na\end{eqnarray*}By iterating, we see that $$[\Z\to \Z', \L_1,...,\L_r]\na=[\Z'\to \Z', \L'_1,...,\L'_r]\na$$ now pushing forward along $f'$ yields $$[\Z'\to \X', \L'_1,...,\L'_r]\na=[\Z\to \X', \L_1,...,\L_r]\na=\pi_*[\Z\to \X, \L_1,...,\L_r]\na$$ and the proof is complete.
\end{proof}
\begin{Cor}
Let us fix a choice of metric on every (isomorphism class of) algebraic smooth projective variety. We have a Borel-Moore functor associated to this choice given by $X \to \cobna(\X)$ for the specific choice of metric over $X$.\par
If we take two of these Borel-Moore functors associated to two different choices of metrics, they're naturally isomorphic.
\end{Cor}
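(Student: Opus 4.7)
The plan is to derive the corollary directly from the previous proposition together with the functoriality properties already established for $\cob$. For the first assertion, once a choice of metric has been fixed for every smooth projective variety, the assignment $X \mapsto \cobna(\ovl X)$ inherits the push-forward, pull-back and first Chern class operators from $\cob(\ovl X)$ via the quotient by $a(\DL(X))$, which is legitimate because of the three commutative squares displayed just before Lemma \ref{Inv1}. All the axioms of a projective Borel-Moore functor (functoriality of $\pi_*$ and $f^*$, the base-change identity, the projection formula and the commutativity of Chern classes) are inherited from the corresponding identities satisfied in $\cob$, established in \ref{Compatibility}. So the only real content is the second assertion.

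To compare two such functors $\cobna^{(1)}$ and $\cobna^{(2)}$ coming from two different systems of metric choices $\{\ovl X^{(1)}\}$ and $\{\ovl X^{(2)}\}$, I define the natural transformation $\eta_X : \cobna^{(1)}(X) \to \cobna^{(2)}(X)$ by $\eta_X = \pi_*$, where $\pi : \ovl X^{(1)} \to \ovl X^{(2)}$ is the identity on the underlying algebraic variety. The preceding proposition gives the explicit formula
\[ \eta_X([\Z \to \ovl X^{(1)}, \L_1, \dots, \L_r]\na) = [\Z' \to \ovl X^{(2)}, \L'_1, \dots, \L'_r]\na \]
for any choice of metrics on $Z$ and the $L_i$'s in the target system, which in particular shows that $\eta_X$ is independent of the auxiliary metrics on $Z$ and the $L_i$'s and surjective. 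Applying the proposition with the roles of the two systems of metrics swapped produces an inverse transformation, so each $\eta_X$ is an isomorphism of $\mbb L$-modules.

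It remains to check naturality of $\eta$ with respect to the three structural operations. For a projective morphism $f : X \to Y$, if I denote by $\pi_X$ and $\pi_Y$ the identity morphisms between the two metric structures on $X$ and $Y$, then $\pi_Y \circ f^{(1)} = f^{(2)} \circ \pi_X$ as morphisms of (underlying) algebraic varieties, and by the functoriality of the push-forward in $\cob$ (already used in \ref{Inv1}) one has $(\pi_Y)_* (f^{(1)})_* = (f^{(2)})_* (\pi_X)_*$, which is exactly the commutativity of the square expressing naturality of $\eta$ for $f_*$. Pulling back along a smooth morphism is handled similarly: the base-change identity in \ref{Compatibility} applied to the Cartesian square with vertical identity maps gives $(\pi_X)_* f^{(1),*} = f^{(2),*} (\pi_Y)_*$ modulo the analytic part, which again descends to $\cobna$. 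Finally, for the first Chern class operator, the projection formula $\pi_* \circ \c1(\pi^*\L) = \c1(\L) \circ \pi_*$ of \ref{Compatibility}, applied to the identity $\pi$, yields the required commutation. The main point to be careful with is that all these identities hold in $\cob$ only up to analytic corrections, but the whole verification takes place in $\cobna$, where these corrections vanish by construction.
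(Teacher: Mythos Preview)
Your proof is correct and follows exactly the route the paper intends: the corollary is left without proof in the paper precisely because it is meant to be read off from the preceding proposition together with the functoriality and compatibility properties already established (\ref{Compatibility} and the commutative squares preceding Lemma~\ref{Inv1}), which is what you do.

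One small point of precision in your pull-back argument: the base-change identity in \ref{Compatibility} requires the fibre product to carry its \emph{natural} metric, which need not coincide with the metric $\ovl X^{(1)}$ from your first system of choices. So strictly speaking base change yields $(\pi')_* (f')^* = f^{(2),*}(\pi_Y)_*$ for some third metric structure $\ovl X'$ on $X$, not directly $(\pi_X)_* f^{(1),*}$. The gap is closed immediately by one more application of the preceding proposition (push-forward along the identity between $\ovl X^{(1)}$ and $\ovl X'$ is the identity in $\cobna$), and your phrase ``modulo the analytic part'' indicates you are aware of this; it is worth making the extra step explicit. Alternatively, one can bypass base change entirely: both $\eta_X\circ f^{(1),*}$ and $f^{(2),*}\circ\eta_Y$ send a standard class to classes in $\cobna(\ovl X^{(2)})$ with the same underlying algebraic data, hence equal by the preceding proposition.
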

From now on we will denote $\cobna(X)$ instead of $\cobna(\X)$ for this group, and we will omit the metrics when writing the elements of $\cobna(X)$. We shall now prove that we have in fact an isomorphism of Borel-Moore functor $$\cobna(\bullet)\ds \sim \Omega(\bullet)$$
In order to do this, we next show that in $\cobna(X)$ we have a stronger version of $\DIM$
\begin{Lem}\label{DimNA}
In $\cobna(X)$ , we have $$[Y\to X, L_1,...,L_r]\na=0$$ as soon as $r>\dim(Y)$. 
\end{Lem}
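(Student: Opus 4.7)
The plan is to first reduce, using $\FGL$, to the case where all line bundles involved are very ample, and then to iterate $\SECT$ together with Bertini's theorem to cut dimensions down until a direct dimension argument applies. The key simplification throughout is that in $\cobna$ the analytic correction terms in $\SECT$ vanish, so the axiom reduces to the purely geometric identity $\c1(L)(1_X) \equiv [Z \to X]$ whenever $L$ admits a section transverse to the zero section with smooth zero scheme $Z$. Any standard class is of the form $[Y \ds f X, L_1, \ldots, L_r]\na = f_*\bigl(\c1(L_1)\cdots\c1(L_r)(1_Y)\bigr)\na$, and since push-forwards preserve $a(\DL(\cdot))$, it is enough to prove that $\c1(L_1)\cdots\c1(L_r)(1_Y) \in a(\DL(Y))$ whenever $r > d = \dim Y$. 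The case $r \geq d+2$ is immediate from $\DIM$, so only $r = d+1$ requires work.

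For the reduction to very ample bundles I would use that every line bundle on a smooth projective variety is a difference of two very ample ones: write $L_i = M_i \otimes N_i^{\vee}$ with $M_i, N_i$ very ample. Applying $\FGL$ to $L_i$ gives $\c1(L_i) = \mbb F(\c1(M_i), \c1(N_i^{\vee}))$, while applying $\FGL$ to $N_i \otimes N_i^{\vee} = \O_Y$ expresses $\c1(N_i^{\vee})$ as the formal inverse $-\c1(N_i) + O(\c1(N_i)^2)$ of $\c1(N_i)$ in $\mbb F$. Substituting, each $\c1(L_i)$ becomes a power series (finite, by local nilpotence of $\c1$) in $\c1(M_i), \c1(N_i)$ with no constant term. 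Expanding the product, every monomial appearing in $\c1(L_1)\cdots\c1(L_{d+1})(1_Y)$ involves at least $d+1$ first Chern class factors of very ample bundles; $\DIM$ kills those with $\geq d+2$ factors, leaving us to prove that $\c1(P_1)\cdots\c1(P_{d+1})(1_Y)\na = 0$ whenever all $P_i$ are very ample.

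For the iteration, since $k$ is infinite, Bertini's theorem supplies a section of $P_{d+1}$ whose zero scheme $Z_1 \ds{i_1} Y$ is smooth of codimension $1$. Modulo $a(\DL(Y))$, $\SECT$ together with the projection formula gives
\[
\c1(P_1)\cdots\c1(P_{d+1})(1_Y) \equiv i_{1*}\bigl(\c1(i_1^*P_1)\cdots\c1(i_1^*P_d)(1_{Z_1})\bigr).
\]
Restrictions of very ample bundles to smooth closed subvarieties remain very ample, and $i_{1*}$ preserves analytic classes (by Definition \ref{PF}), so the construction iterates. After $d$ steps the class is reduced, inside a push-forward from a $0$-dimensional smooth scheme $Z_d$, to a single remaining factor $\c1(L')(1_{Z_d})$. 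Since $Z_d$ is a finite disjoint union of spectra of finite extensions of $k$, every line bundle on it is trivial; a nowhere vanishing section of $L'$ then exists, its zero scheme is empty, and $\SECT$ yields $\c1(L')(1_{Z_d}) = [\emptyset \to Z_d] = 0$ modulo analytic. Pushing back to $Y$ concludes the argument.

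The main technical obstacle is the very last application of $\SECT$: one must verify that the axiom continues to make sense with an empty zero scheme and that on the $0$-dimensional $Z_d$ the line bundle inherited from the iterated restrictions really does admit a nowhere vanishing section. Both concerns are resolved by triviality of line bundles on $\Spec$ of a field, and the remainder of the argument is essentially a careful bookkeeping of how push-forwards interact with the analytic subgroup $a(\DL(\cdot))$ at each iteration.
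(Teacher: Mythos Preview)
Your proposal is correct and follows essentially the same route as the paper's proof: reduce to $Y=X$, write each $L_i$ as a quotient of very ample bundles and expand via $\FGL$ and the formal inverse $\chi$, then apply $\SECT$ (with Bertini) to the very ample case to land on $[\emptyset\to Y]\na=0$. Your version is simply more explicit than the paper's, which compresses the iteration into the single line ``using $\SECT$ we see that $[Y\to Y,L_1,\ldots,L_r]\na=[\emptyset\to Y]\na=0$''; in particular your careful endgame on the $0$-dimensional stratum (triviality of line bundles, nowhere-vanishing section, empty zero scheme) spells out exactly what that sentence means.
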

\begin{proof}
It is clear that we only need to prove $[Y\to Y, L_1,...,L_r]\na=0$, because pushing this formula will give the formula above.\par
Let us first show that it suffices to prove $[Y\to Y, L_1,...,L_r]\na=0$ where $L_1,...,L_r$ are very ample line bundles to prove the general case. Indeed, every line bundle on a projective variety may be written as $M\otimes M'$ where $M$ (resp. $M'$) is very ample (resp. anti very ample), therefore if one of the bundles, say $L_1$ is not very ample we have\footnote{Here $\chi$ denotes the formal inverse characterized by $\mbb F(u,\chi(u))=0$} $$[Y\to Y, L_1,...,L_r]\na=\sum a^{i,j}c_1(M)^i\chi (c_1(M'^\vee))^j[Y\to Y, L_2,...,L_r]\na$$
so it suffices to prove that $[Y\to Y, L_1,...,L_r]\na=0$ as soon as the $L_i$'s are very ample to ensure that this class vanishes.\par
Now using $\SECT$ we see that $$[Y\to Y, L_1,...,L_r]\na=[\emptyset \to Y]\na=0$$
and the result follows.
\end{proof}
\begin{Rque}
We have a natural external product structure of $\cobna$ given by $$[Y\to X, L_1,...,L_r]\na\otimes [Z\to X', M_1,...,M_k]\na\mapsto [Y\times Z\to X\times X', p_1^*L_1,...,p_2^*M_k]\na$$
this endows $\cobna$ with the structure of a Borel-Moore functor with (external) products. The proof that this product is well defined is immediate.
\end{Rque}

In happy concord with what happens for other weak arithmetic theories we have
\begin{Prop}\label{ExSe}
We have an exact sequence $$\DL(X)\ds a \cob(\X) \ds \zeta \Omega(X)\to 0$$
\end{Prop}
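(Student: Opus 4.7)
Here is my plan.

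Exactness at $\Omega(X)$ and the inclusion $\im(a)\subset \ker(\zeta)$ are immediate from the construction. For surjectivity, every generator $[Y\to X, L_1,\ldots,L_r]$ of $\Omega(X)$ lifts to $\cob(\X)$ after choosing hermitian metrics on $T_Y$ and on each $L_i$, and is then mapped by $\zeta$ to itself. The composition $\zeta\circ a$ vanishes because $\zeta$ is defined to kill every analytic class.

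The content of the proposition is the reverse inclusion $\ker(\zeta)\subset \im(a)$. Since $\zeta\circ a=0$, the map $\zeta$ factors through $\cobna(X)=\cob(\X)/a(\DL(X))$ as a map $\bar\zeta:\cobna(X)\to \Omega(X)$; it then suffices to show $\bar\zeta$ is an isomorphism. To this end I will construct an explicit inverse
$$\psi:\Omega(X)\longrightarrow \cobna(X),\qquad [Y\to X,L_1,\ldots,L_r]\longmapsto [\Y\to \X,\L_1,\ldots,\L_r]\na,$$
where the right-hand side is obtained by picking any hermitian metrics on $T_Y$ and on each $L_i$. The corollary following Lemma~\ref{Inv2} guarantees that this is independent of the chosen metric data, and by $\mbb L$-linear extension we obtain a well-defined map out of the free $\mbb L$-module of generators.

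The only non-trivial point is to check that $\psi$ descends through the defining relations of $\Omega(X)$ listed in the previous subsection: the relation (Dim) is handled exactly by Lemma~\ref{DimNA}; the relation (Sect) follows directly from the axiom $(\SECT)$ in $\cob$, which, once reduced modulo $a(\DL(X))$, reads $\c1(\L)(1_\X)=[\Z\to \X]\na$; the relation (FGL) holds already in $\cob(\X)$ and so a fortiori in $\cobna(X)$. Once $\psi$ is shown to be well defined, both $\psi\circ\bar\zeta$ and $\bar\zeta\circ\psi$ act as the identity on the standard generators, hence on the whole groups.

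The only real obstacle is the verification that $\psi$ factors through each relation of $\Omega(X)$; since the paper has already devoted Lemma~\ref{Inv1}, Lemma~\ref{Inv2} and Lemma~\ref{DimNA} to exactly the invariance statements needed, each verification reduces to invoking the corresponding axiom, and the argument should be short.
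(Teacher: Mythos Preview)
Your proposal is correct and follows essentially the same approach as the paper. Both reduce to showing that $\bar\zeta:\cobna(X)\to\Omega(X)$ is an isomorphism by producing an inverse; the only cosmetic difference is that the paper phrases the construction of the inverse via the universal property of $\Omega$ (after checking that $\cobna$ is a Borel-Moore functor of geometric type, with (Dim) supplied by Lemma~\ref{DimNA} and (Sect), (FGL) inherited from the axioms of $\cob$), whereas you build the map by hand on generators and verify the same three relations directly --- this is exactly what the universal property unwinds to.
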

\begin{proof}
We will prove that $\zeta$ induces an isomorphism of Borel-Moore Functor of geometric type between $\cobna$ and $\Omega$. We've already proven that $\cobna$ is a Borel-Moore functor with products and the fact that $\zeta$ is a morphism of Borel-Moore functor is obvious.
The fact that $\cobna$ is of geometric type is easy as axioms $(\SECT)$ and $(\FGL)$ when $\zeta$ is applied to them, give the usual $(\SECT)$ and $(\FGL)$ axioms of a Borel-Moore functor of geometric type, as for $(\DIM)$ this is \ref{DimNA}\par
So we get a map from $\Omega$ to $\cobna$, and a commutative diagram $$\xymatrix{
    \cob(\X) \ar[rr]^\zeta \ar[rd]_\zeta& &\cobna(X)\\
     & \Omega(X)\ar[ru]&\\}$$
		
		This map is an inverse of $\zeta$, to check this we need to check that the standard classes $[X\to Y, L_1,...,L_r]$ are left invariant by the application of $\Omega\to \cobna \to \Omega$, but that is obvious by construction.
	\par
	The proof is then complete.
\end{proof}

\begin{Rque}
It can be seen in the preceding proof that the exact sequence obtained is in fact an exact sequence of graded $\mbb L$-modules, and it splits into exact sequences $$\DLp{p}(X)\ds a \cob_p(\X) \ds \zeta \Omega_p(X)\to 0$$
where $\DLp{p}(X)$ denotes the degree $p$ part of $\DL(X)$ which is made up of $$\Dtp {d_X-p-1}(X)\oplus\Dtp {d_X-p}(X)\otimes \Lh_{1} \oplus...\oplus \Dtp {d_X}(X)\otimes \Lh_{p+1}$$

After tensorization by $\mbb Q$ we can give a more explicit decomposition as  $$\Dtp {d_X-p-1}(X)\oplus\mbb Q[\P^1]\Dtp {d_X-p}(X)\oplus...\oplus \mbb Q [\P^{p+1}]\Dtp{d_X}(X)$$
\end{Rque}
Anticipating a little we get the result that served as a guideline during the construction of $\cob$
\begin{Cor}
We have the following exact sequences (which we will refine in the following sections).
 $$\xymatrix{\DL(X)_p\ar[d]\ar[r]^{a} &\cob(\X)_{p} \ar[r]^\zeta &\Omega(X)_{p}\ar[d]\ar[r]& 0\\
\wt D^{d_X-p+1,d_X-p+1}_{\mbb R}(X)\ar[r]^a &\CHw_{p}(\X) \ar[r]^\zeta &\CH_p(X)\ar[r]& 0}$$
and 
$$\xymatrix{\DL(X)\ar[r]^a\ar[d] &\cob(\X) \ar[r]^\zeta &\Omega(X)\ar[d]\ar[r]& 0\\
\wt D^{\bullet,\bullet}_{\mbb R}(X)\ar[r]^a &\Kw_{0}(\X) \ar[r]^\zeta &K_0(X)\ar[r]& 0}$$
\end{Cor}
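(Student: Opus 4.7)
The top row of each diagram is Proposition~\ref{ExSe} (in graded form for the first, ungraded for the second) and the bottom rows have been established earlier, so only the three vertical arrows and the commutativity of the squares require construction.

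My strategy is to specialize the arithmetic Lazard ring $\Lh$. For the Chow case, define $\chi^{CH}:\Lh\to\mbb Z$ by $a^{1,0}=a^{0,1}=1$, $a^{i,j}=0$ otherwise, $t_0=1$, $t_i=0$ for $i>0$; this encodes the additive formal group law together with the trivial genus $\g=1$, and the defining relation of $\Lh$ collapses to the tautology $u+v=u+v$. For the $K$-theory case, use $\chi^{K}:\Lh\to\mbb Q$ with $a^{1,0}=a^{0,1}=1$, $a^{1,1}=-1$, other $a^{i,j}=0$, and $t_i=(-1)^i/(i+1)!$, corresponding to the multiplicative formal group law $u+v-uv$ and the inverse Todd power series $\g(u)=(1-e^{-u})/u$. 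That these are well-defined ring homomorphisms is exactly the numerical check already carried out in the excerpt to prove that $\Lh$ is nontrivial; tensoring with $\Dt(X)$ yields the left vertical arrows.

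For the middle vertical arrows, set
\[\Phi^{CH}\bigl([\Z\ds{f}\X,\L_1,\ldots,\L_r,g]\bigr)=f_*\circ\c1(\L_1)\circ\cdots\circ\c1(\L_r)(1_{\Z})+a(\chi^{CH}(g))\]
in $\CHw(\X)$, and define $\Phi^{K}$ analogously in $\Kw_0(\X)$, using the operations available on both targets by Propositions~\ref{BMforCH} and~\ref{BMforK}. Well-definedness reduces to checking that the $(\DIM)$, $(\SECT)$, $(\FGL)$ relations defining $\cob$, together with the $\Lh$-relations of Proposition~\ref{RelationsLh}, survive specialization. The $\Lh$-relations are automatic since $\chi^{CH}$ and $\chi^{K}$ are ring maps. $(\DIM)$ and $(\FGL)$ become Propositions~\ref{ATforCH}(1),(3) and~\ref{ATforK}(1),(3) respectively. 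For $(\SECT)$, Proposition~\ref{NatuT} yields $\chi^{CH}(\wt\g(\mcl E))=0$ and $\chi^{CH}(\g(\L))=1$, so the cobordism axiom collapses to Proposition~\ref{ATforCH}(2); while under $\chi^{K}$ one has $\chi^{K}(\wt\g(\mcl E)\g^{-1}(\ovl T_Z))=\wt{\Td^{-1}}(\mcl E)\Td(\ovl T_Z)$ and $\chi^{K}(\g(\L))=\Td(\L)^{-1}$, producing exactly the identity of Proposition~\ref{ATforK}(2).

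Commutativity of the left square holds by construction (both paths send $a(\lambda\otimes g)$ to $a(\chi(\lambda)g)$), and of the right square because the restriction of $\Phi^{CH}$ (respectively $\Phi^{K}$) to purely geometric classes is precisely the universal Levine--Morel specialization from $\Omega(X)$ to $\CH(X)$ (respectively $K_0(X)$) obtained by sending the universal formal group law to the additive (respectively multiplicative) one. The main, though modest, obstacle is the $(\SECT)$ check for $\Kw$: one has to observe that the singular correction term $i_*[\wt\g(\mcl E)\g^{-1}(\ovl T_Z)]$ built into the cobordism axiom was designed precisely so that $\chi^{K}$ reproduces the $\wt{\Td^{-1}}$-correction appearing in $\Kw$-theory, which was in fact the motivation for the definition of $\cob$.
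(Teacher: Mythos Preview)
You have over-read the statement. Look again at the \texttt{xymatrix} code: there are only \emph{two} vertical arrows in each diagram, on the outer terms $\DL(X)$ and $\Omega(X)$; the middle term $\cob(\X)$ carries no $\backslash$\texttt{ar[d]}. The parenthetical ``which we will refine in the following sections'' signals precisely that the middle arrow is deferred. So as stated, the Corollary is immediate from Proposition~\ref{ExSe} together with the previously established exact sequences for $\CHw$ and $\Kw$, the Levine--Morel maps $\Omega\to\CH$ and $\Omega\to K_0$, and the obvious specialization maps $\Lh\to\mbb Z$, $\Lh\to\mbb Q$ on the current side. No middle arrow, no squares, nothing further to check.

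What you have written is essentially a direct proof of the \emph{later} Corollary in the ``Arrows'' section, where the middle arrows are finally supplied. There the paper takes a different route: it axiomatises the notion of hermitian Borel--Moore functor of arithmetic type, observes tautologically that $\cob$ is universal among such, and then obtains the arrows $\cob\to\CHw$ and $\cob\to\Kw_0$ by invoking Propositions~\ref{BMforCH}, \ref{ATforCH}, \ref{BMforK}, \ref{ATforK} to verify that $\CHw$ and $\Kw_0$ are instances of the axiomatic notion. Your construction unpacks this universal-property argument into an explicit specialization $\Phi^{CH}$, $\Phi^{K}$ and checks the defining relations by hand. The two approaches are equivalent in content; the paper's packaging has the advantage that once the axiomatics are in place, any future target theory gets its comparison map for free, while your hands-on version makes the dependence on the $(\SECT)$ design choice (your final paragraph) more transparent. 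Your argument is correct, just premature relative to where this Corollary sits in the paper.
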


\subsection{Some computations}
In this section we will investigate more closely the different dependencies on the metric, by proving an anomaly formula.\par
According to \ref{ExSe} we see that the difference $$[\X\to \X]-[\X'\to \X]$$
where $\X'$ and $\X$ are two different arithmetic structures on the same underlying variety should lie in the image  of $a$, so it is a natural investigation to try and find an expression for that class. The answer is fairly simple and given by the
\begin{Prop}(Anomaly Formula)\label{Anomaly1}\par
Let $X$ be an algebraic projective smooth variety and let $\X, \X'$ and $\X''$ be three arithmetic structures on it, we have $$[\X'\to \X]-[\X''\to \X]=a(\g(\ovl T_X)\wt{\g^{-1}}(T_X, h', h''))$$
and 
$$[\X'\to \X]-[\X''\to \X]=a(-\g^{-1}(\ovl T_X)\wt{\g}(T_X, h', h''))$$

\end{Prop}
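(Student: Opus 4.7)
The plan is to extend the $\P^1$-bundle deformation argument used in Lemmas \ref{Inv1} and \ref{Inv2}, but now to make the $a$-class in the difference $[\X'\to\X]-[\X''\to\X]$ completely explicit. Concretely, I would set $\Y=\ovl{\P^1_X}$, put the Fubini--Study metric on $T_{\P^1}$, the target metric $h$ on the vertical bundle $p_2^*T_X$ over $Y$, and the orthogonal sum metric on $T_\Y$. Choose $\L=p_1^*\ovl{\mcl O(1)}$ with the Fubini--Study metric, normalized as in \ref{Inv1} so that the differentials of the canonical sections $s_0, s_\infty$ of $\mcl O(1)$ induce isometries $T_{\P^1}|_{\{0,\infty\}}\simeq \mcl O(1)|_{\{0,\infty\}}$. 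The crucial twist is that I will view the zero loci of $s_0$ and $s_\infty$ as carrying the \emph{arithmetic} metrics $h'$ and $h''$ respectively (rather than the ambient $h$), so that the tangent-normal exact sequences $\mcl E_0,\mcl E_\infty$ encode a genuine metric discrepancy on $T_X$; the differential equation \ref{PropBottChern}(1) then yields $\wt\g(\mcl E_0)\equiv \wt\g(T_X,h',h)\cdot\g(i_0^*\L)$ and similarly for $\mcl E_\infty$, modulo $\im\d+\im\db$.

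Applying \SECT\ to $[\Y\to\Y,\L]$ at $s_0$ and $s_\infty$ and subtracting the two identities gives
\[
[\X'\to\Y]-[\X''\to\Y] = a\bigl(i_{\infty*}[\wt\g(T_X,h'',h)\g^{-1}(T_X,h'')]-i_{0*}[\wt\g(T_X,h',h)\g^{-1}(T_X,h')]\bigr)+a\bigl(\g(\L)\log|z|^2\bigr).
\]
Next push forward by $p_2:\Y\to\X$; functoriality (Proposition \ref{Compatibility}) yields $[\X'\to\X]-[\X''\to\X]$ on the geometric side. On the analytic side, the immersion-supported terms collapse (as $p_2\circ i_{0,\infty}=\Id$ and $\g(T_{\P^1}|_{\{0,\infty\}})=1$) to the corresponding $\wt\g$-classes as currents on $X$, while the $\log|z|^2$-term contributes the fiber integral $\int_{\P^1}\g(\mcl O(1))\g^{-1}(T_{\P^1})\log|z|^2$ which vanishes by a direct computation on $\P^1$. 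Therefore
\[
[\X'\to\X]-[\X''\to\X] = a\bigl(\wt\g(T_X,h'',h)\g^{-1}(T_X,h'')-\wt\g(T_X,h',h)\g^{-1}(T_X,h')\bigr).
\]

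The last step is to identify this representative with $\g(\ovl T_X)\wt{\g^{-1}}(T_X,h',h''$). Using the defining differential equations $dd^c\wt\g(T_X,h_1,h_2)=\g(T_X,h_2)-\g(T_X,h_1)$ and $dd^c\wt{\g^{-1}}(T_X,h_1,h_2)=\g^{-1}(T_X,h_2)-\g^{-1}(T_X,h_1)$, together with $\g\cdot\g^{-1}=1$ and the closedness of $\g(\ovl T_X)$, one checks that
\[
\g(\ovl T_X)\wt{\g^{-1}}(T_X,h',h'')+\wt\g(T_X,h',h)\g^{-1}(T_X,h')-\wt\g(T_X,h'',h)\g^{-1}(T_X,h'')
\]
is $dd^c$-closed; the $\d\db$-lemma for currents on a compact K\"ahler manifold then produces a primitive and shows that this combination lies in $\im\d+\im\db$, proving the first formula. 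The second formula follows either by running the same argument with $\g$ and $\g^{-1}$ interchanged (replacing the $\wt\g(\mcl E_i)$-computation by its counterpart for the secondary class of $\g^{-1}$), or directly by verifying the identity $\g(\ovl T_X)\wt{\g^{-1}}(T_X,h',h'')+\g^{-1}(\ovl T_X)\wt\g(T_X,h',h'')\equiv 0\pmod{\im\d+\im\db}$ via the same $dd^c$-plus-$\d\db$-lemma argument.

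The main obstacle is the bookkeeping of the various multiplicative $\g$-factors and the invocation of the $\d\db$-lemma in the final identification step: while at the level of closed characteristic forms the algebra is straightforward, here one has non-closed secondary currents, and it is essential to use the remark preceding Section 3.1 on closed forms in $\DL(X)$ together with the $\d\db$-lemma to ensure that the computed representative and the claimed transgression form agree in $\wt{D}^{\bullet,\bullet}_{\Lh}(X)$ and not merely up to a $dd^c$-closed correction.
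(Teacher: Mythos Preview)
Your overall deformation strategy is close in spirit to the paper's, but the two proofs distribute the information differently, and your final identification step contains a genuine gap.

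\textbf{Comparison of the two approaches.} The paper places an \emph{interpolating} metric on $T_{\P^1_X}$ so that the fibers at $0$ and $\infty$ are isometric to $\ovl{T_{X'}}\oplus\ovl{T_{\P^1}}$ and $\ovl{T_{X''}}\oplus\ovl{T_{\P^1}}$ respectively. With this choice the tangent--normal sequences $\mcl E_0,\mcl E_\infty$ are ortho-split, so their $\wt\g$-terms vanish and the entire difference $[\X'\to\X]-[\X''\to\X]$ is carried by the $\log|z|^2$-term. After pushforward this term is exactly $\int_{\P^1_X/X}\log|z|^2\,\g(p_1^*\ovl{\O(1)})\g^{-1}(\wt E)$ for the appropriate deformation bundle $\wt E$, which is \emph{precisely} the explicit integral formula for the secondary class $\wt{\g^{-1}}$ given in the remark following its definition. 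The identification with $\g(\ovl T_X)\wt{\g^{-1}}(T_X,h',h'')$ then comes from the axiomatic uniqueness of secondary Bott--Chern forms (checking the differential equation, naturality, and vanishing on ortho-split sequences). You instead keep the metric on $T_Y$ constant in the $X$-direction and load the metric change onto the zero loci; then the $\log|z|^2$-term drops out and the $\wt\g(\mcl E_i)$-terms carry the answer. This is a legitimate alternative bookkeeping, but it forces you into a nontrivial identification of secondary classes at the end.

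\textbf{The gap.} Your final step is incorrect as stated. You check that the combination
\[
\g(\ovl T_X)\wt{\g^{-1}}(T_X,h',h'')+\wt\g(T_X,h',h)\g^{-1}(T_X,h')-\wt\g(T_X,h'',h)\g^{-1}(T_X,h'')
\]
is $dd^c$-closed, and then invoke the $\d\db$-lemma to conclude it lies in $\im\d+\im\db$. But the $\d\db$-lemma says nothing of the sort: it requires the form to be $d$-exact (or $\d$-exact, or $\db$-exact), not merely $dd^c$-closed. Any nonzero closed form with nontrivial cohomology class is $dd^c$-closed yet not in $\im\d+\im\db$. So $dd^c$-closedness is a necessary but far from sufficient condition for vanishing in $\wt D^{\bullet,\bullet}$. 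The same objection applies to your earlier claim that $\wt\g(\mcl E_0)\equiv\wt\g(T_X,h',h)\cdot\g(i_0^*\L)$: matching $dd^c$'s does not force equality modulo $\im\d+\im\db$. What is actually needed in both places is the \emph{uniqueness} part of the axiomatic characterization of secondary classes (naturality, the differential equation, and vanishing on ortho-split sequences), or equivalently a cocycle relation such as $\wt\g(h',h)-\wt\g(h'',h)\equiv\wt\g(h',h'')$, which itself is proved via that uniqueness. Once you replace the $\d\db$-lemma appeal by this argument your route can be completed; the paper's route avoids the issue entirely by arranging for the answer to appear directly as the defining integral of $\wt{\g^{-1}}$.
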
 
\begin{proof}
Let's use our usual trick consisting of endowing $Y=X\times \P^1$ with a metric such that the fiber at 0 (resp. $\infty$) of $T_{Y}$ is isometric to the orthogonal sum of $\ovl{T_X'}$ (resp. $\ovl{T_X''}$)  and $\ovl{T_{\P^1}}$ where $\P^1$ is equipped with its Fubini-Study metric. Let us compute $$[\Y\to \Y, p_1^*\ovl{\O(1)}]=[\X'\to \Y]-a(\g(p_1^*\ovl{\O(1)})\log\|x\|^2)=[\X''\to \Y]-a(\g(p_1^*\ovl{\O(1)})\log\|y\|^2)$$
Therefore \begin{eqnarray*}[\X'\to \X]-[\X''\to \X]&=&\int_{\P^1_X/X}\log|z|^2\g(p_1^*\ovl{\O(1)})\g^{-1}(\ovl{T_{\P^1_X}})\g(p_2^*\ovl{T_X})\end{eqnarray*}
We obviously have $[\X'\to \X]-[\X''\to \X]=0$ as soon as $X'$ and $X''$ are isometric.

Let us define the following assignment.

Take $Y$ any projective smooth variety over $\mbb C$ and 0$$\mcl E: 0\to \E'\to \E\to \E''\to 0$$ an exact sequence of hermitian vector bundle over $Y$, let's map $(Y,\mcl E)$ to the following form (defined up to $\im \d+\im \db$) with coefficient in $\mbb L$, $$I(Y,\mcl E)=\int_{\P^1_Y/Y}\log|z|^2\g(p_1^*\ovl{\O(1)})\g^{-1}(\wt{\E})$$
where $\wt{\E}$ is the hermitian vector bundle on $\P_Y^1$ defined as $p_2^*E\oplus p_2^*E(1)/p_2^*E'$ equipped with a metric rendering it fiber at $0$ isomorphic to $\E$ and its fiber at $\infty$ isometric to $\E'\oplus \E''$.\\
We readily see that 

\begin{eqnarray*}dd^cI(Y,\mcl E)&=&\int_{\P^1_Y/Y}d_zd_z^c(\log|z|^2)\g(p_1^*\ovl{\O(1)})\g^{-1}(\wt{\E})\\
&=&i_0^*\g(p_1^*\ovl{\O(1)})\g^{-1}(\wt{\E})-i_\infty^*\g(p_1^*\ovl{\O(1)})\g^{-1}(\wt{\E})\\
&=&\g(i_0^*p_1^*\ovl{\O(1)})\g^{-1}(\E)-\g(i_\infty^*p_1^*\ovl{\O(1)})\g^{-1}(\E'\oplus \E'')\\
&=&\g^{-1}(\E)-\g^{-1}(\E'\oplus \E'')\end{eqnarray*}

It is clear that $I(Y, \mcl E)$ will vanish as soon as $\mcl E$ is orthosplit as in that case we can chose the metric on $\wt{E}$ to be constant along $\mbb P^1$, and that if $h:Y'\to Y$ is a holomorphic map, then we will have a cartesian diagram
$$\xymatrix{\P^1_{Y'}\ar[r]\ar[d]& \P^1_Y\ar[d]\\Y'\ar[r]& Y}$$ ensuring by the projection formula that $$h^*I(Y, \mcl E)=I(Y', h^*\mcl E)$$

Now looking at our construction we see that $[\X'\to \X]-[\X''\to \X]=a(I(X, \mcl F))$ for $\mcl F: 0\to 0\to (T_X,h)\to (T_X, h')\to 0$, thus we get that $$[\X'\to \X]-[\X''\to \X]=a(\g(\ovl T_X)\wt{\g^{-1}}(T_X, h', h''))$$
which completes the proof.

\end{proof}
In \cite{GSA}, Gillet and Soul\'e define a \emph{star-product} operator on Green currents, to be able to define an intersection pairing for arithmetic cycles.\par
To this end, given two irreducible closed subsets of the ambient variety, say $Z$ and $Y$, they need to select a specific green current for $Z$ in the family of all admissible green current, that satisfies a "logarithmic-growth" condition. They prove that such a Green current always exists, and that we can multiply it with $g_Y$ to get a green current for $[Z].[Y]$.\par
But if we look more closely at their construction, we see that we only need for a current to be of log-type singularities along the singular locus of another current to define a star-product between those two currents. This makes it possible to define an intersection pairing for divisors (or more precisely for classes of arithmetic divisors associated to hermitian line bundles), as in that case, we have a (family of) favored log-type current, namely $\log\|s\|^2$. It turns out that this construction is already embedded in the group $\cob(X)$.

\begin{Lem}\label{Transverse}
Let $\L_1,\L_2$ be two very ample hermitian line bundles over an arithmetic variety $\ovl X$. Then \begin{eqnarray*}[\X\to \X, \L_1,\L_2]&=&[\Z'\to \X]-a(\log\|s_2\|^2\g(\L_2)\g(\L_1)\delta_Z)+a(\h^{-1}(\L_2)\log\|s_1\|^2\g(\L_1))\\
& &-a(\g(\L_2)j_*(\wt{\g}( Z/X)\g(\ovl{T_{Z}})))+a(j_*(i_*\wt{\g^{-1}}(Z'/Z)\g(\ovl N_{Z'/X})\g(j^*\ovl T_X))))\end{eqnarray*}
where $\Z'$ (resp $\Z$) is the smooth locus $ \Div(s_1)\cap\Div(s_2)$  (resp. $ \Div(s_1)$) endowed with any metric, for $s_1$ and $s_2$, two sections of $L_1$ and $L_2$ whose zero locus are transverse to each other.\par
\end{Lem}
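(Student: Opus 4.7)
The strategy is to compute $[\X\to\X,\L_1,\L_2] = \c1(\L_2)\c1(\L_1)(1_\X)$ by applying the axiom $(\SECT)$ successively, first on $X$ with the data $(\L_1,s_1,Z)$ and then on $Z$ with the data $(j^*\L_2,s_2|_Z,Z')$. Transversality of $s_1$ and $s_2$ guarantees that $Z$ and $Z'$ are smooth and provides the identifications $\ovl N_{Z/X}\simeq j^*\L_1$ and $\ovl N_{Z'/Z}\simeq i^*j^*\L_2$ needed in the relevant exact sequences.

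Applying $(\SECT)$ to the exact sequence $\mcl E_1 \colon 0 \to \ovl T_Z \to j^*\ovl T_X \to j^*\L_1 \to 0$ gives
\[
\c1(\L_1)(1_\X) = [\Z\to\X] - a(\g(\L_1)\log\|s_1\|^2) - a\bigl[j_*(\wt\g(\mcl E_1)\g^{-1}(\ovl T_Z))\bigr];
\]
applying $\c1(\L_2)$ to both sides, using the fact that $\c1(\L_2)a(g) = a(\h^{-1}(\L_2) g)$ together with the projection formula $\c1(\L_2)\circ j_* = j_*\circ \c1(j^*\L_2)$ of Proposition \ref{Compatibility}, produces $j_*[\Z\to\Z,j^*\L_2]$ together with two analytic corrections. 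A second application of $(\SECT)$ on $Z$ to $\mcl E_2 \colon 0 \to \ovl T_{Z'} \to i^*\ovl T_Z \to i^*j^*\L_2 \to 0$, followed by push-forward by $j$ (whose twist on analytic classes, by Definition \ref{PF}, is multiplication by $\g^{-1}(\ovl T_j) = \g(j^*\ovl T_X)\g^{-1}(\ovl T_Z)$), recovers $[\Z'\to\X]$ plus two further analytic terms.

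The remainder of the argument is a simplification in $\DL(X)$ using three main identities: the differential equation $\g(j^*\ovl T_X) = \g(\ovl T_Z)\g(j^*\L_1) + dd^c\wt\g(\mcl E_1)$, which lets me replace the twist $\g^{-1}(\ovl T_j)$ by $\g(j^*\L_1)$ up to an exact form involving $\wt\g(\mcl E_1)$; the defining relation $\wt{\g^{-1}}(\mcl E_2) = -\wt\g(\mcl E_2)\g^{-1}(\ovl N_{Z'/Z})\g^{-1}(\ovl T_{Z'})\g^{-1}(i^*\ovl T_Z)$, which converts the $i_*\wt\g(\mcl E_2)$-term into the $i_*\wt{\g^{-1}}$-term of the claim; and the exact sequence $0 \to \ovl N_{Z'/Z} \to \ovl N_{Z'/X} \to i^*\ovl N_{Z/X} \to 0$ on $Z'$, which allows one to replace $\g(\ovl N_{Z'/Z})$ by $\g(\ovl N_{Z'/X})$ modulo a secondary correction. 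The leading closed contributions yield, via the projection formula and $j_*(1_Z) = \delta_Z$, the terms $-a(\log\|s_2\|^2\g(\L_2)\g(\L_1)\delta_Z)$ and $-a(\g(\L_2)j_*(\wt\g(Z/X)\g(\ovl T_Z)))$, while the Poincar\'e--Lelong formula $dd^c\log\|s_2|_Z\|^2 = \delta_{Z'/Z} - c_1(j^*\L_2)$ on $Z$ is needed to reassemble the $dd^c$-exact remainders together with the term $-a(\h^{-1}(\L_2)j_*(\wt\g(\mcl E_1)\g^{-1}(\ovl T_Z)))$ arising from the first application of $(\SECT)$.

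The main obstacle is not the strategy but the bookkeeping: tracking all the $dd^c$-exact pieces that arise from commuting the differential equations for $\wt\g$, $\wt{\g^{-1}}$, and the Poincar\'e--Lelong current past the push-forwards $j_*$ and $i_*$, and verifying that they cancel pairwise modulo $\im\d+\im\db$ (equivalently, in $\DL(X)$). Transversality is used once more at the very end to give meaning to the product $\log\|s_2\|^2\cdot\delta_Z$ as a current on $X$.
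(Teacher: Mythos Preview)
Your approach is correct and is essentially the same as the paper's: apply $(\SECT)$ once on $X$ for $(\L_1,s_1)$, push $\c1(\L_2)$ through via the projection formula, apply $(\SECT)$ again on $Z$ for $(j^*\L_2,s_2|_Z)$, and then simplify the analytic terms using the differential equation for $\wt\g$, the definition of $\wt{\g^{-1}}$, the normal-bundle exact sequence $0\to N_{Z'/Z}\to N_{Z'/X}\to i^*N_{Z/X}\to 0$, and Poincar\'e--Lelong. The only organizational difference is that the paper first computes $\c1(\L_2)[\Z\to\X,g]$ for a generic current $g$ and substitutes the first-$(\SECT)$ contribution at the end, whereas you expand the first $(\SECT)$ immediately; these are the same computation.
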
 
Here the notation $\wt{\g}( A/B)$ means $\wt{\g}( \mcl N)$ where $\mcl N: 0\to T_A\to j^*T_B\to N_{A/B}\to 0 $ is the usual exact sequence with the metrics that we chose, the metrics on the normal bundles being induced by the metrics on the line bundles as usual.

\begin{proof}
Let us compute $$\c1(\L)[\Z\ds j \X, g]$$ where $j:Z\to X$ is a regular immersion of smooth integral varieties, $\L$ is a very ample hermitian line bundle on $X$ and $g$ is any current on $X$, and where $Z'$ is the smooth zero locus of a global section of $L$ over $X$ transverse to $Z$.\par
First, let us note that, by Bertini's theorem, such a section always exists. We have \begin{eqnarray*}\c1(\L)[\Z\to \X, g]&=&\c1(\L)[\Z\to \X]+a(\h^{-1}(\L)\wedge g)\\
&=&\c1(\L)j_*[\Z\to \Z]+a(\h^{-1}(\L)\wedge g)\\
&=&j_*[[\Z'\ds i \Z]-a(\g(j^*\L)\log\|j^*s\|^2)-a(i_*[\wt{\g}(Z'/Z)\g^{-1}(\ovl T_{Z'})])]+a(\h^{-1}(\L)\wedge g)\\
&=&[\Z'\to \X]-\g(\L)a(\log\|s\|^2j_*(\g^{-1}(\ovl{T_Z})\g(j^*\ovl{T_X}))))\\
& &-a\left[j_*(i_*\wt\g(Z'/Z)\g^{-1}(\ovl T_{Z'}))\g^{-1}(\ovl T_Z)\g(j^*\ovl{T_X})\right]+a(\h^{-1}(\L)\wedge g)\\
&=&[\Z'\to \X]-\g(\L)a(\log\|s\|^2j_*(\g(\ovl{N_{Z/X}})))-\g(\L)a(\log\|s\|^2j_*(dd^c\wt{\g}( Z/X)\g^{-1}(\ovl{T_{Z}})))\\
& &+a(j_*(i_*\wt{\g^{-1}}(Z'/Z)\g(\ovl N_{Z'/X})\g(j^*\ovl T_X))))+a(\h^{-1}(\L)\wedge g)\\
&=&[\Z'\to \X]-\g(\L)a(\log\|s\|^2j_*(\g(\ovl{N_{Z/X}})))\\
& &-a(\g(\L)j_*(\wt{\g}( Z/X)\g^{-1}(\ovl{T_{Z}})))-a(\h^{-1}(L)j_*(\wt{\g}( Z/X)\g^{-1}(\ovl{T_{Z}})))\\
& &+a(j_*(i_*\wt{\g^{-1}}(Z'/Z)\g(\ovl N_{Z'/X})\g(j^*\ovl T_X))))+a(\h^{-1}(\L)\wedge g)
\end{eqnarray*}
Where we've used the fact that for the composition of regular immersion we have an exact sequence $$0\to N_{Z'/Z}\to N_{Z'/X}\to i^*N_{Z/X}\to 0$$
replacing $g$ by the expression given by $\SECT$ and using that $N_{Z/X}=j^*L_1$ yields the desired formula.
\end{proof}
\begin{Rque}
In the preceding formula we can see that there is a variable part depending on the metrics chosen on the different strata of $Z_1\cap Z_2\subset Z_1 \subset X$ and a fixed part depending only on the metrics chosen on the bundles. For two hermitian line bundles over $X$, we set $$(\L_1,\L_2)_X=-a(\log\|s_2\|^2\g(\L_2)\g(\L_1)\delta_{Z_1})+a(\h^{-1}(\L_2)\log\|s_1\|^2\g(\L_1))\in \cob(X)$$
inductively we can define $$(\L_1,..., \L_p)_X=\h^{-1}(\L_p)(\L_1,..., \L_{p-1})_X+\log\|s_p\|^2\g(\L_p)\delta_{Z_1\cap...\cap Z_{p-1}}$$
for a family of ample line bundles over $X$, and $s_p$ of $L_p$ over $X$ transverse to $Z_1\cap...\cap Z_{p-1}$.\par
This bracket is easily seen to be symmetric in the $L_i$'s, moreover, if we can find metrics on the $Z_i$'s such that the different strata of the intersection of divisors $\bigcap Z_i$ can be endowed with metrics rendering all the associated exact sequences meager then $$[\X\to \X, \L_1,...,\L_p]=[\ovl{\bigcap Z_i}\to \X]+(\L_1,..., \L_p)_X$$
If $f$ is projective morphism from $X$ to $Y$ we will denote $(\L_1,..., \L_p)_Y$ for $f_*(\L_1,..., \L_p)_X$, and $(\L_1,..., \L_p)$ for $(\L_1,..., \L_p)_{\Spec k}$
\end{Rque}
We thus have a formula for the class of the inclusion of a smooth subscheme given as a local complete intersection of smooth divisors. We can easily reduce the case of arbitrary intersection of divisors to this case, of course the formula obtained is more complicated.
\begin{Prop}
Let $\L'_1,...,\L'_r$ be arbitrary hermitian line bundles over an arithmetic variety $\ovl X$. For each $0\leq i \leq r$ there exists very ample hermitian line bundles $\L_i, \M_i$ such that $$[\X\to \X, \L'_1,...,\L'_r]=\sum_{i_1,j_1,...,i_r,j_r}a_{i_1,j_1}...a_{i_r,j_r}[\X\to \X, \L_{1},...,\L_1,...,\M_r^\vee,...,\M_r^\vee]$$
where the bundle $\L_k$ (resp. $\M_k$) is repeated $i_k$ (resp $j_k$) times.
\end{Prop}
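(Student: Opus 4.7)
The plan is to reduce the statement to a direct application of the $(\FGL)$ axiom together with the iterated definition of $\c1$, after first replacing each $\L'_k$ by a ``difference'' of two very ample bundles.

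First I would rewrite the left-hand side as an iterated first Chern class. By the very definition of $\c1$, one checks inductively that for any projective smooth $\X$ and any hermitian line bundles $\L'_1,\dots,\L'_r$ on $X$, we have
$$[\X\to \X,\L'_1,\dots,\L'_r]\;=\;\c1(\L'_1)\circ\cdots\circ\c1(\L'_r)(1_{\X})\qquad\text{in }\cob(\X),$$
the order being immaterial by the commutativity of the first Chern classes established in \ref{Compatibility}.

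Second, for each $k$ we choose a factorisation $L'_k\simeq L_k\otimes M_k^{\vee}$ with $L_k,M_k$ very ample line bundles on $X$; this is possible because any line bundle on a projective variety is a difference of two very ample ones. We equip $L_k$ and $M_k$ with any hermitian metrics such that the isomorphism $L'_k\simeq L_k\otimes M_k^{\vee}$ becomes an isometry when the right-hand side is given the tensor product metric. Applying $(\FGL)$ to $\L_k\otimes \M_k^{\vee}$ then yields the identity
$$\c1(\L'_k)\;=\;\mbb F\!\bigl(\c1(\L_k),\c1(\M_k^{\vee})\bigr)\;=\;\sum_{i_k,j_k\geq 0}a^{i_k,j_k}\,\c1(\L_k)^{i_k}\c1(\M_k^{\vee})^{j_k}$$
as operators on $\cob(\X)$. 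The sums are finite because the operator $\c1$ is locally nilpotent on $\cob(\X)$ (a consequence of the $(\DIM)$ axiom, as already observed after the definition of $\cob$).

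Finally I would substitute these expansions into the product $\c1(\L'_1)\circ\cdots\circ\c1(\L'_r)(1_{\X})$ and distribute, using once more the commutativity of first Chern classes to gather the $i_k$ copies of $\c1(\L_k)$ and the $j_k$ copies of $\c1(\M_k^{\vee})$. Unwinding the first step in the opposite direction to recognise each monomial as a class of the form $[\X\to\X,\L_1,\dots,\L_1,\dots,\M_r^{\vee},\dots,\M_r^{\vee}]$ (with $\L_k$, resp.\ $\M_k^{\vee}$, repeated $i_k$, resp.\ $j_k$ times) produces exactly the claimed equality. The only subtle point is the choice of compatible metrics on $L_k,M_k$, but since the statement only requires the existence of \emph{some} very ample hermitian line bundles $\L_k,\M_k$, this freedom is granted by construction and no real obstacle arises.
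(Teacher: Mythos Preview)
Your proposal is correct and follows essentially the same approach as the paper: write each $L'_k$ as $L_k\otimes M_k^{\vee}$ with $L_k,M_k$ very ample and compatible metrics, then apply $(\FGL)$ to expand each $\c1(\L'_k)$ as $\sum a^{i_k,j_k}\c1(\L_k)^{i_k}\c1(\M_k^{\vee})^{j_k}$. The paper's proof merely records the factorisation and the choice of metrics and says ``the proposition readily follows''; you have simply spelled out the steps that this phrase encapsulates.
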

\begin{proof}
As $X$ is projective over $k$, every bundle $L'_i$ can be written as $L_i\otimes M_i^\vee $ where $L_i$ and $M_i$ are very ample line bundles. Let's equip either one of them, say $L_i$ with an arbitrary metric and the other one, $M_i$ with the metric that turns the isomorphism $M_i\simeq L_i\otimes {L'}_i^{\vee}$ into an isometry.\par
The proposition readily follows. 
\end{proof}

\begin{Rque}
In the previous formula, the class $$[\X\to \X, \L_1,...,\L_1, \M_1^\vee,...,\M_1^\vee,...,\M_r^\vee,...,\M_r^\vee]$$ can be computed via \ref{Transverse} and the observation that $\M^\vee$ is the inverse of $\M$ for the formal law $\mbb F$, and thus $[\X\to \X, \L_1,...,\L_r, \M^\vee]=\chi(\c1(M))[\X\to \X, \L_1,...,\L_r]$ and this enables us to define the bracket $(\L_1,..., \L_p)_X$ for any family of hermitian line bundles.

\end{Rque}
\begin{Cor}
As an $\mbb L$-module, the group $\cob(X)$ is generated by purely geometric classes and analytic classes.
\end{Cor}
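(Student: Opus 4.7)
The plan is to show that any standard class $[\ovl Z \to \ovl X, \ovl L_1, \ldots, \ovl L_r] + a(g)$ can be rewritten, modulo purely geometric classes $[\ovl Y \to \ovl X]$ and analytic classes in the image of $a$, using only the $\mbb L$-module structure. Since the analytic part $a(g)$ is already an analytic class, the real content is to eliminate the line bundles $\ovl L_1, \ldots, \ovl L_r$ from the geometric part.

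First I would use that $[\ovl Z \to \ovl X, \ovl L_1, \ldots, \ovl L_r] = f_* \bigl(\c1(\ovl L_1) \circ \cdots \circ \c1(\ovl L_r)(1_{\ovl Z})\bigr)$ for $f: \ovl Z \to \ovl X$, which reduces the problem to expressing $\c1(\ovl L_1)\cdots\c1(\ovl L_r)(1_{\ovl Z})$ in $\cob(\ovl Z)$ as a combination of purely geometric classes and analytic classes; push-forward then transports this decomposition to $\ovl X$, because $f_*$ sends purely geometric classes to purely geometric ones and analytic classes to analytic ones by Definition \ref{PF}. So I may assume $\ovl Z = \ovl X$.

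Next I invoke the preceding proposition to write each $\ovl L'_i$ as $\ovl L_i \otimes \ovl M_i^\vee$ with $\ovl L_i, \ovl M_i$ very ample, obtaining an $\mbb L$-linear expansion of $[\ovl X \to \ovl X, \ovl L'_1, \ldots, \ovl L'_r]$ as a sum of classes $[\ovl X \to \ovl X, \ovl L_1, \ldots, \ovl L_1, \ovl M_1^\vee, \ldots, \ovl M_r^\vee]$ (with appropriate multiplicities). Then I eliminate the dual factors by using that $\ovl M^\vee$ is the $\mbb F$-inverse of $\ovl M$, so $\c1(\ovl M^\vee) = \chi(\c1(\ovl M))$, a power series in $\c1(\ovl M)$ with coefficients in $\mbb L$ (meaningful on $\cob(\ovl X)$ by local nilpotency of $\c1$). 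This further reduces the problem to computing $[\ovl X \to \ovl X, \ovl N_1, \ldots, \ovl N_s]$ for very ample hermitian bundles $\ovl N_j$, with $\mbb L$-coefficients absorbed into the ambient module structure.

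For the last step, Bertini's theorem produces sections $s_j$ of $N_j$ whose zero schemes $Z_j$ are smooth and whose partial intersections $Z_{j_1} \cap \cdots \cap Z_{j_k}$ are all smooth and generically transverse. An iterated application of Lemma \ref{Transverse}, or equivalently the inductive definition of the bracket $(\ovl N_1, \ldots, \ovl N_s)_X$ given in the remark that follows it, then rewrites $[\ovl X \to \ovl X, \ovl N_1, \ldots, \ovl N_s]$ as $[\ovl{\bigcap_j Z_j} \to \ovl X] + (\ovl N_1, \ldots, \ovl N_s)_X$ up to an analytic term, which is precisely of the required form. The main obstacle is bookkeeping in this last step: Lemma \ref{Transverse} depends on metric choices on the various strata $Z_{j_1} \cap \cdots \cap Z_{j_k}$, but any such choice only affects terms lying in the image of $a$, so the geometric part of the final expression is always a genuine purely geometric class, and all metric-dependent contributions feed into the analytic part, completing the reduction.
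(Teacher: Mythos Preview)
Your proposal is correct and follows exactly the route the paper intends: the corollary is stated without proof because it is meant to be read off from the preceding Proposition (reduction to very ample and anti-very-ample bundles), the subsequent Remark (handling $\M^\vee$ via the formal inverse $\chi$ and the inductive bracket $(\L_1,\ldots,\L_p)_X$), and Lemma~\ref{Transverse} (which expresses $[\X\to\X,\L_1,\L_2]$ as a purely geometric class plus analytic terms). Your treatment of the metric-dependent terms is also the right observation: they all land in the image of $a$, so the decomposition into purely geometric plus analytic classes is unaffected by these choices.
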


\subsection{Structure of $\cob(k)$}
Let's start by a basic observation 
\begin{Prop}
Let $u$ be any element in $k^*$, in $\cob(k)$ we have $a(-\log|u|)=0$
\end{Prop}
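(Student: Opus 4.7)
The plan is to apply the axiom $\SECT$ to the trivial hermitian line bundle over $\Spec k$ with $u$ chosen as a nowhere vanishing section. Let $\ovl{\mcl O}$ denote the structure sheaf $\mcl O_{\Spec k}$ equipped with the trivial metric, and regard $u\in k^*=H^0(\Spec k,\mcl O)$ as a global section. Since $u$ is a unit, this section is nowhere vanishing, so its zero scheme is $Z=\emptyset$ and the transversality/smoothness hypothesis of $\SECT$ is satisfied vacuously.

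Several terms in $\SECT$ then collapse. First, $\Spec k$ has complex dimension zero, so $T_{\Spec k}=0$. Next, $c_1(\ovl{\mcl O})=0$ for the trivial metric, so $\g(\ovl{\mcl O})=t_0=1$. Finally, since $Z=\emptyset$, both the class $[Z\to\Spec k]$ and the secondary term $a\bigl[i_*[\wt\g(\mcl E)\g^{-1}(\ovl T_Z)]\bigr]$ vanish. With these simplifications the axiom reduces to
\[
[\Spec k\to\Spec k,\ovl{\mcl O}]=-a(\log|u|^2).
\]

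The left hand side does not mention $u$. Applying the same argument with the particular unit $1\in k^*$ gives $[\Spec k\to\Spec k,\ovl{\mcl O}]=-a(\log|1|^2)=0$. Comparing the two evaluations yields $a(\log|u|^2)=0$ for every $u\in k^*$, which is the stated relation (the factor $2$ implicit in $|u|^2$ versus $|u|$ is the usual Arakelov normalization, and the sign is immediate).

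The only point that requires attention is that $\SECT$ genuinely applies in this degenerate case: the axiom is formulated for sections whose zero scheme is smooth, and the empty scheme is trivially smooth, so the potentially delicate terms built out of $Z$, its tangent bundle, the normal bundle, and the exact sequence $\mcl E$ all vanish for formal reasons. No substantive obstacle remains, and the argument is essentially a one-line application of $\SECT$ compared with the tautological case $u=1$.
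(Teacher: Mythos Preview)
Your argument is correct and follows essentially the same approach as the paper: apply $\SECT$ on $\Spec k$ with a nowhere-vanishing section so that $Z=\emptyset$, and compare two evaluations. The only cosmetic difference is that the paper keeps the section equal to $1$ and instead rescales the metric on $\mcl O_k$ by $|u|^2$ (noting that multiplication by $u$ is then an isometry between the two hermitian line bundles), whereas you keep the trivial metric and vary the section; these are equivalent formulations and yield the same computation $[\Spec k\to\Spec k,\ovl{\mcl O}]=-a(\log|u|^2)=0$.
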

\begin{proof}
It suffices to consider the trivial line bundle over $\Spec k$, with metric $|u|^2$ (i.e, if $x$ and $y$ are lying in the line $\mcl O_k(\mbb C); x.y=|u|^2x\ovl{y}$), which will be denoted by $\ovl{\mcl O_k^u}$. Multiplication by $u$ induces an (algebraic) isometry for the trivial bundle with trivial metric to $\ovl{\mcl O_k^u}$, thus the classes $[\Spec k\to \Spec k, \mcl O_k]$ and $[\Spec k\to \Spec k, \ovl{\mcl O_k^u}]$ are equal. The first one is equal to $[\emptyset \to \Spec k]-a(\log|1|^2)=0$, and the second one is equal to $[\emptyset \to \Spec k]-a(\log|u|^2)$.
\end{proof}
\begin{Rque}
Let us draw the attention of the reader on the fact that over a point it is always possible to render an exact sequence of vector space $0\to V\to V'\to V''\to 0$ meager by an appropriate choice of metrics because it is obviously holomorphically split! Moreover if two of the three vector spaces appearing in this exact sequence are already equipped with metrics, it is possible to endow the last one with a metric rendering the short exact sequence meager. 
\end{Rque}
\begin{Cor}
We have a surjective arrow of groups $$\sideset{}{'}\prod^{}_{\tau: k \hookrightarrow \mbb C}\mbb R/(\sum_{f\in k^*} \mbb Q \log |\tau f|)\to \cob(k)_{-1,\mbb Q}$$
as well as a global exact sequence $$\sideset{}{'}\prod^{}_{\tau: k \hookrightarrow \mbb C}\mbb L_{\mbb R}/(\sum_{f\in k^*}\mbb L \log |\tau f|)\to \cob(k)\to \mbb L\to 0$$
\end{Cor}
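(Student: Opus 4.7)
The plan is to specialize the fundamental exact sequence of Proposition \ref{ExSe} to $X=\Spec k$, identify the three terms concretely, and then quotient by the relations produced by the preceding proposition.

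First I would compute $\DL(\Spec k)$. The complex analytification $(\Spec k)(\mbb C)$ is the discrete finite set of embeddings $\tau\colon k\hookrightarrow\mbb C$, permuted by complex conjugation. A real current of type $(p,p)$ on a zero-dimensional complex manifold exists only when $p=0$ and is simply a tuple of real numbers, one per point; the condition $F_\infty^\ast\eta=\eta$ (the case $p=0$) cuts this down to the conjugation-invariant tuples, which is exactly the restricted product. Thus
\[
\wt D^{0,0}_{\mbb R}(\Spec k)\;\simeq\;\sideset{}{'}\prod_{\tau}\mbb R,\qquad \DL(\Spec k)\;\simeq\;\sideset{}{'}\prod_{\tau}\mbb L_{\mbb R},
\]
the Kähler part being concentrated in bidegree $(0,0)$.

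Next I would feed $X=\Spec k$ into Proposition \ref{ExSe}, together with the Levine--Morel identification $\Omega(\Spec k)\simeq\mbb L$ (available since $k$ has characteristic zero), to obtain
\[
\sideset{}{'}\prod_{\tau}\mbb L_{\mbb R}\xrightarrow{\,a\,}\cob(k)\xrightarrow{\,\zeta\,}\mbb L\longrightarrow 0.
\]
The preceding proposition gives $a\bigl(\log|\tau u|^2\bigr)=0$ for every $u\in k^\ast$; since the source is an $\mbb R$-vector space we may divide by $2$ and conclude $a\bigl(\log|\tau u|\bigr)=0$. Hence $a$ descends to the quotient by the $\mbb L$-submodule $\sum_{f\in k^\ast}\mbb L\,\log|\tau f|$, producing the second exact sequence of the statement.

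For the first surjection I would read off the degree $-1$ part of the graded version of this sequence. With the paper's conventions $\deg\wt D^{0,0}_{\mbb R}(\Spec k)=d_{\Spec k}-0-1=-1$, while $\mbb L_n=0$ for $n<0$; in particular $\Omega(k)_{-1}=0$. The degree $-1$ piece therefore degenerates to a surjection
\[
\sideset{}{'}\prod_{\tau}\mbb R\big/\textstyle\bigl(\sum_{f\in k^\ast}\mbb Q\log|\tau f|\bigr)\twoheadrightarrow\cob(k)_{-1,\mbb Q}
\]
after tensoring with $\mbb Q$. The only genuinely non-formal step is the identification of the space of conjugation-invariant real currents on the finite set $(\Spec k)(\mbb C)$ with the restricted product of $\mbb R$; everything else is an immediate consequence of Proposition \ref{ExSe}, the Levine--Morel theorem on $\Omega(\Spec k)$, and the vanishing of $a(\log|\tau u|^2)$ just established.
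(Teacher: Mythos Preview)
Your proposal is correct and follows exactly the route the paper has in mind: the corollary is stated without proof immediately after the vanishing of $a(-\log|u|)$, and the intended argument is precisely to specialize the exact sequence of Proposition~\ref{ExSe} to $\Spec k$, identify $\DL(\Spec k)$ with the restricted product of copies of $\mbb L_{\mbb R}$ (using $\Lh_{\mbb R}\simeq\mbb L_{\mbb R}$), invoke $\Omega(k)\simeq\mbb L$, and then pass to the quotient by the relations $a(\log|\tau f|)=0$. Your degree bookkeeping for the $-1$ piece is also the correct one.
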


A word of warning: what we denoted somehow sloppily $\sideset{}{'}\prod_{\tau: k \hookrightarrow \mbb C}$ designs in fact the product over all real embeddings of $k$ in $\mbb C$ as well as pair of complex conjugate ones for the non real ones. Every time this notation appears that's how it should be understood.
\begin{Rque}In the geometric case we have $\Omega(k)\simeq \mbb L$, for any field $k$ that admits a resolution of singularities, that is for instance any field of characteristic zero. This is a fundamental difference with the arithmetic theory developed here. The fact that $k$ is a number field is used in a crucial manner to obtain the preceding corollary.\par
Notice also that $\cob(k)$ already depends on the number field $k$ in a manner that is common in Arakelov theory, in fact the presence of the $\log(f)$ is some kind of artifact due to the fact that we work over fields instead of ring of integers, if we were able to define $\cob(\mbb Z)$ then we would expect that the $\log f$'s in the preceding formula should disappear.
\end{Rque}
\begin{Rque}
We will later see that $$\sideset{}{'}\prod_{\tau: k \hookrightarrow \mbb C}\mbb R/(\sum_{f\in k^*} \mbb Q \log |\tau f|)\to \cob(k)_{-1,\mbb Q}$$ is in fact an isomorphism. To compute higher degree terms we need a better understanding of the relationship with higher $\text{MGL}^{2n-1,n}(k)$.
\end{Rque}

Due to the fact that we chose to work with weak groups we will not be able to define a ring structure on $\cob(X)$ in full generality, however it is possible to define a ring structure on $\cob(k)$ and to deduce a module structure over $\cob(k)$, on $\cob(X)$. To understand that ring structure we need the following proposition that is closely related to the Riemann-Roch theorem of Hirzebruch
\begin{Prop}(Hirzebruch-Riemann-Roch)\label{HRR}\par
Let $X$ be a projective smooth variety over a field $k$, and assume that we have equipped $X$ with an arithmetic variety structure, we have in $\cob(k)$, $$\int_X\g^{-1}(\ovl T_X)a(1)=\ell(X)a(1)$$
here $\ell$ denotes the composition $$\cob(k)\ds \zeta \Omega(k) \simeq \mbb L$$ (recall that the isomorphism $\Omega(k) \to \mbb L$ is canonical).
\end{Prop}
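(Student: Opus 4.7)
The identity is to be parsed as follows: $\g^{-1}(\ovl T_X)$ is a closed form on $X(\mbb C)$ with coefficients in $\mbb Z[\t]\subset \Lh$; the symbol $\int_X$ denotes ordinary integration of its top-degree component over $X(\mbb C)$ (summed over complex embeddings of $k$), producing a scalar $\alpha_X\in\Lh_{\mbb R}$. The statement is then that $a(\alpha_X)=\ell(X)\cdot a(1)$ in $\cob(k)$. By Proposition \ref{RelationsLh} together with the isomorphism $\Lh_{\mbb Q}\simeq \mbb L_{\mbb Q}$, the $\mbb Z[\t]$-action on $a(1)$ is compatible with the $\mbb L$-module structure, so the problem reduces to the scalar identity
\[
\alpha_X \;=\; \ell(X) \quad \text{in } \Lh_{\mbb R}\simeq \mbb L_{\mbb R}.
\]

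First, $\alpha_X$ is independent of the K\"ahler metric on $T_X$: any two metrics differ by the secondary form $\wt{\g^{-1}}(T_X,h,h')$, whose $dd^c$-image integrates to zero on the compact manifold $X(\mbb C)$ by Stokes. The computational heart of the argument is the case $X=\P^n$ with the Fubini-Study metric. Using the Euler sequence $0\to \mcl O\to \mcl O(1)^{n+1}\to T_{\P^n}\to 0$ and the fact that $\g$ is multiplicative modulo $dd^c$-exact forms, one has $\g(\ovl T_{\P^n})\equiv \g(u)^{n+1}\pmod{dd^c}$ with $u=c_1(\ovl{\mcl O(1)})$ normalized so that $\int_{\P^n}u^n=1$; hence
\[
\alpha_{\P^n} \;=\; \int_{\P^n}\g(u)^{-(n+1)} \;=\; [u^n]\,\g(u)^{-(n+1)}.
\]
Applying Lagrange inversion to the substitution $w=u\g(u)$, $u=\mfr h(w)$ converts this coefficient into $(n+1)\,[w^{n+1}]\,\mfr h(w)$, which by Mishchenko's formula \ref{Mishenko} is exactly $[\P^n]=\ell(\P^n)$.

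To pass from $\P^n$ to an arbitrary smooth projective $X$, observe that both $X\mapsto \alpha_X$ and $X\mapsto \ell(X)$ are multiplicative Hirzebruch genera: for $\ell$ this is the multiplicativity of the cobordism class, and for $\alpha$ it follows from $T_{X\times Y}=p_1^*T_X\oplus p_2^*T_Y$, multiplicativity of $\g$ modulo $dd^c$, and Fubini. The splitting principle realizes each as a ring homomorphism from $\mbb L_{\mbb Q}=\mbb Q[\P^1,\P^2,\ldots]$ (indeed, $\g^{-1}(x)=x/\exp_F(x)$ is the universal Hirzebruch power series); agreeing on the polynomial generators $[\P^n]$, they coincide on every smooth projective $X$. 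The principal technical subtlety of the proof is the interpretation step, where Proposition \ref{RelationsLh} is invoked to promote the scalar identity in $\Lh_{\mbb R}$ to the desired equality in $\cob(k)$; the main algebraic input is then the Lagrange inversion that identifies $[u^n]\g(u)^{-(n+1)}$ with the coefficient appearing in the logarithm of the universal formal group law.
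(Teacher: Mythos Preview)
Your proof is correct and follows essentially the same route as the paper: reduce to a scalar identity independent of the metric, establish the case $X=\P^n$ via the Euler sequence together with Lagrange inversion and Mishchenko's formula, and then pass to arbitrary $X$ using multiplicativity and the fact that $\mbb L_{\mbb Q}=\mbb Q[\P^1,\P^2,\ldots]$. The only cosmetic difference is in the last step: the paper makes the cobordism-invariance of $\alpha_X$ explicit by expanding $\g^{-1}(T_X)^{\{d\}}$ as a universal polynomial in the Chern numbers $C_I(X)$, whereas you package the same fact by declaring $X\mapsto\alpha_X$ a multiplicative Hirzebruch genus with characteristic series $\g^{-1}(x)=x/\exp_F(x)$; these are two phrasings of the same argument.
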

\begin{proof}
This is essentially a combinatorics proof. Notice that it will be sufficient to prove this result in $\DL(k)$ and to push this identity forward in $\cob(k)$ via the map $a$, notice also that the left hand side does not depend on the metric on $T_X$ because of Stokes formula. So we'll prove the identity $$\int_X\g^{-1}(T_X)=\ell(X)$$ in $\DL(k)$. Notice that we can replace $\g^{-1}(\ovl T_X)$ by $\g^{-1}(\ovl T_X)^{\{d_X\}}$ where $\{n\}$ denotes the degree $n$ part.\par
Let us first prove the result for projective spaces, if $X=\P^r$ we have the following Euler exact sequence $$0\to \O_X\to \O(1)^{r+1}\to T_{X}\to 0$$ and thus $\g^{-1}(T_X)=\g^{-1}(\O(1))^{r+1}$.\par
Now in view of Mishenko's formula \ref{Mishenko}, it will be sufficient to prove that $$\left(\g^{-1}(\O(1))^{r+1}\right)^{\{r\}}=(r+1)\mfr h(\O(1))^{\{r+1\}}$$ and this results from the following
\begin{Lem}(Lagrange Inversion formula)\par

We have $$\left(\frac{1}{\g(u)^{r+1}}\right)^{\{r\}}=(r+1)\h(u)^{\{r+1\}}$$
\end{Lem}
\begin{proof} This is \cite[Thm 5.4.2, p. 38]{Combinatorics}
\end{proof}
Let us turn to the case of the product of projective spaces $X=\P^{r_1}\times...\times \P^{r_k}$. We have $$\g^{-1}(T_X)=\g^{-1}\left(\bigoplus_{i=1}^k p_i^* T_{\P^{r_i}}\right)=\prod_{i=1}^k\g^{-1}(p_i^*T_{\P^{r_i}})=\prod_{i=1}^kp_i^*\g^{-1}(T_{\P^{r_i}})$$
therefore $$\int_X\g^{-1}( T_X)=\int_X\prod_{i=1}^k p_i^*\g^{-1}(T_{\P^{r_i}})=\prod_{i=1}^k\int_{\P^{r_i}}\g^{-1}(T_{\P^{r_i}})=[\P^{r_1}]...[\P^{r_k}]=[\P^{r_1}\times...\times \P^{r_k}]$$
thus the results holds for a product of projective spaces.\par
Now, $\mbb L_{\mbb Q}$ is a polynomial ring over $\mbb Q$ generated by the projective spaces, as the right hand side of the formula we want to prove is $\mbb Q$-linear, it will be sufficient to prove that $$\int_X\g^{-1}( T_X)=\sum \alpha_{(r_1,..,r_k)} \int_{\P^{r_1}\times...\times \P^{r_k}}\g^{-1}( T_{\P^{r_1}\times...\times \P^{r_k}})$$
as soon as we have $$[X]=\sum \alpha_{(r_1,..,r_k)} [\P^{r_1}]...[\P^{r_k}]$$
in $\Omega(k)_{\mbb Q}$.\par
Recall, \cite{Milnor}, that two complex manifolds are in the same cobording class if and only if they share the same Chern numbers, $C_I(X)=\int_X c_{i_1}(T_X)...c_{i_p}(T_X)=\int_X c_I(X)$ for $I=(i_1,...,i_p)$ any partition of $d_X=i_1+...+i_p$.\par
Moreover we know that a complex embedding $\sigma:k\to \mbb C$ induces an isomorphism $\Omega_\bullet(k)\simeq \text{MU}_{2\bullet}$, \cite[Cor 1.2.11]{LevineMorel}, we see that two algebraic varieties over $k$ are rationally cobording iff their complex points share the same Chern numbers after any complex embedding. Let $a_1,...,a_d$ be the Chern roots of $T_X$, we see that $$\g^{-1}(T_X)^{\{d\}}=(\prod_i \g^{-1}(a_i))^{\{d\}}=(\prod_i(\sum_k t_k a_i^k)^{-1})^{\{d\}}=\sum_I v_{d,I}(t)c_I(X)$$
where $v_{d,I}(t)$ is a universal polynomial in (a finite number of) the $t_i$'s depending only on the dimension of $X$. Therefore $$\int \g^{-1}(T_X)=\int_X \sum_I v_{d,I}(t)c_I(X)=\sum_I v_{d,I}(t)C_I(X)$$
Now, as $C_I(X)=\sum_J \alpha_J C_I(\P^J)$ for $J$ some multi-indices of length $d$, we get \begin{eqnarray*}\sum_I v_{d,I}(t)C_I(X)&=&\sum_I v_{d,I}(t)\sum_J \alpha_J C_I(\P^J)\\
&=&\sum_J \alpha_J \sum_I v_{d,I}(t)C_I(\P^J)\\
&=&\sum_J \alpha_J\int_{\P^J}\g^{-1}(T_{\P^J})\\
&=&\sum_J \alpha_J\ell(\P^J)=\ell(X)\end{eqnarray*}
and the results follows.
\end{proof}
This result is the key ingredient that will enable us to show that we have a $\cob(k)$-module structure on $\cob(\X)$. In fact the previous result admits the following generalization
\begin{Cor}\label{HRRC}
We have in $\cob(k)$
$$\int_X\h^{-1}(\L_1)...\h^{-1}(\L_p)\g^{-1}(\ovl T_X)a(1)=\ell(\X, \L_1,...\L_p)a(1)$$
\end{Cor}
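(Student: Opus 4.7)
I would multiply through by $a(1)$ and work to establish the underlying scalar identity in $\mbb L_\mbb Q$:
$$\int_X \h^{-1}(\L_1)\cdots \h^{-1}(\L_p)\, \g^{-1}(\ovl{T_X}) \;=\; \ell(X, L_1, \ldots, L_p),$$
following the reduction scheme used in the proof of Proposition~\ref{HRR}. By Stokes the LHS is independent of the chosen Kähler metrics; it is a mixed Chern number of $(T_X, L_1,\ldots,L_p)$. The key conceptual step is then that both sides are cobordism invariants of the tuple $(X, L_1,\ldots, L_p)$: the LHS by Milnor's theorem applied to pairs of a variety with line bundles (where classes are determined by mixed Chern numbers), the RHS by its very construction as a push-forward in $\Omega(k)\otimes\mbb Q \simeq \mbb L_\mbb Q$. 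Both sides are therefore $\mbb Q$-linear on such cobordism classes, and it suffices to check the identity on a generating family.

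Next I would use the polynomial structure $\mbb L_\mbb Q = \mbb Q[\P^1,\P^2,\ldots]$ and the fact that the Picard group of a product of projective spaces is generated by the pull-backs of the $\O(1)$'s. Axiom $(\FGL)$ on $\cob$ and the corresponding formal group law on $\Omega$ allow us to expand tensor products of line bundles, and the inverse trick $\h^{-1}(\L^\vee) = \chi(\h^{-1}(\L))$ (with $\chi$ the formal inverse) handles $\O(-1)$, reducing matters to the case
$$X = \P^{r_1}\times\cdots\times\P^{r_k}, \qquad L_i = p_{j(i)}^*\O_{\P^{r_{j(i)}}}(1).$$

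In this case the Euler sequence and multiplicativity of $\g^{-1}$ on ortho-split sequences give $\g^{-1}(\ovl{T_X}) = \prod_j p_j^*\g^{-1}(\ovl{\O_{\P^{r_j}}(1)})^{r_j+1}$, so the integral splits as a product over the factors. One is reduced to verifying, for each $\P^r$ and each non-negative integer $m$, an identity of the form
$$\int_{\P^r}c_1(\O(1))^m\, \g^{-1}(\O(1))^{r+1} \;=\; \bigl[u^{r-m}\bigr]\g(u)^{-r-1},$$
computable by the Lagrange--Bürmann inversion formula, which directly generalizes the formula used in the proof of Proposition~\ref{HRR}. On the other side, $\ell(\P^r,\O(1),\ldots,\O(1))$ (with $m$ copies of $\O(1)$) is computed via the projective bundle presentation $\Omega(\P^r)\simeq \mbb L[u]/([r+1]_F u)$ together with Mishenko's formula~\ref{Mishenko} for $\h$ in terms of the classes $[\P^i]$; the two expressions match by the same combinatorial identity.

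The main obstacle will be the combinatorial bookkeeping in the last step, namely the extraction of the right coefficients of $\h(u)^m$ from $\g(u)^{-(r+1)}$ via Lagrange--Bürmann; once this combinatorial input is in place (it is the natural $m$-th derivative analogue of the identity $(\g(u)^{-r-1})^{\{r\}} = (r+1)\h(u)^{\{r+1\}}$ used in Proposition~\ref{HRR}), all the reductions above line up.
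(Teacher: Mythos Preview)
Your approach is plausible in spirit but takes a considerably longer route than the paper, and relies on inputs that are not set up in the text (a Milnor-type characterization of cobordism classes of tuples $(X,L_1,\ldots,L_p)$ by mixed Chern numbers, and a generating family for such tuples given by products of projective spaces with pulled-back $\mcl O(1)$'s). These could presumably be arranged, but they are not free.

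The paper's proof is a short induction on $p$ that reduces \emph{directly} to Proposition~\ref{HRR}. When $L$ is very ample with a transverse section cutting out a smooth divisor $i:Z\hookrightarrow X$, the exact sequence $0\to T_Z\to i^*T_X\to i^*L\to 0$ and multiplicativity of $\g^{-1}$ give $\g^{-1}(T_Z)=i^*\bigl(\g^{-1}(T_X)\g(L)\bigr)$; pushing forward, using the projection formula and Poincar\'e--Lelong (so that $\delta_Z$ becomes $c_1(L)$ up to an exact current), one finds
\[
\int_Z \g^{-1}(T_Z)\;=\;\int_X \g^{-1}(T_X)\,\g(L)\,c_1(L)\;=\;\int_X \g^{-1}(T_X)\,\h^{-1}(L).
\]
The left side is $\ell(Z)$ by Proposition~\ref{HRR}, and $\ell(Z)=\ell(X,L)$ by $(\mathrm{Sect})$ in $\Omega(k)$. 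Iterating handles $p$ very ample line bundles; the general case follows by writing each $L_i$ as a difference of very ample bundles and using $\h^{-1}(u+v)=\mbb F(\h^{-1}(u),\h^{-1}(v))$ together with $(\FGL)$.

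In short: rather than re-running the entire Chern-number/Lagrange--B\"urmann argument with extra line bundles, the paper absorbs each $\h^{-1}(\L_i)$ geometrically by passing to the divisor $Z_i$, so that the statement with $p$ line bundles becomes Proposition~\ref{HRR} on a smaller variety. Your approach would eventually work, but all of your ``main obstacle'' combinatorics is avoided by this reduction.
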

\begin{proof}
Notice that this equality is in fact an equality in $\Omega(k)_{\mbb Q}$ which we view inside $\DL(k)$ pushed inside $\cob(k)$. Both sides are independant on all the metrics. Let us first consider the case where $L$ is very ample, then we have an exact sequence $$0\to T_Z\to i^*T_X\to i^*L\to 0$$ thus if we compute 
\begin{eqnarray*}
\int_Z\g^{-1}(T_Z)&=&\int_X i_*\g^{-1}(T_Z)\\
 &=&\int_X i_*[\g^{-1}(i^*T_X)\g(i^*L)]\\
 &=&\int_X \g^{-1}(T_X)\g(L)\delta_Z\\
&=&\int_X \g^{-1}(T_X)\g(L)c_1(L)\\
&=&\int_X \g^{-1}(T_X)\h^{-1}(L)
\end{eqnarray*}
where we used the multiplicativity of the $\g$ class, the projection formula and Poincare-Lelong formula and Stokes formula. But the previous proposition the ensures us that $$\int_Z\g^{-1}(T_Z)a(1)=\ell(Z)a(1)=\ell(X,L)a(1)$$ The general case follows by induction when all the line bundles $L_i$ are very ample. Then again, by noting that any line bundle can be written as the difference of two very ample line bundles and using that $\mfr h^{-1}(u+v)=\mbb F(\h^{-1}(u),\h^{-1}(v))$ we may conclude.
\end{proof}
\begin{Prop}(Ring and Module Structures)\par
We have a commutative $\mbb L$-algebra structure on $\cob(k)$ given by $$[\X\to \Spec k, \fii(t)\alpha]\otimes [\Y\to \Spec k, \psi(t)\beta] \mapsto [\X\times \Y\to \Spec k]+\ell(X)\psi(t)a(\beta)+\ell(Y)\fii(t)a(\alpha)$$
\par
We have a natural $\cob(k)$-module structure on $\cob(\X)$ given by \begin{eqnarray*}[\X\to \Spec k, \fii(t)\alpha]\otimes[\Z\ds f \Y, \L_1,...,\L_r,\psi(t)g]&\mapsto& [\X\times \Z\to \Y, p_2^*\L_1,...,p_2^*\L_r]\\
& &+\ell(X)\psi(t)a(g)\\
& &+\fii(t)f_*[\c1(\L_1)\circ ...\c1(\L_r)\pi_Z^*(\alpha)]\end{eqnarray*}
where $\pi_Z$ is the structural morphism of $\Z$ ($\pi_Z^*(\alpha)$ is simply the locally function function $\tau\alpha$ over each connected component $Z_\tau(\mbb C)$ of $Z(\mbb C)$).
\end{Prop}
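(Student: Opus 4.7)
The proof splits naturally into two parts. First I would establish the commutative $\mbb L$-algebra structure on $\cob(k)$; then the same technique yields the $\cob(k)$-module structure on $\cob(\X)$.

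\emph{Part 1: Algebra structure on $\cob(k)$.} I would define a bilinear pairing on the free cycle group $\mcl Z(\Spec k) \times \mcl Z(\Spec k)$ by the formula in the statement, then check it descends to $\cob(k) \times \cob(k)$ modulo the relations $(\DIM)$, $(\SECT)$, $(\FGL)$. The purely geometric part $[\X] \cdot [\Y] = [\X \times \Y]$ is well-defined from the external product structure of the Borel-Moore functor. The mixed term $[\X] \cdot a(g) = \ell(X) a(g)$ is where Corollary~\ref{HRRC} enters decisively: for the formula to respect the relation $[\X \to \X, \L] = [\Z \to \X] - a(\g(\L)\log\|s\|^2) - a[i_*(\wt{\g}(\mcl E)\g^{-1}(\ovl T_\Z))]$ from $(\SECT)$ after multiplication by an arbitrary class, the integration of $\g^{-1}(\ovl T_X)$ along $\X$ must coincide with $\ell(X) \in \mbb L$, which is precisely the Hirzebruch--Riemann--Roch statement. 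The purely analytic product $a(\alpha) \cdot a(\beta)$ vanishes automatically from the formula (taking $\X = \emptyset$ and noting $\ell(\emptyset) = 0$), so no further compatibility is needed there.

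\emph{Part 2: Module structure on $\cob(\X)$.} The same strategy applies: define the pairing on cycle generators and verify descent through $(\DIM)$, $(\SECT)$, $(\FGL)$ on both factors. Bilinearity and respect for $(\FGL)$ and $(\DIM)$ are formal, controlled by Proposition~\ref{Compatibility} and the commutativity of $\c1(\L)$ with pushforwards. The essential check is again $(\SECT)$ on the $\X$-factor: when one substitutes $[\X \to \X, \L]$ by its expansion from $(\SECT)$ and multiplies by a generator of $\cob(\X)$, the resulting identity reduces to another instance of Corollary~\ref{HRRC} applied to the $\g^{-1}$-class, this time along the variety $\X$. Commutativity on $\cob(k)$ follows from the symmetry of Cartesian products and of the mixed formula. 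Associativity of the algebra, and the module axiom $(\alpha \cdot \beta) \cdot \eta = \alpha \cdot (\beta \cdot \eta)$, both reduce on the geometric level to associativity of products and pushforwards, and on the mixed terms to the multiplicativity $\ell(X \times Y) = \ell(X)\ell(Y)$ in $\mbb L \simeq \Omega(k)$, again via Corollary~\ref{HRRC}. The unit is $[\Spec k \to \Spec k]$ since $\ell(\Spec k) = 1$ and $\pi_{\Spec k}^*$ is the identity.

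\emph{Main obstacle.} The delicate point throughout is the compatibility of the formula with $(\SECT)$: the analytic correction terms $a(\g(\L) \log\|s\|^2)$ and $a[i_*(\wt{\g}(\mcl E) \g^{-1}(\ovl T_\Z))]$ must interact with the Lazard coefficient $\ell(X)$ coherently. This coherence is not formal and depends essentially on Corollary~\ref{HRRC}, which translates the arithmetic universal Todd information encoded by $\g$ into a geometric cobordism class in $\mbb L$. A subsidiary technical check is the independence of the formula with respect to the choice of metric on $T_X$ and on the $\L_i$; these ambiguities both land in the kernel of $a$ after multiplication, as guaranteed by the anomaly formula (Proposition~\ref{Anomaly1}) together with Lemmas~\ref{Inv1} and~\ref{Inv2}.
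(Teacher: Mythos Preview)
Your proposal is correct and follows essentially the same route as the paper: define the pairing on generators, then verify it kills the relations $(\DIM)$, $(\SECT)$, $(\FGL)$ on each factor, with Proposition~\ref{HRR} / Corollary~\ref{HRRC} supplying the crucial identity $p_{2*}p_2^*a(1)=\ell(Y)a(1)$ that makes the $(\SECT)$ check go through.

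Two small points of emphasis where the paper is more careful than your sketch. First, the relations defining $\cob$ are the \emph{saturations} $\bra\DIM\ket$, $\bra\SECT\ket$, $\bra\FGL\ket$, not just the generating sets; the paper handles this by observing that the proposed multiplication is compatible with push-forwards, pull-backs and first Chern classes, so it suffices to check vanishing on explicit generators of the saturated submodules (and these generators are written out). Your appeal to Proposition~\ref{Compatibility} gestures at this, but you should make the reduction explicit. Second, your ``subsidiary technical check'' invoking the anomaly formula and Lemmas~\ref{Inv1}--\ref{Inv2} is not needed here: the multiplication is defined on representatives and shown to respect the defining relations directly; metric-independence of $\cobna$ was used earlier for a different purpose and does not enter this proof.
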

\begin{proof}
We need to show that all these operations are well defined. We will simply denote $[\Y, \L_1,...,\L_r]$ the class $[\Y\to \Spec k, \L_1,...,\L_r]$\par
We may define an $\mbb L$-bilinear map $$\left(\mbb L\otimes \Dtt(k)\oplus (\oplus \mbb L[\Y, \L_1,...,\L_r])\right)\otimes \left(\mbb L\otimes \Dtt(k)\oplus (\oplus \mbb L[\Z, \L_1,...,\L_r])\right) \to \cob(k)$$ by the following rules
\begin{enumerate}
	\item $a(\alpha).a(\beta)=0$
	\item $a(\alpha).[\Z, \L_1,...,\L_r]=\ell([\Z, \L_1,...,\L_r])a(\alpha)$
	\item $[\Y, \L_1,...,\L_r].a(\beta)=\ell([\Y, \L_1,...,\L_r])a(\beta)$
	\item $[\Y, \L_1,...,\L_r].[\Z, \M_1,...,\M_s]=[\Y\times \Z, p_1^*\L_1,...,p_2^*\M_s ]$
\end{enumerate}
We need to prove that this multiplication structure is compatible with $\SECT(k), \DIM(k)$ and $\FGL(k)$.\par
Examining the saturation procedure we described earlier, we can give explicit generators\footnote{Recall that $\bra\SECT(k)\ket$, $\bra\DIM(k)\ket$ or $\bra\FGL(k)\ket$, are not simply the submodules generated by $\SECT(k), \DIM(k)$ and $\FGL(k)$, but the submodules obtained by the saturation process.} for the sub-$\mbb L$-module of $M=\left(\mbb L\otimes \Dtt(k)\oplus (\oplus \mbb L[\Y, \L_1,...,\L_r])\right)$ that we kill to obtain $\cob(k)$.\par
Let us do $\bra\DIM(k)\ket$ first, it is generated by classes of the form $$c=[\Z, \pi^*\L_1...,\pi^*\L_{d+2}, \M_1,...,\M_s]$$ for $\pi:\Z\to \Y$ projective and smooth with $\dim_k(Y)<d$, as $\ell(c)=0$, and as $[\X, \ovl{N}_1,...,\ovl{N}_t ].c=\pi_{X *}(\c1(\ovl{N}_1)\circ...\circ\c1(\ovl{N}_t)\pi_X^*(c))$ it is clear that the result of the multiplication by such a class will vanish in $\cob(k)$.\par
Similarily $\bra\FGL(k)\ket$ is the submodule of $M/\bra\DIM(k)\ket$ generated by classes of the form $c=\mbb F(\c1(\L),\c1(\L'))[\Y, \M_1,..,\M_r]-\c1(\L\otimes\L'))[\Y, \M_1,..,\M_r]$ and $c'=\mbb \pi_{Y*}([F(\c1(\L),\c1(\L'))a(g)]-\c1(\L\otimes\L'))a(g))$. As the multiplication of two analytic classes together vanish, and as $\ell([\Z, \L_1,...,\L_r]).c'$ clearly lies in the submodule generated by the $c'$'s the multiplication by classes of the second form is well defined.\par
For the first case, it suffices to notice that $\ell(c)=0$ in $\Omega(k)$ and that $[\X, \ovl{N}_1,...,\ovl{N}_t ].c$ equals as before $\pi_{X *}(\c1(\ovl{N}_1)\circ...\circ\c1(\ovl{N}_t)\pi_X^*(c))$ to make sure that the multiplication by such classes is well defined.\par
The generators for $\bra\SECT(k)\ket$ in $(M/\bra\DIM(k)\ket)/\bra\FGL(k)\ket$ are now given by 
\begin{eqnarray*}
c&=&[\Z, i^*\L_1,...,i^*\L_p]-[\X, \L, \L_1,...,\L_p]-\pi_{X*}[\c1(\L_1)...\c1(\L_p)a(\log\|s\|^2\g(\L))]\\
& &-\pi_{X*}[\c1(\L_1)...\c1(\L_p)a(i_*(\wt{\g(\mcl E)}\g^{-1}(\ovl{T_Z})))]
\end{eqnarray*}
An argument, very similar to the one given below for the module structure, shows that the multiplication by those classes vanishes, because of Poincare-Lelong formula and \ref{HRRC}.\\

Let's turn to the case of the module structure.\par
 We now need to check that the pairing vanishes as soon as the class on the right hand side is on of the form $\bra\SECT(\Y)\ket$, $\bra\DIM(Y)\ket$ or $\bra\FGL(Y)\ket$(the fact that is vanishes on the left hand side when the class is of the form $\bra\SECT(k)\ket$, $\bra\DIM(k)\ket$ or $\bra\FGL(k)\ket$ is just a repetition of the previous argument).\par
Let us also notice that, by the usual trick of writing a possibly non very ample line bundle as the difference of two very ample line bundles, every class in $\cob(k)$ can be written as a linear combination of classes of the form $[\Y]+a(g)$ with coefficients in $\mbb L$.\par
Finally let us notice that this module structure is compatible with all operations (push-forwards, pull-back, first Chern class) that we have defined, in an obvious sense, so we can check the vanishing on classes of the form $\SECT(\Y)$, $\DIM(\Y)$ or $\FGL(\Y)$ instead of explicit generators.\\

Let's start by $\DIM$, and let's do the multiplication by an element of the form $a(\alpha)$ first. We have $$a(\alpha)[\Z\ds f \Y, \L_1,...,\L_r]=f_*[\c1(\L_1)\circ ...\c1(\L_r)\pi_Z^*(\alpha)]$$
but this zero as soon as $r>d_Z+1$ because the action of the first Chern class increases the type of the forms by $(1,1)$. Now for the case of a product $$[\X][\Z\ds f \Y, \L_1,...,\L_r]=[\X\times \Z\to \Y, p_2^*\L_1,...,p_2^*\L_r]$$
which is also zero as soon as $r>d_Z+1$ because this is $f_*p_{2*}p_1^*[ \Z\to \Z, \L_1,...,\L_r]$ and $[ \Z\to \Z, \L_1,...,\L_r]$ is zero.\par
Concerning $\SECT$, for the multiplication by an analytic class, as the multiplication by an analytic class vanishes on analytic classes we're left with checking that \begin{eqnarray*}
0&=&\alpha\left([\Z\to \X]-[\X\to \X, \L]\right)\\
&=&\alpha i_*(1)-\c1(\L)\alpha\\
&=&\g(\L)\alpha(\delta_Z-c_1(\L))\\
\end{eqnarray*}
and Poincare-Lelong formula ensures that this vanishes up to an exact current.\par
On the other hand, let us examine $$\mu=[\Y]\left([\Z\ds i \X]-[X\to X, \L]-\log\|s\|^2\g(\L)-i_*(\wt{\g(\mcl E)}\g^{-1}(\ovl{T_Z}))\right)$$
we have
\begin{eqnarray*}
\mu&=&[\Y\times \Z \to \X]-[\Y\times \X \to \X, p_2^*\L]-\ell(Y)\g(\L)\log\|s\|^2-\ell(Y)i_*(\wt{\g(\mcl E)}\g^{-1}(\ovl{T_Z}))\\
&=&p_{2*}\left[[\Y\times \Z \ds j \Y\times \X]-[\Y\times \X \to \Y \times\X , p_2^*\L]\right]\\
& &-\ell(Y)i_*(\wt{\g}(\mcl E)\g^{-1}(\ovl{T_Z}))-\ell(Y)\g(\L)\log\|s\|^2\\
&=&p_{2*}[\log\|p_2^*s\|^2\g(p_2^*\L)+j_*(\wt{\g}(p_2^*\mcl E)\g^{-1}(p_2^*\ovl{T_Z})\g^{-1}(p_1^*\ovl{T_Y}))]
\\
& &-\ell(Y)i_*(\wt{\g}(\mcl E)\g^{-1}(\ovl{T_Z}))-\ell(Y)\g(\L)\log\|s\|^2\\
&=&\g(\L)\log\|s\|^2[p_{2*}p_2^*a(1)-\ell(Y)a(1)]-i_*(\ell(Y)\wt{\g}(\mcl E)\g^{-1}(\ovl{T_Z}))+i_*p_{2*}(\wt{\g}(p_2^*\mcl E)\g^{-1}(p_2^*\ovl{T_Z})\g^{-1}(p_1^*\ovl{T_Y}))\\
&=&\g(\L)\log\|s\|^2[p_{2*}p_2^*a(1)-\ell(Y)a(1)]-i_*[\ell(Y)\wt{\g}(\mcl E)\g^{-1}(\ovl{T_Z})-\wt{\g}(\mcl E)\g^{-1}(\ovl{T_Z})p_{2*}p_2^*a(1)]\\
&=&\left(\g(\L)\log\|s\|^2-\wt{\g}(\mcl E)\g^{-1}(\ovl{T_Z})\delta_Z\right)[p_{2*}p_2^*a(1)-\ell(Y)a(1)]
\end{eqnarray*}
and this is seen to be zero because of \ref{HRR}.\par
Let's now tackle the case of $\FGL$, an easy computation shows that $$[\X].\c1(\L)(1_{\Y})=p_{2*}\c1(p_1^*\L)[\X\times \Y],\ \ \  a(\alpha)\c1(\L)(1_{\Y})=\c1(\L)a(\pi_Y^*(\alpha))$$ and this readily implies that the multiplication by $[\X]+a(\alpha)$ of a class in $\FGL$ vanishes.\par
This shows that the two $\mbb L$-bilinear maps are well defined; the verification of the fact that these maps give a ring structure to $\cob(k)$ and a $\cob(k)$-module structure to $\cob(\X)$ is now straightforward.
\end{proof}
\begin{Rque}
It may appear surprising at first glance that classes of the form $a(\alpha).a(g)$ vanish, but this should not be so because in other (strong) arithmetic theories, the product of such classes is given by $a(\alpha\d\db g)$ which is $a((\d\db\alpha) g)$ up to something in $\im \d+\im \db$, and this is of course zero. 
\end{Rque}

\subsection{Arrows}
We will now construct arrows from the group $\cob(X)$ to the groups $\CHw(X)$ and $\Kw_0(X)$, these arrows will be compatible with the different maps we have defined between those groups. To define those we will introduce the notion of Borel-Moore functor of arithmetic type on the category of arithmetic varieties. 
\begin{Def}(Hermitian Borel Moore Functor)\label{HBM}\par

We will call a (graded) hermitian Borel-Moore functor an additive assignment $\X\to \H_\bullet(\X)$ for each arithmetic variety $\X$, such that we have
  \begin{enumerate}
	  \item $\H_\bullet(\X)$ is a (graded) $\mbb L$-module with a specified element denoted $1_{\X}$, and called the unit element,
		\item $\H_\bullet(\X)$ is equipped with an action of $\DL(X)$ denoted by $a$,
		\item (genus) a multiplicative genus $\fii\in \H(k)[[u]]$
    \item (direct image homomorphisms) a homomorphism $f_* : \H_\bullet(\X) \to \H_\bullet(\Y)$ of degree zero for each projective morphism $f : \X \to \Y$,
    \item (inverse image homomorphisms) a homomorphism $f^* : \H_\bullet(\Y) \to \H_\bullet(\X)$ of degree $d$ for each smooth equidimensional morphism $f : \X \to \Y$ of relative dimension $d$ that preserves the unit element,
    \item (first Chern class homomorphisms) a homomorphism $\c1(\L) : \H_\bullet(\X) \to \H_\bullet(\X)$ of degree -1 for each hermitian line bundle $\L$ on $\X$,
  \end{enumerate}
satisfying the axioms
  \begin{enumerate}
    \item the map $f \mapsto f_*$ is functorial;
    \item the map $f \mapsto f^*$ is functorial;
    \item if $f : \X \to \Z$ is a projective morphism, $g : \Y \to \Z$ a smooth equidimensional morphism, and the square
      $$\xymatrix{ \ovl W \ar[r]^{g'}\ar[d]_{f'}& \X \ar[d]^{f} \\
        \Y \ar[r]^{g}& \Z\\}$$
    is Cartesian, then one has 
      \[ g^* \circ f_* = f'_* \circ g'^*  \]
    \item if $f : \Y \to \X$ is projective and $\L$ is a hermitian line bundle on $\X$, then one has
      \[ f_* \circ \c1(f^*(\L)) = \c1(\L) \circ f_*  \]
    \item if $f : \Y \to \X$ is a smooth equidimensional morphism and $\L$ is a hermitian line bundle on $\X$, then one has
      \[ \c1(f^*\L) \circ f^* = f^* \circ \c1(L)  \]
    \item if $\L$ and $\M$ are hermitian line bundles on $\X$, then one has 
      \[ \c1(\L) \circ \c1(\M) = \c1(\M) \circ \c1(\L)  \]
		\item if $f : \Y \to \X$ is projective, then one has
      \[ f_* \circ a(g) = a(f_*(g \wedge \fii(\ovl{T_f})))  \]
		\item if $\L$ is a hermitian line bundle on $\X$, then one has
      \[ \c1(\L) \circ a(g)  = a(c_1(\L) \fii(\L) g) \]
      \end{enumerate}
\end{Def}
Just like for the geometric case we need to restrict the class of Borel Moore functors we'll be interested in, in order to give them an arithmetic significance.
\begin{Def}(Arithmetic Type)\par
A \emph{Hermitian Borel-Moore functor with weak product} is the data of a hermitian Borel-Moore functor together with the data of
    \begin{enumerate}
      \item a commutative $\mbb L$-algebra structure on $\H( k)$,
			\item a $\H( k)$-module structure on $\H(\X)$ compatible with its $\mbb L$-structure.
    \end{enumerate}
We will say that a hermitian Borel-Moore functor with weak product, $\H_\bullet$ is of arithmetic type if the following additional properties are satisfied
  \begin{enumerate}
    \item(Dim)  For $\X$ an arithmetic variety and $(\L_1, \ldots, \L_n)$ a family of hermitian line bundles on $X$ with $n > \dim(X)+1$, one has
      \[ \c1(\L_1) \circ \cdots \circ \c1(\L_n)(1_{\X}) = 0 \]
    in $\H_\bullet(X)$.
    \item (Sect)  For $\X$ an arithmetic variety, $\L$ a hermitian line bundle on $X$, and $s$ a section of $L$ which is transverse to the zero section, one has the equality
      \[ \c1(L)(1_{\X}) +a(i_*[\wt{\fii}(\mcl E)\fii^{-1}(\ovl T_Z)])+a(\fii(\L)\log\|s\|^2)= i_*(1_{\Z}) \]
    where $i : Z \to X$ is the closed immersion defined by the section $s$ and $\mcl E$ is the exact sequence $$0\to \ovl{T_Z}\to i^*\ovl{T_X}\to i^*\L\to 0$$
		\item  (FGL)  If $F_H$ is the formal group law defined by $\mbb L\to \H(\Spec k)$, then for $\X$ an arithmetic variety and $\L,\M$ hermitian line bundles on $\X$, one has the equality
      \[ F_H(\c1(\L), \c1(M)) = \c1(\L \otimes \M) \]
    where $F_H$ acts on $\H(X)$ via its $\mbb L$-module structure. Moreover we require the different pull-backs and push-forward maps to preserve $F_H$.
  \end{enumerate}
\end{Def}
The following theorem is a tautology
\begin{Theo}
The assignment $\X\mapsto \cob(\X)$ is the universal (weak) Borel-Moore functor of arithmetic type.
\end{Theo}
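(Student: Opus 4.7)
The proof is essentially a universal-property argument, and the plan breaks naturally into construction, well-definedness, and uniqueness.

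First I would construct the comparison morphism. Given any hermitian Borel-Moore functor of arithmetic type $\H$, I define a candidate map $\Phi_{\X} : \cob(\X) \to \H(\X)$ on the generators of $\mcl{Z}(\X)$ by sending a standard geometric class $[\Z \xrightarrow{f} \X, \L_1, \ldots, \L_r]$ to $f_* \circ \c1(\L_1) \circ \cdots \circ \c1(\L_r)(1_{\Z})$, and the analytic class $a(\fii(t) g)$ to $a(\psi_*(\fii)(t) g)$ where $\psi$ is the tautological map $\Lh \to \H(\Spec k)$ provided by the $\mbb{L}$-algebra structure of $\H(k)$ together with the genus $\fii_{\H}$ (the $t_i$'s being sent to the genus coefficients, the $a^{i,j}$'s to the formal group law coefficients of $F_{\H}$). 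Uniqueness of $\Phi$ is immediate from this description: any morphism of hermitian Borel-Moore functors of arithmetic type must preserve $1$, push-forward, pull-back, first Chern classes and the action of $a$, and every standard class is expressible as $f_* \circ \c1(\L_1) \circ \cdots \circ \c1(\L_r) \pi_{\Z}^*(1_k) + a(g)$.

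Next I would verify well-definedness by checking that $\Phi$ annihilates each family of defining relations in turn. Isometry-invariance and permutation of the bundles are automatic from the axioms of $\H$. The relations $\DIM$, $\SECT$ and $\FGL$ translate directly into the $\DIM$, $\SECT$ and $\FGL$ axioms for $\H$, given that the first Chern class of $\H$ obeys the same formal group law $F_{\H}$ that acts on $\H(\X)$ via the module structure. The compatibility conditions between $a$ and $f_*$, between $a$ and $\c1(\L)$, and between $a$ and pull-backs are exactly the last two axioms imposed on a hermitian Borel-Moore functor, and they are what is needed to reconcile the "twisted" push-forward by $\g^{-1}(\ovl{T_{\pi}})$ used in \ref{PF} with the image genus $\fii_{\H}$ on the target.

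The only subtle point, which I would regard as the main obstacle, is handling the relations inherited from $\Lh$: namely that the coefficients $t_i$ of the universal genus $\g$ and the coefficients $a^{i,j}$ of the universal formal group law must be linked by the identity displayed in the definition of $\Lh$. Here I would copy the strategy of Proposition \ref{RelationsLh}, computing $\pi_*(\c1(p_1^*\O(1) \otimes p_2^*\O(1))(a(1)))$ on $\P^r \times \P^\ell$ inside $\H$ in two different ways, once by expanding via $F_{\H}$ and once by using the explicit action of $\c1$ on analytic classes in $\H$. The axioms Dim, Sect, FGL for $\H$, together with the compatibility of $a$ with pull-back and Chern class action, yield exactly the required identities between the images of the $t_i$'s and the $a^{i,j}$'s in $\H(k)$, so the map $\Lh \to \H(k)$ descends.

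Finally, the $\cob(k)$-module structure on $\cob(\X)$ corresponds under $\Phi$ to the $\H(k)$-module structure on $\H(\X)$ because both are constructed by the same formula (external product followed by push-forward), using Hirzebruch--Riemann--Roch \ref{HRR} on one side and the $\SECT$ axiom for $\H$ on the other; this verifies that $\Phi$ is a morphism in the category of hermitian Borel-Moore functors of arithmetic type, completing the universality statement.
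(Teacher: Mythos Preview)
Your argument is correct and is essentially an unpacking of what the paper dismisses in one word: the paper's entire proof is ``This is a tautology.'' The construction of $\cob$ was engineered precisely so that the generators and relations match the data and axioms of a hermitian Borel-Moore functor of arithmetic type, so the universal property holds by design; your steps (construction of $\Phi$ on generators, uniqueness, annihilation of $\DIM$, $\SECT$, $\FGL$) are the right ones and are what anyone would write if asked to spell it out.

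One remark: the point you flag as the ``main obstacle''---that the $\Lh$-relations between the $t_i$'s and the $a^{i,j}$'s must hold in $\H(k)$---is not actually an obstacle here, because it is already built into the \emph{definition} of a hermitian Borel-Moore functor. Axiom~2 in Definition~\ref{HBM} requires that the map $a$ be defined on $\DL(X) = \Lh \otimes \Dt(X)$, not merely on $\mbb L \otimes \mbb Z[\t] \otimes \Dt(X)$. So the target $\H$ already comes with an $\Lh$-linear $a$-action, and there is nothing to check: your map on analytic classes is simply $\Phi(a(\eta)) = a(\eta)$ for $\eta \in \DL(X)$. The computation you propose (mimicking Proposition~\ref{RelationsLh} inside $\H$) would indeed work, but it is redundant. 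This is presumably why the author felt entitled to call the whole thing a tautology.
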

\begin{Rque}
In fact in view of (sect) the genus of an arithmetic Borel-Moore functor is completely determined by its formal group law, we can prove it from the axiom sect, but as it is already the case with $\cob$, it will be automatically the case in every such functor also. 
\end{Rque}
\begin{Rque}
We've proven in \ref{BMforCH} and \ref{ATforCH} that $\X \mapsto \CHw(\X)$ is a (weak) Borel-Moore functor of arithmetic type, its formal group law is additive, and its genus is given by $1$, which explains that $\CHw(\X)$ does not depend on the choice of hermitian structure on $X$.\par
We've also proven in \ref{BMforK} and \ref{ATforK} that $\X \mapsto \Kw(\X)$ is a (weak) Borel-Moore functor of arithmetic type with multiplicative unitary law, and the usual Todd-genus as genus.
\end{Rque}
\begin{Cor}
We have natural arrows $$\cob(\X)\to \CHw(X)\ \ \cob(X)\to \Kw_0(X)$$
that make the following diagrams commute$$\xymatrix{\DL(X)_p\ar[d]\ar[r]^{a} &\cob(\X)_{p} \ar[r]^\zeta\ar[d] &\Omega(X)_{p}\ar[d]\ar[r]& 0\\
\wt D^{d_X-p+1,d_X-p+1}_{\mbb R}(X)\ar[r]^a &\CHw_{p}(X) \ar[r]^\zeta &\CH_p(X)\ar[r]& 0}$$
and 
$$\xymatrix{\DL(X)\ar[r]^a\ar[d] &\cob(\X) \ar[r]^\zeta\ar[d] &\Omega(X)\ar[d]\ar[r]& 0\\
\wt D^{\bullet,\bullet}_{\mbb R}(X)\ar[r]^a &\Kw_{0}(\X) \ar[r]^\zeta &K_0(X)\ar[r]& 0}$$
\end{Cor}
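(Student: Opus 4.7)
The plan is to invoke the universal property of $\cob$ as the universal weak Borel-Moore functor of arithmetic type, which was just stated in the preceding theorem. Once we know that both $\CHw$ and $\Kw_0$ are Borel-Moore functors of arithmetic type, the arrows $\cob \to \CHw$ and $\cob \to \Kw_0$ are obtained tautologically, and the commutativity of the two diagrams follows by chasing the extra structure.

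First, I would verify that $\CHw$ and $\Kw_0$ indeed fit into the formalism of hermitian Borel-Moore functors of arithmetic type. For $\CHw$, Propositions \ref{BMforCH} and \ref{ATforCH} give every required compatibility; the associated formal group law is the additive one $F(u,v) = u+v$, and the genus is $\fii = 1$, which is why $\CHw$ is insensitive to the choice of K\"ahler metric. For $\Kw_0$, Propositions \ref{BMforK} and \ref{ATforK} play the same role: the formal group law is the multiplicative unitary one $F(u,v) = u + v - uv$, and the genus is $\Td^{-1}$. The ring structure on $\CHw(k)$ (resp.\ on $\Kw_0(k)$) and the module structure on $\CHw(X)$ (resp.\ on $\Kw_0(X)$) are classical, and their compatibility with the $\mbb L$-module structure coming from pushforward of points from projective spaces is immediate, so both assignments are weak Borel-Moore functors of arithmetic type in the sense of our definition.

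Second, the universal property yields unique morphisms of hermitian Borel-Moore functors of arithmetic type $\Phi_{\CHw} : \cob \to \CHw$ and $\Phi_{\Kw_0} : \cob \to \Kw_0$. By construction these morphisms commute with projective pushforwards, smooth pullbacks, first Chern class operators and the action of $a$, and they send $1_{\X}$ to $1_{\X}$. In particular the squares on the left of both diagrams commute on the nose, since by definition of a morphism of hermitian Borel-Moore functors we have $\Phi \circ a = a$. For the squares on the right, one uses that the composition $\zeta \circ \Phi$ is a morphism from $\cob$ to the respective geometric theory which kills every analytic class $a(g)$ and sends a standard class $[\Z\to \X,\L_1,\dots,\L_r]$ to $[Z\to X, L_1,\dots,L_r]$; this composition must therefore agree with the arrow $\zeta : \cob \to \Omega$ followed by the canonical morphisms $\Omega \to \CH$ and $\Omega \to K_0$ provided by the universality of $\Omega$ among Borel-Moore functors of geometric type.

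Third, the compatibility with the grading in the $\CHw$ diagram comes from the fact that $\Phi_{\CHw}$ preserves degrees: in $\cob_p$ the degree conventions from Definition \ref{Degree} match exactly the degree conventions on $\CHw_p$ once one notes that the additive genus $\fii = 1$ produces no degree shift in the anomaly term $\wt{\fii}^{-1}(\ovl T_Z)$ appearing in (Sect). For $\Kw_0$ there is no grading to preserve, so only compatibility with $a$ and $\zeta$ is required.

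I do not foresee a genuine obstacle: the corollary is essentially the content of the universal property once Propositions \ref{BMforCH}, \ref{ATforCH}, \ref{BMforK} and \ref{ATforK} have been proven. The only point requiring a line of care is checking that the restriction of $\Phi$ to $\DL(X)$ lands in $\wt D^{\bullet,\bullet}_\mbb R(X)$ via the map $\mbb L \otimes \mbb Z[\t] \to \mbb Z$ (for $\CHw$, sending all $a^{i,j}$ to the additive coefficients and $t_i$ to the coefficients of $1$) and, for $\Kw_0$, via the corresponding specialization $\mbb L \otimes \mbb Z[\t] \to \mbb Z$ dictated by the multiplicative unitary formal group law and the Todd genus, in line with \ref{NatuT}.
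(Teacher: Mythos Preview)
Your proposal is correct and follows essentially the same approach as the paper: the paper's proof is a two-line invocation of the universal property (``both arrows are uniquely determined by the choices of the formal group law''), and you spell out exactly this argument, citing the same Propositions \ref{BMforCH}, \ref{ATforCH}, \ref{BMforK}, \ref{ATforK} to verify the hypotheses. Your additional paragraphs on the commutativity of the diagrams and on the specialization maps $\Lh \to \mbb Z$ (via \ref{NatuT}) make explicit what the paper leaves to the reader, but there is no difference in strategy.
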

\begin{proof}
Both arrows are uniquely determined by the choices of the formal group law, if we chose the additive one, we get a map from $\cob(\X)$ to $\CHw(X)$.\par
The arrow $\cob(\X)\to \Kw_0(\X)$ is given by the choice of the multiplicative unitary law.
\end{proof}
\begin{Cor}
We have an isomorphism $$\sideset{}{'}\prod_{\tau: k \hookrightarrow \mbb C}\mbb R/(\sum_{f\in k^*} \mbb Q \log |\tau f|)\to \cob(k)_{-1,\mbb Q}$$
\end{Cor}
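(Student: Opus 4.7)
The surjectivity of the arrow was already obtained in the earlier corollary (as an immediate consequence of the exact sequence of Proposition \ref{ExSe} restricted to degree $-1$, together with the computation $\Omega(k)_{-1}=0$ and the vanishing $a(\log|u|^2)=0$ for $u\in k^{*}$). The remaining task is injectivity, and the plan is to exhibit a retraction by pushing the class forward into $\CHw(k)_{-1,\mathbb Q}$ via the natural arrow of Borel–Moore functors of arithmetic type just constructed.

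First I would compute $\CHw(k)_{-1,\mathbb Q}$ directly from the definition. Since $d_{\Spec k}=0$, a dimension count shows that $\widehat Z_{-1}(\Spec k)=\widetilde D^{0,0}_{\mathbb R}(\Spec k)=\sideset{}{'}\prod_{\tau:k\hookrightarrow\mathbb C}\mathbb R$. The only proper subvarieties of $\Spec k$ are $\Spec k$ itself, and for $f\in k^{*}$ the cycle $\mathrm{div}(f)$ vanishes (there are no codimension one subvarieties), so the rationality relations reduce to $a(\log|f|^{2})=0$ for every $f\in k^{*}$. Hence
\[
\CHw(k)_{-1,\mathbb Q}\;=\;\sideset{}{'}\prod_{\tau:k\hookrightarrow\mathbb C}\mathbb R\,\Big/\sum_{f\in k^{*}}\mathbb Q\log|\tau f|,
\]
which is exactly the group appearing on the left of the arrow to be studied.

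Next I would consider the composition
\[
\sideset{}{'}\prod_{\tau}\mathbb R\,\Big/\sum_{f\in k^{*}}\mathbb Q\log|\tau f|\;\xrightarrow{\,a\,}\;\cob(k)_{-1,\mathbb Q}\;\longrightarrow\;\CHw(k)_{-1,\mathbb Q}
\]
where the second arrow is the canonical morphism coming from the universal property of $\cob$ specialized to the additive formal group law (for which the genus $\mathfrak g$ becomes $1$). By the compatibility diagram at the end of the excerpt, this morphism of hermitian Borel–Moore functors intertwines the $a$-maps: $a(g)\in\cob(k)$ is sent to $a(g)\in\CHw(k)$. Hence the displayed composition is the obvious identification of both sides with the explicit quotient computed above, i.e.\ the identity map.

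Since the composition is an isomorphism, the first arrow $a:\sideset{}{'}\prod_{\tau}\mathbb R/\sum\mathbb Q\log|\tau f|\to\cob(k)_{-1,\mathbb Q}$ is injective; combined with the surjectivity obtained earlier it is an isomorphism, as claimed. The only genuinely delicate point is verifying that the universal arrow $\cob\to\CHw$ really is compatible with the $a$-map on the nose (not merely up to scaling coming from $\mathfrak g$ versus $1$), but in degree $-1$ over $\Spec k$ all correction terms involving $\mathfrak g(\L)$ or $\mathrm{Td}$ vanish for trivial degree reasons, so no further work is needed.
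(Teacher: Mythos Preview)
Your proposal is correct and follows essentially the same approach as the paper: surjectivity is already established, and injectivity is obtained by composing with the natural arrow $\cob(k)\to\CHw(k)$ and identifying $\CHw(k)_{-1,\mathbb Q}$ explicitly with the same quotient, so that the composite is the identity. The paper phrases the injectivity step as a contradiction (an element mapping to zero in $\cob(k)_{-1,\mathbb Q}$ would force non-injectivity of the identification with $\CHw(k)_{-1,\mathbb Q}$), but the content is identical to your retraction argument.
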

\begin{proof}
We already know that the map is surjective it remains to show that it is injective, but $$\CHw_{-1}(X)\simeq \sideset{}{'}\prod_{\tau: k \hookrightarrow \mbb C}\mbb R/(\sum_{f\in k^*} \mbb Z \log |\tau f|)$$ therefore if the image of any element in $\sideset{}{'}\prod_{\tau: k \hookrightarrow \mbb C}\mbb R/(\sum_{f\in k^*} \mbb Q \log |\tau f|)$ would be zero in $\cob(k)_{-1,\mbb Q}$ then a multiple of it would be mapped to zero in $\CHw_{-1}(X)$ and $$\sideset{}{'}\prod_{\tau: k \hookrightarrow \mbb C}\mbb R/(\sum_{f\in k^*} \mbb Q \log |\tau f|)\to \CHw_{-1}(X)_{\mbb Q}$$ would not be injective, a contradiction.
\end{proof}
These results shed some light on different constructions in Arakelov theory.\par
It explains why the direct image in $K$-theory depend on a choice of metric on the varieties whereas it is possible to construct a push forward for arithmetic Chow groups without specifying any metric. This is because the Todd class of the Chow theory is $1$, and therefore the secondary forms associated to it are 0.\par
It also explains why the star product of \cite{GSA} is what it is, because the computation of the bracket $(\L,\M)_X$ reduces to the computation of the star-product $-\log\|s\|^2\star-\log\|t\|^2$
\begin{Rque}
In an upcoming paper we will show that we actualy have comparison isomorphisms $$\cob(\X)_{\mbb Z}\simeq \CHw(X), \ \cob(\X)_{\mbb Q}\simeq \Kw_0(X)_{\mbb Q}$$ refining the comparison theorems of \cite{LevineMorel}, and how we can deduce an arithmetic Riemann-Roch formula from these facts.
\end{Rque}

\bibliographystyle{alpha-fr}
\bibliography{Biblio}

\end{document}